 \theoremstyle{plain}
\newtheorem{theorem}{Theorem}[section]
\newtheorem{lemma}[theorem]{Lemma}
\newtheorem*{claim}{Claim}
\newtheorem{problem}[theorem]{Problem}
\theoremstyle{definition}
\newtheorem{definition}[theorem]{Definition}
\newtheorem{corollary}[theorem]{Corollary}
\newtheorem{proposition}[theorem]{Proposition}
\theoremstyle{remark}
\newtheorem{remark}[theorem]{Remark}
\numberwithin{equation}{section}
\begin{document}


\title[Fully nonlinear equation and its applications]{The partial uniform ellipticity and prescribed problems on the conformal classes of complete metrics}


\author{Rirong Yuan }
\address{School of Mathematics, 
	South China University of Technology, 
	Guangzhou 510641, China}
\email{yuanrr@scut.edu.cn}

 \keywords{Fully nonlinear equation; conformal transformation; fully nonlinear Loewner-Nirenberg problem; complete noncompact version of fully nonlinear Yamabe problem; symmetric concave function; partial uniform ellipticity}

\date{}

\dedicatory{}

 \begin{abstract}
	
	
	We clarify how close a second order fully nonlinear equation can come to uniform ellipticity, through 
	counting large eigenvalues of the linearized operator. 
This suggests an effective and novel
  way to understand the structure of fully nonlinear equations of elliptic and parabolic type.
	As applications, we solve a  
	 fully nonlinear version of the Loewner-Nirenberg problem and a noncompact complete version of fully nonlinear Yamabe problem. 
 Our method is delicate as shown by a topological obstruction.
	

\end{abstract}

\maketitle


  
  \section{Introduction}  
  
  
  \subsection{Partial uniform ellipticity}

  In their pioneering
  paper \cite{CNS3}, Caffarelli-Nirenberg-Spruck initiated the study of a fully nonlinear equation 
  \begin{equation} 
  	\label{equation-cns} 
  	\begin{aligned}
  		F(D^2 u) =\psi \mbox{  } \mbox{ in } \Omega\subset\mathbb{R}^n. 
  	\end{aligned} 
  \end{equation}
  This equation has the form 
  $F(D^2 u)=f(\lambda(D^2 u))$, where
 $\lambda(D^2u)=(\lambda_1,\cdots,\lambda_n)$ are the eigenvalues of $D^2 u$.
  As suggested by Caffarelli-Nirenberg-Spruck, $f$ is  a smooth symmetric function of $n$ real variables defined in an open symmetric  convex cone 
  $\Gamma\subset\mathbb{R}^n$,
  with vertex at the origin and 
   boundary $\partial \Gamma\neq\emptyset$,  
  containing the positive cone $\Gamma_n$. In addition,
  $f$ satisfies some fundamental structural conditions including 
  \begin{equation}
  	\label{concave}
  	f \mbox{ is  concave in } \Gamma,
  \end{equation}
  \begin{equation}
  	\label{elliptic}
  	\begin{aligned}
  		f_{i}(\lambda):= 
  		\frac{\partial f}{\partial \lambda_{i}}(\lambda)
  		> 0  \mbox{ in } \Gamma, \mbox{  } \forall  1\leqslant i\leqslant n.
  	\end{aligned}
  \end{equation}
  
  In some cases, one may 
  replace \eqref{elliptic} by the following weaker condition
  \begin{equation}
  	\label{elliptic-weak}
  	\begin{aligned}
  		f_{i}(\lambda) \geqslant 0,  
  		\mbox{  } \forall 1\leqslant i\leqslant n,
  	 	\mbox{  } \sum_{i=1}^n f_i(\lambda)>0,   \mbox{ in } \Gamma.
  	\end{aligned}
  \end{equation} 

  This includes among others Poisson equation and Monge-Amp\`ere equation, on which 
  there are substantial literature. 
  Other typical examples  
  are as follows:  
  \begin{equation}		\begin{aligned}  			
  	f=  (\sigma_k/\sigma_l)^{1/(k-l)},  \mbox{ } \Gamma=\Gamma_k, \nonumber
 	 \,	\mbox{ or }\, f = (\sigma_k/\sigma_l)^{1/(k-l)} \circ P_{n-1},   \mbox{ }    
 	  \Gamma= \mathcal{P}_k 	\nonumber  			  \end{aligned}  	\end{equation}   
  for  $0\leqslant l<k\leqslant n$, 
  where 
  $\sigma_k$ is the $k$-th elementary symmetric function,  $\Gamma_k$ is  the $k$-th G{\aa}rding cone,  
  and  $P_{n-1}(\lambda)= \sum_{i=1}^n \lambda_i \vec{\bf 1}-\lambda,$
  $\vec{\bf 1}=(1,\cdots,1)\in \mathbb{R}^n,$ 
   $\mathcal{P}_k=
  \{\lambda: P_{n-1}(\lambda)\in\Gamma_k\}$.
  
  We first look at the structure of  equation \eqref{equation-cns}. 
  The eigenvalues of coefficient matrix
  of the linearlized operator at $u$
  are precisely given by
   \[f_1(\lambda), \cdots, f_n(\lambda), \mbox{ for } \lambda= \lambda(D^2 u).\]
  As a consequence, \eqref{elliptic} yields that \eqref{equation-cns} is elliptic at $u\in C^2(\bar \Omega)$ satisfying $\lambda(D^2u)\in \Gamma$,  while \eqref{concave} implies that the operator $F(D^2u)=f(\lambda(D^2u))$ is concave with respect to $D^2u$ when $\lambda(D^2u)\in\Gamma$,
  thereby Evans-Krylov theorem \cite{Evans82,Krylov83} applies  
  once the bound  of real Hessian has been proved.  

  As is well known, the uniform ellipticity plays central roles in the study of second order elliptic equations, 
  especially in the theory of \textit{a priori} estimates, see e.g. \cite{GT1983}.
  However,  
  the uniform ellipticity of \eqref{equation-cns} breaks down in general,
  thereby causing various serious difficulties. 
  Consequently,  
  it is of importance to 
  determine when a fully nonlinear equation 
  becomes uniformly elliptic.
  
  This leads to the concept of partial uniform ellipticity. 
  \begin{definition}
  	\label{def-PUE}
  	We say that
  	$f$ is of \textit{$\mathrm{m}$-uniform ellipticity} in $\Gamma$,  
  	if $f$ satisfies 
  	\eqref{elliptic-weak} and 
  	there is a uniform positive constant $\vartheta$ 
  	such that for any $\lambda\in \Gamma$ with 
  	$\lambda_1\leqslant \cdots\leqslant \lambda_n$, 
  	\begin{equation}
  		\label{partial-uniform2}
  		\begin{aligned}
  			f_{i}(\lambda)\geqslant \vartheta\sum_{j=1}^n f_j(\lambda), \mbox{ } \forall 1\leqslant i\leqslant \mathrm{m}. 
  		\end{aligned}
  	\end{equation}
  	In particular, \textit{$n$-uniform ellipticity} is also called
  	\textit{fully uniform ellipticity}.

  	Accordingly, we have a similar notion of partial uniform ellipticity for a second order 
  	equation, 
  	if its linearized operator 
  	satisfies a similar condition.
  \end{definition}
There  naturally arises a problem: 
  \begin{problem}
  	\label{problem1-yuan}
  To 
  determine the integer $\mathrm{m}$ from \eqref{partial-uniform2} in
  	Definition \ref{def-PUE}.
  \end{problem} 
  
  The first part of this paper is devoted to 
  this problem.
  For any $f$ 
  satisfying 
  \begin{equation}
  	\label{addistruc}
  	\begin{aligned}
  		\lim_{t\rightarrow +\infty}f(t\lambda)>f(\mu) \mbox{ for any } \lambda, \mbox{ }\mu\in \Gamma,
  	\end{aligned}
  \end{equation}
  we observe that a  
  criterion, 
  extending the one 
   proposed initially in 
   \cite{yuan-regular-DP}
  leads 
  to bridging partial uniform ellipticity 
  and
  the maximum count of 
  zero
  components of vectors contained in $\Gamma$.
  To this end, we introduce an integer for $\Gamma$.
  \begin{definition}
  	\label{yuan-kappa}
  	For the cone $\Gamma$, 
  	we define    
  \begin{equation}\begin{aligned}	 {\kappa}_{\Gamma}=\max \left\{k: ({\overbrace{0,\cdots,0}^{k}},{\overbrace{1,\cdots, 1}^{n-k}})\in \Gamma \right\}. \nonumber 	\end{aligned}\end{equation}
  \end{definition}

 Below we state our main result 
  on the partial uniform ellipticity. 
  \begin{theorem}
  	\label{yuan-k+1}
  	Suppose  $(f,\Gamma)$ satisfies 
  	\eqref{concave} and \eqref{addistruc}.
  	Then we have \eqref{elliptic-weak} and that there exists a universal positive constant $\vartheta_{\Gamma}$ depending only on 
  	$\Gamma$, such that
  	for any $ \lambda\in \Gamma$ 
  	with $\lambda_1 \leqslant \cdots \leqslant\lambda_n$,
  	\begin{equation}
  		\begin{aligned}
  			f_{{i}}(\lambda) \geqslant   \vartheta_{\Gamma} \sum_{j=1}^{n}f_j(\lambda), \mbox{  } \forall 1\leqslant i\leqslant \kappa_\Gamma+1. \nonumber
  		\end{aligned}
  	\end{equation}
  	In particular, $f$ is of fully uniform ellipticity  
  	 in the type 2 cone.
  \end{theorem}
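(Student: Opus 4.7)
The plan is to convert \eqref{concave} and \eqref{addistruc} into the statement that the gradient vector $\nabla f(\lambda)=(f_1(\lambda),\dots,f_n(\lambda))$ lies in the closed dual cone $\overline{\Gamma^{*}}$, and then to extract partial uniform ellipticity by testing this membership against cleverly chosen vectors in $\Gamma$ built from the definition of $\kappa_\Gamma$.

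For the dual-cone membership, fix $\lambda\in\Gamma$ and any $v\in\Gamma$. Concavity gives the tangent inequality
\[
f(tv)\leqslant f(\lambda)+\sum_{j=1}^n f_j(\lambda)\bigl(tv_j-\lambda_j\bigr),\qquad t>0,
\]
while \eqref{addistruc} ensures $f(tv)>f(\lambda)$ once $t$ is sufficiently large. Dividing by $t$ and letting $t\to\infty$ yields $\sum_j f_j(\lambda)v_j\geqslant 0$, so $\nabla f(\lambda)\in\overline{\Gamma^{*}}$. The inclusion $\Gamma\supset\Gamma_n$ forces $\Gamma^{*}\subset\overline{\Gamma_n}$, whence $f_i(\lambda)\geqslant 0$ for every $i$. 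If one had $\sum_j f_j(\lambda)=0$, then the nonnegative vector $\nabla f(\lambda)$ would pair to $0$ against the interior point $\vec{\bf 1}\in\Gamma$, forcing $\nabla f(\lambda)=0$; by concavity $\lambda$ would then globally maximize $f$, contradicting \eqref{addistruc}. This gives \eqref{elliptic-weak}.

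For the partial uniform ellipticity, set $\kappa=\kappa_\Gamma$ and take $e_\kappa=(\underbrace{0,\dots,0}_{\kappa},1,\dots,1)\in\Gamma$. Openness of $\Gamma$ produces a constant $\epsilon=\epsilon(\Gamma)>0$ with $e_\kappa-\epsilon e_j\in\Gamma$ for every $j\leqslant\kappa$. Now fix $\lambda$ with $\lambda_1\leqslant\cdots\leqslant\lambda_n$; comparing $f(\lambda)$ with $f(\tau\lambda)$ for transpositions $\tau$ via concavity gives the classical ordering $f_1(\lambda)\geqslant\cdots\geqslant f_n(\lambda)$. Pairing $\nabla f(\lambda)$ with $v=e_\kappa-\epsilon e_j$ yields, for each $j\leqslant\kappa$,
\[
f_j(\lambda)\leqslant \epsilon^{-1}\sum_{l>\kappa}f_l(\lambda)\leqslant \epsilon^{-1}(n-\kappa)\,f_{\kappa+1}(\lambda),
\]
and summing over all $j$,
\[
\sum_{j=1}^n f_j(\lambda)\leqslant (n-\kappa)\bigl(1+\kappa/\epsilon\bigr)\,f_{\kappa+1}(\lambda).
\]
Since $f_i(\lambda)\geqslant f_{\kappa+1}(\lambda)$ for $i\leqslant\kappa+1$, the theorem follows with $\vartheta_\Gamma=\bigl[(n-\kappa)(1+\kappa/\epsilon)\bigr]^{-1}$, depending only on $\Gamma$. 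The fully uniform ellipticity statement for type 2 cones should be immediate once one identifies $\kappa_\Gamma=n-1$ in that setting.

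The main conceptual point I expect will require care is recognizing why $\kappa_\Gamma$ is the sharp cutoff: the perturbations $e_\kappa-\epsilon e_j$ lie in $\Gamma$ precisely for the ``already vanishing'' indices $j\leqslant\kappa$, which is exactly what admits an openness perturbation; pushing this trick to control $f_i$ for $i>\kappa_\Gamma+1$ would demand vectors in $\Gamma$ with more than $\kappa_\Gamma$ zero components, which is forbidden by the very definition of $\kappa_\Gamma$. This combinatorial obstruction is what prevents extending the lower bound on $f_i$ beyond the index $\kappa_\Gamma+1$ without extra hypotheses on $(f,\Gamma)$, and is what makes the integer in the theorem sharp in general.
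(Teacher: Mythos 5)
Your proof is correct and follows essentially the same route as the paper: you recover the pairing inequality $\sum_i f_i(\lambda)\mu_i\geqslant 0$ for $\mu\in\Gamma$ (this is the content of Lemma~\ref{lemma3.4}), invoke the monotonicity $f_1\geqslant\cdots\geqslant f_n$ coming from symmetry and concavity, and then exploit the openness perturbation of $(0,\dots,0,1,\dots,1)\in\Gamma$ to produce admissible test vectors with negative entries in the first $\kappa_\Gamma$ slots. The only cosmetic difference is that you test against $\kappa_\Gamma$ separate vectors each with a single negative entry, whereas the paper (via Propositions~\ref{yuan-kappa-2} and~\ref{yuanrr-2}) tests against one vector with all $\kappa_\Gamma$ entries negative, yielding a different but equally valid choice of $\vartheta_\Gamma$.
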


  \begin{remark}  	A choice of $\vartheta_{\Gamma}$   can be found 	in Remark \ref{remark111}.  	
  	 In Proposition \ref{yuan-kappa-2} we prove  $\kappa_\Gamma$ is in effect the maximum count of  negative 	components of vectors  in $\Gamma$.
     \end{remark}

  This theorem asserts that $f$ is 
  of \textit{$(\kappa_\Gamma+1)$-uniform ellipticity} in $\Gamma$,
  which is in effect sharp as shown in Corollary \ref{thm-sharp} below.   This is one of the main contributions we make in this paper. 
The theorem  applied to the case  $(f,\Gamma)=(\sigma_k^{1/k},\Gamma_k)$ 
gives back  \cite[Theorem 1.1]{Lin1994Trudinger} of Lin-Trudinger  
 with a different  method, 
 essentially using specific properties of $\sigma_k^{1/k}$ which cannot  be adapted to general functions. 

  Theorem \ref{yuan-k+1} suggests an effective and  novel
  way to understand the structure of fully nonlinear equations of elliptic and parabolic type.
As a by-product, we can confirm the following inequality in general context
  \begin{equation}
  	\label{key13-yuan}
  	\begin{aligned}
  		f_i(\lambda)\geqslant \vartheta_{\Gamma} \sum_{j=1}^n f_j(\lambda)   \mbox{ if } \lambda_i\leqslant 0. \nonumber
  	\end{aligned}
  \end{equation}  
  This inequality   
  was imposed as a key assumption by Li \cite{LiYY1991},  
 subsequently
  by  
  Trudinger \cite{Trudinger90}, Guan-Spruck  \cite{Guan1991Spruck} and Sheng-Urbas-Wang \cite{ShengUrbasWang-Duke} to study Weingarten equations.  Also, it can be applied to various  geometric PDEs from conformal geometry and real Hessian type fully nonlinear equations, see e.g.  \cite{SChen2007,SChen2009,Urbas2002,Guan12a}.

 Another remarkable 
 consequence 
 is that 
  a type 2 cone (see Definition \ref{def-type2}) guarantees the uniform ellipticity of the corresponding
   equations. 
   This allows  
us to study a  class of fully 
nonlinear equations, with applications to a
   fully nonlinear version of Loewner-Nirenberg problem 
   and a complete noncompact version of fully nonlinear Yamabe problem.

 \subsection{Fully nonlinear equations on Riemannian manifolds}
  Let $(M,g)$ be a connected Riemannian manifold of dimension $n$  with Levi-Civita connection $\nabla$. Let $\partial M$ denote the boundary, and $\bar M=M\cup\partial M$.
  Notice $\bar M=M$ if $\partial M=\emptyset$.
  
   We denote $\Delta u$, $\nabla^2 u$ and $\nabla u$  the Laplacian, Hessian and gradient of $u$ with respect to $g$, respectively. 
 For $\mathrm{U}$ a symmetric $(0,2)$-tensor, $\lambda(g^{-1} \mathrm{U})$ denote the eigenvalues of $\mathrm{U}$ with respect to $g$, 
  and the meaning of $\lambda(\tilde{g}^{-1}\mathrm{U})$ is obvious.

We apply partial uniform ellipticity to study a fully nonlinear equation
 \begin{equation}
 	\label{mainequ-1general}
 	\begin{aligned}
 		f(\lambda(g^{-1}(\Delta u \cdot g-\varrho\nabla^2u+A(x,\nabla u))))=\psi(x,u),
 	\end{aligned}
 \end{equation}
with a constant $\varrho$ subject to
\begin{equation}
	\label{assumption-4}
	\begin{aligned}
		\varrho<\frac{1}{1-\kappa_\Gamma \vartheta_{\Gamma}} \mbox{ and } \varrho\neq 0,
	\end{aligned}
\end{equation}
where 
  the right-hand side  
is a smooth positive  function 
such that   \begin{equation}
	\label{psi-gamma1}
	\begin{aligned}
		\psi_z(x,z)>0, \mbox{ } \lim_{z\to-\infty}\psi(x,z)=0, \mbox{ } 
		\psi(x,z)\geqslant h(x) e^{\alpha(x)z},
		 \mbox{ }  (x,z)\in  \bar M\times \mathbb{R}
	\end{aligned}
\end{equation}
holds with positive continuous functions $h$ and $\alpha$. 
 In addition, 
 \begin{equation}
 	\label{sup-infty}
 	\sup_{\Gamma}f=+\infty,
 \end{equation}
 \begin{equation}
 	\label{homogeneous-1-buchong2}
 	\begin{aligned}
 		f >0 \mbox{ in } \Gamma, \quad f =0 \mbox{ on } \partial \Gamma,
 	\end{aligned}
 \end{equation}
and
 $A(x,p)$ is  a smooth symmetric  $(0,2)$-tensor   satisfying 
for any $(x,p)\in T \bar M,$ 
\begin{equation}
	\label{R-gamma1}
	\begin{aligned}
		\lim_{|p|\to+\infty}	\left(\frac{|A(x,p)|}{|p|^2} + \frac{|D_pA(x,p)|}{|p|}\right)\leqslant\eta(x), 	  \mbox{ }		\lim_{|p|\to+\infty} \frac{|\nabla' A(x,p)|}{|p|^3} =0,
	\end{aligned}
\end{equation}
where  
$\eta$ is a positive
 continuous function, 
$\nabla' A$ denotes  partial covariant derivative of  $A$,  the meaning  of $D_pA$ is obvious when viewed as depending on $x\in  \bar M$.

 The equations of this form closely connect  with prescribed curvature equations  
 from conformal geometry.
 An analogue of  equation \eqref{mainequ-1general} with $\varrho=1$ is also called a $(n-1)$ type fully nonlinear equation.
     In recent years, $(n-1)$ type Monge-Amp\`ere equation 
    has been studied extensively in complex geometry, due to its  close 
    connection to the Calabi-Yau theorem for   Gauduchon and balanced metrics
    \cite{FuWangWuFormtype2010,GTW15}.  

First, we recall the concept of admissible and maximal solution.
 \begin{definition}
 	 	    We say $u$ is an \textit{admissible} function for the equation \eqref{mainequ-1general}  if
 	 $$\lambda(g^{-1}(\Delta u \cdot g-\varrho\nabla^2u+A(x,\nabla u)))\in \Gamma$$  pointwisely.  	 
 	 Meanwhile, we say $u$ is a \textit{maximal} solution of an equation, 
 	  if
 	 $u\geqslant w$ in $M$ for any admissible solution $w$ to the same equation. 
 	  
 	 Similarly, we have  notions of admissible and maximal conformal metrics.
 \end{definition}

Below, we solve the Dirichlet problem with infinite boundary value condition.  
  
  \begin{theorem}
  	\label{thm1-pde-general}
  	Let $(M,g)$ be a compact connected Riemannian manifold  of dimension $n\geqslant 3$  with smooth boundary.  
  	Suppose,  in addition to 
 \eqref{concave}, \eqref{assumption-4},  \eqref{psi-gamma1}, \eqref{sup-infty}, 	\eqref{homogeneous-1-buchong2} and 	\eqref{R-gamma1},
   that there is a 
  	$C^2$-admissible function on $\bar M$. Then the equation \eqref{mainequ-1general} admits a smooth admissible solution $u$
  	with $\underset{x\to \partial M}{\lim}\,u(x)=+\infty.$ 
  	
  	
  \end{theorem}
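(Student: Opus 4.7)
The plan is to obtain the blow-up solution by an exhaustion procedure: solve auxiliary Dirichlet problems with large but finite constant boundary data $k$, extract a limit as $k\to\infty$ via interior a priori estimates powered by Theorem~\ref{yuan-k+1}, and enforce the blow-up on $\partial M$ through a logarithmic barrier.

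First, for each integer $k$ sufficiently large, I would solve
\begin{equation*}
	f(\lambda(g^{-1}(\Delta u\cdot g-\varrho\nabla^2 u+A(x,\nabla u))))=\psi(x,u) \text{ in } M,\qquad u=k \text{ on } \partial M.
\end{equation*}
From the given $C^2$-admissible function $\underline u$ on $\bar M$ together with $\lim_{z\to-\infty}\psi(x,z)=0$ and $f=0$ on $\partial\Gamma$, the translate $\underline u-C$ with $C$ large is a strict global subsolution. Standard continuity-method existence for fully nonlinear equations of CNS type in the presence of an admissible subsolution then produces a smooth admissible $u_k$. The comparison principle, together with $\psi_z>0$, yields monotonicity $u_k\leqslant u_{k+1}$. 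The role of the condition $\varrho<1/(1-\kappa_\Gamma\vartheta_\Gamma)$ is algebraic: by Theorem~\ref{yuan-k+1} and the ordering $f_1\geqslant\cdots\geqslant f_n$ coming from concavity, one has $f_1\leqslant(1-\kappa_\Gamma\vartheta_\Gamma)\sum_j f_j$, so after the substitution $\mu_j=\sum_i\lambda_i-\varrho\lambda_j$ the partials $\tilde f_k=\sum_j f_j-\varrho f_k$ of the resulting operator in $\nabla^2 u$ remain nonnegative, and the transformed equation is degenerate-elliptic along the admissible cone.

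Second, I would build a local supersolution $\bar u$ on a collar $\{d<\delta\}$ of $\partial M$ that diverges to $+\infty$ there. With $d=\mathrm{dist}(\cdot,\partial M)$, the ansatz $\bar u=-a\log d+b$ yields
\begin{equation*}
	\Delta\bar u\cdot g-\varrho\nabla^2\bar u\,\approx\,\frac{a}{d^{2}}\bigl(g-\varrho\,\nabla d\otimes\nabla d\bigr)
\end{equation*}
to leading order, with eigenvalues of size $a/d^{2}$, whereas the exponential lower bound $\psi(x,z)\geqslant h(x)e^{\alpha(x)z}$ from \eqref{psi-gamma1} makes the right-hand side grow like $d^{-\alpha a}$. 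Choosing $a$ so that $\alpha a$ exceeds the effective order of $f$ in $1/d^{2}$, and using \eqref{R-gamma1} to see that $A(x,\nabla\bar u)$ contributes only lower order in $1/d$, gives $f(\lambda(\cdots))\leqslant\psi(x,\bar u)$ in the collar. Comparison then yields $u_k\leqslant\bar u$ on $\{d<\delta\}$, hence a uniform-in-$k$ upper bound for $u_k$ on every compact subset of $M$; the subsolution $\underline u-C$ furnishes the corresponding lower bound, completing the $C^0$ control.

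The technical heart is the interior $C^1$ and $C^2$ estimates uniform in $k$ on a fixed compact $K\subset M$. A Bernstein-type computation combined with \eqref{R-gamma1} controls $|\nabla u_k|$ on $K$ in terms of the $C^0$ oscillation of $u_k$ on a slightly larger compact set. For the $C^2$ bound I would apply the maximum principle to a cutoff-weighted auxiliary function involving the largest eigenvalue of $\nabla^2 u_k$; differentiating the equation twice and using concavity of $f$, the bad terms involve $(\sum_j f_j)|\nabla u_k|^2$ together with commutators generated by $A(x,\nabla u)$, while the good term is $\sum_j f_j\lambda_j^2$. Here Theorem~\ref{yuan-k+1} is decisive: the lower bound $f_i\geqslant\vartheta_\Gamma\sum_j f_j$ for $i\leqslant\kappa_\Gamma+1$, combined with the strict inequality \eqref{assumption-4}, delivers fully uniform ellipticity of the transformed operator on the relevant spectral range and permits the bad terms to be absorbed. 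Concavity of $f$ then yields $C^{2,\alpha}$ via Evans--Krylov and $C^\infty$ via Schauder bootstrap. Passing to the limit, $u:=\lim_{k\to\infty}u_k$ is a smooth admissible solution in $M$, and $u\to+\infty$ at $\partial M$ follows from $u\geqslant u_k\equiv k$ on $\partial M$ for every $k$. The chief obstacle is precisely the $C^2$ interior estimate: it is the point where the passage from partial to full uniform ellipticity, under the algebraic constraint \eqref{assumption-4}, is doing the essential work, and where the gradient growth of $A$ permitted by \eqref{R-gamma1} must be tamed using the same ellipticity reserve.
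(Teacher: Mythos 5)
Your overall strategy --- solve finite-data Dirichlet problems, extract a monotone sequence, and pass to the limit using interior estimates powered by the fully uniform ellipticity coming from \eqref{assumption-4} via Theorem~\ref{yuan-k+1} and the substitution $\mu_i=\sum_j\lambda_j-\varrho\lambda_i$ --- is the same as the paper's, and your algebraic reduction, the lower bound from the admissible function, the interior $C^1$/$C^2$/Evans--Krylov chain, and the argument that $u\to+\infty$ at $\partial M$ are all in order.

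The genuine gap is in the uniform-in-$k$ upper bound. You claim that from the local supersolution $\bar u=-a\log d+b$ on the collar $\{d<\delta\}$, ``comparison then yields $u_k\leqslant\bar u$ on $\{d<\delta\}$.'' But the comparison principle on the collar requires $u_k\leqslant\bar u$ on the \emph{whole} boundary of the collar, i.e.\ both on $\partial M$ (fine, since $\bar u=+\infty$ there) and on the inner sphere $\{d=\delta\}$ --- and the latter is precisely the bound you are trying to establish, so the argument is circular. Moreover, even granting $u_k\leqslant\bar u$ on the collar, this controls $u_k$ \emph{near} $\partial M$, not on compact subsets of $M$, which lie in $\{d\geqslant\delta\}$; the local barrier says nothing there. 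The trouble is that $u_k|_{\partial M}\to\infty$, so you cannot bound $\max_{\bar M}u_k$ (it equals $k$), and you need a \emph{global} comparison function blowing up at $\partial M$ to force the maximum of a difference $u_k-\tilde u$ into the interior, where the maximum principle applies.

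The paper resolves this with a genuinely global device: Lemma~\ref{lemma-add5} (a trace inequality that follows from concavity alone, no homogeneity needed) turns the fully nonlinear inequality into the semilinear differential inequality $A_f\psi(x,u^{(k)})\leqslant\Delta u^{(k)}+\mathrm{tr}(g^{-1}A(x,\nabla u^{(k)}))+A_ff(\vec{\bf 1})-n$, and then McOwen's construction of a solution $\tilde u$ of $\Delta\tilde u+\frac{\Theta}{q-1}|\nabla\tilde u|^2=\frac{q-1}{\Theta}e^{\Theta\tilde u}$ with $\tilde u\to+\infty$ at $\partial M$ gives a global blowing-up supersolution, whereupon $u^{(k)}-\tilde u$ attains an interior maximum and the exponential lower bound in \eqref{psi-gamma1} together with $\frac{\Theta}{q-1}\geqslant B$ closes the estimate. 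Your exponential-versus-$d^{-2}$ rate count is the right heuristic but needs to be implemented globally, not on a collar. (A minor additional remark: the paper takes $u^{(k)}|_{\partial M}=\varepsilon\log k$ with $\varepsilon$ fixed by the lower-barrier construction \eqref{h-def-k}, not $u_k|_{\partial M}=k$; this choice matters for the sharp asymptotics in Proposition~\ref{prop-7.5} but not for existence.)
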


When the background manifold is complete and noncompact, 
we can solve the equation, 
given an asymptotic condition at infinity which is indeed sharp
\begin{equation}
	\label{asymptotic-condition1-general}
	\begin{aligned}
		f(\lambda(g^{-1}(\Delta \underline{u}\cdot g-\varrho \nabla^2 \underline{u}+A(x,\nabla \underline{u})))) \geqslant  \psi(x,\underline{u}), 
		\mbox{ }
		\forall x\in M.
	\end{aligned}
\end{equation} 
\begin{theorem}
	\label{thm2-pde-general}
	Assume $(M,g)$ is a complete noncompact Riemannian manifold of dimension $n\geqslant 3$ and suppose a $C^2$-admissible function satisfying 
  \eqref{asymptotic-condition1-general}. 
  In addition to \eqref{assumption-4},
  \eqref{psi-gamma1} and \eqref{R-gamma1}, we assume $(f,\Gamma)$ satisfies \eqref{concave}, 	\eqref{sup-infty} and	\eqref{homogeneous-1-buchong2}. 
	Then the equation \eqref{mainequ-1general} 
	possesses
	a unique smooth maximal admissible solution $u$.
	Moreover, $u\geqslant \underline{u}$ in $M$. 	
	
\end{theorem}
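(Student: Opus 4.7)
The plan is to obtain the solution as a decreasing limit of solutions with infinite boundary values on an exhausting family of relatively compact domains, mimicking the classical Perron-type construction used for the Loewner--Nirenberg problem.

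First I would choose a smooth exhaustion $M_1 \Subset M_2 \Subset \cdots \Subset M$ by relatively compact open sets with smooth boundaries (obtained, for instance, by mollifying sublevel sets of the distance function from a fixed basepoint, which exists as a proper Lipschitz function since $(M,g)$ is complete). The restriction of $\underline{u}$ to each $\bar M_j$ is a $C^2$-admissible function on a compact manifold with boundary, and it satisfies \eqref{asymptotic-condition1-general} on $M_j$. Thus Theorem \ref{thm1-pde-general} (applied on each $M_j$, whose hypotheses are in place by assumption) yields a smooth admissible solution $u_j$ of \eqref{mainequ-1general} on $M_j$ with $\lim_{x\to\partial M_j}u_j(x)=+\infty$.

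Next I would establish the two comparison properties that drive the construction. Since $\psi_z>0$ by \eqref{psi-gamma1}, the standard comparison principle for concave fully nonlinear equations (applied in the admissible cone, where the operator is elliptic) gives: (a) $u_j\geqslant \underline{u}$ in $M_j$, because $u_j=+\infty\geqslant\underline{u}$ on $\partial M_j$ and $\underline{u}$ satisfies \eqref{asymptotic-condition1-general}; and (b) $u_{j+1}\leqslant u_j$ in $M_j$, since on $\partial M_j$ the function $u_{j+1}$ is finite while $u_j=+\infty$. Consequently $\{u_j\}$ is pointwise decreasing on each compact subset of $M$ once $j$ is large enough, and is bounded below by $\underline{u}$. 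Define $u:=\lim_{j\to\infty}u_j$, which satisfies $u\geqslant\underline{u}$ on $M$.

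The main obstacle is upgrading this pointwise limit to a smooth admissible solution, and this is exactly where the partial uniform ellipticity from Theorem \ref{yuan-k+1} enters. For any fixed compact set $K\Subset M$, choose $j_0$ with $K\Subset M_{j_0}$; the sequence $\{u_j\}_{j\geqslant j_0+1}$ is uniformly bounded on $\bar M_{j_0}$ from above by $u_{j_0+1}$ and from below by $\underline{u}$. I would then invoke the interior gradient and second-order \emph{a priori} estimates for equation \eqref{mainequ-1general}, which are available (on slightly shrunk interior domains) thanks to the concavity \eqref{concave}, the structural conditions \eqref{psi-gamma1}--\eqref{R-gamma1}, and crucially the $(\kappa_\Gamma+1)$-uniform ellipticity provided by Theorem \ref{yuan-k+1} together with the admissibility of $\underline{u}$ playing the role of a sub-barrier; the condition \eqref{assumption-4} is precisely tailored to make the linearized operator controllable after accounting for the $\Delta u\cdot g-\varrho\nabla^2u$ combination. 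Evans--Krylov theory then provides interior $C^{2,\alpha}$ bounds, and Schauder bootstrap upgrades this to $C^\infty_{\mathrm{loc}}$ estimates uniform in $j$. By Arzel\`a--Ascoli (and a diagonal argument over an exhaustion), a subsequence converges in $C^\infty_{\mathrm{loc}}$ to $u$, which is therefore a smooth admissible solution of \eqref{mainequ-1general} on $M$.

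Finally, I would deduce maximality and uniqueness. For any admissible solution $w$ of \eqref{mainequ-1general} on $M$ and any $j$, the comparison principle on $M_j$ gives $w\leqslant u_j$ there, since $w$ is finite while $u_j=+\infty$ on $\partial M_j$; letting $j\to\infty$ yields $w\leqslant u$ on $M$, so $u$ is maximal. Uniqueness of the maximal admissible solution is immediate from the definition. The only delicate point throughout is ensuring that the interior estimates from Theorem \ref{yuan-k+1} really do depend only on $\sup_K u$, $\inf_K \underline{u}$ and the geometry on a slightly larger compact set, which is what allows the local limit to exist regardless of the blow-up of $u_j$ at $\partial M_j$.
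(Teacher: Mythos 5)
Your proposal follows essentially the same route as the paper: exhaust $M$ by compact subdomains, solve the Dirichlet problem with infinite boundary data on each (via Theorem \ref{thm1-pde-general}), use the comparison principle to get monotone decrease and the lower bound by $\underline{u}$, pass to the limit via the local estimates (enabled by the reduction to full uniform ellipticity coming from Theorem \ref{yuan-k+1} together with condition \eqref{assumption-4}, as in Proposition \ref{key-lemma1}), and obtain maximality again by comparison. The only minor imprecision is that the paper first converts \eqref{mainequ-1general} to a genuinely fully uniformly elliptic equation before invoking the local estimates of Section \ref{section4}, whereas you speak of using the $(\kappa_\Gamma+1)$-uniform ellipticity directly; the underlying mechanism (partial ellipticity plus \eqref{assumption-4} yielding full ellipticity of the linearized operator) is the same.
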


To prove the above results, 
the primary problem is to 
derive local estimates for first and second derivatives.
The key ingredient is that, in the presence of \eqref{concave}, \eqref{assumption-4}, \eqref{sup-infty} and \eqref{homogeneous-1-buchong2}, we can apply
Theorem \ref{yuan-k+1}  to confirm the 
\textit{fully uniform ellipticity} of 
equation \eqref{mainequ-1general} in Proposition \ref{key-lemma1}.
This is the most important step  for our approach.
With this crucial procedure 
at hand, we can prove the above theorems. To be brief,  
   we apply singularity solutions of  a class of semi-linear  equations obtained by McOwen \cite{McOwen1993} to construct supersolutions. Then we   complete the proof of Theorem \ref{thm1-pde-general}. 
In an attempt 
to show  Theorem \ref{thm2-pde-general}, 
we
set up appropriate approximate problems on an exhaustion series of domains, building on Theorem \ref{thm1-pde-general}.
 The approximation is based on
   a  maximum principle: 
 the approximate Dirichlet problems with infinite boundary value condition produce
 a decreasing sequence of approximate
  solutions (see 
 Proposition \ref{thm-c0-upper-2}). 
 

    \subsection{Prescribed problems for complete conformal metrics} 
  Let  $\mathrm{Sec}_g$, ${Ric}_g$ and 
  ${R}_g$ denote the sectional, Ricci and scalar curvature of $g$, respectively. 
  When $n\geqslant3$ we denote for $g$
  \begin{equation}
  	\begin{aligned}
  		\,& A_{{g}}^{\tau,\alpha}=\frac{\alpha}{n-2} \left({Ric}_{g}-\frac{\tau}{2(n-1)}   {R}_{g}\cdot {g}\right), \,& \alpha=\pm1,  \mbox{  }\tau \in \mathbb{R}. \nonumber
  	\end{aligned}
  \end{equation}
When $\alpha=1$, it is the modified Schouten tensor  $A_g^\tau=\frac{1}{n-2} ({Ric}_g-\frac{\tau}{2(n-1)}{R_g}\cdot g ).$
  When $\tau=\alpha=1$, it is the Schouten tensor
  $A_g=\frac{1}{n-2} ({Ric}_g-\frac{1}{2(n-1)}{R_g}\cdot g ).$ 
For $\tau=n-1$ and $\alpha=1$, it corresponds to the Einstein tensor $G_g={Ric}_g-\frac{1}{2}{R}_g\cdot g.$

 One natural question to ask is:
  given a smooth positive 
  function $\psi$, is there 
  a smooth complete metric $\tilde{g}=e^{2{u}}g$ satisfying
  \begin{equation}
  	\label{main-equ1}
  	\begin{aligned}
  		f(\lambda(\tilde{g}^{-1}A_{\tilde{g}}^{\tau,\alpha}))=\psi \mbox{  }  \mbox{ in } M.
  	\end{aligned}
  \end{equation}
  
  The equations of this type
    include many important equations as special cases.
 When $f=\sigma_1$, $\tau=0$ and 
 $\psi$ is a proper constant, 
  it is closely related to the well-known Yamabe problem, proved by Schoen \cite{Schoen1984} 
  combining the important  work  of 
   Aubin \cite{Aubin1976} and Trudinger \cite{Trudinger1968}. 
  For $f=\sigma_k^{1/k}$, $\psi=1$ and $\tau=\alpha=1$,
  the equation on closed Riemannian manifolds   
  was proposed by Viaclovsky \cite{Viaclovsky2000}, known as $k$-Yamabe problem. From then on it has received  much attention  in \cite{Branson2006Gover,ChangGurskyYang2002,Ge2006Wang,Guan2003Wang-CrelleJ,ABLi2003YYLi,Gursky2007Viaclovsky,ShengTrudingerWang2007}.  
When $M$ has boundary,  the prescribed scalar curvature problem with certain boundary properties was studied by \cite{Cherrier1984,Escobar1992JDG,Escobar1992Ann}, and further complemented in \cite{Han1999Li,Brendle2014Chen,Marques2005,Marques2007},
  where the obtained metrics are not 
  complete.
  The case is fairly different when the metric is complete.
  A deep result of Aviles-McOwen \cite{Aviles1988McOwen}, 
  extending the celebrated  theorem of Loewner-Nirenberg \cite{Loewner1974Nirenberg}, 
  yields that every compact Riemannian manifold 
  of dimension $n\geqslant3$ 
  with smooth boundary admits a complete conformal  metric with negative constant scalar curvature. 
  Since then
 Loewner-Nirenberg and Aviles-McOwen's results were extended by many experts to prescribed $\sigma_k$ curvature equation  
  \begin{equation}
  	\label{main-equ-sigmak}
  	\begin{aligned}
  		\sigma_k^{1/k}(\lambda(\tilde{g}^{-1}A_{\tilde{g}}^{\tau,\alpha}))=\psi \mbox{  }  \mbox{ in } M,
  	\end{aligned}
  \end{equation}
 when imposing
  restrictions to $(\alpha,\tau)$
  \begin{equation}
  	\label{tau-alpha-2}
  	\begin{cases}
  		\tau<1 \,& \mbox{ if } \alpha=-1,\\
  		\tau>n-1 \,& \mbox{ if } \alpha=1.
  	\end{cases}
  \end{equation} 
Under this assumption, the equation  is automatically of fully uniform ellipticity according to the formula \eqref{conformal-formula1} below.
   Below we list part of related literature. 
On compact manifolds with boundary, the existence 
of solutions  to prescribed $\sigma_k$-curvature equation 
 for   $-{Ric}_g$ was obtained by Guan  \cite{Guan2008IMRN} and Gursky-Streets-Warren  \cite{Gursky-Streets-Warren2011},  
and the case  
$\tau>n-1$, $\alpha=1$  was considered by  Li-Sheng  \cite{Li2011Sheng}. 
On the other hand,  the  prescribed $\sigma_k$-curvature equation 
on a closed manifold was studied by Gursky-Viaclovsky \cite{Gursky2003Viaclovsky}  with $\tau<1$, $\alpha=-1$,
and by Sheng-Zhang \cite{Sheng2006Zhang} for $\tau>n-1$, $\alpha=1$ based on a flow approach. 
  
  Nevertheless, 
most known results on fully nonlinear   Loewner-Nirenberg problem for modified Schouten tensors require condition \eqref{tau-alpha-2},
 and it is rarely known under a  broader assumption. 
 To close this gap, one challenge we face is the \textit{critical} case $\tau=n-1$, in which  the uniform ellipticity  
  has to break down when $\Gamma=\Gamma_n$.
    This critical case  
  coincides 
  with prescribed curvature equation for the
  Einstein tensor  
  \begin{equation}
  	\label{conformal-equ0}
  	\begin{aligned}
  		f(\lambda(\tilde{g}^{-1}G_{\tilde{g}}))=\psi  \mbox{  }  \mbox{ in } M.
  	\end{aligned}
  \end{equation}
  Unfortunately, the topological obstruction 
  as described in Remark \ref{remark1-crucial} below
  indicates that when $\Gamma=\Gamma_n$, at least for $n=3$, the fully nonlinear Loewner-Nirenberg problem for positive Einstein tensor is unsolvable in general. 
  This 
  manifests that the 
  obstruction 
  to the solvability of 
 \eqref{main-equ1}  
   in the conformal
    class of complete metrics   
   arises primarily from the lack of fully uniform ellipticity. 


  
  \subsubsection{Prescribed problem I: compact manifolds with boundary}  
  \label{PrePro1}

  When restricted to the equation \eqref{main-equ1}, 
  the assumption \eqref{assumption-4}  
  reads as follows
  \begin{equation}
  	\label{tau-alpha}
  	\begin{cases}
  		\tau<1 \,&\mbox{ if } \alpha=-1, \\
  		\tau>1+(n-2)(1-\kappa_\Gamma\vartheta_{\Gamma}) \,&\mbox{ if } \alpha=1.
  	\end{cases}
  \end{equation}
  It would be worthwhile to note that the assumption \eqref{tau-alpha}  is much more broader than \eqref{tau-alpha-2},
  and it allows the critical case $\tau=n-1$
  whenever $\Gamma\neq\Gamma_n$.
  As a corollary of Theorem \ref{thm1-pde-general}, we solve
  \eqref{main-equ1} in the conformal class of complete metrics. 
  \begin{theorem}
  	\label{existence1-compact}
  	In addition to \eqref{concave}, \eqref{homogeneous-1-buchong2} and  
  	\eqref{tau-alpha}, we assume  
  	\begin{equation}
  		\label{homogeneous-1-mu}
  		\begin{aligned}
  			f(t\lambda)=t^{\mathrm{\varsigma}} f(\lambda), \mbox{ } \forall \lambda\in\Gamma, \mbox{} \forall t>0,
  			\mbox{  for some constant } 0<\mathrm{\varsigma} \leqslant1.
  		\end{aligned}
  	\end{equation}
  	Assume that $(M,g)$ is a compact  connected Riemannian manifold of dimension $n\geq 3$ with smooth boundary and support a $C^2$  compact conformal admissible metric.
  	Then for any $0<\psi\in C^\infty(\bar M)$, there exists at least one smooth complete  
  	metric $\tilde{g}=e^{2u}g$  
  	satisfying \eqref{main-equ1}.  
  	  \end{theorem}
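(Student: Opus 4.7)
The plan is to reduce Theorem \ref{existence1-compact} directly to Theorem \ref{thm1-pde-general} via the standard conformal change formula. Setting $\tilde g = e^{2u}g$, the conformal transformation formula \eqref{conformal-formula1} expresses $A_{\tilde g}^{\tau,\alpha}$ schematically as
\begin{equation*}
A_{\tilde g}^{\tau,\alpha} = \frac{\alpha(\tau-1)}{n-2}\,\Delta u\cdot g - \alpha\,\nabla^2 u + B(x,\nabla u),
\end{equation*}
where $B(x,p)$ is smooth, quadratic in $p$ with $p$-independent coefficients, and whose zero-order term is $A_g^{\tau,\alpha}$. Using $\tilde g^{-1}=e^{-2u}g^{-1}$ together with the homogeneity \eqref{homogeneous-1-mu}, I absorb the conformal factor into the right-hand side, and after rescaling by the positive scalar $\alpha(\tau-1)/(n-2)$ (positive in both cases of \eqref{tau-alpha}) arrive at an equation of the form
\begin{equation*}
f(\lambda(g^{-1}(\Delta u\cdot g - \varrho\,\nabla^2 u + A(x,\nabla u)))) = \tilde\psi(x,u),
\end{equation*}
with $\varrho=(n-2)/(\tau-1)$ and $\tilde\psi(x,u)=C_{\tau,\alpha,\varsigma}\,\psi(x)\,e^{2\varsigma u}$.

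Next I would verify the hypotheses of Theorem \ref{thm1-pde-general} one by one. A direct case analysis shows that for $\alpha=1$ the condition \eqref{tau-alpha} gives $\varrho\in\bigl(0,\,1/(1-\kappa_\Gamma\vartheta_\Gamma)\bigr)$, while for $\alpha=-1$ it gives $\varrho<0$; both cases lie within \eqref{assumption-4}. The right-hand side $\tilde\psi$ is smooth positive, strictly increasing in $u$, tends to $0$ as $u\to-\infty$, and satisfies $\tilde\psi\geqslant h(x)e^{2\varsigma u}$, which gives \eqref{psi-gamma1}. The assumption \eqref{sup-infty} follows from \eqref{homogeneous-1-mu} with $\varsigma>0$ together with $f>0$ from \eqref{homogeneous-1-buchong2}, and $A(x,p)$ satisfies \eqref{R-gamma1} because it is quadratic in $p$ with smooth, $p$-independent coefficients. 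A $C^2$ compact conformal admissible metric $e^{2\underline{u}}g$ on $\bar M$ yields, under the same conformal reduction, a $C^2$ admissible function $\underline u$ on $\bar M$ for the reduced equation. Theorem \ref{thm1-pde-general} then delivers a smooth admissible solution $u$ with $u(x)\to+\infty$ as $x\to\partial M$, and $\tilde g=e^{2u}g$ solves \eqref{main-equ1}.

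For completeness of $\tilde g$, the boundary blow-up rate inherited from the McOwen singular-solution supersolutions employed in Theorem \ref{thm1-pde-general} yields a lower bound of the form $u(x)\geqslant -C\log d_g(x,\partial M)-C'$ near $\partial M$, so for any path $\gamma(t)\to\partial M$ the $\tilde g$-length $\int e^{u\circ\gamma}|\gamma'|_g\,dt$ diverges, giving completeness of $(M,\tilde g)$. The main obstacle is the careful bookkeeping that matches \eqref{tau-alpha} with \eqref{assumption-4} under the reduction. The delicate regime is the critical case $\tau=n-1,\ \alpha=1$, which corresponds to $\varrho=1$; the required inequality $\varrho<1/(1-\kappa_\Gamma\vartheta_\Gamma)$ reduces precisely to $\kappa_\Gamma\vartheta_\Gamma>0$, which is guaranteed exactly when $\Gamma\neq\Gamma_n$ by Theorem \ref{yuan-k+1}. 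This is where partial uniform ellipticity is indispensable: without it the reduction would collapse in the critical Einstein-tensor regime, precisely the regime where the topological obstruction for $\Gamma=\Gamma_n$ indicates the theorem must fail.
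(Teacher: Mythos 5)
Your overall route coincides with the paper's: reduce \eqref{main-equ1} via the conformal formula \eqref{conformal-formula1} to an equation of the form \eqref{mainequ-1general} with $\varrho=(n-2)/(\tau-1)$, verify that \eqref{tau-alpha} forces $\varrho$ into \eqref{assumption-4} (this is Lemma \ref{lemma-add3}), check the remaining structural hypotheses, and invoke Theorem \ref{thm1-pde-general} (equivalently Theorem \ref{thm1-pde} after the uniform-ellipticity reduction). That part is correct and matches the paper.

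The completeness argument, however, contains a genuine gap. First, a misattribution: the McOwen solutions are used as \emph{upper} barriers — the comparison in \eqref{c0-upper} reads $u^{(k)}\leqslant\tilde u+C$ — so they bound $u$ from \emph{above}, not from below. The lower bound near $\partial M$ comes from the local \emph{subsolution} barriers $w=\varepsilon\log\frac{\delta^2}{k\mathrm{\sigma}+\delta^2}+\varphi$ of \eqref{h-def-k}, which give $u\geqslant -\varepsilon\log\mathrm{\sigma}-C_0$ via \eqref{low-barrier-k}. Second, and more importantly, a bound of the form $u\geqslant -C\log\mathrm{\sigma}-C'$ only yields completeness when $C\geqslant 1$: the length integral $\int_0^{\delta}e^{u}\,d\mathrm{\sigma}\gtrsim\int_0^{\delta}\mathrm{\sigma}^{-C}\,d\mathrm{\sigma}$ converges if $C<1$. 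In the general barrier construction of Theorem \ref{thm1-pde} the constant $\varepsilon$ is chosen \emph{small}, so one cannot conclude $C\geqslant 1$ for free. The paper addresses this precisely through Proposition \ref{prop-7.5}: because in the conformal reduction $A(x,\nabla u)=\tfrac{1}{2}|\nabla u|^2 g-du\otimes du-A_g$ has the special quadratic structure \eqref{A-form1} with $\alpha(x)=\tfrac12$, $\beta(x)=-1$, so that $(\alpha,\dots,\alpha,\alpha+\beta+1)=(\tfrac12,\dots,\tfrac12)\in\Gamma_n\subset\Gamma$, one can take $\varepsilon=1$ in the lower barrier, giving $u\geqslant-\log\mathrm{\sigma}-C_0$ and hence completeness. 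Your proposal omits this verification, and without it the completeness claim does not follow.
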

  	We construct $C^2$  compact conformal admissible metrics, using 
  	Morse functions. 
  \begin{theorem} 
  	\label{thm1-construction}
  		\label{existence1-compact-construction}
  In Theorem \ref{existence1-compact} the assumption on the existence of a compact conformal admissible metric can be dropped 
  if $(\alpha,\tau)$ further satisfies 
  	\begin{equation}
  		\label{tau-alpha-3}
  		\begin{cases}
  			\tau\leqslant 0  \,& \mbox{ if } \alpha=-1,\\
  			\tau\geqslant 2  \,& \mbox{ if } \alpha=1. 
  		\end{cases}
  	\end{equation}
  	
  \end{theorem}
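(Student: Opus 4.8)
The plan is to realize the desired metric as $\tilde g=e^{2u}g$ for a function $u\in C^2(\bar M)$ manufactured from a Morse function on $\bar M$, and to verify the admissibility $\lambda(\tilde g^{-1}A_{\tilde g}^{\tau,\alpha})\in\Gamma$ by hand. I would first record the conformal transformation law \eqref{conformal-formula1} in the form
$$
A_{\tilde g}^{\tau,\alpha}=\alpha\Big(A_g^{\tau}-\nabla^2u+du\otimes du+\tfrac{\tau-1}{n-2}\,\Delta u\cdot g+\tfrac{\tau-2}{2}\,|\nabla u|^2g\Big),
$$
noting that the only part not affine in $u$ is $\alpha\big(du\otimes du+\tfrac{\tau-2}{2}|\nabla u|^2g\big)$, and that the role of assumption \eqref{tau-alpha-3} is precisely to make this tensor positive semi-definite: for $\alpha=1$ this is $\tau\ge2$, and for $\alpha=-1$ it amounts to $-du\otimes du+\tfrac{2-\tau}{2}|\nabla u|^2g\ge0$, which holds since $\tfrac{2-\tau}{2}\ge1$. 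Because $\Gamma$ is open and $\Gamma\supseteq\Gamma_n$, it suffices to produce $u$ for which the eigenvalue vector of $A_{\tilde g}^{\tau,\alpha}$ lies well inside $\Gamma$; positive definiteness of $A_{\tilde g}^{\tau,\alpha}$ is one sufficient way.

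Since $\partial M\neq\emptyset$, $\bar M$ carries a Morse function whose (finitely many, interior) critical points all have index $\le n-1$; I would use such a function when $\alpha=-1$ and its negative — all interior indices $\ge1$ — when $\alpha=1$, call it $\varphi$, and after adding a constant assume $\varphi\ge1$. A further $C^2$-small perturbation near the critical set $\mathrm{Crit}(\varphi)$ lets me prescribe the Hessian there to be strongly anisotropic: for $\alpha=-1$ with one positive eigenvalue very large and the rest small in modulus (possible as the index is $\le n-1$, so at least one eigenvalue is positive), and for $\alpha=1$ with one negative eigenvalue very large in modulus and the rest small (possible as the index is $\ge1$). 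The admissible metric is then obtained from $u=e^{N\varphi}$ if $\alpha=-1$ and $u=-e^{N\varphi}$ if $\alpha=1$, with $N$ a large parameter to be chosen last.

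To check $A_{\tilde g}^{\tau,\alpha}\in\Gamma$ for $N$ large I split $\bar M$ by distance from $\mathrm{Crit}(\varphi)$. Off a fixed neighbourhood of $\mathrm{Crit}(\varphi)$ one has $|\nabla\varphi|\ge c_0>0$; since $\nabla u=\pm Ne^{N\varphi}\nabla\varphi$, the positive semi-definite gradient term has size $N^2e^{2N\varphi}$, and because $\varphi\ge1$ it dominates the Hessian term $\mp(\nabla^2u-\tfrac{\tau-1}{n-2}\Delta u\cdot g)$, of size $N^2e^{N\varphi}$, and the fixed tensor $A_g^{\tau}=O(1)$; any direction in which the gradient term degenerates carries the strictly positive contribution $\big(1+\tfrac{\tau-1}{n-2}\big)N^2e^{N\varphi}|\nabla\varphi|^2$ from the Hessian term, so that $A_{\tilde g}^{\tau,\alpha}$ is positive definite there (except in the borderline value $\alpha=1$, $\tau=2$, which is absorbed by \eqref{tau-alpha} just as at the critical points below). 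Near a critical point $p$ the gradient terms vanish at $p$, so up to an $O(1)$ remainder
$$
A_{\tilde g}^{\tau,\alpha}(p)=\alpha A_g^{\tau}(p)+Ne^{N\varphi(p)}\Big(\pm\nabla^2\varphi(p)\mp\tfrac{\tau-1}{n-2}\,\Delta\varphi(p)\cdot g\Big);
$$
the engineered anisotropy makes the eigenvalue vector of the bracketed tensor interior to $\Gamma$ — for $\alpha=-1$ it can be made positive definite (a dominant positive Hessian direction forces a large positive trace), while for $\alpha=1$ at most one coordinate of that vector can fail to be positive, and this is exactly where assumption \eqref{tau-alpha}, inherited from Theorem \ref{existence1-compact}, is used, as it measures how much negativity in one coordinate $\Gamma$ tolerates against the remaining large positive ones. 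Then $Ne^{N\varphi(p)}\to\infty$ absorbs the $O(1)$ remainder and $\alpha A_g^{\tau}(p)$, so $A_{\tilde g}^{\tau,\alpha}\in\Gamma$ near $\mathrm{Crit}(\varphi)$ as well, and Theorem \ref{existence1-compact} then yields the complete metric solving \eqref{main-equ1}.

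The only genuinely delicate point is the behaviour near $\mathrm{Crit}(\varphi)$: no Morse function on a general $\bar M$ is critical-point free, so the construction must (i) eliminate the single dangerous index, which is possible exactly because $\partial M\neq\emptyset$, and (ii) at the remaining unavoidable critical points extract membership in $\Gamma$ from the anisotropy of $\varphi$. Step (ii) is a careful eigenvalue estimate in which \eqref{tau-alpha-3} (making the off-critical estimate work, via semi-definiteness of the gradient term) and \eqref{tau-alpha} (making the critical-point estimate work, via the opening of $\Gamma$) must be combined; the off-critical estimates and the reduction to Theorem \ref{existence1-compact} are routine.
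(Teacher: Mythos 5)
Your overall strategy (manufacture an admissible conformal factor from a function built on a Morse function and check membership in $\Gamma$ by hand) is in the spirit of the paper, but your execution diverges at the crucial point and has genuine gaps. First, your premise that ``no Morse function on a general $\bar M$ is critical-point free'' is false: this is exactly the paper's Lemma \ref{lemma-diff-topologuy}, proved by passing to the double of $M$ and using the homogeneity lemma to isotope all critical points off $\bar M$, producing $v$ with $dv\neq 0$ on all of $\bar M$. With such a $v$ the paper simply takes $\underline u=e^{Nv}$ (for \emph{both} signs of $\alpha$, working with the reduced tensor $V[\underline u]=\frac{n-2}{\alpha(\tau-1)}A^{\tau,\alpha}_{\underline g}$), and the elementary inequality
\begin{equation}
(1+\gamma e^{Nv})|\nabla v|^2 g+\varrho\,(e^{Nv}-1)\,dv\otimes dv\;\geqslant\;|\nabla v|^2 g,
\nonumber
\end{equation}
valid because \eqref{tau-alpha-3} gives $\gamma\geqslant 0$ and $\gamma+\varrho\geqslant 0$ (see \eqref{gammarho0}), makes $V[\underline u]$ positive definite everywhere for $N\gg 1$; admissibility then follows trivially from $\Gamma_n\subseteq\Gamma$, with no cone-geometry subtleties and no critical points to handle. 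Your route forgoes this simplification and therefore must fight exactly the difficulties the paper's lemma eliminates.

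Second, and more seriously, your $\alpha=1$ case contains an error. With your choice $u=-e^{N\varphi}$ the Laplacian term $\frac{\tau-1}{n-2}\Delta u\cdot g$ is \emph{negative} at order $N^2e^{N\varphi}$, so off the critical set, in directions orthogonal to $\nabla\varphi$ and in the borderline case $\tau=2$ (where the isotropic top-order term $\frac{\tau-2}{2}|\nabla u|^2 g$ vanishes), the dominant surviving eigenvalues are $\approx-\frac{\tau-1}{n-2}N^2e^{N\varphi}|\nabla\varphi|^2<0$, while only the $\nabla\varphi$-direction is large positive ($\sim N^2e^{2N\varphi}$). The resulting spectrum $(-c,\dots,-c,\text{huge})$ lies in $\Gamma$ only when $\Gamma$ is of type $2$; e.g.\ for $n=3$, $\Gamma=\Gamma_2$, $\alpha=1$, $\tau=2$ (which satisfies both \eqref{tau-alpha} and \eqref{tau-alpha-3}, since $\vartheta_{\Gamma_2}>0$) one computes $\sigma_2<0$, so the constructed metric is not admissible. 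Your parenthetical claim that this borderline case is ``absorbed by \eqref{tau-alpha}'' is unsubstantiated and, as stated, false; the fix is the paper's sign convention $u=+e^{N\varphi}$, under which the $\Delta u$-term contributes a positive isotropic piece. Two further points need repair even in your framework: at the critical points with $\alpha=1$ your argument tacitly requires $\bigl(\frac{\tau-n+1}{n-2},\frac{\tau-1}{n-2},\dots,\frac{\tau-1}{n-2}\bigr)\in\Gamma$, i.e.\ $(1,\dots,1,1-\varrho)\in\Gamma$ with $\varrho=\frac{n-2}{\tau-1}$, which is precisely the paper's Corollary \ref{coro3-ingamma} (a nontrivial consequence of Proposition \ref{construction-type2}), not something that follows from the slogan that \eqref{tau-alpha} ``measures how much negativity $\Gamma$ tolerates''; and a ``$C^2$-small perturbation'' cannot make a Hessian eigenvalue at a critical point ``very large'' --- the required modification is local but not small, and its compatibility with the index constraints must be argued.
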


We apply this construction to draw geometric conclusions. 
For the Ricci tensor case,
together with Theorem \ref{thm1-unique},  Theorem \ref{thm1-construction} deduces the following theorem. 
\begin{theorem}
	\label{thm1-infinitevolume}
Suppose $(M,g)$ is a compact connected Riemannian manifold of dimension $n\geqslant 3$ with smooth boundary. Then there  is   a unique complete conformal metric  $\tilde{g}$ 
with  $Ric_{\tilde{g}}<-1$ and
$\sigma_n/\sigma_{n-1}(\lambda(-Ric_{\tilde{g}}))=1$. 
 \end{theorem}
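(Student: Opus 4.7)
The plan is to identify the geometric problem with a special instance of equation \eqref{main-equ1}, apply Theorem \ref{thm1-construction} to produce the metric, invoke Theorem \ref{thm1-unique} to obtain uniqueness, and then deduce the strict curvature inequality directly from the algebraic form of the equation.

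To translate the problem, I would set $(f,\Gamma)=(\sigma_n/\sigma_{n-1},\Gamma_n)$ together with $(\tau,\alpha)=(0,-1)$, so that $A_g^{0,-1}=-\frac{1}{n-2}\Ric_g$, and the degree-one homogeneity of $\sigma_n/\sigma_{n-1}$ gives
\[
\sigma_n/\sigma_{n-1}\bigl(\lambda(\tilde g^{-1}A_{\tilde g}^{0,-1})\bigr)=\frac{1}{n-2}\,\sigma_n/\sigma_{n-1}\bigl(\lambda(-\Ric_{\tilde g})\bigr).
\]
Thus prescribing $\sigma_n/\sigma_{n-1}(\lambda(-\Ric_{\tilde g}))=1$ is equivalent to the instance of \eqref{main-equ1} with constant right-hand side $\psi\equiv 1/(n-2)$. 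The pair $(\sigma_n/\sigma_{n-1},\Gamma_n)$ is smooth and concave on $\Gamma_n$, positive in $\Gamma_n$ and vanishing on $\partial\Gamma_n$, homogeneous of degree $\varsigma=1$, and unbounded, so \eqref{concave}, \eqref{homogeneous-1-buchong2}, \eqref{sup-infty} and \eqref{homogeneous-1-mu} all hold. With $\alpha=-1$ and $\tau=0$, both \eqref{tau-alpha} (which demands $\tau<1$) and the stronger \eqref{tau-alpha-3} (which demands $\tau\leqslant 0$) are automatic.

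With these hypotheses verified, Theorem \ref{thm1-construction} supplies a smooth complete conformal metric $\tilde g=e^{2u}g$ solving the equation (the construction of a compact conformal admissible barrier being absorbed into that theorem), and Theorem \ref{thm1-unique} yields uniqueness. It then remains only to read off $\Ric_{\tilde g}<-1$: pointwise the eigenvalues $\lambda_i:=\lambda_i(\tilde g^{-1}(-\Ric_{\tilde g}))$ are all positive (as $\lambda\in\Gamma_n$), and the equation reads $\sum_{i=1}^n \lambda_i^{-1}=1$. If some $\lambda_j\leqslant 1$ then $\lambda_j^{-1}\geqslant 1$ while the other terms are strictly positive, contradicting the identity; hence $\lambda_i>1$ for every $i$, which is exactly the two-tensor inequality $-\Ric_{\tilde g}>\tilde g$, i.e.\ $\Ric_{\tilde g}<-1$ in the conventional sense.

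The main obstacle is the existence step encapsulated in Theorem \ref{thm1-construction}: the choice $\tau=0$ lies at the boundary of \eqref{tau-alpha-3} for $\alpha=-1$, so the fully uniform ellipticity delivered by Theorem \ref{yuan-k+1} (applied with $\kappa_{\Gamma_n}=0$, whence the conformal constant $\varrho=-(n-2)$ meets \eqref{assumption-4}) together with the explicit Morse-function construction of a compact conformal admissible barrier is indispensable. Once that machinery is in hand, the geometric conclusion reduces to the short algebraic verification above.
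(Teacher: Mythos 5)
Your argument is correct and follows exactly the route the paper indicates: take $(f,\Gamma)=(\sigma_n/\sigma_{n-1},\Gamma_n)$, $(\tau,\alpha)=(0,-1)$ so that $A^{0,-1}_g=-\tfrac{1}{n-2}\Ric_g$, invoke Theorem~\ref{thm1-construction} for existence and Theorem~\ref{thm1-unique} for uniqueness, and read off $\Ric_{\tilde g}<-1$ from $\sum_i\lambda_i^{-1}=1$ with all $\lambda_i>0$. The only cosmetic point is that Theorem~\ref{thm1-unique} is stated with the normalization $f(\vec{\bf 1})=1$ and $\psi|_{\partial M}\equiv1$; since you work with the unnormalized $\sigma_n/\sigma_{n-1}$ and a constant $\psi=1/(n-2)$, you should note that this is a harmless constant rescaling (what Proposition~\ref{unique-prop} actually uses is that $\sup_{\partial M}\psi=\inf_{\partial M}\psi$).
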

  This gives a new proof of \cite[Theorem 1]{Lohkamp-2} when $n\geq3$.
 Notice also that the complete metric
  is obtained in each conformal class.
 This is in contrast with \cite[Theorems A and C]{Lohkamp-1} of Lohkamp.  

In addition, we can deform the Einstein tensor.
  \begin{theorem}
  	\label{thm0-conformal}
  	Let $(M,g)$ be a compact connected Riemannian manifold of dimension $n\geqslant3$ with smooth boundary.
 Let $(f,\Gamma)$ satisfy  	\eqref{concave}, \eqref{homogeneous-1-buchong2} and \eqref{homogeneous-1-mu}.
  	Suppose in addition that  $\Gamma\neq \Gamma_n$. 
  	Then for each $0<\psi\in C^\infty(\bar M)$,  there exists a smooth admissible complete metric 
  	$\tilde{g}=e^{2u}g$ satisfying \eqref{conformal-equ0}.

  \end{theorem}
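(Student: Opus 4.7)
The plan is to deduce Theorem \ref{thm0-conformal} from Theorem \ref{existence1-compact-construction} applied at the critical parameters $(\alpha,\tau)=(1,n-1)$. The first step is the identification $G_g=(n-2)A_g^{n-1,1}$ of the Einstein tensor with the modified Schouten tensor at these values. Consequently, by the homogeneity \eqref{homogeneous-1-mu}, the Einstein tensor equation \eqref{conformal-equ0} for the prescribed function $\psi$ is equivalent to the modified Schouten equation \eqref{main-equ1} at $(\alpha,\tau)=(1,n-1)$ with the rescaled right-hand side $(n-2)^{-\varsigma}\psi$, which is again smooth and positive on $\bar M$.

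The decisive step is to verify the structural assumption \eqref{tau-alpha} at the critical exponent $\tau=n-1$. For $\alpha=1$, this requires $\tau>1+(n-2)(1-\kappa_\Gamma\vartheta_\Gamma)$. Since we assume $\Gamma\neq\Gamma_n$, Definition \ref{yuan-kappa} forces $\kappa_\Gamma\geqslant 1$, and Theorem \ref{yuan-k+1} produces a positive constant $\vartheta_\Gamma>0$ depending only on $\Gamma$. Hence $\kappa_\Gamma\vartheta_\Gamma>0$ and
\[
1+(n-2)(1-\kappa_\Gamma\vartheta_\Gamma)<n-1=\tau,
\]
as needed. The hypothesis \eqref{tau-alpha-3} required to invoke the companion construction in Theorem \ref{thm1-construction} is also immediate: since $n\geqslant 3$, one has $\tau=n-1\geqslant 2$ in the branch $\alpha=1$. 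The remaining conditions \eqref{concave}, \eqref{homogeneous-1-buchong2}, and \eqref{homogeneous-1-mu} are assumed directly in the hypotheses of Theorem \ref{thm0-conformal}.

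With every hypothesis in place, Theorem \ref{existence1-compact-construction} produces a smooth admissible complete conformal metric $\tilde g=e^{2u}g$ solving the rescaled version of \eqref{main-equ1} at $(\alpha,\tau)=(1,n-1)$, which is precisely \eqref{conformal-equ0}. The divergence $u\to+\infty$ at $\partial M$ built into Theorem \ref{thm1-pde-general} translates, in standard fashion, into completeness of $\tilde g$ on $M$.

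The anticipated main obstacle is the criticality of $\tau=n-1$: at this value the coefficient of $-\nabla^2 u$ in the conformal change formula is exactly $\varrho=1$, and the fully uniform ellipticity of the resulting equation of type \eqref{mainequ-1general} fails in general when $\Gamma=\Gamma_n$ (compare the topological obstruction in Remark \ref{remark1-crucial}). The assumption $\Gamma\neq\Gamma_n$ is therefore unavoidable in our approach: it is precisely what furnishes the slack $\kappa_\Gamma\vartheta_\Gamma>0$ via Theorem \ref{yuan-k+1}, keeping $\varrho=1$ within the admissible range prescribed by \eqref{assumption-4} and restoring the fully uniform ellipticity on which the a priori estimates of Theorem \ref{thm1-pde-general} rely. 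Without this input the local second derivative estimates and the construction of the background admissible metric would both collapse in the Einstein tensor case.
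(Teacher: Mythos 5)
Your proposal is correct and is exactly the argument the paper intends: identify $G_g=(n-2)A_g^{n-1,1}$, absorb the factor $(n-2)^{-\varsigma}$ into $\psi$ via \eqref{homogeneous-1-mu}, use $\Gamma\neq\Gamma_n$ (hence $\kappa_\Gamma\geqslant 1$ by Lemma \ref{lemma-add1} and $\vartheta_\Gamma>0$ by Theorem \ref{yuan-k+1}) to verify \eqref{tau-alpha} at the critical value $\tau=n-1$, note $\tau=n-1\geqslant 2$ gives \eqref{tau-alpha-3}, and then apply Theorem \ref{thm1-construction}. The paper presents Theorem \ref{thm0-conformal} precisely as this specialization of Theorem \ref{thm1-construction}, so your route and the paper's coincide.
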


  Note that this theorem is significantly interesting 
  in dimension three, 
  since the Einstein tensor is closely connected 
  with the sectional curvature.
  Based on this, 
   	we present a topological obstruction to  reveal 
    	  that the condition $\Gamma\neq\Gamma_n$ imposed in 
  Theorem \ref{thm0-conformal} 
  is crucial and cannot be further dropped. 
  \begin{remark}[Topological obstruction]
  	\label{remark1-crucial}
  Denote $B_{r}(a)=\left\{x\in \mathbb{R}^3: |x-a|^2<r^2\right\}$. Let 
  	$\bar B_{r_1}(a_1), \cdots,\bar B_{r_m}(a_m)$ be pairwise disjoint.
  	For $r\gg1$ with $\cup_{i=1}^m B_{r_i}(a_i)\subset B_r(0)$, we denote
  	$\Omega=B_{r+1}(0)\setminus (\cup_{i=1}^m \bar B_{r_i}(a_i))$,  $g$ a Riemannian metric on $\Omega$.
  	Fix $x\in \Omega$,  let $\Sigma\subset T_x\Omega$ be a tangent $2$-plane,   $\vec{\bf n}\in T_x\Omega$ the unit normal vector to $\Sigma$, then
  	\begin{equation} \label{GSW0} \begin{aligned}
  			G_g(\vec{\bf n},\vec{\bf n})=-\mathrm{Sec}_g(\Sigma), \nonumber
  	\end{aligned}  \end{equation}
  	see \cite[Section 2]{Gursky-Streets-Warren2010} or \eqref{GSW1} below.
  	If Theorem \ref{thm0-conformal} holds in the positive cone case,
  	then the solution on $\Omega$ is
  a complete conformal
  	metric  
  	with negative sectional curvature.
  	This contradicts to the Cartan-Hadamard theorem.  
  	
  	
  \end{remark}

  \subsubsection{Prescribed problem II: complete noncompact manifolds}
  \label{PrePro2}

  In contrast with 
  the resolution of Yamabe problem on closed Riemannian manifolds, 
  the 
  complete noncompact version of
  Yamabe problem 
  is not always solvable as shown by
  Jin \cite{Jin1988}. 
  Consequently, 
  one could only expect  
  the solvability of prescribed curvature problem in the conformal class of complete noncompact metrics
  under proper additional assumptions. 
  Indeed prior to Jin's work, Ni \cite{Ni-82Invent,Ni-82Indiana} obtained the existence and nonexistence of solutions to prescribed scalar curvature equation on standard 
  Euclidean spaces, which was partially extended by Sui \cite{Sui2017JGA} to the prescribed $\sigma_k$-curvature equation for negative Ricci curvature.  
  When imposed fairly strong restrictions to the asymptotic ratio of prescribed functions and
  curvature, or even to the topology  
  of background manifolds, 
  Aviles-McOwen \cite{Aviles1985McOwen,Aviles1988McOwen2}  and Jin \cite{Jin1993} investigated prescribed scalar curvature equation on negatively curved complete noncompact Riemannian manifolds, 
followed by 
  Fu-Sheng-Yuan \cite{FuShengYuan}   recently, who studied 
   prescribed $\sigma_k$-curvature equation 
 \eqref{main-equ-sigmak} for $\alpha=-1$ and $\tau<1$.
Unfortunately, the restrictions imposed there
  are 
  not optimal, even for  conformal  prescribed  scalar curvature equation.
  
  It still remains widely open to determine under which conditions 
 the conformal prescribed curvature equation is solvable on complete noncompact manifolds.
  We 
  close this gap
as a consequence of Theorem \ref{thm2-pde-general}. That is, we can solve 
fully nonlinear version of Yamabe problem 
 on a complete noncompact Riemannian manifold, 
 assuming 
 existence of
  a $C^2$ complete admissible 
  metric $\underline{g}=e^{2\underline{u}}g$ 
  with
  \begin{equation}
  	\label{key-assum1}
  	\begin{aligned}
  		{f(\lambda(\underline{g}^{-1}A_{\underline{g}}^{\tau,\alpha}))}   \geqslant \Lambda_0 {\psi} \mbox{ holds uniformly in $M\setminus K_0$.}
  	\end{aligned}
  \end{equation}
  Here $\Lambda_0$ is a uniform positive constant, and $K_0$ is a compact subset of $M$.

  \begin{theorem}
  	\label{thm1}
  	Let $(M,g)$ be a complete noncompact Riemannian manifold of dimension $n\geqslant 3$. 
  	Suppose $(f,\Gamma)$ satisfies   \eqref{concave}, \eqref{homogeneous-1-buchong2} and \eqref{homogeneous-1-mu}. 
  	Given a smooth positive function $\psi$ and $(\alpha,\tau)$ obeying \eqref{tau-alpha}, we assume that
  	$(M,g)$ carries a  $C^2$ complete admissible conformal metric subject to \eqref{key-assum1}.
  	Then there  
  	exists 
  	a unique smooth complete maximal conformal  admissible metric 
  	satisfying 
  	 \eqref{main-equ1}. 
  \end{theorem}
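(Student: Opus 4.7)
The plan is to deduce Theorem \ref{thm1} from Theorem \ref{thm2-pde-general} by a direct conformal reduction. Writing $\tilde g=e^{2u}g$, the classical conformal change formula yields
\[
A_{\tilde g}^{\tau,\alpha} = A_g^{\tau,\alpha} - \alpha\nabla^2u + \alpha\,du\otimes du + \tfrac{\alpha(\tau-1)}{n-2}\Delta u\cdot g + \tfrac{\alpha(\tau-2)}{2}|\nabla u|^2 g,
\]
while $\lambda(\tilde g^{-1}T) = e^{-2u}\lambda(g^{-1}T)$ for any symmetric $(0,2)$-tensor $T$. I would factor out the constant $c:=\alpha(\tau-1)/(n-2)$, which is strictly positive under the range \eqref{tau-alpha}; then, by the homogeneity \eqref{homogeneous-1-mu} of $f$, equation \eqref{main-equ1} is equivalent to
\[
f\bigl(\lambda(g^{-1}(\Delta u\cdot g-\varrho\nabla^2u+A(x,\nabla u)))\bigr)=\tilde\psi(x,u),
\]
with $\varrho=(n-2)/(\tau-1)$, $A(x,p)$ a smooth symmetric $(0,2)$-tensor quadratic in $p$, and $\tilde\psi(x,u)=c^{-\mathrm{\varsigma}}e^{2\mathrm{\varsigma}u}\psi(x)$. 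The range \eqref{tau-alpha} translates exactly to the constraint \eqref{assumption-4} on $\varrho$; the quadratic dependence of $A$ on $p$ makes \eqref{R-gamma1} immediate; and $\mathrm{\varsigma}>0$ together with $\psi>0$ delivers \eqref{psi-gamma1} for $\tilde\psi$.

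Next I would produce a globally defined $C^2$ admissible subsolution. To build one I translate $\underline u$ downward: set $\underline v:=\underline u-C$ with $C\geqslant 0$ to be chosen. Since $A_{\underline g}^{\tau,\alpha}$ depends only on derivatives of $\underline u$, this shift leaves $g^{-1}A_{\underline g}^{\tau,\alpha}$ unchanged and merely multiplies $\underline g^{-1}$ by $e^{2C}$; hence by \eqref{homogeneous-1-mu},
\[
f\bigl(\lambda\bigl(e^{2C}\underline g^{-1}A_{\underline g}^{\tau,\alpha}\bigr)\bigr)=e^{2\mathrm{\varsigma}C}f\bigl(\lambda(\underline g^{-1}A_{\underline g}^{\tau,\alpha})\bigr).
\]
Outside $K_0$ the assumption \eqref{key-assum1} bounds the right-hand side from below by $e^{2\mathrm{\varsigma}C}\Lambda_0\psi$, and inside the compact set $K_0$ admissibility of $\underline g$ together with \eqref{homogeneous-1-buchong2} supplies a uniform positive lower bound for $f(\lambda(\underline g^{-1}A_{\underline g}^{\tau,\alpha}))$, while $\psi$ remains bounded. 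For $C$ sufficiently large, both estimates force the subsolution inequality \eqref{asymptotic-condition1-general} to hold on all of $M$.

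Applying Theorem \ref{thm2-pde-general} to the reduced PDE with subsolution $\underline v$ now yields a unique smooth maximal admissible solution $u$ satisfying $u\geqslant\underline v$ on $M$. The metric $\tilde g=e^{2u}g$ solves \eqref{main-equ1} and is admissible; it is also complete, because the pointwise bound $\tilde g\geqslant e^{-2C}\underline g$ gives $d_{\tilde g}\geqslant e^{-C}d_{\underline g}$, so $\tilde g$-bounded closed sets are $\underline g$-bounded and thus precompact by completeness of $\underline g$, yielding completeness of $\tilde g$ via Hopf--Rinow. The maximality and uniqueness of $u$ transfer through the bijection $u\leftrightarrow\tilde g$ to the uniqueness of the maximal conformal admissible metric.

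The main obstacle I foresee is matching the algebraic threshold $\varrho<1/(1-\kappa_\Gamma\vartheta_\Gamma)$ of \eqref{assumption-4} to the geometric range \eqref{tau-alpha} on $(\alpha,\tau)$; this is precisely the identification supplied by the partial uniform ellipticity theorem (Theorem \ref{yuan-k+1}) and is exactly what permits the critical case $\tau=n-1$ whenever $\Gamma\neq\Gamma_n$. Once this algebraic identification and the subsolution construction above are in hand, the remainder of the argument is routine bookkeeping with conformal change formulas and the hypotheses of Theorem \ref{thm2-pde-general}.
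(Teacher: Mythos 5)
Your proposal is correct and takes essentially the same route as the paper: conformally transform \eqref{main-equ1} into the form \eqref{mainequ-1general} via \eqref{conformal-formula1}, translate \eqref{tau-alpha} into \eqref{assumption-4} through $\varrho=(n-2)/(\tau-1)$ (Lemma \ref{lemma-add3}), and invoke Theorem \ref{thm2-pde-general}. The paper states this deduction in a single line, so your verification of \eqref{R-gamma1} and \eqref{psi-gamma1} for $\tilde\psi$, your construction of the subsolution $\underline v=\underline u-C$ using \eqref{homogeneous-1-mu} and compactness of $K_0$ to upgrade \eqref{key-assum1} to the global condition \eqref{asymptotic-condition1-general}, and the completeness argument from $\tilde g\geqslant e^{-2C}\underline g$ usefully supply details left implicit there; the only hypothesis you did not explicitly tick off is \eqref{sup-infty}, which follows at once from \eqref{homogeneous-1-buchong2} and \eqref{homogeneous-1-mu} with $\varsigma>0$.
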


  \begin{remark}
  	Theorems \ref{thm2-pde-general} and \ref{thm1} 
  	reveal that all of geometric and analytic obstructions to  
  	solvability 
  	of the corresponding equations are 
  	embodied in the assumption of asymptotic condition at infinity, which is sufficient and necessary:
  	\begin{itemize}
  		\item  The solution we expect to obtain tautologically satisfies the asymptotic condition at infinity, which is therefore necessary for the existence. 
  		\item  
  		Prescribed curvature problem \eqref{main-equ1} 
  		is not always solvable. Please refer to  \cite{Jin1993,Ni-82Indiana,Sui2017JGA} for some nonexistence results on conformal prescribed scalar and Ricci curvature equations. 
  		
  	\end{itemize}
  	In conclusion, 
  	except the asymptotic assumption 
  	at infinity, it is not required to impose further restrictions to curvatures, prescribed functions and the topology 
  	 of background manifolds. This is in contrast to related works cited above.
  	This is new even in
  	Euclidean spaces. 
 An analogue of Theorem \ref{thm1} also holds for conformal prescribed scalar curvature equation,  see Theorem \ref{thm-scalarcurvature} below. 
  \end{remark}

  
  

  

    \begin{remark}	A somewhat surprising fact to us is	that we impose neither \eqref{elliptic} nor \eqref{elliptic-weak} in  Theorems \ref{yuan-k+1},  \ref{thm1-pde-general}, \ref{thm2-pde-general},  \ref{existence1-compact}, \ref{existence1-compact-construction}, \ref{thm0-conformal} and \ref{thm1}.	This is in contrast with huge literature on fully nonlinear elliptic equations.  \end{remark}

In conclusion, in this paper we first prove partial uniform ellipticity for fully nonlinear equations and then use it to confirm the uniform ellipticity of a special class of fully nonlinear
 equations.  As an application, we solve a fully nonlinear Loewner-Nirenberg problem, given a compact conformal admissible metric. This assumption is partially confirmed, utilizing Morse theory. 
As a result, we can deduce various geometric conclusions.
  Furthermore, we   examine asymptotic behavior and uniqueness of solutions to fully nonlinear Loewner-Nirenberg problem. 
Building on these solutions,
 we prove the existence and uniqueness of complete maximal conformal metric solving  a complete noncompact version of fully nonlinear Yamabe problem, under an asymptotic condition at infinity which is in effect sharp.  
 Our approach also works for conformal scalar curvature equation.
 Moreover, the topological obstruction 
 indicates that
  our strategy is delicate. 
 \vspace{1mm}
 
  The paper is organized as follows.  
  In Section \ref{section3} we investigate the partial uniform ellipticity.  
  This is one of the most important parts of the paper.
  In  Section \ref{section7} we   confirm  the uniform ellipticity of \eqref{mainequ-1general}. This is crucial for our approach.
  In Section \ref{Sec3} we 
  solve a class of equations of fully uniform ellipticity on complete noncompact Riemannian manifolds, 
using an approximate argument.   
 The key ingredient is 
  the solution of the Dirichlet problem with infinite boundary value condition. 
  The proof of  existence of such solutions 
  is left to Section \ref{DP-1}. 
  Moreover,   in Section \ref{section6} 
  the approximation method is also used to 
  analyze asymptotic behavior
  and  uniqueness of complete conformal metrics.  
  In Section \ref{sec16} we construct conformal admissible metrics, based on a result on Morse functions.
 In the final section, we derive local and boundary estimates for equations of fully uniform ellipticity. 
  


    \medskip
  
  \section{The partial uniform ellipticity}
  \label{section3}

  We start with the following lemma.
  \begin{lemma} 
  	\label{lemma3.4} 
  	For $(f,\Gamma)$ satisfying 
  	\eqref{concave}, the following 
  	statements	are equivalent.
  	\begin{enumerate}
  		\item[$(1)$] $(f,\Gamma)$ satisfies \eqref{addistruc}.
  		\item[$(2)$] For each $\lambda$, $\mu\in \Gamma,$ $\sum_{i=1}^n f_i(\lambda)\mu_i>0.$
  		\item[$(3)$] $f(\lambda+\mu)>f(\lambda)$,   $\forall\lambda$, $\mu\in\Gamma$.
  	\end{enumerate}
  \end{lemma}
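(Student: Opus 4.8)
The plan is to establish the two-way implications $(2)\Leftrightarrow(3)$ and $(1)\Leftrightarrow(2)$. Throughout I would repeatedly use: that $\Gamma$ is an \emph{open} convex cone, so that for $\lambda\in\Gamma$ and any $\mu$ one has $\lambda-\delta\mu\in\Gamma$ for all small $\delta>0$; the first-order characterization of concavity, $f(y)\le f(x)+\sum_i f_i(x)(y_i-x_i)$ for all $x,y\in\Gamma$; and the fact that for a concave function $g$ of one real variable the slope $\tfrac{g(b)-g(a)}{b-a}$ is non-increasing in each of $a,b$.

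For $(2)\Leftrightarrow(3)$ I would fix $\lambda,\mu\in\Gamma$ and set $g(t)=f(\lambda+t\mu)$, which is concave on $[0,\infty)$ and has $g'(t)=\sum_i f_i(\lambda+t\mu)\mu_i$. If (3) holds then concavity gives $\sum_i f_i(\lambda)\mu_i=g'(0)\ge g(1)-g(0)=f(\lambda+\mu)-f(\lambda)>0$, which is (2); conversely, if (2) holds then applying it to the pair $(\lambda+t\mu,\mu)\in\Gamma\times\Gamma$ gives $g'(t)>0$ for every $t\in[0,1]$, hence $g(1)>g(0)$, which is (3). For $(2)\Rightarrow(1)$, fix $\lambda,\mu\in\Gamma$ and choose $\delta>0$ with $\lambda-\delta\mu\in\Gamma$; applying (2) to the pairs $(t\lambda,\lambda)$ and $(t\lambda,\lambda-\delta\mu)$ gives, for all $t>0$, $\sum_i f_i(t\lambda)\lambda_i>0$ and $\sum_i f_i(t\lambda)\mu_i<\tfrac1\delta\sum_i f_i(t\lambda)\lambda_i$; substituting into $f(\mu)\le f(t\lambda)+\sum_i f_i(t\lambda)(\mu_i-t\lambda_i)$ shows $f(\mu)<f(t\lambda)$ whenever $t>1/\delta$, and since (2) makes $t\mapsto f(t\lambda)$ increasing, $\lim_{t\to\infty}f(t\lambda)>f(\mu)$, i.e.\ \eqref{addistruc}.

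The substantial direction is $(1)\Rightarrow(2)$. I would first observe that (1) with $\mu=\lambda=\nu$ forces $\sum_i f_i(\nu)\nu_i>0$ for every $\nu\in\Gamma$, since otherwise the concavity inequality would give $f(s\nu)\le f(\nu)+(s-1)\sum_i f_i(\nu)\nu_i\le f(\nu)$ for all $s\ge1$, contradicting $\lim_{s\to\infty}f(s\nu)>f(\nu)$. Consequently $s\mapsto f(s\nu)$ is strictly increasing for each $\nu\in\Gamma$ (its derivative equals $\tfrac1s\sum_i f_i(s\nu)(s\nu)_i>0$), so $\lim_{s\to\infty}f(s\nu)=\sup_{s>0}f(s\nu)$, which by (1) exceeds $f(\zeta)$ for every $\zeta\in\Gamma$. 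Now, given $\lambda,\mu\in\Gamma$, pick $s_0$ with $f(s_0\mu)>f(\lambda)$ and, for large $t$, set $\nu_t=\mu+\lambda/t$; then $\nu_t\in\Gamma$ and $\nu_t\to\mu$, so by continuity $f(s_0\nu_t)\to f(s_0\mu)>f(\lambda)$, hence $f(s_0\nu_t)>f(\lambda)$ for all large $t$. Fixing such a $t\ge s_0$ and using monotonicity of $s\mapsto f(s\nu_t)$, we get $f(\lambda+t\mu)=f(t\nu_t)\ge f(s_0\nu_t)>f(\lambda)$, and then concavity converts this into $\sum_i f_i(\lambda)\mu_i\ge\tfrac1t\big(f(\lambda+t\mu)-f(\lambda)\big)>0$, which is (2).

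The main obstacle is exactly this last implication: hypothesis (1) only controls $f$ along rays through the origin, whereas (2) and (3) are statements about translation by a vector of $\Gamma$, and the two have to be bridged. The device I would use is to rewrite the translated point $\lambda+t\mu$ as the large multiple $t(\mu+\lambda/t)$ of a direction converging to $\mu$, and then to combine continuity in that direction with strict monotonicity of $f$ along rays — monotonicity that itself must first be extracted from (1). Openness of $\Gamma$ is needed both here, to guarantee $\nu_t\in\Gamma$ for large $t$, and in $(2)\Rightarrow(1)$, to produce $\lambda-\delta\mu\in\Gamma$.
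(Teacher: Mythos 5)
Your argument is correct, but it takes a genuinely different and longer route than the paper in the one substantive implication. The paper proves the cycle $(1)\Rightarrow(2)\Rightarrow(3)\Rightarrow(1)$, and its $(1)\Rightarrow(2)$ is a two-line application of the gradient inequality \eqref{concavity1} at the fixed point $\lambda$: since \eqref{addistruc} supplies $t$ with $f(t\mu)>f(\lambda)$, concavity gives $\sum_i f_i(\lambda)(t\mu_i-\lambda_i)\geqslant f(t\mu)-f(\lambda)>0$; taking $\mu=\lambda$ yields $\sum_i f_i(\lambda)\lambda_i>0$, and then the same inequality for general $\mu$ forces $\sum_i f_i(\lambda)\mu_i>0$. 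In particular, the ``main obstacle'' you identify --- that (1) controls $f$ only along rays while (2), (3) speak of translations --- simply does not arise in the paper's treatment, because the concavity inequality based at $\lambda$ compares $f(\lambda)$ directly with the ray point $t\mu$, so no translate $\lambda+t\mu$ is ever needed. Your route instead first extracts $\sum_i f_i(\nu)\nu_i>0$ and strict monotonicity of $f$ along rays, then reaches $f(\lambda+t\mu)>f(\lambda)$ via the perturbation $\nu_t=\mu+\lambda/t$ and continuity, and only then applies concavity; this is sound (and note that $\nu_t\in\Gamma$ already follows from $\Gamma$ being a convex cone --- openness is not needed at that step, only in your $(2)\Rightarrow(1)$), but it effectively proves a statement close to (3) on the way to (2). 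Your $(2)\Rightarrow(1)$ via $\lambda-\delta\mu\in\Gamma$ is the same openness trick the paper uses for $(3)\Rightarrow(1)$ via $t\lambda-\mu\in\Gamma$, and your $(2)\Leftrightarrow(3)$ through $g(t)=f(\lambda+t\mu)$ matches the paper's use of \eqref{concavity1} at $\lambda+\mu$. The trade-off: the paper's proof is shorter and pinpoints the single quantitative use of concavity, while yours additionally records the monotonicity of $f$ along rays, a fact that is sometimes useful elsewhere but is not needed for this lemma.
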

  
  \begin{proof}
  	
  	
  	It follows from the concavity of $f$ that
  	\begin{equation}
  		\label{concavity1}
  		\begin{aligned}
  			f(\lambda)\geqslant f(\mu)+\sum_{i=1}^n f_i(\lambda)(\lambda_i-\mu_i), \quad \forall \lambda, \mbox{ }\mu\in \Gamma.
  		\end{aligned}
  	\end{equation}
  	

  	$\mathrm{(1)}\Rightarrow \mathrm{(2)}$: 
  	Fix $\lambda\in \Gamma$. The condition \eqref{addistruc} implies that for any  
  	$\mu \in \Gamma$, there is $T\geqslant1$ (may depend on $\mu$) such that for each $t>T$,
  	$f(t\mu)>f(\lambda)$. Together with \eqref{concavity1},
  	one gets $\sum_{i=1}^n f_i(\lambda) (t\mu_i-\lambda_i)>0$. Thus, $\sum_{i=1}^nf_i(\lambda)\lambda_i>0$ (if one takes $\mu=\lambda$) and moreover
  	$\sum_{i=1}^n f_i(\lambda)\mu_i>0$.
  	
  	$\mathrm{(2)}\Rightarrow \mathrm{(3)}$:
  	The proof uses \eqref{concavity1}.
  	
  	
  	$\mathrm{(3)}\Rightarrow \mathrm{(1)}$: 
  	For any $\lambda,\mbox{ } \mu\in\Gamma$, 
  	$t\lambda-\mu\in\Gamma$ for $t> t_{\lambda,\mu}$, depending only on $\lambda$ and $\mu$.
  	So $f(t\lambda)>f(\mu)$ for such $t$.
  \end{proof}

\begin{remark}
	In the presence of 
	\eqref{concave} and \eqref{elliptic}, 
	this lemma was initially proposed by the author in 
	\cite[Lemma 3.2]{yuan-regular-DP}
	to set up quantitative boundary estimate of the form   
	\begin{equation}	\label{quantitative-boundary-estimate}		\begin{aligned}			\sup_{\partial M}|\partial\overline{\partial}u| \leqslant C(1+\sup_{M}|\partial u|^2)  \nonumber  		\end{aligned} 	\end{equation}
	for the Dirichlet problem of fully nonlinear elliptic equations on complex manifolds.
	In Lemma \ref{lemma3.4}, the assumption \eqref{elliptic} has been removed. 
\end{remark}

  \begin{corollary}
  	\label{coro3.2}
  	If $f$ satisfies 
  	\eqref{concave} and \eqref{addistruc}, then \eqref{elliptic-weak} 
  	holds.
  	
  	
  \end{corollary}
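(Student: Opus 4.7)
The plan is to derive both halves of \eqref{elliptic-weak} as direct consequences of statement $(2)$ in Lemma \ref{lemma3.4}. Under the standing hypotheses \eqref{concave} and \eqref{addistruc}, that lemma already converts the growth condition into the first-derivative inequality
$$\sum_{i=1}^n f_i(\lambda)\mu_i > 0 \mbox{ for all } \lambda,\mu\in\Gamma.$$
The remaining task is simply to test this inequality against cleverly chosen $\mu\in\Gamma$, exploiting the fact that $\Gamma$ contains the positive cone $\Gamma_n$ and therefore offers a large supply of admissible test directions.

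Concretely, I would first take $\mu=(1,\cdots,1)\in\Gamma_n\subset\Gamma$, which immediately yields $\sum_{i=1}^n f_i(\lambda)>0$; this is the second clause of \eqref{elliptic-weak}. Next, to establish $f_j(\lambda)\geqslant 0$ for each fixed index $j$, I would use the one-parameter family $\mu_t=(1,\cdots,1)+t\, e_j$ for $t>0$, where $e_j$ denotes the $j$-th standard basis vector of $\mathbb{R}^n$. Since all components of $\mu_t$ are strictly positive, $\mu_t\in\Gamma_n\subset\Gamma$, and applying Lemma \ref{lemma3.4}$(2)$ with this $\mu_t$ gives
$$\sum_{i=1}^n f_i(\lambda)+t\, f_j(\lambda)>0 \mbox{ for every } t>0.$$
Dividing by $t$ and letting $t\to+\infty$ forces $f_j(\lambda)\geqslant 0$, which is the first clause of \eqref{elliptic-weak}.

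I do not anticipate any serious obstacle. Once Lemma \ref{lemma3.4} has been invoked, the rest of the argument reduces to a one-line choice of test vector together with a scaling limit. In particular, this approach avoids invoking the standard symmetry-plus-concavity monotonicity of the partial derivatives (e.g.\ that $\lambda_i\leqslant\lambda_j$ implies $f_i(\lambda)\geqslant f_j(\lambda)$); the limit $t\to+\infty$ isolates the sign of each $f_j(\lambda)$ individually, so one never needs to compare different components.
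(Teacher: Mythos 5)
Your proposal is correct and follows essentially the same route as the paper: both invoke Lemma \ref{lemma3.4}$(2)$ and then exploit $\Gamma_n\subseteq\Gamma$ to extract the two clauses of \eqref{elliptic-weak}, with your limiting argument in $t$ simply spelling out the step the paper leaves implicit in ``Note that $\Gamma_n\subseteq\Gamma$, then $f_i(\lambda)\geqslant 0$.''
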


   \begin{proof}	Fix $\lambda \in \Gamma.$ 	According to Lemma \ref{lemma3.4}, $\sum_{i=1}^n f_i(\lambda)\mu_i>0, \mbox{ } \forall\mu\in \Gamma$. 	Note that $\Gamma_n\subseteq\Gamma$,  	then $f_i(\lambda)\geqslant 0,$ $\forall 1\leqslant i\leqslant n.$	We have  $\sum_{i=1}^n f_i(\lambda)>0$ by setting $\mu=\vec{\bf 1}$. \end{proof}

In the following proposition, we relate $\kappa_\Gamma$ to the maximal count of negative components of vectors in $\Gamma$. 
 Let's denote 
  \begin{definition}

  \begin{equation}\begin{aligned}	\widetilde{\kappa}_{\Gamma}=\max\left\{k: (-\alpha_1,\cdots,-\alpha_k,\alpha_{k+1},\cdots, \alpha_n)\in \Gamma, \mbox{ where } \alpha_j>0, \mbox{ } \forall 1\leqslant j\leqslant n \right\}. \nonumber  \end{aligned}\end{equation}
    	 
\end{definition}
  
  \begin{proposition}
  	\label{yuan-kappa-2}
  	$\kappa_\Gamma=\widetilde{\kappa}_\Gamma$.
  \end{proposition}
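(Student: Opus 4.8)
The plan is to prove the two inequalities $\kappa_\Gamma \leqslant \widetilde{\kappa}_\Gamma$ and $\widetilde{\kappa}_\Gamma \leqslant \kappa_\Gamma$ separately, exploiting that $\Gamma$ is an open symmetric convex cone containing $\Gamma_n$.

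For $\kappa_\Gamma \leqslant \widetilde{\kappa}_\Gamma$: let $k = \kappa_\Gamma$, so the vector $v = (\overbrace{0,\dots,0}^{k},\overbrace{1,\dots,1}^{n-k})$ lies in $\Gamma$. Since $\Gamma$ is open, a small perturbation keeps us in $\Gamma$; in particular, for $\varepsilon > 0$ small enough the vector $(-\varepsilon,\dots,-\varepsilon,1,\dots,1)$ with $k$ negative entries stays in $\Gamma$. This exhibits a vector in $\Gamma$ with exactly $k$ strictly negative components (and $n-k$ strictly positive ones), so $\widetilde{\kappa}_\Gamma \geqslant k = \kappa_\Gamma$.

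For $\widetilde{\kappa}_\Gamma \leqslant \kappa_\Gamma$: let $k = \widetilde{\kappa}_\Gamma$ and pick $\alpha_1,\dots,\alpha_n > 0$ with $w = (-\alpha_1,\dots,-\alpha_k,\alpha_{k+1},\dots,\alpha_n) \in \Gamma$. By symmetry of $\Gamma$ I may permute, so assume $\alpha_1 \leqslant \dots \leqslant \alpha_k$; the key point is to reach a vector of the form $(0,\dots,0,1,\dots,1)$ with $k$ zeros using convexity and the inclusion $\Gamma_n \subset \Gamma$. One clean way: add to $w$ the positive vector $(\alpha_1,\dots,\alpha_k, N, \dots, N) \in \Gamma_n \subset \Gamma$ for $N$ large; since $\Gamma$ is a convex cone it is closed under addition, so the sum $(0,\dots,0,\alpha_{k+1}+N,\dots,\alpha_n+N) \in \Gamma$, and rescaling by a positive constant (cones are scale-invariant) together with a further additive adjustment of the last $n-k$ entries — again adding an element of $\Gamma_n$ — normalizes all the last $n-k$ coordinates to be equal, say to $1$. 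This yields $(\overbrace{0,\dots,0}^{k},\overbrace{1,\dots,1}^{n-k}) \in \Gamma$, hence $\kappa_\Gamma \geqslant k = \widetilde{\kappa}_\Gamma$. Here I use that $\Gamma$ is a cone (closed under positive scaling) and convex (so closed under addition after scaling), plus $\Gamma_n \subseteq \Gamma$ to supply the auxiliary strictly positive vectors.

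The main technical point to get right is the normalization step in the second inequality: one must be careful that after killing the first $k$ coordinates the remaining $n-k$ coordinates can be made all equal while staying in $\Gamma$ — this is where $\Gamma_n \subseteq \Gamma$ and convexity are both essential, since adding an element of $\Gamma_n$ with unequal positive entries can equalize the ratios of the last block without disturbing the zeros. A subtlety to check is the boundary behavior: $\kappa_\Gamma$ is defined with a $\max$, so one should confirm the relevant sets are nonempty (they are, since $\Gamma_n \subseteq \Gamma$ forces $\kappa_\Gamma \geqslant 0$) and that the maximum is attained, which follows because $k$ ranges over the finite set $\{0,1,\dots,n-1\}$ (the all-zero vector is never in $\Gamma$ since $\partial\Gamma \neq \emptyset$ and $\Gamma$ is a proper cone). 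Everything else is routine manipulation of the cone axioms.
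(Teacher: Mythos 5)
Your proof is correct in substance and follows essentially the same route as the paper: the inequality $\kappa_\Gamma\leqslant\widetilde{\kappa}_\Gamma$ by an openness perturbation of $(0,\dots,0,1,\dots,1)$, and the reverse inequality by adding positive vectors to $(-\alpha_1,\dots,-\alpha_{k},\alpha_{k+1},\dots,\alpha_n)$ using convexity, the cone property, and $\Gamma_n\subseteq\Gamma$ (the paper states this direction even more tersely, via $(0,\dots,0,\alpha_{k+1},\dots,\alpha_n)\in\Gamma$). One small imprecision: the "further additive adjustment" that equalizes the last $n-k$ entries must have zeros in the first $k$ slots, so it lies in $\overline{\Gamma_n}$, not in $\Gamma_n$ as you claim; adding a genuine element of $\Gamma_n$ would necessarily disturb the zeros. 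This is easily repaired either by doing the equalization in one step, adding $(\alpha_1,\dots,\alpha_k,\,N-\alpha_{k+1},\dots,N-\alpha_n)\in\Gamma_n$ for $N>\max_i\alpha_i$ to land directly on $(0,\dots,0,N,\dots,N)$ and then rescaling, or by first noting that $\Gamma+\overline{\Gamma_n}\subseteq\Gamma$, which follows from the openness and convexity of the cone $\Gamma$ together with $\Gamma_n\subseteq\Gamma$.
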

  \begin{proof}
  	The case $\Gamma=\Gamma_n$ is true since $\kappa_{\Gamma_n}=0$ and $\widetilde{\kappa}_{\Gamma_n}=0$.
  	Next, we consider the case $\Gamma\neq\Gamma_n$.
  	Assume  $(-\alpha_1,\cdots,-\alpha_{\widetilde{\kappa}_\Gamma}, \alpha_{\widetilde{\kappa}_\Gamma+1},\cdots,\alpha_n)\in \Gamma$, for $\alpha_i>0$, $\forall 1\leqslant i\leqslant n$. Then 
  	$(0,\cdots,0, \alpha_{\widetilde{\kappa}_\Gamma+1},\cdots,\alpha_n)\in \Gamma$, 
  	which implies 
  	 $ {\kappa}_\Gamma\geqslant \widetilde{\kappa}_\Gamma$.
  	Conversely, if 
  	$$({\overbrace{0,\cdots,0}^{{\kappa}_\Gamma}},{\overbrace{1,\cdots, 1}^{n-{\kappa}_\Gamma}})\in \Gamma,$$
  	  then 
  	  by the openness of $\Gamma$, we have for some $0<\epsilon\ll1$,
  	$$({\overbrace{-\epsilon,\cdots,-\epsilon}^{{\kappa}_\Gamma}},{\overbrace{1,\cdots, 1}^{n-{\kappa}_\Gamma}})\in \Gamma.$$   
  	Consequently, $ \widetilde{\kappa}_\Gamma\geqslant  {\kappa}_\Gamma$.
  \end{proof}

      Corollary \ref{coro3.2} is a part of Theorem \ref{yuan-k+1}.
  Below we complete the proof. 
   Fix $\lambda\in\Gamma$. 
  It follows from  the concavity and symmetry of $f$ that 
  \begin{equation}
  	f_i(\lambda)\geqslant f_j(\lambda) \mbox{ for } \lambda_i\leqslant\lambda_j.  \nonumber
    \end{equation}
  In particular
  \begin{equation}
  	\label{1nf_i}
  	f_1(\lambda)\geqslant \frac{1}{n}\sum_{i=1}^n f_i(\lambda) \mbox{ if } \lambda_1\leqslant \cdots\leqslant\lambda_n. \nonumber
  \end{equation}
  Therefore, for the case $\Gamma=\Gamma_n$ $(\mbox{i.e. } \kappa_\Gamma=0)$, we immediately obtain Theorem \ref{yuan-k+1}.
  For general $\Gamma$, 
  Theorem \ref{yuan-k+1} is a consequence of Proposition \ref{yuan-kappa-2} and   the following proposition.
   \begin{proposition}
  	\label{yuanrr-2}
  	Assume $\Gamma\neq \Gamma_n$ and  $f$ satisfies 
  	\eqref{concave} and \eqref{addistruc} in $\Gamma$. 
  	For the $\kappa_\Gamma$ as defined in Definition \ref{yuan-kappa}, let
  	$\alpha_1, \cdots, \alpha_n$ be $n$ strictly positive constants such that 
  	$$(-\alpha_1,\cdots,-\alpha_{\kappa_\Gamma}, \alpha_{\kappa_\Gamma+1},\cdots, \alpha_n)\in \Gamma.$$
  	In addition,  assume $\alpha_1\geqslant\cdots\geqslant \alpha_{\kappa_\Gamma}$.
  	Then for each $ \lambda\in \Gamma$ with order $\lambda_1 \leqslant \cdots \leqslant\lambda_n$,
  	\begin{equation}
  		\label{theta1}
  		\begin{aligned}
  			f_{\kappa_\Gamma+1}(\lambda)\geqslant\frac{\alpha_1}{\sum_{i=\kappa_\Gamma+1}^n \alpha_i-\sum_{i=2}^{\kappa_\Gamma}\alpha_i}f_1(\lambda).
  		\end{aligned}
  	\end{equation}
  \end{proposition}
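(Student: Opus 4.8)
The plan is to plug the admissible vector provided by the hypothesis into the differential inequality of Lemma~\ref{lemma3.4}, and then squeeze both sides using the ordering of the $f_i$'s.

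First I would record two preliminary facts about $\lambda\in\Gamma$ with $\lambda_1\leqslant\cdots\leqslant\lambda_n$: concavity and symmetry of $f$ (as recalled just above the proposition) give $f_1(\lambda)\geqslant f_2(\lambda)\geqslant\cdots\geqslant f_n(\lambda)$, while Corollary~\ref{coro3.2} gives $f_i(\lambda)\geqslant0$ for all $i$ together with $\sum_{i=1}^n f_i(\lambda)>0$. Combining the two forces $f_1(\lambda)>0$, since otherwise $f_1(\lambda)=0$ would make every $f_i(\lambda)$ vanish, contradicting the positivity of their sum. This strict positivity of $f_1(\lambda)$ will later yield the positivity of the denominator in \eqref{theta1} at no extra cost.

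Next, set $\mu=(-\alpha_1,\cdots,-\alpha_{\kappa_\Gamma},\alpha_{\kappa_\Gamma+1},\cdots,\alpha_n)$, which lies in $\Gamma$ by hypothesis, so that statement~(2) of Lemma~\ref{lemma3.4} gives $\sum_{i=1}^n f_i(\lambda)\mu_i>0$, that is,
\[ \sum_{i=\kappa_\Gamma+1}^n\alpha_i f_i(\lambda) \;>\; \alpha_1 f_1(\lambda)+\sum_{i=2}^{\kappa_\Gamma}\alpha_i f_i(\lambda). \]
Since $f_i(\lambda)\leqslant f_{\kappa_\Gamma+1}(\lambda)$ for $i\geqslant\kappa_\Gamma+1$ and $f_i(\lambda)\geqslant f_{\kappa_\Gamma+1}(\lambda)$ for $2\leqslant i\leqslant\kappa_\Gamma$, bounding the left side above and the right side below turns this into
\[ f_{\kappa_\Gamma+1}(\lambda)\Big(\sum_{i=\kappa_\Gamma+1}^n\alpha_i-\sum_{i=2}^{\kappa_\Gamma}\alpha_i\Big) \;>\; \alpha_1 f_1(\lambda) \;>\; 0. \]
As $f_{\kappa_\Gamma+1}(\lambda)\geqslant0$ and the product is strictly positive, the bracketed factor must be strictly positive; dividing by it gives \eqref{theta1}. (Alternatively, the denominator is positive because $\mu\in\Gamma\subseteq\Gamma_1$, so $\sum_i\mu_i>0$, i.e. $\sum_{i=\kappa_\Gamma+1}^n\alpha_i>\sum_{i=1}^{\kappa_\Gamma}\alpha_i>\sum_{i=2}^{\kappa_\Gamma}\alpha_i$.)

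There is no serious obstacle: the real content is identifying the correct test vector for Lemma~\ref{lemma3.4} and performing the two-sided squeeze that isolates $f_1(\lambda)$ and $f_{\kappa_\Gamma+1}(\lambda)$ — the ordering $\alpha_1\geqslant\cdots\geqslant\alpha_{\kappa_\Gamma}$ is imposed precisely so that peeling off the term $\alpha_1 f_1(\lambda)$ produces the largest constant. The only point to watch is the bookkeeping of strict versus non-strict inequalities that makes the denominator positive.
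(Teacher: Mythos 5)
Your proof is correct and takes essentially the same route as the paper: both test statement (2) of Lemma \ref{lemma3.4} against the admissible vector $(-\alpha_1,\cdots,-\alpha_{\kappa_\Gamma},\alpha_{\kappa_\Gamma+1},\cdots,\alpha_n)$ (the paper's inequality \eqref{good1-yuan}) and exploit the ordering $f_1(\lambda)\geqslant\cdots\geqslant f_n(\lambda)\geqslant 0$ coming from concavity, symmetry and Corollary \ref{coro3.2}. The only difference is the final step: the paper first records the cruder bound $f_{\kappa_\Gamma+1}(\lambda)>\frac{\alpha_1}{\sum_{i=\kappa_\Gamma+1}^n\alpha_i}f_1(\lambda)$ and then obtains \eqref{theta1} ``by iteration'', whereas your simultaneous two-sided squeeze yields \eqref{theta1} in one stroke and also makes explicit the positivity of the denominator and of $f_1(\lambda)$, which the paper leaves tacit.
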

  
  \begin{proof}
  	Fix $\lambda\in\Gamma$ and we assume
  	$\lambda_1\leqslant \lambda_2\leqslant \cdots \leqslant\lambda_n$. 
  	 From Corollary \ref{coro3.2}, $f_i(\lambda)\geqslant0$ and $\sum_{i=1}^n f_i(\lambda)>0$.
  	By Lemma \ref{lemma3.4},
  	\begin{equation}
  		\label{good1-yuan}
  		\begin{aligned}
  			-\sum_{i=1}^{\kappa_\Gamma} \alpha_i f_i(\lambda)+\sum_{i=\kappa_\Gamma+1}^n \alpha_i f_i(\lambda)>0
  		\end{aligned}
  	\end{equation}
which simply yields $f_{\kappa_\Gamma+1}(\lambda)>   \frac{\alpha_1}{\sum_{i=\kappa_\Gamma+1}^n \alpha_i}f_1(\lambda)$.
  	In addition, 
  	one can derive \eqref{theta1} by using 
  	the iteration.
  	
  \end{proof}

  \begin{remark}
  	\label{remark111}
  	When $\Gamma\neq\Gamma_n$ the constant $\vartheta_\Gamma$ in Theorem \ref{yuan-k+1} can be achieved as follows:
  	$$\vartheta_\Gamma= \sup_{(-\alpha_1,\cdots,-\alpha_{\kappa_\Gamma}, \alpha_{\kappa_\Gamma+1},\cdots, \alpha_n)\in \Gamma,\, \alpha_i>0} \, \frac{\alpha_1/n}{\sum_{i=\kappa_\Gamma+1}^n \alpha_i-\sum_{i=2}^{\kappa_\Gamma}\alpha_i}.$$	
  	
  	A somewhat remarkable fact to us is that 
  	in the description of partial uniform ellipticity, 
  	$\kappa_\Gamma$ and $\vartheta_\Gamma$	
  	depend  only on $\Gamma$ instead specifically on $f$.  	
  	Consequently, it has great advantages in applications to
  	PDEs and geometry.   For instance, Theorem \ref{yuan-k+1} allows us to 
  	compute the partial uniform ellipticity of  
  	fully 
  	nonlinear equations in  connection with the notions of
  	$p$-convexity \cite{ShaJP1986,WuHH-1987},  $\mathbb{F}$-subharmonic and $\mathbb{G}$-plurisubharmonic functions in the sense of  Harvey-Lawson \cite{Harvey2011Lawson}. 
  \end{remark}

  The following proposition precisely reveals  the 
   interaction between $\kappa_\Gamma$ and partial uniform ellipticity. Denote
  \begin{equation}
  	\label{Proj-Rk}
  	\Gamma^{\infty}_{\mathbb{R}^k}:=  \left\{\lambda'\in\mathbb{R}^k: (\lambda',c,\cdots,c)\in\Gamma \mbox{ for some } c>0\right\}.
  \end{equation}
  \begin{proposition}
  	\label{thm-k+1}
  	
  	In addition to 
  	\eqref{concave}, 
  	we assume  for some $1\leqslant k\leqslant n-1$  that
  	$f$ is of $(k+1)$-uniform ellipticity in 
  	 $\Gamma$.  Then $\kappa_\Gamma\geqslant k$ and
  	$\Gamma^{\infty}_{\mathbb{R}^k}=\mathbb{R}^k.$
  	
  \end{proposition}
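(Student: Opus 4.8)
The plan is to extract geometric information about $\Gamma$ from the $(k+1)$-uniform ellipticity hypothesis by probing $f$ along rays. First I would observe that $(k+1)$-uniform ellipticity, as in Definition~\ref{def-PUE}, gives a uniform $\vartheta>0$ with $f_i(\lambda)\geqslant\vartheta\sum_{j=1}^n f_j(\lambda)$ for $1\leqslant i\leqslant k+1$ whenever $\lambda_1\leqslant\cdots\leqslant\lambda_n$; in particular each such $f_i(\lambda)>0$, so along any $\lambda\in\Gamma$ the partial derivatives in the $k+1$ smallest directions are strictly positive. The concavity inequality \eqref{concavity1} then forces a quantitative growth statement: for a fixed $\mu\in\Gamma$ and any $\lambda\in\Gamma$, $f(\lambda)-f(\mu)\geqslant\sum_i f_i(\lambda)(\lambda_i-\mu_i)$, and I want to feed in cleverly chosen $\lambda$ tending to a boundary ray of $\Gamma$ to see that such a ray cannot have more than $n-k$ coordinates that are allowed to be forced negative — which is exactly $\kappa_\Gamma\geqslant k$ in view of Proposition~\ref{yuan-kappa-2}.

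Concretely, to prove $\kappa_\Gamma\geqslant k$ I would argue by contradiction: suppose $\kappa_\Gamma\leqslant k-1$, so (again via Proposition~\ref{yuan-kappa-2}) no vector in $\Gamma$ has $k$ negative components, equivalently every $\lambda\in\Gamma$ with $\lambda_1\leqslant\cdots\leqslant\lambda_n$ has $\lambda_k\geqslant 0$ when $\lambda_{k-1}<0$ already fails $\ldots$ — more precisely, $\lambda\in\Gamma$ with $\lambda_1\leqslant\cdots\leqslant\lambda_n$ must satisfy $\lambda_k\geqslant 0$ is too strong; rather, the boundary $\partial\Gamma$ contains a point with exactly $k$ non-positive entries is the critical configuration. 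Here is the cleaner route: because $\Gamma\supseteq\Gamma_n$ is an open convex cone with $\partial\Gamma\neq\emptyset$, its boundary carries a supporting hyperplane at some point $\nu\in\partial\Gamma$, i.e. there is a nonzero $a\in\overline{\Gamma^*}$ (the dual cone) with $\langle a,\lambda\rangle>0$ for all $\lambda\in\Gamma$; by symmetrizing $\Gamma$ one may take $a$ to have non-increasing entries, and the number of \emph{negative} entries of the "most slanted" such $a$ encodes $\kappa_\Gamma$. The key computation is that $\nabla f(\lambda)/\sum_j f_j(\lambda)$ lies in the closed dual cone $\overline{\Gamma^*}$ (this is immediate from Lemma~\ref{lemma3.4}(2)), and $(k+1)$-uniform ellipticity says its first $k+1$ entries are bounded below by $\vartheta>0$ uniformly. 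Taking $\lambda\to\nu$ along $\Gamma$ and passing to a limit of these normalized gradient vectors produces an element $b\in\overline{\Gamma^*}$, not identically zero, whose first $k+1$ entries are all $\geqslant\vartheta>0$; hence the ray $\{tb\colon t>0\}$ pairs positively with all of $\Gamma$ except that $b$ itself, having its smallest $k+1$ coordinates positive, is incompatible with $\Gamma$ containing a vector whose smallest $k$ coordinates are negative and whose $(k+1)$-st is positive — forcing $\kappa_\Gamma\geqslant k$ by the characterization $\kappa_\Gamma=\widetilde\kappa_\Gamma$.

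For the second conclusion, $\Gamma^\infty_{\mathbb{R}^k}=\mathbb{R}^k$, I would show that any $\lambda'=(\lambda_1,\dots,\lambda_k)\in\mathbb{R}^k$, however negative, can be completed to a vector in $\Gamma$ by appending a sufficiently large constant tail $(c,\dots,c)$. Fix $\lambda'$ and consider $\lambda(c)=(\lambda',c,\dots,c)$; I want $\lambda(c)\in\Gamma$ for $c\gg1$. Start from a known point $(-\epsilon,\dots,-\epsilon,1,\dots,1)\in\Gamma$ with $k$ copies of $-\epsilon$ (available since $\kappa_\Gamma\geqslant k$, using openness of $\Gamma$ as in the proof of Proposition~\ref{yuan-kappa-2}); scaling by a large factor $t$ and adding $(0,\dots,0,s,\dots,s)\in\overline{\Gamma_n}\subset\overline\Gamma$ in the last $n-k$ slots keeps us in $\Gamma$ (convex cone), and by choosing $t$ so that $-t\epsilon\leqslant\min_i\lambda_i$ we can, via one more convexity/monotonicity step using the fact that adding a nonnegative multiple of $e_i$ for $i>k$ stays in $\Gamma$, dominate $\lambda(c)$; more cleanly, $\lambda(c)-t(-\epsilon,\dots,-\epsilon,1,\dots,1)$ has its first $k$ entries $\geqslant0$ and its last entries $\geqslant0$ once $c\geqslant t$, i.e. it lies in $\overline{\Gamma_n}$, so $\lambda(c)\in t(-\epsilon,\dots)+\overline{\Gamma_n}\subset\Gamma$. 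This gives $\lambda'\in\Gamma^\infty_{\mathbb{R}^k}$, and since $\lambda'$ was arbitrary, $\Gamma^\infty_{\mathbb{R}^k}=\mathbb{R}^k$.

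The main obstacle I anticipate is making the limiting argument in the second paragraph rigorous: the normalized gradients $\nabla f(\lambda)/\sum_j f_j(\lambda)$ live on a compact set (the simplex intersected with $\overline{\Gamma^*}$), so a convergent subsequence exists as $\lambda\to\nu\in\partial\Gamma$, but one must ensure the limit $b$ genuinely detects the supporting-hyperplane structure of $\Gamma$ at $\nu$ rather than degenerating — this is where the hypothesis $\sup_\Gamma$-type growth (or just that $\nu\in\partial\Gamma$ and $f$ is finite and concave, hence continuous, up to points where it may blow to $-\infty$) needs care, and one should probably choose $\nu$ to be a boundary point of a specific combinatorial type (exactly $\kappa_\Gamma$ zero entries) so that the supporting functional is pinned down. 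Everything else is routine convex-cone manipulation together with Lemma~\ref{lemma3.4} and Corollary~\ref{coro3.2}.
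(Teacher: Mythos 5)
Your reduction of the second conclusion to the first is fine: given $\kappa_\Gamma\geqslant k$, the openness of $\Gamma$ gives $(-\epsilon,\cdots,-\epsilon,1,\cdots,1)\in\Gamma$ ($k$ slots of $-\epsilon$), and writing $(\lambda',c,\cdots,c)=t(-\epsilon,\cdots,-\epsilon,1,\cdots,1)+\mu$ with $t\epsilon\geqslant-\min_i\lambda'_i$, $c\geqslant t$, so that $\mu\in\overline{\Gamma_n}\subset\overline{\Gamma}$, indeed yields $\Gamma^{\infty}_{\mathbb{R}^k}=\mathbb{R}^k$; this is exactly the routine step the paper leaves implicit. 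The problem is the first (and main) conclusion, $\kappa_\Gamma\geqslant k$, which your proposal does not actually establish. Two concrete issues. First, the step ``$\nabla f(\lambda)/\sum_j f_j(\lambda)\in\overline{\Gamma^*}$, immediate from Lemma \ref{lemma3.4}(2)'' invokes a statement that, under \eqref{concave}, is \emph{equivalent} to \eqref{addistruc}; but \eqref{addistruc} is not a hypothesis of Proposition \ref{thm-k+1}, and concavity plus \eqref{elliptic-weak} (even plus $(k+1)$-uniform ellipticity) does not imply it: for instance $f(\lambda)=\sigma_1(\lambda)-\sqrt{1+|\lambda|^2}$ on $\Gamma_1$ is concave, has $f_i>0$, and is $(k+1)$-uniformly elliptic for $k\leqslant n-2$, yet $\overline{\Gamma_1^{\,*}}$ is the single ray spanned by $\vec{\bf 1}$ while $\nabla f(\lambda)$ is generally not parallel to $\vec{\bf 1}$. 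Second, and more seriously, even granting that your limit $b\in\overline{\Gamma^*}$ exists with its first $k+1$ entries $\geqslant\vartheta$, no contradiction with $\kappa_\Gamma\leqslant k-1$ follows: the dual cone of $\Gamma_n$ (where $\kappa=0$) already contains strictly positive vectors, so producing \emph{one} nondegenerate dual functional proves nothing. What the contradiction needs is control of the supporting functional at the specific boundary point $\nu=(0,\cdots,0,1,\cdots,1)$ ($k$ zeros), which lies on $\partial\Gamma$ precisely when $\kappa_\Gamma\leqslant k-1$; you would have to show that functional cannot annihilate the last $n-k$ slots, and you never tie your $b$ to it. Nor can this be done by ``$\nabla f\to$ supporting functional'': a concave $f$ (e.g.\ a linear one, smooth across $\partial\Gamma$) has gradients totally unrelated to the normal directions of $\partial\Gamma$. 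You flag this difficulty in your last paragraph, but flagging it is not closing it, and also note the sign slip: since $\Gamma_n\subseteq\Gamma$, dual vectors have nonnegative entries, so it is their \emph{zero} pattern, not ``negative entries,'' that encodes $\kappa_\Gamma$.

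For comparison, the paper's argument is direct and avoids duality altogether: with $a$ chosen so that $f(a\vec{\bf 1})>f(c_0\vec{\bf 1})>\sup_{\partial\Gamma}f$ and $R=a/\vartheta$, one evaluates $f$ at $\lambda_{\epsilon,R}=(\epsilon,\cdots,\epsilon,R,\cdots,R)\in\Gamma_n\subseteq\Gamma$ and uses the concavity inequality \eqref{concavity1} against $a\vec{\bf 1}$ together with \eqref{partial-uniform2} to get $f(\lambda_{\epsilon,R})\geqslant f(a\vec{\bf 1})$ uniformly in $\epsilon$; since $R$ is independent of $\epsilon$, letting $\epsilon\to0^+$ keeps the limit point $(0,\cdots,0,R,\cdots,R)$ inside a superlevel set that cannot meet $\partial\Gamma$, whence $\kappa_\Gamma\geqslant k$. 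If you want to salvage your write-up, the cleanest fix is to replace your second paragraph by this superlevel-set computation (your third paragraph can then stand as the deduction of $\Gamma^{\infty}_{\mathbb{R}^k}=\mathbb{R}^k$).
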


  \begin{proof}

  	
  	  	Let $\vartheta$ be as in \eqref{partial-uniform2}. 
  	As above, $\vec{\bf 1}=(1,\cdots,1)$.
  	For the $(f,\Gamma)$, we denote    
  	\begin{equation}
  		\begin{aligned}
  			\Gamma^\sigma=  \{\lambda\in\Gamma: f(\lambda)>\sigma\}.  \nonumber
  	\end{aligned} \end{equation}
  	Let   
  	$c_0$ be the positive constant with
  	$f(c_0 \vec{\bf 1})>\sup_{\partial\Gamma}f$,
  	and then we set $a=1+c_0$. Thus $f(a\vec{\bf 1})>f(c_0 \vec{\bf1})$ by $\sum_{i=1}^n f_i(\lambda)>0$.
  	For $0<\epsilon<a$ and $R=\frac{a}{\vartheta}$, we denote
  	$\lambda_{\epsilon,R}=({\overbrace{\epsilon,\cdots,\epsilon}^{k}},{\overbrace{R,\cdots, R}^{n-k}}).$
  	We can deduce from \eqref{concavity1} that
  	\begin{equation}
  		\begin{aligned}
  			f(\lambda_{\epsilon,R})\geqslant
  			\,& f(a\vec{\bf 1})+\epsilon\sum_{i=1}^{k} f_i(\lambda_{\epsilon,R})+
  			R\sum_{i=k+1}^n  f_i(\lambda_{\epsilon,R})
  			-a\sum_{i=1}^n f_i(\lambda_{\epsilon,R}) 
  			\\
  			 \geqslant
  		 \,& f(a\vec{\bf 1})+(R\vartheta -a)\sum_{i=1}^{n} f_i(\lambda_{\epsilon,R}) \mbox{ (using $(k+1)$-uniform ellipticity)}
  	\\ 	= \,& f(a\vec{\bf 1}) \mbox{ (noticing } R=\frac{a}{\vartheta})
  			\\>\,&  f(c_0\vec{\bf1}). \nonumber
  		\end{aligned}
  	\end{equation}
  	So $\lambda_{\epsilon,R}=(\epsilon,\cdots,\epsilon, R,\cdots, R)\in \overline{\Gamma^{f(a\vec{\bf 1})}}$.
  	Notice that $R=\frac{a}{\vartheta}$ does not depend  on $\epsilon$. 
  	Taking $\epsilon\rightarrow 0^+$,  we get $(0,\cdots,0,R,\cdots,R)\in  \overline{\Gamma^{f(a\vec{\bf 1})}}
  	\subset\Gamma^{f(c_0\vec{\bf1})}\subset\Gamma.$
Thus  	$\kappa_\Gamma\geqslant k$. 
  \end{proof}
  
  \begin{remark}
  	The proof uses the simple fact $\Gamma_n\subseteq\Gamma$. This is different from the proof of Proposition \ref{construction-type2} below.
  \end{remark}
  
  \begin{corollary}
  	\label{thm-sharp}
  	The $(\kappa_\Gamma+1)$-uniform ellipticity asserted in Theorem \ref{yuan-k+1} 
  	cannot be improved.
  \end{corollary}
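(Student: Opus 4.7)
The plan is to obtain Corollary \ref{thm-sharp} as a direct consequence of Proposition \ref{thm-k+1}, applied contrapositively. Sharpness should mean: for any $(f,\Gamma)$ satisfying \eqref{concave} and \eqref{addistruc}, the integer $\kappa_\Gamma+1$ in the conclusion of Theorem \ref{yuan-k+1} is the largest one possible; equivalently, no such $f$ is of $(\kappa_\Gamma+2)$-uniform ellipticity (with the convention that this statement is vacuous when $\kappa_\Gamma+1=n$, since no notion beyond full uniform ellipticity is defined).

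The core step is an immediate contradiction argument. Assume, toward a contradiction, that some $(f,\Gamma)$ satisfying the hypotheses of Theorem \ref{yuan-k+1} is of $(\kappa_\Gamma+2)$-uniform ellipticity (so in particular $\kappa_\Gamma+1\leqslant n-1$). Set $k=\kappa_\Gamma+1$. Then $1\leqslant k\leqslant n-1$ and $f$ is of $(k+1)$-uniform ellipticity in $\Gamma$. Proposition \ref{thm-k+1} applies and yields $\kappa_\Gamma\geqslant k=\kappa_\Gamma+1$, which is absurd.

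To ensure this is not a vacuous statement, I should remark that the hypotheses \eqref{concave} and \eqref{addistruc} are indeed met by many standard choices, e.g.\ $f=\sigma_1$ on any admissible $\Gamma\supseteq\Gamma_n$, or $f=(\sigma_k/\sigma_l)^{1/(k-l)}$ on $\Gamma_k$. For such $f$, Theorem \ref{yuan-k+1} produces genuine $(\kappa_\Gamma+1)$-uniform ellipticity which, by the contradiction above, cannot be strengthened to $(\kappa_\Gamma+2)$-uniform ellipticity.

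There is no real obstacle here: the substantive content has already been laid down in Proposition \ref{thm-k+1}, whose proof constructs, inside the closure of a sublevel set, the witness vector with exactly $\kappa_\Gamma$ vanishing components. The only thing to watch in writing the proof is the degenerate endpoint: when $\kappa_\Gamma=n-1$ the conclusion $\kappa_\Gamma+1=n$ is fully uniform ellipticity, which is automatically optimal, and no appeal to Proposition \ref{thm-k+1} is needed.
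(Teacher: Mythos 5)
Your argument is correct and is exactly the paper's (implicit) reasoning: the corollary is stated immediately after Proposition \ref{thm-k+1} precisely because it is its contrapositive, namely that $(\kappa_\Gamma+2)$-uniform ellipticity would force $\kappa_\Gamma\geqslant\kappa_\Gamma+1$. Your handling of the degenerate endpoint $\kappa_\Gamma=n-1$ is also sensible.
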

  
  The following two corollaries are key ingredients in the construction of local barriers in Section \ref{DP-1}.
  \begin{corollary}
  	\label{coro-type2}
  	In addition to  \eqref{concave}, 
  	we assume that $f$ is of fully uniform ellipticity in $\Gamma$. Then the corresponding cone $\Gamma$ is of type 2.
  \end{corollary}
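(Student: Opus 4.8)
The plan is to deduce Corollary \ref{coro-type2} directly from Proposition \ref{thm-k+1}. By Definition \ref{def-PUE}, the hypothesis that $f$ is of fully uniform ellipticity in $\Gamma$ is exactly the statement that $f$ is of $n$-uniform ellipticity, i.e. of $(k+1)$-uniform ellipticity with the choice $k=n-1$. Since $1\leqslant n-1\leqslant n-1$ and \eqref{concave} is among our assumptions, Proposition \ref{thm-k+1} applies with $k=n-1$ and yields $\kappa_\Gamma\geqslant n-1$ together with $\Gamma^{\infty}_{\mathbb{R}^{n-1}}=\mathbb{R}^{n-1}$.

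Next I would record the trivial bound $\kappa_\Gamma\leqslant n-1$: since $\Gamma$ is an open cone with vertex at the origin, the zero vector $(\overbrace{0,\cdots,0}^{n})$ does not lie in $\Gamma$, so by Definition \ref{yuan-kappa} the value $k=n$ is not attained. Combined with the previous step this forces $\kappa_\Gamma=n-1$, equivalently $(0,\cdots,0,1)\in\Gamma$, equivalently $\Gamma^{\infty}_{\mathbb{R}^{n-1}}=\mathbb{R}^{n-1}$, equivalently $\widetilde{\kappa}_\Gamma=n-1$ by Proposition \ref{yuan-kappa-2}. Any one of these is the defining property of a type 2 cone in Definition \ref{def-type2}, so $\Gamma$ is of type 2, as claimed.

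I do not expect a genuine obstacle here: the substance of the corollary is carried entirely by Proposition \ref{thm-k+1}, which is already proved above, and what remains is essentially bookkeeping --- matching the terminology of Definitions \ref{def-PUE}, \ref{yuan-kappa} and \ref{def-type2} and noting $\kappa_\Gamma\leqslant n-1$. The only point requiring a little care is to invoke whichever of the several equivalent formulations of a type 2 cone is taken as the definition in Definition \ref{def-type2}; as observed above, Proposition \ref{thm-k+1} (together with Proposition \ref{yuan-kappa-2} if needed) supplies all of them simultaneously, so the argument is insensitive to that choice.
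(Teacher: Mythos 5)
Your deduction is correct and matches the paper's intent: Corollary \ref{coro-type2} is stated as an immediate consequence of Proposition \ref{thm-k+1} applied with $k=n-1$, which gives $\kappa_\Gamma\geqslant n-1$ (equivalently $(0,\cdots,0,1)\in\Gamma$, i.e. $\Gamma$ is of type 2 in the sense of Definition \ref{def-type2}). Your extra observations ($\kappa_\Gamma\leqslant n-1$ since $0\notin\Gamma$, and the equivalence via Proposition \ref{yuan-kappa-2} and Lemma \ref{lemma-add1}) are harmless bookkeeping consistent with the paper.
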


  \begin{corollary}
  	\label{thm-type2}
  	If $\Gamma$ carries a smooth symmetric concave $f$ satisfying \eqref{concave} 
  	and \eqref{addistruc},
  	then the following statements are equivalent to each other.
  	\begin{enumerate}
  			\item[$\mathrm{(1)}$]  $f$ is of fully uniform ellipticity in $\Gamma$.
  		\item[$\mathrm{(2)}$] $\Gamma$ is of type  2. That is $\Gamma^{\infty}_{\mathbb{R}^{n-1}}=\mathbb{R}^{n-1}$, where $\Gamma^{\infty}_{\mathbb{R}^{n-1}}$ is as we denoted in \eqref{Proj-Rk}.
  	
  	\end{enumerate}
  \end{corollary}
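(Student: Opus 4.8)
The plan is to read the equivalence off the two results already proved, routing both directions through the elementary observation that, for a cone $\Gamma$ carrying a concave $f$ as in the hypotheses, being of type $2$ is the same as $\kappa_\Gamma=n-1$. Theorem \ref{yuan-k+1} supplies the implication from $\kappa_\Gamma=n-1$ to fully uniform ellipticity, while Proposition \ref{thm-k+1} runs the reverse direction; the only genuinely new input is a one-line convexity argument exploiting $\Gamma_n\subseteq\Gamma$.

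For $(1)\Rightarrow(2)$ I would argue as follows. Fully uniform ellipticity is by definition $n$-uniform ellipticity, i.e.\ $(k+1)$-uniform ellipticity with $k=n-1$, so Proposition \ref{thm-k+1} applies and yields $\kappa_\Gamma\geqslant n-1$ together with $\Gamma^{\infty}_{\mathbb{R}^{n-1}}=\mathbb{R}^{n-1}$. Since $\kappa_\Gamma\leqslant n-1$ always (the vector of $n$ zero entries is the origin, which is excluded from $\Gamma$), in fact $\kappa_\Gamma=n-1$, and $\Gamma$ is of type $2$ by the definition recalled in \eqref{Proj-Rk}. (Alternatively, this direction is already contained in Corollary \ref{coro-type2}.)

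For $(2)\Rightarrow(1)$ the first step is to derive $\kappa_\Gamma=n-1$ from type $2$. Testing $\Gamma^{\infty}_{\mathbb{R}^{n-1}}=\mathbb{R}^{n-1}$ with the vector $(-1,\cdots,-1)\in\mathbb{R}^{n-1}$ produces $c>0$ with $(-1,\cdots,-1,c)\in\Gamma$; since $\vec{\bf 1}=(1,\cdots,1)\in\Gamma_n\subseteq\Gamma$ and $\Gamma$ is an open convex cone,
\begin{equation}
\frac{1}{2}\big[(-1,\cdots,-1,c)+(1,\cdots,1)\big]=\big(\underbrace{0,\cdots,0}_{n-1},\tfrac{c+1}{2}\big)\in\Gamma,
\end{equation}
and rescaling by $2/(c+1)$ gives $(\underbrace{0,\cdots,0}_{n-1},1)\in\Gamma$, hence $\kappa_\Gamma=n-1$ by Definition \ref{yuan-kappa}. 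The second step is to invoke Theorem \ref{yuan-k+1}, applicable since $(f,\Gamma)$ satisfies \eqref{concave} and \eqref{addistruc}: it delivers a universal $\vartheta_\Gamma>0$ with $f_i(\lambda)\geqslant\vartheta_\Gamma\sum_{j=1}^n f_j(\lambda)$ for all $1\leqslant i\leqslant\kappa_\Gamma+1=n$ and every $\lambda\in\Gamma$ ordered by $\lambda_1\leqslant\cdots\leqslant\lambda_n$, and also records \eqref{elliptic-weak}; this is exactly fully uniform ellipticity.

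I do not expect a serious obstacle: the substantive content lives in Theorem \ref{yuan-k+1} and Proposition \ref{thm-k+1}, which are already established. The only point requiring care is the bookkeeping around the identification of type $2$ with $\kappa_\Gamma=n-1$ --- in particular the use of $\Gamma_n\subseteq\Gamma$ in the convexity step, and keeping the ordering convention $\lambda_1\leqslant\cdots\leqslant\lambda_n$ consistent between Definition \ref{def-PUE} and the conclusion of Theorem \ref{yuan-k+1}.
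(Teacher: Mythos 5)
Your proposal is correct and follows essentially the same route as the paper: $(1)\Rightarrow(2)$ is exactly Proposition \ref{thm-k+1} with $k=n-1$ (equivalently Corollary \ref{coro-type2}), and $(2)\Rightarrow(1)$ is the identification of type $2$ with $\kappa_\Gamma=n-1$ (Lemma \ref{lemma-add1}(4), which you prove by the same elementary convexity-and-rescaling observation the paper leaves implicit) followed by Theorem \ref{yuan-k+1}.
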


  \begin{lemma}
  	\label{lemma-add1}
  	Let $\kappa_\Gamma$ be 
  	as in Definition \ref{yuan-kappa},  $\vartheta_\Gamma$ be  as 
  	in Theorem \ref{yuan-k+1}, then 
  	\begin{enumerate}
  		\item[$\mathrm{(1)}$] $\kappa_\Gamma$ is an integer with $0\leqslant \kappa_\Gamma\leqslant n-1$.  
  		\item[$\mathrm{(2)}$]  For $\Gamma=\Gamma_k$, $\kappa_{\Gamma}=n-k$.
  		\item[$\mathrm{(3)}$]  $\kappa_\Gamma=0$ if and only if $\Gamma= \Gamma_n$.
  		\item[$\mathrm{(4)}$] $\kappa_\Gamma=n-1$  if and only if $\Gamma$ is of type 2.
  		
  		\item[$\mathrm{(5)}$] $0<\vartheta_{\Gamma}\leqslant \frac{1}{n}$.
  	\end{enumerate}
  	Moreover, if $\vartheta_{\Gamma}=\frac{1}{n}$ and $\kappa_\Gamma=n-1$ occur simultaneity then $$f_i(\lambda)=\frac{1}{n}\sum_{j=1}^n f_j(\lambda), \quad\forall \lambda\in\Gamma, \mbox{ } \forall 1\leqslant i\leqslant n. $$
  \end{lemma}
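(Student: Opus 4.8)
The plan is to treat the five items and the concluding rigidity statement in turn; each reduces to an elementary property of the cone $\Gamma$ together with what has already been established. For (1): the set whose maximum defines $\kappa_\Gamma$ contains $k=0$, since $\vec{\bf 1}\in\Gamma_n\subseteq\Gamma$, and contains no $k=n$, since that would force the origin into the open cone $\Gamma$, contradicting $\partial\Gamma\neq\emptyset$; hence $\kappa_\Gamma$ is a well-defined integer in $\{0,\dots,n-1\}$. For (2): I would observe that $\sigma_m$ of the vector with $j$ zeros and $n-j$ ones equals $\binom{n-j}{m}$, which is positive exactly when $m\leqslant n-j$; therefore this vector lies in $\Gamma_k$ iff $k\leqslant n-j$, so the largest admissible number of zeros is $n-k$, i.e.\ $\kappa_{\Gamma_k}=n-k$. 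The special cases $k=n$ and $k=1$ already give one implication of (3) and one of (4) respectively.

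Items (3) and (4) both rely on the \emph{monotonicity} of $\Gamma$: if $\lambda\in\Gamma$ and $\mu\geqslant\lambda$ componentwise then $\mu\in\Gamma$, which holds because $\mu-\lambda\in\overline{\Gamma_n}\subseteq\overline{\Gamma}$ and $\Gamma+\overline{\Gamma}\subseteq\Gamma$ for an open convex cone. For (3): if $\Gamma\neq\Gamma_n$ then, since $\Gamma$ is open and contains $\Gamma_n$, some $\lambda\in\Gamma$ has a negative entry; reordering so that $\lambda_1<0$ is minimal, replacing $\lambda_2,\dots,\lambda_n$ by a common positive value $c$ (monotonicity), rescaling by $1/c$, and applying monotonicity once more gives $(0,1,\dots,1)\in\Gamma$, so $\kappa_\Gamma\geqslant1$; together with (2) this proves (3). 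For (4): $\kappa_\Gamma=n-1$ means $(0,\dots,0,1)\in\Gamma$, hence $(-\delta,\dots,-\delta,1)\in\Gamma$ for small $\delta>0$ by openness; scaling by a large $t>0$ and then using monotonicity produces $(\lambda',t)\in\Gamma$ for any given $\lambda'\in\mathbb{R}^{n-1}$, i.e.\ $\Gamma^{\infty}_{\mathbb{R}^{n-1}}=\mathbb{R}^{n-1}$, the type-$2$ condition (Definition \ref{def-type2}, cf.\ Corollary \ref{thm-type2}); conversely that condition forces $(0,\dots,0,c)\in\Gamma$ for some $c>0$, hence $(0,\dots,0,1)\in\Gamma$ and $\kappa_\Gamma=n-1$ by (1).

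For (5), positivity of $\vartheta_\Gamma$ is part of Theorem \ref{yuan-k+1}. For $\vartheta_\Gamma\leqslant\frac1n$, I would test the inequality of Theorem \ref{yuan-k+1} at the fully symmetric point $\lambda=\vec{\bf 1}\in\Gamma$: by symmetry of $f$ all $f_i(\vec{\bf 1})$ coincide, so $f_1(\vec{\bf 1})=\frac1n\sum_j f_j(\vec{\bf 1})$, and $\sum_j f_j(\vec{\bf 1})>0$ by Corollary \ref{coro3.2}, whence $f_1(\vec{\bf 1})\geqslant\vartheta_\Gamma\sum_j f_j(\vec{\bf 1})$ gives $\vartheta_\Gamma\leqslant\frac1n$ (alternatively one reads this off the explicit value in Remark \ref{remark111}, using $\Gamma\subseteq\Gamma_1$). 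For the rigidity clause, suppose in addition $\kappa_\Gamma=n-1$ and $\vartheta_\Gamma=\frac1n$: then the admissible range $1\leqslant i\leqslant\kappa_\Gamma+1$ in Theorem \ref{yuan-k+1} is all of $\{1,\dots,n\}$, so $f_i(\lambda)\geqslant\frac1n\sum_j f_j(\lambda)$ for every $i$ (both sides being permutation-symmetric, the ordering hypothesis is irrelevant); summing over $i$ gives $\sum_i f_i(\lambda)\geqslant\sum_j f_j(\lambda)$, which must be an equality, so $f_i(\lambda)=\frac1n\sum_j f_j(\lambda)$ for all $i$ and all $\lambda\in\Gamma$.

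No step should be genuinely difficult; the only places needing care are (3)--(4), where one must go through the monotonicity of $\Gamma$ and the characterization of type-$2$ cones rather than arguing componentwise, and (5) with its rigidity clause, which fall out cleanly once the universal inequality of Theorem \ref{yuan-k+1} is evaluated at the maximally symmetric configuration $\vec{\bf 1}$.
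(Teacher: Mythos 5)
Your proposal is correct and follows essentially the same route as the paper: the paper's own proof only records $\kappa_\Gamma\leqslant n-1$ (via $\Gamma\subseteq\Gamma_1$) and the evaluation of the inequality of Theorem \ref{yuan-k+1} at $\lambda=\vec{\bf 1}$ to get $\vartheta_\Gamma\leqslant\frac1n$, dismissing the remaining items as obvious, and your arguments supply exactly those omitted elementary steps (monotonicity of $\Gamma$ under componentwise increase, the count of zeros in $\Gamma_k$, the identification of type $2$ with $(0,\dots,0,1)\in\Gamma$, and the summation argument forcing equality in the rigidity clause). The only nitpick is your aside that the case $k=1$ of (2) "gives one implication of (4)" — it merely gives an example — but this is harmless since you prove both directions of (4) in full afterwards.
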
 
  \begin{proof}
  	
  	Note that $\Gamma\subseteq\Gamma_1$, we know $\kappa_\Gamma\leqslant n-1$.
  	Next we prove the last statement. Set $\lambda=\vec{\bf1}$, then  
  	$f_i(\vec{\bf1})=\frac{1}{n}\sum_{j=1}^n f_j(\vec{\bf1}), \mbox{ } \forall 1\leqslant i\leqslant n$. This
  	simply yields $\vartheta_\Gamma\leqslant \frac{1}{n}$. 
  	The proofs of the other three statements are obvious, and we omit them here.
  \end{proof}

  As consequences of Lemma \ref{lemma-add1}, we obtain the following lemmas.

  \begin{lemma}\label{lemma-add3-2}
  	Suppose $\varrho$ obeys \eqref{assumption-4}. Then $\varrho <n$ and 
  	$\varrho\neq 0. $
  \end{lemma}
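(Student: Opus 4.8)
The plan is to reduce the statement to the two elementary bounds on $\kappa_\Gamma$ and $\vartheta_\Gamma$ already recorded in Lemma \ref{lemma-add1}, namely $0\leqslant \kappa_\Gamma\leqslant n-1$ and $0<\vartheta_\Gamma\leqslant \frac{1}{n}$. The assertion $\varrho\neq 0$ needs no argument whatsoever: it is literally one of the two conditions comprising \eqref{assumption-4}, so I would simply quote it.

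For the bound $\varrho<n$, the point is that \eqref{assumption-4} also gives $\varrho<\frac{1}{1-\kappa_\Gamma\vartheta_\Gamma}$, so it suffices to show that the denominator $1-\kappa_\Gamma\vartheta_\Gamma$ is positive and in fact at least $\frac{1}{n}$; then $\frac{1}{1-\kappa_\Gamma\vartheta_\Gamma}\leqslant n$ and we are finished. First I would multiply the two bounds from Lemma \ref{lemma-add1} to get $\kappa_\Gamma\vartheta_\Gamma\leqslant (n-1)\cdot\frac{1}{n}=\frac{n-1}{n}<1$, whence $1-\kappa_\Gamma\vartheta_\Gamma\geqslant \frac{1}{n}>0$ and therefore $\frac{1}{1-\kappa_\Gamma\vartheta_\Gamma}\leqslant n$. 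Combining this with $\varrho<\frac{1}{1-\kappa_\Gamma\vartheta_\Gamma}$ yields the strict inequality $\varrho<n$.

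There is no real obstacle here; the only subtlety worth a moment's care is the source of strictness. When $\kappa_\Gamma=n-1$ and $\vartheta_\Gamma=\frac{1}{n}$ occur simultaneously — the degenerate case flagged at the end of Lemma \ref{lemma-add1}, in which all the $f_i$ coincide — the denominator equals exactly $\frac{1}{n}$ and $\frac{1}{1-\kappa_\Gamma\vartheta_\Gamma}=n$, so the intermediate bound $\varrho\leqslant n$ is only non-strict; strictness is instead inherited from the strict inequality $\varrho<\frac{1}{1-\kappa_\Gamma\vartheta_\Gamma}$ in \eqref{assumption-4}. Once Lemma \ref{lemma-add1} is in hand this is a one-line deduction, and I would present it as such.
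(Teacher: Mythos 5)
Your proof is correct and follows exactly the route the paper intends: the lemma is stated there as an immediate consequence of Lemma \ref{lemma-add1}, via $\kappa_\Gamma\vartheta_\Gamma\leqslant\frac{n-1}{n}$, hence $\frac{1}{1-\kappa_\Gamma\vartheta_\Gamma}\leqslant n$, combined with the strict inequality in \eqref{assumption-4}. Your remark on where the strictness comes from in the borderline case is a sensible extra precaution but introduces nothing beyond the paper's (omitted) argument.
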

  

  \begin{lemma}\label{lemma-add2}
  	Suppose 
  	$(\alpha,\tau)$ satisfies \eqref{tau-alpha}. Then
  	\begin{equation}
  		\label{positive1}
  		\begin{aligned}
  			\alpha(n\tau+2-2n)>0, \quad   (\tau-1)(n\tau+2-2n) >0. \nonumber
  		\end{aligned}
  	\end{equation}

  \end{lemma}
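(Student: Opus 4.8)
The plan is to split according to the sign of $\alpha$ in \eqref{tau-alpha} and, in each case, to reduce both asserted inequalities to a single comparison of $\tau$ with the number $\frac{2n-2}{n}=2-\frac{2}{n}$, observing that $n\tau+2-2n>0$ is equivalent to $\tau>\frac{2n-2}{n}$ (and $n\tau+2-2n<0$ is equivalent to the reverse inequality). The only external facts I will need are $n\geqslant 3$ together with the bounds $0\leqslant\kappa_\Gamma\leqslant n-1$ and $0<\vartheta_\Gamma\leqslant\frac1n$ supplied by Lemma \ref{lemma-add1}.

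First I would handle $\alpha=-1$, where \eqref{tau-alpha} gives $\tau<1$. Since $n\geqslant 3$ one has $1<\frac{2n-2}{n}$, so $\tau<\frac{2n-2}{n}$ and hence $n\tau+2-2n<0$; also $\tau-1<0$. Then $\alpha(n\tau+2-2n)=-(n\tau+2-2n)>0$, and $(\tau-1)(n\tau+2-2n)>0$ as a product of two negative quantities.

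Next I would handle $\alpha=1$, where \eqref{tau-alpha} reads $\tau>1+(n-2)(1-\kappa_\Gamma\vartheta_\Gamma)$. Using $\kappa_\Gamma\vartheta_\Gamma\leqslant\frac{n-1}{n}$ from Lemma \ref{lemma-add1}, I get $1-\kappa_\Gamma\vartheta_\Gamma\geqslant\frac1n>0$, whence the threshold satisfies $1+(n-2)(1-\kappa_\Gamma\vartheta_\Gamma)\geqslant 1+\frac{n-2}{n}=\frac{2n-2}{n}$. Consequently $\tau>\frac{2n-2}{n}$, so $n\tau+2-2n>0$; and since $(n-2)(1-\kappa_\Gamma\vartheta_\Gamma)>0$ we also get $\tau>1$. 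Therefore $\alpha(n\tau+2-2n)=n\tau+2-2n>0$ and $(\tau-1)(n\tau+2-2n)>0$, which finishes this case and hence the lemma.

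The argument is essentially elementary arithmetic; the one place that requires care — and the only real obstacle — is invoking the sharp bounds $\vartheta_\Gamma\leqslant\frac1n$ and $\kappa_\Gamma\leqslant n-1$ so that the threshold $1+(n-2)(1-\kappa_\Gamma\vartheta_\Gamma)$ is never smaller than $\frac{2n-2}{n}$; if this slack were lost the first inequality could fail for $\alpha=1$.
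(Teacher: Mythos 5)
Your proof is correct and follows exactly the route the paper intends: the paper states this lemma as a direct consequence of Lemma \ref{lemma-add1} without writing out the arithmetic, and your case analysis (using $\kappa_\Gamma\leqslant n-1$, $0<\vartheta_\Gamma\leqslant\frac1n$ to bound the threshold by $\frac{2n-2}{n}$ when $\alpha=1$, and the trivial sign check when $\alpha=-1$, $\tau<1$) is precisely the omitted verification.
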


  It seems very interesting
  to answer whether G{\aa}rding's cone 
  is the largest one to share same $\kappa_\Gamma$. 
  
  \begin{problem}
  	\label{problem1}
  	Fix $1< k<n$.  For the cone $\Gamma$ with $\kappa_\Gamma=n-k$, is $\Gamma\subseteq\Gamma_{k}$ true?
  	
  \end{problem}
  
  \begin{problem}
  	\label{problem2}
  	For any $\Gamma$, $\hat{\Gamma}$ with $\kappa_\Gamma\geqslant \kappa_{\hat{\Gamma}}+1$, is $\hat{\Gamma}\subset\Gamma$ true?
  \end{problem}

  Finally, we confirm  \eqref{addistruc} under  the assumptions \eqref{sup-infty} and \eqref{homogeneous-1-buchong2}.
  
  \begin{lemma}
  	\label{lemma2.3}
  	
  	Assume,  in addition to \eqref{concave}, that 
  	$\sup_\Gamma f=+\infty$ and
  	\begin{equation}
  		\label{addistruc-0}
  		\begin{aligned}
  			\lim_{t\rightarrow+\infty} f(t\lambda)>-\infty, \mbox{  } \forall\lambda\in\Gamma.
  		\end{aligned}
  	\end{equation}
  	Then $(f,\Gamma)$ satisfies \eqref{addistruc}.
  \end{lemma}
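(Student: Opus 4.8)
The plan is to prove the formally stronger assertion that $\lim_{t\to+\infty}f(t\lambda)=+\infty$ for every $\lambda\in\Gamma$; because $f$ is real-valued on $\Gamma$, this is exactly \eqref{addistruc}. Throughout I would only use \eqref{concave}, the hypothesis $\sup_\Gamma f=+\infty$, and \eqref{addistruc-0}; in particular no ellipticity hypothesis should be needed, in keeping with the paper's theme.

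First I would record a monotonicity fact along rays. For fixed $\lambda\in\Gamma$ the function $g(t)=f(t\lambda)$, $t>0$, is concave, being the composition of the concave $f$ with the linear map $t\mapsto t\lambda$. If $g$ were not non-decreasing, there would be $s_1<s_2$ with $g(s_1)>g(s_2)$; then the chord slope on $[s_1,s_2]$ is negative, so the right derivative of $g$ at $s_2$ is negative, whence, $g$ being concave, $g(t)\to-\infty$ as $t\to+\infty$, contradicting \eqref{addistruc-0}. Thus $t\mapsto f(t\lambda)$ is non-decreasing on $(0,+\infty)$ for every $\lambda\in\Gamma$, and therefore $L(\lambda):=\lim_{t\to+\infty}f(t\lambda)=\sup_{t>0}f(t\lambda)$ exists in $[f(\lambda),+\infty]$.

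Next, fixing $\lambda\in\Gamma$ and an arbitrary $\mu\in\Gamma$, I would use concavity once more to bound $L(\lambda)$ from below by $\tfrac12 f(\mu)+\tfrac12 f(\lambda)$. Since $\Gamma$ is open, $B(\lambda,\delta)\subset\Gamma$ for some $\delta>0$, so for all large $t$ the vector $\nu_t:=\lambda-\tfrac{1}{2t}\mu$ lies in $\Gamma$, equivalently $2t\lambda-\mu\in\Gamma$; the segment from $\mu$ to $2t\lambda-\mu$ then lies in $\Gamma$ with midpoint $t\lambda$, so \eqref{concave} gives, for such $t$,
\begin{equation}
	f(t\lambda)\geqslant\tfrac12 f(\mu)+\tfrac12 f(2t\lambda-\mu)=\tfrac12 f(\mu)+\tfrac12 f(2t\,\nu_t)\geqslant\tfrac12 f(\mu)+\tfrac12 f(\nu_t), \nonumber
\end{equation}
the last step using the monotonicity of the preceding paragraph along the ray through $\nu_t$ (valid since $2t\geqslant1$). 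As $f$ is continuous and $\nu_t\to\lambda$, letting $t\to+\infty$ with $\mu$ fixed, and using $L(\lambda)\geqslant f(t\lambda)$, yields $L(\lambda)\geqslant\tfrac12 f(\mu)+\tfrac12 f(\lambda)$.

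Finally, since $\sup_\Gamma f=+\infty$ one may choose $\mu\in\Gamma$ with $f(\mu)$ arbitrarily large, so the displayed bound forces $L(\lambda)=+\infty$; as $\lambda\in\Gamma$ was arbitrary and each $f(\mu)$ is finite, \eqref{addistruc} follows. The point that requires care — and essentially the only nonelementary step — is the order of the limiting operations: one must first fix $\mu$, send $t\to+\infty$ to kill the perturbation $\tfrac1{2t}\mu$ and obtain $L(\lambda)\geqslant\tfrac12 f(\mu)+\tfrac12 f(\lambda)$, and only afterwards let $f(\mu)$ approach $\sup_\Gamma f$. Everything else is elementary convexity of $f$ and openness of $\Gamma$.
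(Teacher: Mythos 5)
Your proof is correct, and it takes a genuinely different route from the paper's. The paper argues through the supporting-hyperplane inequality \eqref{concavity1}: dividing by $t$ it gets $\sum_{i} f_i(\lambda)\mu_i\geqslant \limsup_{t\to+\infty}f(t\mu)/t\geqslant 0$, deduces $f_i\geqslant 0$, then uses $\sup_\Gamma f=+\infty$ to get $\sum_i f_i(\lambda)>0$, upgrades to the strict inequality $\sum_i f_i(\lambda)\mu_i>0$ via the openness of $\Gamma$ (replacing $\mu$ by $\mu-\epsilon\vec{\bf 1}$), and finally invokes the equivalence $(2)\Rightarrow(1)$ of Lemma \ref{lemma3.4} to conclude \eqref{addistruc}. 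You instead avoid derivatives and Lemma \ref{lemma3.4} altogether: one-dimensional concavity along rays plus \eqref{addistruc-0} gives monotonicity of $t\mapsto f(t\lambda)$, and the midpoint trick with endpoints $\mu$ and $2t\lambda-\mu$ (legitimate for large $t$ by openness and the cone property, with $f(2t\nu_t)\geqslant f(\nu_t)$ by the same monotonicity) yields $L(\lambda)\geqslant\tfrac12 f(\mu)+\tfrac12 f(\lambda)$, whence $L(\lambda)=+\infty$; your care about fixing $\mu$ before sending $t\to+\infty$ is exactly where the argument could otherwise go wrong, and you handle it correctly. The trade-off: your argument is more elementary (it needs only continuity and concavity, not differentiability) and proves the strictly stronger conclusion $\lim_{t\to+\infty}f(t\lambda)=+\infty$ for every $\lambda\in\Gamma$, which immediately implies \eqref{addistruc} since $f$ is finite on $\Gamma$; the paper's derivative-based route, on the other hand, produces as by-products the inequalities $\sum_i f_i(\lambda)\mu_i\geqslant 0$ and $\sum_i f_i(\lambda)>0$, which are reused elsewhere (e.g.\ in Corollary \ref{coro3.2} and the ellipticity discussion), and it slots directly into the equivalences of Lemma \ref{lemma3.4}.
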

  \begin{proof}
  	We have by \eqref{concavity1} the following inequality 
  $$\sum_{i=1}^n f_i(\lambda)\mu_i \geqslant \limsup_{t\rightarrow+\infty} f(t\mu)/t, \quad  \forall \lambda, \mbox{  } \mu\in\Gamma.$$ 
Thus  
  	\begin{equation}	\label{addistruc-1}	\begin{aligned}		\sum_{i=1}^n f_i(\lambda)\mu_i\geqslant0,  \quad 	\forall \lambda, \mbox{  } \mu\in\Gamma.  	\end{aligned}	\end{equation}
  	So $f_i(\lambda)\geqslant 0$, $\forall \lambda\in \Gamma, 1\leqslant i\leqslant n$.  Together with $\sup_\Gamma f=+\infty$ and 
  	\eqref{concavity1}, we derive 
  	\begin{equation}
  		\label{sum>0}
  		\begin{aligned}
  			\sum_{i=1}^n f_i(\lambda)>0 \mbox{ in } \Gamma.
  		\end{aligned}
  	\end{equation} 
  	By \eqref{addistruc-1}, \eqref{sum>0} and the openness of $\Gamma$, 
  	we get  $(2)$ from Lemma \ref{lemma3.4}, as required.
  	
  \end{proof}

   \medskip
  
  \section{Problem reduction}
  \label{section7}

  \subsection{Preliminaries}
  \label{preli-0}

  
  On a Riemannian manifold $(M,g)$, one defines the curvature tensor
  by
  $$R(X,Y)Z=-\nabla_X\nabla_YZ+\nabla_Y\nabla_X Z+\nabla_{[X,Y]}Z.$$

  Let $e_1,...,e_n$ be a local frame on $M$.  From now on we denote  
  $$  \langle X,Y\rangle=g(X,Y), \quad g_{ij}= \langle e_i,e_j\rangle, \quad  \{g^{ij} \} =  \{g_{ij} \}^{-1}.$$
  Under Levi-Civita connection  of $(M,g)$, $\nabla_{e_i}e_j=\Gamma_{ij}^k e_k$, and $\Gamma_{ij}^k$ denote the Christoffel symbols.  
  The curvature coefficients are given by
  $R_{ijkl}=\langle e_i,R(e_k,e_l)e_j\rangle$. 
  For simplicity we write 
  $$\nabla_i=\nabla_{e_i}, \nabla_{ij}=\nabla_i\nabla_j-\Gamma_{ij}^k\nabla_k, 
  \nabla_{ijk}=\nabla_i\nabla_{jk}-\Gamma_{ij}^l\nabla_{lk}-\Gamma^l_{ik}\nabla_{jl}, \mbox{ etc}.$$
  
  Under the local unit orthogonal frame 
  \begin{equation}  \begin{aligned}
  		\,& {Ric}_g(e_i,e_i)=\sum_{j=1}^n \langle R(e_i,e_j)e_i,e_j \rangle,
  		\,& {R}_g=\sum_{i=1}^n {Ric}_g(e_i,e_i), 
  \end{aligned}  \end{equation}
  \begin{equation} \label{GSW1} \begin{aligned}
  		{Ric}_g(e_i,e_i)=
  		\frac{{R}_g}{2}-\frac{1}{2}\sum_{k,l\neq i} \langle R(e_k,e_l)e_k,e_l\rangle, \mbox{  }  \forall 1\leqslant i\leqslant n.
  \end{aligned}  \end{equation}

  Under the conformal change
  $\tilde{g}=e^{2u}g$, one has  
  (see e.g. \cite{Besse1987})
  \begin{equation}
  	\begin{aligned}
  		{Ric}_{\tilde{g}}=\,& {Ric}_g -\Delta u g -(n-2)\nabla^2u-(n-2)|\nabla u|^2g +(n-2)du\otimes du. \nonumber
  	\end{aligned}
  \end{equation}
  Thus
  \begin{equation}
  	\label{conformal-formula1}
  	\begin{aligned}
  		A_{\tilde{g}}^{\tau,\alpha}
  		= A_{g}^{\tau,\alpha}
  		+\frac{\alpha(\tau-1)}{n-2}\Delta u g-\alpha  \nabla^2 u
  		+\frac{\alpha(\tau-2)}{2}|\nabla u|^2 g
  		+\alpha  du\otimes du. 
  	\end{aligned}
  \end{equation}
  In particular, the Schouten tensor obeys
  \begin{equation}
  	\begin{aligned}
  		\label{conformal-formula2}
  		A_{\tilde{g}} 
  		= A_{g} -\nabla^2 u	-\frac{1}{2}|\nabla u|^2 g
  		+ du\otimes du. 
  	\end{aligned}
  \end{equation}

  \subsection{Fully uniform ellipticity and construction of type 2 cones}

  First, we shall summarize the definition of type 2 cones.
  \begin{definition} 
  	[{\cite{CNS3}}]
  	\label{def-type2}
  	$\Gamma$ is said to be of type 1 if the positive $\lambda_i$ axes belong to $\partial\Gamma$; otherwise it is called of type 2.
  \end{definition}
  
  Except $\Gamma_1$, it seems not easy to find general  type 2 cones. 
   Based on 
  Theorem \ref{yuan-k+1}, we can construct some examples of type 2 cones.
  
  Given a cone $\Gamma$, we take  a constant $\varrho$ satisfying \eqref{assumption-4}.
  Note that  $\varrho<n$ and $\varrho\neq 0$ by Lemma \ref{lemma-add3-2}. We let  
  \begin{equation}	 	\begin{aligned}  		\mu_i=\frac{1}{n-\varrho}
  		\left(\sum_{j=1}^n \lambda_j -\varrho\lambda_i \right), 
  		\mbox{ i.e.  }
  		\lambda_i=\frac{1}{\varrho}\left(\sum_{j=1}^n \mu_j-(n-\varrho)\mu_i\right),	\end{aligned}  \end{equation}
  \begin{equation}
  \label{map1}
  	\begin{aligned}
  		\tilde{\Gamma}  =
  		\left\{(\lambda_1,\cdots,\lambda_n): 
  		\lambda_i=\frac{1}{\varrho}\left(\sum_{j=1}^n \mu_j-(n-\varrho)\mu_i\right),
  		\mbox{ } (\mu_1,\cdots,\mu_n)\in \Gamma 
  		\right\}.
  	\end{aligned}
  \end{equation} 
  So $\tilde{\Gamma}$ is also an open symmetric convex cone of $\mathbb{R}^n$ with  $\tilde{\Gamma}\subseteq\Gamma_1$. 
  One has a symmetric concave function $\tilde{f}$ on  $\tilde{\Gamma}$
  as follows:
  \begin{equation}\label{def-f}\begin{aligned}  \tilde{f}(\lambda)= f(\mu). 
  \end{aligned}\end{equation} 
  
  \begin{proposition} 
  	[Fully uniform ellipticity]
  	\label{key-lemma1}
  	Let $(f,\Gamma)$ satisfy \eqref{concave} and \eqref{addistruc}.  
  	Let  $(\tilde{f},\tilde{\Gamma})$ be as above.
  	Suppose in addition that $\varrho$ satisfies \eqref{assumption-4}. 
  	 Then   there is a uniform positive constant $\theta$ such that
  	\begin{equation}	 
  		\label{FUE-1}
  		\begin{aligned}
  			\frac{\partial \tilde{f}}{\partial \lambda_i}(\lambda) \geqslant \theta \sum_{j=1}^n \frac{\partial \tilde{f}}{\partial \lambda_j}(\lambda)>0 \mbox{ in }  \tilde{\Gamma}, \quad \forall 1\leqslant i\leqslant n.
  		\end{aligned}
  	\end{equation}
  Moreover, $(\tilde{f},\tilde{\Gamma})$ also satisfies \eqref{concave} and \eqref{addistruc}.
  \end{proposition}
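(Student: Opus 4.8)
The plan is to transfer the estimate \eqref{FUE-1} back, through the linear change of variables $\mu=\mu(\lambda)$ from \eqref{map1}, to the partial uniform ellipticity of $(f,\Gamma)$ supplied by Theorem \ref{yuan-k+1}. First I would record the structural facts about $(\tilde f,\tilde\Gamma)$. Since $\varrho\neq 0$ and $\varrho<n$ by Lemma \ref{lemma-add3-2}, the linear map $\mu\mapsto\lambda$ in \eqref{map1} is a bijection of $\mathbb{R}^n$ commuting with coordinate permutations, so $\tilde\Gamma$ is an open symmetric convex cone and $\tilde f(\lambda)=f(\mu)$ is symmetric and concave, the latter because precomposing a concave function with a linear map preserves concavity. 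Moreover $\tilde f(t\lambda)=f(t\mu)$ for $t>0$, so \eqref{addistruc} for $(f,\Gamma)$ immediately yields \eqref{addistruc} for $(\tilde f,\tilde\Gamma)$; this gives the ``moreover'' part. Finally, applying Corollary \ref{coro3.2} to $(f,\Gamma)$ gives $f_k(\mu)\geqslant 0$ for all $k$ and $\mathcal{F}:=\sum_{k=1}^n f_k(\mu)>0$.

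The next step is a chain-rule identity. From $\mu_k=\frac{1}{n-\varrho}\big(\sum_{j=1}^n\lambda_j-\varrho\lambda_k\big)$ one has $\partial\mu_k/\partial\lambda_i=\frac{1}{n-\varrho}(1-\varrho\delta_{ki})$, whence
\begin{equation}
\frac{\partial\tilde f}{\partial\lambda_i}(\lambda)=\frac{1}{n-\varrho}\big(\mathcal{F}-\varrho f_i(\mu)\big),\qquad \sum_{i=1}^n\frac{\partial\tilde f}{\partial\lambda_i}(\lambda)=\mathcal{F}>0. \nonumber
\end{equation}
Since $n-\varrho>0$, proving \eqref{FUE-1} amounts to a uniform bound on $f_i(\mu)/\mathcal{F}$ strictly below $1/\varrho$ when $\varrho>0$, and to almost nothing when $\varrho<0$.

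Then comes a short case analysis. If $\varrho<0$, then $-\varrho f_i(\mu)\geqslant 0$ gives at once $\partial\tilde f/\partial\lambda_i\geqslant\frac{1}{n-\varrho}\mathcal{F}$, so $\theta=\frac{1}{n-\varrho}$ works. If $0<\varrho<n$, order the entries of $\mu$ increasingly; by concavity and symmetry $\max_k f_k(\mu)$ is attained at the smallest entry of $\mu$, and Theorem \ref{yuan-k+1} says that the $\kappa_\Gamma+1$ derivatives associated with the $\kappa_\Gamma+1$ smallest entries of $\mu$ are each $\geqslant\vartheta_\Gamma\mathcal{F}$. Since $\max_k f_k(\mu)$ is one of those $\kappa_\Gamma+1$ quantities, bounding $\mathcal{F}=\sum_k f_k(\mu)$ below by $\max_k f_k(\mu)$ plus the other $\kappa_\Gamma$ of them yields $\max_k f_k(\mu)\leqslant(1-\kappa_\Gamma\vartheta_\Gamma)\mathcal{F}$. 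Inserting this into the identity above gives, for every $i$,
\begin{equation}
\frac{\partial\tilde f}{\partial\lambda_i}(\lambda)\geqslant\frac{1-\varrho(1-\kappa_\Gamma\vartheta_\Gamma)}{n-\varrho}\,\sum_{j=1}^n\frac{\partial\tilde f}{\partial\lambda_j}(\lambda), \nonumber
\end{equation}
and the coefficient is strictly positive precisely because \eqref{assumption-4} forces $\varrho<\frac{1}{1-\kappa_\Gamma\vartheta_\Gamma}$. Taking $\theta$ to be the smaller of the two constants produced in the two cases completes \eqref{FUE-1}.

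The main obstacle is the case $0<\varrho<n$: one must use Theorem \ref{yuan-k+1} in reverse, turning the lower bounds on the $\kappa_\Gamma+1$ largest first derivatives $f_i(\mu)$ into the single upper bound $\max_i f_i(\mu)\leqslant(1-\kappa_\Gamma\vartheta_\Gamma)\mathcal{F}$, and then observing that this bound is exactly calibrated to the threshold $\frac{1}{1-\kappa_\Gamma\vartheta_\Gamma}$ appearing in \eqref{assumption-4} — so the hypothesis is precisely what the argument needs, no more and no less. Everything else — the chain-rule identity, $\mathcal{F}>0$, and the inheritance of \eqref{concave} and \eqref{addistruc} by $(\tilde f,\tilde\Gamma)$ — is routine.
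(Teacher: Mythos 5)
Your proposal is correct and follows essentially the same route as the paper: the chain-rule identity $\partial\tilde f/\partial\lambda_i=\frac{1}{n-\varrho}\bigl(\sum_j f_j(\mu)-\varrho f_i(\mu)\bigr)$, a case split on the sign of $\varrho$, and in the case $\varrho>0$ the use of Theorem \ref{yuan-k+1} to get $f_i(\mu)\leqslant(1-\kappa_\Gamma\vartheta_\Gamma)\sum_j f_j(\mu)$, which is exactly the inequality the paper invokes. The only difference is presentational: you spell out how that upper bound follows from the $(\kappa_\Gamma+1)$-uniform ellipticity and you make explicit the routine transfer of \eqref{concave} and \eqref{addistruc} to $(\tilde f,\tilde\Gamma)$, which the paper disposes of in the remarks preceding the proposition.
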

  
  \begin{proof}
  	
  	{\bf Case 1}: $\varrho<0$.  
  	A straightforward computation shows
  	$$\frac{\partial \tilde{f}}{\partial\lambda_i}
  	= \sum_{j=1}^n\frac{\partial f}{\partial\mu_j}\frac{\partial\mu_j}{\partial\lambda_i}
  	=\frac{1}{n-\varrho}
  	\left(\sum_{j=1}^n\frac{\partial f}{\partial\mu_j} -\varrho\frac{\partial f}{\partial\mu_i}\right)
  	\geqslant \frac{1}{n-\varrho}\sum_{j=1}^n\frac{\partial f}{\partial\mu_j} =
  	\frac{1}{n-\varrho} \sum_{j=1}^n\frac{\partial \tilde{f}}{\partial\lambda_j}.$$
  	
  	{\bf Case 2}: $0<\varrho<\frac{1}{1-\kappa_\Gamma\vartheta_{\Gamma}}$. 
  	By Theorem \ref{yuan-k+1},  $(1- \kappa_\Gamma\vartheta_{\Gamma} )\sum_{j=1}^n \frac{\partial f}{\partial\mu_j}\geqslant \frac{\partial f}{\partial\mu_i},$
  	$\forall 1\leqslant i\leqslant n$. Thus
  	\begin{equation}
  		\label{computation-1}
  		\begin{aligned}
  			\frac{\partial \tilde{f}}{\partial\lambda_i}
  			\geqslant 
  			\frac{1-\varrho(1-\kappa_\Gamma\vartheta_{\Gamma})}{n-\varrho}\sum_{j=1}^n \frac{\partial f}{\partial\mu_j}
  			=
  			\frac{1-\varrho(1-\kappa_\Gamma\vartheta_{\Gamma})}{n-\varrho} \sum_{j=1}^n\frac{\partial \tilde{f}}{\partial\lambda_j}. \nonumber
  		\end{aligned}
  	\end{equation}
  	
  \end{proof}

  \begin{proposition}
  	[Construction of type 2 cones]
  	\label{construction-type2}
  	Let $\varrho$ satisfy  \eqref{assumption-4}, and let $\tilde{\Gamma}$ be the cone as defined in \eqref{map1}. Then $\Gamma_n\subset\tilde{\Gamma}$ and $\tilde{\Gamma}$ is a type 2 cone.
  \end{proposition}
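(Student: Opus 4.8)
The plan is to reduce the two assertions to the single statement that $(1-\varrho,1,\dots,1)\in\Gamma$, and to prove this by a symmetrization argument carried out inside $\Gamma$; the abstract route through Proposition \ref{key-lemma1}, Proposition \ref{thm-k+1} and Lemma \ref{lemma-add1} is not available, since the proof of Proposition \ref{thm-k+1} relies on the containment $\Gamma_n\subseteq(\,\cdot\,)$, and for $\tilde\Gamma$ that containment is itself part of the conclusion. First I record, via Lemma \ref{lemma-add3-2}, that $\varrho\neq0$ and $\varrho<n$, so the linear map $S\colon(\mu_1,\dots,\mu_n)\mapsto(\lambda_1,\dots,\lambda_n)$ with $\lambda_i=\frac1\varrho\bigl(\sum_j\mu_j-(n-\varrho)\mu_i\bigr)$ is invertible and $\tilde\Gamma=S(\Gamma)$. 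A direct computation gives $S^{-1}(0,\dots,0,1)=\tfrac1{n-\varrho}(1,\dots,1,1-\varrho)$, and since $n-\varrho>0$ and $\Gamma$ is symmetric this yields
\[
(0,\dots,0,1)\in\tilde\Gamma\ \Longleftrightarrow\ (1,\dots,1,1-\varrho)\in\Gamma\ \Longleftrightarrow\ (1-\varrho,1,\dots,1)\in\Gamma .
\]
If the last membership holds, then $\kappa_{\tilde\Gamma}\geqslant n-1$ by Definition \ref{yuan-kappa}, hence $\kappa_{\tilde\Gamma}=n-1$ by Lemma \ref{lemma-add1}(1), so $\tilde\Gamma$ is of type 2 by Lemma \ref{lemma-add1}(4); applying the symmetry of $\tilde\Gamma$ to $(0,\dots,0,1)$ also shows that each coordinate vector lies in $\tilde\Gamma$, whence $\Gamma_n\subseteq\tilde\Gamma$ because $\tilde\Gamma$ is a convex cone.

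It remains to prove $(1-\varrho,1,\dots,1)\in\Gamma$. If $\varrho<1$ this is immediate, since then $1-\varrho>0$ and the vector lies in $\Gamma_n\subseteq\Gamma$; this settles in particular the case $\kappa_\Gamma=0$, where $\varrho<\frac1{1-\kappa_\Gamma\vartheta_\Gamma}=1$. So I may assume $\varrho\geqslant1$, which forces $\kappa_\Gamma\geqslant1$ (otherwise $\varrho\geqslant1=\frac1{1-\kappa_\Gamma\vartheta_\Gamma}$ contradicts \eqref{assumption-4}). Using the description of $\vartheta_\Gamma>0$ in Remark \ref{remark111} together with homogeneity, for each $\epsilon\in(0,\vartheta_\Gamma)$ I choose a vector $v=(-\alpha_1,\dots,-\alpha_{\kappa_\Gamma},\alpha_{\kappa_\Gamma+1},\dots,\alpha_n)\in\Gamma$ with all $\alpha_i>0$, normalized so that $\alpha_1=1$, with $\frac{1/n}{D}>\vartheta_\Gamma-\epsilon$, where $D=\sum_{i>\kappa_\Gamma}\alpha_i-\sum_{2\leqslant i\leqslant\kappa_\Gamma}\alpha_i$; note $D>\alpha_1=1$ because $v\in\Gamma\subseteq\Gamma_1$. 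Averaging $v$ over the $(n-1)!$ permutations that fix the first coordinate, and using convexity of $\Gamma$, produces $\bigl(-1,\tfrac{D}{n-1},\dots,\tfrac{D}{n-1}\bigr)\in\Gamma$, hence $\bigl(-\tfrac{n-1}{D},1,\dots,1\bigr)\in\Gamma$ after rescaling by $\tfrac{n-1}{D}>0$.

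Now $\Gamma$ is a convex cone containing $\Gamma_n$, so adding a nonnegative multiple of $(1,0,\dots,0)\in\overline{\Gamma_n}$ to a point of $\Gamma$ keeps it in $\Gamma$; combined with $\tfrac1D>n(\vartheta_\Gamma-\epsilon)$ and letting $\epsilon\to0$, this shows that $(s,1,\dots,1)\in\Gamma$ for every $s>-n(n-1)\vartheta_\Gamma$. On the other hand, Lemma \ref{lemma-add1} gives $\kappa_\Gamma\leqslant n-1$ and $\vartheta_\Gamma\leqslant\frac1n$, so $\kappa_\Gamma\bigl(1+n(n-1)\vartheta_\Gamma\bigr)\leqslant n(n-1)$, an inequality equivalent to $\frac1{1-\kappa_\Gamma\vartheta_\Gamma}\leqslant1+n(n-1)\vartheta_\Gamma$. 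Hence \eqref{assumption-4} forces $\varrho<1+n(n-1)\vartheta_\Gamma$, that is $1-\varrho>-n(n-1)\vartheta_\Gamma$, and therefore $(1-\varrho,1,\dots,1)\in\Gamma$; by the reduction above this completes the proof.

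The step I expect to be the main obstacle is the symmetrization: one must extract near-extremal vectors of $\Gamma$ from the supremum defining $\vartheta_\Gamma$, symmetrize them onto the one-parameter family $(s,1,\dots,1)$, and then verify that the threshold $-n(n-1)\vartheta_\Gamma$ obtained this way is actually covered by the constraint $\varrho<\frac1{1-\kappa_\Gamma\vartheta_\Gamma}$ — a matching that reduces to the elementary inequality $\kappa_\Gamma(1+n(n-1)\vartheta_\Gamma)\leqslant n(n-1)$, which is where the bounds $\kappa_\Gamma\leqslant n-1$ and $\vartheta_\Gamma\leqslant\frac1n$ from Lemma \ref{lemma-add1} are used. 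A secondary point is the bookkeeping around the possibly unattained supremum (handled by the limit $\epsilon\to0$) and around the routine fact that an open convex cone is stable under adding rays of $\overline{\Gamma_n}$.
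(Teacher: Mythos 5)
Your proof is correct, and it takes a genuinely different route from the paper's. The paper never touches the vector $(1-\varrho,1,\dots,1)$ directly: it invokes Proposition \ref{key-lemma1} (fully uniform ellipticity of $\tilde f$ on $\tilde\Gamma$, which does not presuppose $\Gamma_n\subseteq\tilde\Gamma$) and runs an open--closed argument on the segment $\lambda_t=(t,\dots,t,1)$, $t\in[0,1]$, using the concavity inequality \eqref{concavity1} for $\tilde f$ at the point $\frac{a}{\theta}\lambda_{t_i}$ to keep the limit point inside a sublevel-complement $\overline{\tilde\Gamma^{\tilde f(a\vec{\bf 1})}}\subset\tilde\Gamma$; the membership $(1,\dots,1,1-\varrho)\in\Gamma$ is then recorded afterwards as Corollary \ref{coro3-ingamma}. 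You reverse this order: you prove the corollary-level statement first, purely at the level of cone geometry (no $f$ or $\tilde f$ at all), by symmetrizing near-extremal vectors from the supremum in Remark \ref{remark111} onto the family $(s,1,\dots,1)$ and matching the resulting threshold $-n(n-1)\vartheta_\Gamma$ against \eqref{assumption-4} via the inequality $\kappa_\Gamma\bigl(1+n(n-1)\vartheta_\Gamma\bigr)\leqslant n(n-1)$, which indeed follows from $\kappa_\Gamma\leqslant n-1$ and $\vartheta_\Gamma\leqslant\frac1n$ and is sharp exactly in the borderline case $\kappa_\Gamma=n-1$, $\vartheta_\Gamma=\frac1n$, where both thresholds equal $n$ and the strictness of \eqref{assumption-4} saves you. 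All the auxiliary steps check out: $S^{-1}(0,\dots,0,1)=\frac1{n-\varrho}(1,\dots,1,1-\varrho)$, the averaging over permutations fixing the first slot lands on $\bigl(-1,\frac{D}{n-1},\dots,\frac{D}{n-1}\bigr)$ with $D>\alpha_1$, and $\Gamma+\overline{\Gamma}\subseteq\Gamma$ justifies adding the ray $(1,0,\dots,0)$; there is no circularity, since you only use Lemmas \ref{lemma-add1}, \ref{lemma-add3-2}, Proposition \ref{yuanrr-2}/Remark \ref{remark111} and the definitions. What each approach buys: yours is self-contained at the cone level, makes explicit that the conclusion is a statement about $\Gamma$ and $\varrho$ alone, and exhibits the quantitative tightness of the threshold; the paper's is shorter once Proposition \ref{key-lemma1} is in hand and, notably, is insensitive to which admissible constant $\vartheta_\Gamma$ one takes in Theorem \ref{yuan-k+1}, whereas your argument is tied to the specific formula of Remark \ref{remark111} (or any smaller choice, for which it works a fortiori) --- a harmless dependence here, since that is the paper's declared choice, but worth stating explicitly if you write this up.
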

  \begin{proof}
  	Given $(f,\Gamma)$, as in \eqref{map1} and \eqref{def-f}, 
  	we obtain $(\tilde{f},\tilde{\Gamma})$.
  	It only requires to verify
  	\[(0\cdots,0,1)\in \tilde{\Gamma}.\]
  	
  Denote $\lambda_t=(t\cdots,t,1)$ and 
  $I=\{t\in [0,1]: \lambda_t\in \tilde{\Gamma}\}.$
  	Clearly, $1\in I$, and $I$ is an open subset of $I$ by the openness of $\tilde{\Gamma}$. 
  	It is sufficient to prove the closeness.
  	Let $\{t_i\}\subset I$, $0<t_i\leqslant 1$ and $$t_0=\lim_{i\to +\infty}t_i.$$ 
  	It only requires to prove $t_0\in I$. 
  	We denote  $\tilde{\Gamma}^\sigma=  \{\lambda\in\tilde{\Gamma}: \tilde{f}(\mu)>\sigma\}. $  
  	Let $\theta$ be as in \eqref{FUE-1},    and let   $\tilde{f}(a\vec{\bf1})>\sup_{\partial\tilde{\Gamma}}\tilde{f}$.
  	We deduce from \eqref{FUE-1} and the concavity of $\tilde{f}$ that
  	\begin{equation}
  		\begin{aligned}
  			\tilde{f} \left(\frac{a}{\theta}  \lambda_{t_i}\right)
  	\geqslant	\tilde{f}(a\vec{\bf 1})+\frac{at_i}{\theta} \sum_{j=1}^{n-1} \frac{\partial \tilde{f}}{\partial \lambda_j}	\left(\frac{a}{\theta}\lambda_{t_i}\right)+ \frac{a}{\theta} \cdot\frac{\partial \tilde{f}}{\partial \lambda_n}	\left(\frac{a}{\theta}\lambda_{t_i}\right)-a\sum_{j=1}^n \frac{\partial \tilde{f}}{\partial \lambda_j}\left(\frac{a}{\theta}\lambda_{t_i}\right)  
  			> \tilde{f}(a\vec{\bf 1}). \nonumber
  		\end{aligned}
  	\end{equation}
  	Thus $\frac{a}{\theta} \lambda_{t_0}\in \overline{\tilde{\Gamma}^{\tilde{f}(a\vec{\bf 1})}}$. As required, $\lambda_{t_0}\in\tilde{\Gamma}.$
  	This  gives   $I=[0,1]$.
  \end{proof}

  \begin{corollary}
  	\label{coro3-ingamma}
  	For $\varrho$ satisfying \eqref{assumption-4},  $(1,\cdots,1,1-\varrho)\in \Gamma.$
  \end{corollary}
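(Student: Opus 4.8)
The plan is to identify the vector $(1,\dots,1,1-\varrho)$, up to a positive rescaling, as the point $\mu\in\Gamma$ corresponding under the linear change of variables \eqref{map1} to the vector $(0,\dots,0,1)\in\tilde{\Gamma}$ produced by Proposition \ref{construction-type2}.

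First I would invoke Lemma \ref{lemma-add3-2}: the hypothesis \eqref{assumption-4} forces $\varrho\neq 0$ and $\varrho<n$, hence $n-\varrho>0$, so both the linear map appearing in \eqref{map1} and the inverse relation $\mu_i=\frac{1}{n-\varrho}\big(\sum_{j=1}^n\lambda_j-\varrho\lambda_i\big)$ recorded immediately before it are legitimate.

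Next, by Proposition \ref{construction-type2} we have $(0,\dots,0,1)\in\tilde{\Gamma}$; by the description \eqref{map1} of $\tilde{\Gamma}$ this is equivalent to saying that the vector $\mu\in\mathbb{R}^n$ obtained from $\lambda=(0,\dots,0,1)$ through the inverse relation lies in $\Gamma$. Since $\sum_{j=1}^n\lambda_j=1$ in this case, one computes $\mu_i=\frac{1}{n-\varrho}$ for $1\leqslant i\leqslant n-1$ and $\mu_n=\frac{1-\varrho}{n-\varrho}$, that is, $\mu=\frac{1}{n-\varrho}(1,\dots,1,1-\varrho)\in\Gamma$.

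Finally, since $\Gamma$ is a cone it is invariant under multiplication by the positive number $n-\varrho$, so $(1,\dots,1,1-\varrho)=(n-\varrho)\mu\in\Gamma$, as claimed. There is no genuine obstacle here: the statement is an immediate unwinding of Proposition \ref{construction-type2}, the only point requiring attention being the positivity $n-\varrho>0$ from Lemma \ref{lemma-add3-2}, which is exactly what makes both the change of variables and the final rescaling valid.
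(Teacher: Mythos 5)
Your proof is correct and is exactly the argument the paper intends: the corollary is stated as an immediate consequence of Proposition \ref{construction-type2}, obtained by pulling $(0,\cdots,0,1)\in\tilde{\Gamma}$ back through the change of variables \eqref{map1} to get $\frac{1}{n-\varrho}(1,\cdots,1,1-\varrho)\in\Gamma$ and then rescaling by $n-\varrho>0$ (valid by Lemma \ref{lemma-add3-2}).
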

  
   \begin{remark} From the construction of $\tilde{\Gamma}$, it is not easy to see $\Gamma_n\subset\tilde{\Gamma}$. \end{remark}


  
  \subsection{Reduction to Schouten tensor case}

  We   reduce the prescribed curvature equation \eqref{main-equ1}
  to a uniform elliptic equation for conformal deformation of Schouten tensor. To do this,
  we can check  
  \begin{equation}
  	\label{check-1}
  	\begin{aligned}
  		\mathrm{tr}\left(g^{-1}(-A_g)\right)g
  		-\varrho \left(-A_g\right)
  		=  \frac{n-2}{\alpha(\tau-1)}A_{g}^{\tau,\alpha},
  		\nonumber
  	\end{aligned}
  \end{equation}
where we take
\begin{equation}
	\label{beta-gamma-A}
	\begin{aligned}
		\varrho=\frac{n-2}{\tau-1}.  \nonumber
	\end{aligned}
\end{equation}
  \begin{lemma}\label{lemma-add3}
  	Let $(\alpha,\tau)$ satisfy \eqref{tau-alpha}, let  $\varrho=\frac{n-2}{\tau-1}$. 
  	 Then 
  	\begin{equation}
  		\begin{aligned}
  			\varrho<\frac{1}{1-\kappa_\Gamma \vartheta_{\Gamma}} \mbox{ and } \varrho\neq 0. \nonumber
  		\end{aligned}
  	\end{equation}

  \end{lemma}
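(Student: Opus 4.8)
The plan is to split according to the sign of $\alpha$ in \eqref{tau-alpha}, since in each of the two cases the sign of $\tau-1$ is completely determined, and hence the sign and size of $\varrho=\frac{n-2}{\tau-1}$ can be compared directly with $\frac{1}{1-\kappa_\Gamma\vartheta_\Gamma}$. The preliminary fact I would record first is that, by Lemma \ref{lemma-add1}, we have $0\leqslant\kappa_\Gamma\leqslant n-1$ and $0<\vartheta_\Gamma\leqslant\frac1n$, so $\kappa_\Gamma\vartheta_\Gamma\leqslant\frac{n-1}{n}$ and therefore $1-\kappa_\Gamma\vartheta_\Gamma\geqslant\frac1n>0$. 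In particular $\frac{1}{1-\kappa_\Gamma\vartheta_\Gamma}$ is a well-defined positive number, and since $n\geqslant3$ the factor $n-2$ is positive as well.

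In the case $\alpha=-1$, condition \eqref{tau-alpha} gives $\tau<1$, hence $\tau-1<0$ and $\varrho=\frac{n-2}{\tau-1}<0$. Then trivially $\varrho\neq0$, and $\varrho<0<\frac{1}{1-\kappa_\Gamma\vartheta_\Gamma}$, which is exactly the claim in this case.

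In the case $\alpha=1$, condition \eqref{tau-alpha} reads $\tau>1+(n-2)(1-\kappa_\Gamma\vartheta_\Gamma)$. Since $(n-2)(1-\kappa_\Gamma\vartheta_\Gamma)>0$ by the preliminary fact, this forces $\tau>1$, hence $\tau-1>0$ and $\varrho=\frac{n-2}{\tau-1}>0$, so $\varrho\neq0$. For the remaining inequality $\varrho<\frac{1}{1-\kappa_\Gamma\vartheta_\Gamma}$, note that both $\tau-1$ and $1-\kappa_\Gamma\vartheta_\Gamma$ are positive, so clearing denominators shows this is equivalent to $(n-2)(1-\kappa_\Gamma\vartheta_\Gamma)<\tau-1$, that is, to $\tau>1+(n-2)(1-\kappa_\Gamma\vartheta_\Gamma)$, which is precisely \eqref{tau-alpha}. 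This completes the argument.

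I do not expect a genuine obstacle: the lemma is essentially a bookkeeping translation of the geometric restriction \eqref{tau-alpha} on $(\alpha,\tau)$ into the analytic restriction \eqref{assumption-4} on $\varrho$, via the substitution $\varrho=\frac{n-2}{\tau-1}$. The only input that is not a one-line computation is the positivity $1-\kappa_\Gamma\vartheta_\Gamma>0$, which I borrow from Lemma \ref{lemma-add1}; everything else is elementary manipulation of inequalities, the one point requiring care being to track the sign of $\tau-1$ when cross-multiplying.
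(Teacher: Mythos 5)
Your proof is correct: the positivity $1-\kappa_\Gamma\vartheta_\Gamma\geqslant\frac1n>0$ from Lemma \ref{lemma-add1} plus the sign analysis of $\tau-1$ in the two cases of \eqref{tau-alpha} is exactly the direct verification the paper leaves implicit (it states Lemma \ref{lemma-add3} without proof, as an immediate translation of \eqref{tau-alpha} into \eqref{assumption-4} under $\varrho=\frac{n-2}{\tau-1}$). Nothing is missing.
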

  
  Therefore,  when $f$ is  homogeneous of degree $\mathrm{\varsigma}$,  the equation \eqref{main-equ1} is equivalent to 
  \begin{equation}
  	\label{mainequ-02-0-0}
  	\begin{aligned}
  		\tilde{f}(\lambda(-\tilde{g}^{-1}A_{\tilde{g}}))
  		=\left(\frac{n-2}{\alpha(n\tau+2-2n)}\right)^\mathrm{\varsigma} \psi, 
  	\end{aligned}
  \end{equation}
  in which $\tilde{f}$ is of fully uniform ellipticity in $\tilde{\Gamma}$ according to Proposition \ref{key-lemma1}. Here $(\tilde{f},\tilde{\Gamma})$  is as in \eqref{map1}-\eqref{def-f}.
  To study this equation, according to the formula \eqref{conformal-formula2},
  it requires to consider 
  \begin{equation}
  	\label{mainequ-02-0}
  	\begin{aligned}
  		\tilde{f}(\lambda(g^{-1}( \nabla^2 u+\frac{1}{2}|\nabla u|^2 g - du\otimes du-A_g)))
  		=\left( \frac{n-2}{\alpha(n\tau+2-2n)}\right)^\mathrm{\varsigma} \psi e^{2\mathrm{\varsigma} u}.
  	\end{aligned}
  \end{equation} 

  \begin{remark}
  	In what follows, 
  	 we replace $(\tilde{f}, \tilde{\Gamma})$ by $(f,\Gamma)$
  	 if necessary. 
  \end{remark}

    \medskip
  
  \section{The equations on complete noncompact  manifolds and applications to prescribed curvature problem}
  \label{Sec3}

    From Proposition \ref{key-lemma1} and Lemma \ref{lemma2.3},    \eqref{mainequ-1general} falls into the equation of the form 
  \begin{equation}
  	\label{mainequ-1}
  	\begin{aligned}
  		f\left(\lambda(g^{-1}(\nabla^2 u+A(x,\nabla u)))\right) =\psi(x,u), 
  	\end{aligned}
  \end{equation}
  in which 
  ${f}$ is of fully uniform ellipticity in ${\Gamma}$, i.e., 
  there is a uniform constant $\theta$ such that
  \begin{equation}
  	\label{fully-uniform2}
  	\begin{aligned}
  		f_{i}(\lambda)\geqslant \theta\sum_{j=1}^n f_j(\lambda)>0 	\mbox{ in } \Gamma,		\quad \forall 1\leqslant i\leqslant n.
  	\end{aligned}
  \end{equation}
  In addition,
  $\psi(x,t)$ is a smooth positive  function on $M\times \mathbb{R}$ satisfying \eqref{psi-gamma1},
  and $A(x,p)$ is  a smooth symmetric  $(0,2)$-tensor satisfying \eqref{R-gamma1}.

  In this section we solve the equation \eqref{mainequ-1} on a complete noncompact Riemannian manifold, given an asymptotic condition at infinity 
  \begin{equation}
  	\label{asymptotic-condition1}
  	\begin{aligned}
  		f(\lambda(g^{-1}(\nabla^2 \underline{u}+A(x,\nabla \underline{u})))) \geqslant  \psi(x,\underline{u}), 
  		\mbox{ }
  		\forall x\in M.
  	\end{aligned}
  \end{equation} 
  Also, for the equation \eqref{mainequ-1}, we say a $C^2$ function $w$ is an \textit{admissible} function, if 
  \begin{equation}\begin{aligned}		\lambda(g^{-1} (\nabla^2w+A(x,\nabla w)))\in \Gamma.  \nonumber	\end{aligned}\end{equation}

  \begin{theorem}
  	\label{thm2-pde}
  	Assume $(M,g)$ is a complete noncompact Riemannian manifold of dimension $n\geqslant 3$ and suppose a $C^2$-admissible function $\underline{u}$ satisfying 
  	the asymptotic condition \eqref{asymptotic-condition1}.	
  	Suppose in addition that  \eqref{concave},  \eqref{sup-infty},	\eqref{homogeneous-1-buchong2},   \eqref{psi-gamma1}, \eqref{R-gamma1} and  \eqref{fully-uniform2} hold.
  	Then the equation \eqref{mainequ-1} has 
  	a unique smooth maximal admissible solution $u$.
  	Moreover,  $u\geqslant \underline{u}$ in $M.$
  \end{theorem}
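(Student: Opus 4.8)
The plan is to obtain $u$ as the decreasing limit of solutions of Dirichlet problems with infinite boundary data on an exhaustion of $M$, using the fully uniform ellipticity \eqref{fully-uniform2} to produce locally uniform a priori estimates and using the subsolution $\underline{u}$ coming from \eqref{asymptotic-condition1} to keep the limit finite. First I would fix $o\in M$ and choose a nested sequence of connected, relatively compact open sets $\Omega_1\subset\overline{\Omega_1}\subset\Omega_2\subset\overline{\Omega_2}\subset\cdots$ with smooth boundaries and $\bigcup_j\Omega_j=M$, which exists because $(M,g)$ is complete and noncompact. On each $\overline{\Omega_j}$ the restriction of $\underline{u}$ is a $C^2$-admissible function and all of \eqref{concave}, \eqref{sup-infty}, \eqref{homogeneous-1-buchong2}, \eqref{psi-gamma1}, \eqref{R-gamma1}, \eqref{fully-uniform2} continue to hold verbatim; hence Theorem \ref{thm1-pde-general} (equivalently, the infinite-boundary-value Dirichlet problem treated in Section \ref{DP-1}) furnishes a smooth admissible solution $u_j$ on $\Omega_j$ with $\lim_{x\to\partial\Omega_j}u_j(x)=+\infty$.

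Next I would establish monotonicity and a lower barrier by comparison. Since $\psi$ is increasing in its second variable and \eqref{mainequ-1} is elliptic along admissible functions, a comparison principle holds once the boundary values are ordered. On $\Omega_j\subset\Omega_{j+1}$ the function $u_{j+1}$ is finite near $\partial\Omega_j$ while $u_j\to+\infty$ there, so $u_j\geqslant u_{j+1}$ in $\Omega_j$; this is the content of Proposition \ref{thm-c0-upper-2}, so $\{u_j\}$ is nonincreasing on every fixed $\Omega_{j_0}$ for $j\geqslant j_0$. Similarly, \eqref{asymptotic-condition1} says $\underline{u}$ is an admissible subsolution of \eqref{mainequ-1} on all of $M$, and $u_j=+\infty\geqslant\underline{u}$ on $\partial\Omega_j$, so comparison gives $u_j\geqslant\underline{u}$ in $\Omega_j$. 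Thus on every fixed compact set the sequence is eventually defined, nonincreasing, and bounded below by $\underline{u}$, hence converges pointwise to some $u$ with $u\geqslant\underline{u}$.

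With the locally uniform $C^0$ bound $\underline{u}\leqslant u_j\leqslant u_{j_0+1}$ on $\overline{\Omega_{j_0}}$ for $j\geqslant j_0+1$ in hand, I would invoke the interior gradient and interior second-derivative estimates for \eqref{mainequ-1}: here fully uniform ellipticity \eqref{fully-uniform2}---supplied by Proposition \ref{key-lemma1} and Theorem \ref{yuan-k+1} after the reduction of Section \ref{section7}---together with the growth conditions \eqref{psi-gamma1} and \eqref{R-gamma1} gives $|\nabla u_j|+|\nabla^2 u_j|\leqslant C$ on $\Omega_{j_0}$ with $C$ depending only on $\Omega_{j_0}$, the $C^0$ bound, and the data (these estimates are carried out in the final section). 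Concavity of $f$ then lets the Evans--Krylov theorem and Schauder theory upgrade this to locally uniform $C^{k,\alpha}$ bounds for all $k$, so a diagonal subsequence converges in $C^\infty_{\mathrm{loc}}(M)$ to $u$, which is therefore a smooth admissible solution of \eqref{mainequ-1} on $M$ with $u\geqslant\underline{u}$. For maximality, given any admissible solution $w$ on $M$, on $\partial\Omega_j$ we have $u_j=+\infty\geqslant w$, so comparison gives $w\leqslant u_j$ in $\Omega_j$; letting $j\to\infty$ yields $w\leqslant u$. Hence $u$ is the maximal admissible solution, and since any two maximal solutions dominate each other, they coincide, which gives uniqueness.

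The hard part will be the interior second-order estimate in the third step: it must be genuinely local, so as to be insensitive to the blow-up of $u_j$ along $\partial\Omega_j$, and it must absorb both the gradient nonlinearity $A(x,\nabla u)$ under only the quadratic growth \eqref{R-gamma1} and the $u$-dependence of $\psi$. This is precisely where fully uniform ellipticity, rather than mere ellipticity, is indispensable: it is what turns the harmful third-order terms produced by differentiating the equation into quantities controllable by a good term carrying a definite positive coefficient. A secondary technical point is to justify the comparison principle on a domain where one competitor equals $+\infty$ on the boundary, which I would handle by comparing on the sublevel sets $\{w<t\}\cap\Omega_j$ and letting $t\to+\infty$.
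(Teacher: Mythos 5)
Your proposal follows essentially the same route as the paper: solve the infinite-boundary-value Dirichlet problem on an exhaustion (Theorem \ref{thm1-pde} of Section \ref{DP-1}), use the comparison principle to get a decreasing sequence bounded below by $\underline{u}$ (Propositions \ref{thm-c0-upper-2}--\ref{thm-c0-lower-2}), pass to the limit via the local estimates of Theorems \ref{interior-2nd-2}--\ref{thm-gradient2} together with Evans--Krylov and Schauder theory, and obtain maximality and uniqueness by comparison. Your extra remark on justifying comparison against $+\infty$ boundary data via sublevel sets is a reasonable way to make precise a step the paper treats implicitly.
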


  As a consequence, we obtain Theorem \ref{thm2-pde-general}.
  
    \begin{remark} 	Suppose for any   positive constant $\Lambda$ that  	\begin{equation}	\label{psi-asymptotic1}\begin{aligned}\Lambda \psi(x,t)\geqslant \psi(x,t+C_\Lambda), \mbox{ } \forall (x,t)\in M\times\mathbb{R} \nonumber	\end{aligned}\end{equation}  holds	for a constant $C_\Lambda$.  Then the asymptotic condition  \eqref{asymptotic-condition1}  can be replaced by 	\begin{equation}	\label{asymptotic-condition-02}	\begin{aligned}		f(\lambda(g^{-1}( \nabla^2 \underline{u}+A(x,\nabla \underline{u}))))  \geqslant \Lambda \psi(x,\underline{u}).  \nonumber	\end{aligned}  	\end{equation}  \end{remark}
  
  \subsection*{Comparison principle}

  First of all, we deduce a comparison principle. 
  \begin{lemma}
  	\label{lemma-mp}
  	Let	$(\Omega,g)$ be a compact Riemannian manifold with  boundary $\partial\Omega$. 
  	Suppose  \eqref{concave},  
  	 \eqref{sup-infty},	\eqref{homogeneous-1-buchong2} and \eqref{psi-gamma1} 
  	hold.
  	Let $w, v\in C^2(\Omega)\cap C(\bar \Omega)$ be admissible functions subject to
  	\[f(\lambda(g^{-1}(\nabla^2 w+A(x,\nabla w))))\geqslant \psi(x,w), \mbox{ } f(\lambda(g^{-1} (\nabla^2 v+A(x,\nabla v))))\leqslant \psi(x,v)
  	\mbox{ in } \Omega,  \]
  	\[w-v\leqslant 0 \mbox{ on } \partial \Omega.\]
  	Then 
   \begin{equation}	\begin{aligned}	w-v\leqslant 0 \mbox{ in } \Omega.\nonumber \end{aligned} \end{equation}
  \end{lemma}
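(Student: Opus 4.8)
The plan is to run the standard maximum-principle argument for concave fully nonlinear operators, applied to the function $w-v$. Suppose for contradiction that $\sup_{\bar\Omega}(w-v)>0$. Since $w-v\leqslant 0$ on $\partial\Omega$, this positive maximum is attained at an interior point $x_0\in\Omega$. At $x_0$ one has $\nabla(w-v)(x_0)=0$, i.e. $\nabla w(x_0)=\nabla v(x_0)=:p_0$, and $\nabla^2(w-v)(x_0)\leqslant 0$, i.e.\ $\nabla^2 w(x_0)\leqslant\nabla^2 v(x_0)$ as symmetric $(0,2)$-tensors. Because $A(x,p)$ depends only on the point and the gradient value, and both gradients agree at $x_0$, we get
\[
\mathrm{U}_w(x_0):=\nabla^2 w(x_0)+A(x_0,\nabla w(x_0))\leqslant \nabla^2 v(x_0)+A(x_0,\nabla v(x_0))=:\mathrm{U}_v(x_0).
\]

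The second step is to feed this tensor inequality into $f$. Both $w$ and $v$ are admissible, so $\lambda(g^{-1}\mathrm{U}_w(x_0)),\lambda(g^{-1}\mathrm{U}_v(x_0))\in\Gamma$. By Corollary \ref{coro3.2} (using \eqref{concave}, and \eqref{addistruc} which follows from \eqref{sup-infty} and \eqref{homogeneous-1-buchong2} via Lemma \ref{lemma2.3}) we have $f_i\geqslant 0$ and $\sum_i f_i>0$ in $\Gamma$, so $f$ is monotone: $\mathrm{U}_w(x_0)\leqslant\mathrm{U}_v(x_0)$ forces $\lambda(g^{-1}\mathrm{U}_w(x_0))-\lambda(g^{-1}\mathrm{U}_v(x_0))$ to lie in the closure of $-\Gamma_n\subset-\Gamma$ after simultaneous diagonalization, whence, by \eqref{concavity1} or directly by monotonicity in the matrix entries,
\[
f(\lambda(g^{-1}\mathrm{U}_w(x_0)))\leqslant f(\lambda(g^{-1}\mathrm{U}_v(x_0))).
\]
Combining with the differential inequalities satisfied by $w$ and $v$,
\[
\psi(x_0,w(x_0))\leqslant f(\lambda(g^{-1}\mathrm{U}_w(x_0)))\leqslant f(\lambda(g^{-1}\mathrm{U}_v(x_0)))\leqslant \psi(x_0,v(x_0)).
\]
But $w(x_0)>v(x_0)$ and $\psi_z>0$ by \eqref{psi-gamma1}, so $\psi(x_0,w(x_0))>\psi(x_0,v(x_0))$, a contradiction. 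Hence $\sup_{\bar\Omega}(w-v)\leqslant 0$, i.e.\ $w-v\leqslant 0$ in $\Omega$.

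The only genuinely delicate point is the second step: deducing $f(\lambda(g^{-1}\mathrm{U}_w))\leqslant f(\lambda(g^{-1}\mathrm{U}_v))$ from the tensor inequality $\mathrm{U}_w\leqslant\mathrm{U}_v$ together with both eigenvalue vectors lying in $\Gamma$. This is exactly the monotonicity of $\lambda\mapsto f(\lambda(\cdot))$ as a function of symmetric matrices, which is guaranteed by $f_i\geqslant 0$ (Corollary \ref{coro3.2}); one should invoke it carefully since $f$ is only defined on $\Gamma$, but admissibility of both $w$ and $v$ keeps us inside $\Gamma$ throughout, and the segment joining the two matrices stays in $\Gamma$ as well by convexity. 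Everything else is the textbook interior-maximum argument. Note that strict positivity $\psi_z>0$ is what makes the argument work without any strong maximum principle or regularity beyond $C^2$; the hypotheses \eqref{sup-infty} and \eqref{homogeneous-1-buchong2} enter only to supply \eqref{addistruc} and hence \eqref{elliptic-weak}.
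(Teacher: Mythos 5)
Your proof is correct and follows essentially the same route as the paper: contradiction at an interior maximum point, the sign conditions $f_i\geqslant 0$, $\sum_i f_i>0$ supplied via Lemma \ref{lemma2.3} and Corollary \ref{coro3.2}, monotonicity of $f$ to compare the two operators at that point, and then $\psi_z>0$ from \eqref{psi-gamma1} to conclude. The only cosmetic remark is that ``simultaneous diagonalization'' is not actually available (the two $g$-self-adjoint endomorphisms need not commute); the componentwise ordering of the ordered eigenvalues follows from the min--max characterization, or one argues directly with matrix monotonicity along the connecting segment, exactly as you indicate.
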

  \begin{proof} 
 Assume by contradiction that there is an interior point $x_0\in \Omega$ such that $0<(w-v)(x_0)=\sup_{\Omega} (w-v)$. 
  	Therefore at $x_0$,
  	\begin{equation}
  		\begin{aligned}
  			\,& \nabla^2 v \geqslant \nabla^2 w, \,& \nabla v=\nabla w. \nonumber 
  		\end{aligned}
  	\end{equation}
  By Lemma \ref{lemma2.3} and Corollary \ref{coro3.2}, 
 one has \eqref{elliptic-weak}. Thus at $x_0$
  	\begin{equation}
  		\begin{aligned}
  			\,& f(\lambda(g^{-1} (\nabla^2 v+A(x,\nabla v))))\geqslant f(\lambda(g^{-1} (\nabla^2 w+A(x,\nabla w)))), \nonumber 
  		\end{aligned}
  	\end{equation}
  	which further yields $v(x_0)\geqslant w(x_0)$ by \eqref{psi-gamma1}. This contradicts to $w(x_0)>v(x_0)$.
  \end{proof}
  
  \subsection*{Approximation and the proof of Theorem \ref{thm2-pde}}
  
  We employ an approximate method, based on Theorem 
  \ref{thm1-pde} below. 
  Let $\{M_k\}_{k=1}^{+\infty}$ be an exhaustion series of domains  
  with
  \begin{itemize}
  	\item $M=\cup_{k=1}^{\infty} M_k, \mbox{  }  \bar M_k =M_k\cup\partial M_k,\mbox{  }  \bar M_k\subset\subset M_{k+1}$.
  	
  	\item  $\bar M_k$ is a compact $n$-manifold with smooth boundary $\partial M_k$.
  \end{itemize}
 
  According to Theorem 
  \ref{thm1-pde}, for each $k\geqslant1$,  there is an admissible function $u_k\in C^\infty(M_k)$  satisfying
  \begin{equation}
  	\label{approximate-DP2-2}
  	\begin{aligned}
  		f\left(\lambda(g^{-1}(\nabla^2 u_k+A(x,\nabla u_k)))\right) =\psi(x,u_k)   \mbox{ in } M_k,
  	\end{aligned}
  \end{equation}
  \begin{equation}
  	\label{approximate-DP3-2}
  	\begin{aligned}
  		\lim_{x\rightarrow\partial M_k} u_k(x)=+\infty.  
  	\end{aligned}
  \end{equation}

  By the comparison principle (Lemma \ref{lemma-mp}), we immediately derive
  \begin{proposition}
  	\label{thm-c0-upper-2}
  	Let $u_k$ be the admissible solution to problem \eqref{approximate-DP2-2}-\eqref{approximate-DP3-2}, then
  	\begin{equation}
  		\begin{aligned}
  			u_{k+1}\leqslant u_k \mbox{ in } M_k, \quad \forall k\geqslant1. \nonumber
  		\end{aligned}
  	\end{equation} 
  \end{proposition}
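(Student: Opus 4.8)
The plan is to obtain the monotonicity $u_{k+1}\leqslant u_k$ in $M_k$ as a direct consequence of the comparison principle, Lemma \ref{lemma-mp}, applied to the pair $w=u_{k+1}$, $v=u_k$. The one subtlety is that $u_k$ blows up along $\partial M_k$ by \eqref{approximate-DP3-2}, so Lemma \ref{lemma-mp} cannot be used verbatim on $\Omega=M_k$; instead I would run the comparison on an exhaustion of $M_k$ by sublevel sets of $u_k$.

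First I would note that, since $\bar M_k$ is compact and $\bar M_k\subset\subset M_{k+1}$, the smooth function $u_{k+1}$ is bounded on $\bar M_k$; put $L:=\sup_{\bar M_k}u_{k+1}<+\infty$. From \eqref{approximate-DP3-2}, for every $c>0$ the set $\Omega_c:=\{x\in M_k:u_k(x)<c\}$ has compact closure contained in $M_k$ (a point escaping to $\partial M_k$ would force $u_k\to+\infty$), and $M_k=\bigcup_{c>0}\Omega_c$. By Sard's theorem I would then fix regular values $c_j\uparrow+\infty$ of $u_k$ with $c_j>L$, so that each $\bar\Omega_{c_j}$ is a compact $n$-manifold with smooth boundary $\partial\Omega_{c_j}=\{u_k=c_j\}$ lying inside $M_k$.

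On $\Omega_{c_j}$ both $u_{k+1}$ and $u_k$ are smooth admissible functions solving \eqref{approximate-DP2-2}, hence $u_{k+1}$ is in particular a subsolution and $u_k$ a supersolution in the sense required by Lemma \ref{lemma-mp}; and on $\partial\Omega_{c_j}$ we have $u_k=c_j>L\geqslant u_{k+1}$, so $u_{k+1}-u_k\leqslant 0$ there. Lemma \ref{lemma-mp} then gives $u_{k+1}\leqslant u_k$ on $\Omega_{c_j}$, and letting $j\to+\infty$ (so that the $\Omega_{c_j}$ exhaust $M_k$) yields $u_{k+1}\leqslant u_k$ throughout $M_k$. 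The main --- indeed the only --- obstacle is the handling of the infinite boundary data: one must localize to sublevel sets of $u_k$, on whose boundaries the desired inequality is forced simply because $u_k$ is large there while $u_{k+1}$ stays bounded on $\bar M_k$; apart from this the statement is immediate from Lemma \ref{lemma-mp}, which is what the phrase ``we immediately derive'' refers to.
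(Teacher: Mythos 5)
Your proof is correct and follows the paper's approach: the paper simply invokes the comparison principle (Lemma \ref{lemma-mp}), and your exhaustion of $M_k$ by sublevel sets of $u_k$, using that $u_{k+1}$ is bounded on $\bar M_k\subset\subset M_{k+1}$ while $u_k\to+\infty$ at $\partial M_k$, is exactly the standard way to make that one-line invocation rigorous in the presence of infinite boundary data.
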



  \begin{proposition}
  	\label{thm-c0-lower-2}
  	For any admissible solution $u_k$ to \eqref{approximate-DP2-2}-\eqref{approximate-DP3-2}, we have 
  	\begin{equation}
  		u_k\geqslant \underline{u} \mbox{ in } M_k, \quad \forall k\geqslant1. \nonumber
  	\end{equation} 
  \end{proposition}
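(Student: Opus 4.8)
The plan is to re‑use the comparison principle of Lemma \ref{lemma-mp}, with $\underline{u}$ playing the role of the (sub)solution and $u_k$ that of the (super)solution: indeed $u_k$ is an exact solution of \eqref{approximate-DP2-2} and $\underline{u}$ satisfies the asymptotic inequality \eqref{asymptotic-condition1}. The only feature not covered verbatim by Lemma \ref{lemma-mp} is that $u_k$ is not continuous up to $\bar M_k$, since it blows up on $\partial M_k$ by \eqref{approximate-DP3-2}. This is harmless: because $\underline{u}\in C^2(\bar M)$ and $\bar M_k\subset\subset M$ is compact, $\underline{u}$ is bounded on $\bar M_k$, and therefore $\underline{u}-u_k\to-\infty$ as $x\to\partial M_k$.

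Concretely, I would argue by contradiction. Suppose $\sup_{M_k}(\underline{u}-u_k)>0$. By the observation above this supremum is attained at an interior point $x_0\in M_k$. (Equivalently, one may fix any $c>\sup_{\bar M_k}\underline{u}$, pass to the precompact sublevel set $\Omega_c=\{x\in M_k:u_k(x)<c\}$, on whose boundary $u_k=c$ and hence $\underline{u}-u_k<0$, apply Lemma \ref{lemma-mp} directly on $\Omega_c$, and then let $c\to+\infty$; either route works.) At the interior maximum point $x_0$ one has $\nabla\underline{u}=\nabla u_k$ and $\nabla^2\underline{u}\leqslant\nabla^2 u_k$, hence $g^{-1}(\nabla^2\underline{u}+A(x,\nabla\underline{u}))\leqslant g^{-1}(\nabla^2 u_k+A(x,\nabla u_k))$ at $x_0$. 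Since $\underline{u}$ is admissible, the left‑hand operator has eigenvalues in $\Gamma$, and then so does the right‑hand one (Weyl's inequality together with $\Gamma_n\subseteq\Gamma$), exactly as in the proof of Lemma \ref{lemma-mp}.

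Invoking \eqref{fully-uniform2} (or Lemma \ref{lemma2.3} and Corollary \ref{coro3.2}) to get \eqref{elliptic-weak}, $f$ is nondecreasing along this matrix inequality, so at $x_0$
\begin{equation}
	\psi(x_0,u_k(x_0))=f(\lambda(g^{-1}(\nabla^2 u_k+A(x,\nabla u_k))))\geqslant f(\lambda(g^{-1}(\nabla^2\underline{u}+A(x,\nabla\underline{u}))))\geqslant\psi(x_0,\underline{u}(x_0)), \nonumber
\end{equation}
where we used \eqref{approximate-DP2-2} for the equality and \eqref{asymptotic-condition1} for the last inequality. Because $\psi_z>0$ by \eqref{psi-gamma1}, this forces $u_k(x_0)\geqslant\underline{u}(x_0)$, i.e. $(\underline{u}-u_k)(x_0)\leqslant0$, contradicting that this quantity equals $\sup_{M_k}(\underline{u}-u_k)>0$. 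Hence $u_k\geqslant\underline{u}$ in $M_k$ for every $k\geqslant1$. The only genuine difficulty is the infinite boundary datum, and it is dispatched by the simple fact that $\underline{u}$ stays bounded on the compact set $\bar M_k$; everything else is the interior maximum‑principle computation already carried out for Lemma \ref{lemma-mp}.
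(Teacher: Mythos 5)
Your argument is correct and is exactly what the paper intends: Proposition \ref{thm-c0-lower-2} is stated as an immediate consequence of the comparison principle, and your proof simply reruns the interior-maximum computation from Lemma \ref{lemma-mp} (using \eqref{asymptotic-condition1}, admissibility, \eqref{elliptic-weak} via Lemma \ref{lemma2.3} and Corollary \ref{coro3.2}, and $\psi_z>0$), with the infinite boundary datum \eqref{approximate-DP3-2} guaranteeing that $\underline{u}-u_k\to-\infty$ near $\partial M_k$ so the supremum is attained at an interior point. No gaps; this matches the paper's route.
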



Proposition \ref{thm-c0-upper-2} states that $\{u_k\}_{k=m}^\infty$ is a decreasing sequence  of functions on $M_m$; while such a decreasing  sequence 
is uniformly bounded from below according to Proposition \ref{thm-c0-lower-2}.
Thus we can take the limit 
  \begin{equation}
	\label{uinfty-lim}
	u_\infty=\lim_{k\to+\infty} u_k.
\end{equation} 
Moreover,
$u_\infty\geqslant \underline{u}$  in $ M.$
 In Propositions \ref{thm-c0-upper-2}-\ref{thm-c0-lower-2}, and Theorems \ref{interior-2nd-2}-\ref{thm-gradient2}  below, we establish  the local estimates up to second order derivatives  for each $u_k$.
Combining with
  Evans-Krylov theorem \cite{Evans82,Krylov83} and classical Schauder theory, 
 we know that $u_\infty$ is in fact a smooth admissible solution to the equation \eqref{mainequ-1}.


  
 Let $w_\infty $ be another admissible solution to the equation \eqref{mainequ-1}, then by the comparison principle we have 
 $u_k\geqslant w_\infty$ in $M_k$ for all $k\geqslant1$. Thus
  	$u_\infty(x)\geqslant w_\infty(x),$ $\forall x\in M.$
  	This means that the solution given by \eqref{uinfty-lim}  is the maximal solution.  The uniqueness of maximal solution is obvious.

  \subsection*{Prescribed curvature problem on complete noncompact  manifolds}
 \label{solving-Schouten1}
  
 Note that the equation  \eqref{mainequ-02-0} is a special case of \eqref{mainequ-1}.  
  As a consequence of Theorem \ref{thm2-pde}, we obtain 
  the following result and Theorem \ref{thm1}.
  \begin{theorem}
  	\label{thm1-shouten}
  	Suppose $(f,\Gamma)$  satisfies \eqref{concave}, \eqref{homogeneous-1-buchong2}, \eqref{homogeneous-1-mu} and 
  	\eqref{fully-uniform2}. Let $(M,g)$ be a complete noncompact Riemannian manifold of dimension $n\geqslant 3$ and with a $C^2$   complete  conformal  metric $\underline{g}$ subject to 
  	\begin{equation}
  		\label{admissible-metric-schouten1}
  		\begin{aligned}
  			\lambda(-g^{-1}A_{\underline{g}})\in \Gamma \mbox{ } \mbox{ in } M  
  		\end{aligned}
  	\end{equation}
  and
  	\begin{equation}
  		\label{key-assum2-2}
  		\begin{aligned}
  	 	f(\lambda(-\underline{g}^{-1} A_{\underline{g}})) \geqslant \Lambda_1  \psi \mbox{ } \mbox{ in } M  
  		\end{aligned}
  	\end{equation}   
  	where $0<\psi\in C^\infty(M)$ and $\Lambda_1$ is a uniform positive constant.
  	Then there  
  	exists 
  	a unique 
  	smooth   maximal   complete  metric $\tilde{g}=e^{2u}g$ satisfying 
  	$f(\lambda(-\tilde{g}^{-1} A_{\tilde{g}})) =\psi$ and $f(\lambda(- {g}^{-1} A_{\tilde{g}})) \in\Gamma.$
  \end{theorem}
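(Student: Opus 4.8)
The plan is to observe that, after the standard conformal change, Theorem~\ref{thm1-shouten} is a direct application of Theorem~\ref{thm2-pde}. Write $\tilde g=e^{2u}g$. By the conformal formula \eqref{conformal-formula2} for the Schouten tensor,
\[
-\tilde g^{-1}A_{\tilde g}=e^{-2u}\,g^{-1}\!\left(\nabla^2u+\frac12|\nabla u|^2g-du\otimes du-A_g\right),
\]
so, using the $\varsigma$-homogeneity \eqref{homogeneous-1-mu}, the equation $f(\lambda(-\tilde g^{-1}A_{\tilde g}))=\psi$ is equivalent to
\[
f\!\left(\lambda\!\left(g^{-1}\!\left(\nabla^2u+A(x,\nabla u)\right)\right)\right)=e^{2\varsigma u}\psi(x),
\]
where $A(x,p):=\frac12|p|^2g-p\otimes p-A_g(x)$. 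This is precisely of the form \eqref{mainequ-1} with $\psi(x,u):=e^{2\varsigma u}\psi(x)$, and admissibility for \eqref{mainequ-1} is equivalent to $\lambda(-g^{-1}A_{\tilde g})\in\Gamma$.

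Next I would verify the hypotheses of Theorem~\ref{thm2-pde}. Conditions \eqref{concave}, \eqref{homogeneous-1-buchong2} and \eqref{fully-uniform2} are assumed; \eqref{sup-infty} follows from $f>0$ in $\Gamma$ and \eqref{homogeneous-1-mu}, since $f(t\lambda)=t^{\varsigma}f(\lambda)\to+\infty$ (so also \eqref{addistruc} holds, by Lemma~\ref{lemma2.3}). The function $\psi(x,z)=e^{2\varsigma z}\psi(x)$ satisfies \eqref{psi-gamma1} with $h:=\psi$ and $\alpha\equiv2\varsigma$: it is strictly increasing in $z$, tends to $0$ as $z\to-\infty$, and equals $h(x)e^{\alpha(x)z}$. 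The tensor $A(x,p)=\frac12|p|^2g-p\otimes p-A_g(x)$ is smooth and symmetric, the quotients $|A(x,p)|/|p|^2$ and $|D_pA(x,p)|/|p|$ stay bounded as $|p|\to\infty$, and $\nabla'A(x,p)=-\nabla A_g(x)$ is independent of $p$, so \eqref{R-gamma1} holds with $\eta$ a constant depending only on $n$.

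It remains to produce the admissible barrier satisfying the asymptotic condition \eqref{asymptotic-condition1}. By \eqref{conformal-formula2} and \eqref{homogeneous-1-mu} again,
\[
f\!\left(\lambda\!\left(g^{-1}\!\left(\nabla^2\underline u+A(x,\nabla\underline u)\right)\right)\right)=e^{2\varsigma\underline u}\,f\!\left(\lambda(-\underline g^{-1}A_{\underline g})\right)\geqslant\Lambda_1\,e^{2\varsigma\underline u}\psi(x)=\Lambda_1\,\psi(x,\underline u),
\]
where admissibility of $\underline u$ comes from \eqref{admissible-metric-schouten1} and the inequality from \eqref{key-assum2-2}. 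Since $\psi(x,t)=e^{2\varsigma t}\psi(x)$ obeys $\Lambda\psi(x,t)=\psi\!\left(x,t+\frac{1}{2\varsigma}\ln\Lambda\right)$, replacing $\underline u$ by $\underline u+C$ with $C\leqslant\frac{1}{2\varsigma}\ln\Lambda_1$ upgrades this to $f(\lambda(g^{-1}(\nabla^2\underline u+A(x,\nabla\underline u))))\geqslant\psi(x,\underline u)$, that is, \eqref{asymptotic-condition1}; this constant shift preserves the $C^2$ bound, admissibility (constants have vanishing gradient and Hessian), and the completeness of $\underline g$ (a constant conformal rescaling). Then Theorem~\ref{thm2-pde} furnishes a unique smooth maximal admissible solution $u$ of \eqref{mainequ-1} with $u\geqslant\underline u$; translating back, $\tilde g=e^{2u}g$ is admissible, solves $f(\lambda(-\tilde g^{-1}A_{\tilde g}))=\psi$, and is complete because $\tilde g\geqslant\underline g$ pointwise and $\underline g$ is complete, while maximality and uniqueness of $\tilde g$ are inherited from those of $u$.

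There is no genuine analytic obstacle left at this stage, as all the work --- local first- and second-order estimates, the monotone approximation on an exhaustion, and the limit argument --- is packaged inside Theorem~\ref{thm2-pde}. The only points needing care are the routine checks that the specific $\psi(x,u)$ and $A(x,p)$ meet \eqref{psi-gamma1} and \eqref{R-gamma1}, and the conceptual observation that completeness of the produced metric is \emph{not} proved directly but is forced by the comparison $u\geqslant\underline u$ together with completeness of $\underline g$; this is exactly why the hypothesis that $\underline g$ be a \emph{complete} sub-metric, rather than merely a barrier, cannot be weakened.
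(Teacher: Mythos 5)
Your proposal is correct and follows essentially the same route as the paper: the paper also reduces \eqref{main-equ1} via the conformal formula \eqref{conformal-formula2} and the homogeneity \eqref{homogeneous-1-mu} to the uniformly elliptic equation \eqref{mainequ-02-0}, observes it is a special case of \eqref{mainequ-1}, and invokes Theorem \ref{thm2-pde}, with the constant $\Lambda_1$ absorbed exactly as in the remark following that theorem (a constant shift of $\underline{u}$) and completeness inherited from $u\geqslant\underline{u}$. Your explicit verifications of \eqref{psi-gamma1}, \eqref{R-gamma1} and \eqref{sup-infty} are the routine checks the paper leaves implicit.
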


  \begin{remark}
  	
  	When 
  	$M \subset\mathbb{R}^n$  is a bounded domain, the  continuous
  	viscosity solution for $f(\lambda(-\tilde{g}^{-1}A_{\tilde{g}}))=1$ was studied by 
  	Gonz\'alez-Li-Nguyen \cite{GLN-2018} using Perron's method.   	The Dirichlet problem for $\sigma_k^{1/k}(\tilde{g}^{-1}A_{\tilde{g}})=\psi$ was studied by Guan \cite{Guan2007AJM}, while the obtained metric is not complete.
  	
  	Our method is fairly different, which is based on partial uniform ellipticity. 
  	In addition, the metrics obtained in this paper are smooth and complete. 
  \end{remark}

  As a consequence of Theorem \ref{thm1}, 
  together with the Maclaurin inequalities
  \begin{equation}
  	\begin{aligned} \left({\sigma_k(\lambda)}/{C_n^k}\right)^{1/k}
  		\leqslant \left({\sigma_l(\lambda)}/{C_n^l}\right)^{1/l}, 
  		\quad \forall\lambda\in \Gamma_k, 
  		\mbox{ }   1\leqslant l< k\leqslant n  \nonumber
  	\end{aligned}
  \end{equation}
  we obtain the existence of 
  solutions to prescribed $\sigma_k$-curvature equations.
  \begin{theorem}
  	\label{thm2-A_gtau}
  	Fix $\Gamma=\Gamma_k$ and $(\alpha,\tau)$ satisfying \eqref{tau-alpha}, we 
  	assume $(M,g)$ is a   complete noncompact Riemannian manifold of dimension $n\geqslant3$ 
  	with $\lambda(g^{-1}A_g^{\tau,\alpha})\in\Gamma_k$   and
  	\begin{equation}  \begin{aligned}
  			\sigma_k^{1/k}(\lambda(g^{-1}A_g^{\tau,\alpha}))\geqslant \delta \nonumber
  	\end{aligned}  \end{equation}
  for some positive constant $\delta.$
  	Then
  	there is a unique   $(\tilde{g}_1, \tilde{g}_2, \cdots, \tilde{g}_k)$, which consists of 
  	smooth maximal complete conformal metrics,
  	to satisfy
  	\[\begin{cases}
  		-{R}_{\tilde{g}_1}=\frac{n(n-2)}{\alpha(n\tau+2-2n)}, \\
  		\sigma_2(\lambda(\tilde{g}_2^{-1}A^{\tau,\alpha}_{\tilde{g}_2}))=\frac{n(n-1)}{2},   \mbox{ } \lambda({g}^{-1}A^{\tau,\alpha}_{\tilde{g}_2})\in \Gamma_2,
  		\\
  		\quad\quad\quad\quad \vdots \\
  		\sigma_k(\lambda(\tilde{g}_k^{-1}A^{\tau,\alpha}_{\tilde{g}_k}))=\frac{n!}{k!(n-k)!},   \mbox{ } \lambda({g}^{-1}A^{\tau,\alpha}_{\tilde{g}_k})\in \Gamma_k.
  	\end{cases}\]
  	
  \end{theorem}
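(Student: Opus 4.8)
The plan is to derive Theorem~\ref{thm2-A_gtau} by invoking Theorem~\ref{thm1} once for each index $j\in\{1,\dots,k\}$, applied to the pair $(f,\Gamma)=(\sigma_j^{1/j},\Gamma_j)$ with the prescribed function taken to be a positive \emph{constant} $\psi_j$, and then assembling the $k$ resulting conformal metrics. So the whole argument is a verification that the hypotheses of Theorem~\ref{thm1} hold for every such $j$, together with a bookkeeping of the constants.

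First I would record the structural facts. For each $1\le j\le k$ the function $\sigma_j^{1/j}$ is concave on $\Gamma_j$, is positive in $\Gamma_j$ and vanishes on $\partial\Gamma_j$, and is homogeneous of degree $1$; hence $(\sigma_j^{1/j},\Gamma_j)$ satisfies \eqref{concave}, \eqref{homogeneous-1-buchong2} and \eqref{homogeneous-1-mu} with $\mathrm{\varsigma}=1$ --- this is precisely why one works with $\sigma_j^{1/j}$ rather than with $\sigma_j$ itself. It then remains to verify that $(\alpha,\tau)$ obeys \eqref{tau-alpha} for \emph{every} cone $\Gamma_j$ with $j\le k$, not merely for $\Gamma_k$. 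When $\alpha=-1$ this is automatic, since \eqref{tau-alpha} reads $\tau<1$ independently of the cone; equivalently $\varrho=\frac{n-2}{\tau-1}<0$, so \eqref{assumption-4} holds for $\Gamma_j$. When $\alpha=1$, by Lemma~\ref{lemma-add1} one has $\kappa_{\Gamma_j}=n-j$, and I would argue that the threshold $1+(n-2)(1-\kappa_{\Gamma_j}\vartheta_{\Gamma_j})$ appearing in \eqref{tau-alpha} is non-increasing as $\Gamma_j$ shrinks, so that the instance of \eqref{tau-alpha} imposed for $\Gamma_k$ is the most restrictive among $j\le k$ and therefore forces \eqref{tau-alpha} --- equivalently, by Lemma~\ref{lemma-add3}, the condition \eqref{assumption-4} for $\varrho=\frac{n-2}{\tau-1}$ --- for each $\Gamma_j$ with $j\le k$.

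Next I would supply the sub-/super-solution datum that Theorem~\ref{thm1} requires. Since the G{\aa}rding cones are nested, $\Gamma_k\subseteq\Gamma_j$ whenever $j\le k$, the hypothesis $\lambda(g^{-1}A_g^{\tau,\alpha})\in\Gamma_k$ shows that the background metric $g$ itself is a $C^2$ admissible conformal metric for the $\sigma_j$-problem, and it is complete because $(M,g)$ is. For the asymptotic condition \eqref{key-assum1}, the Maclaurin inequalities applied at $\lambda(g^{-1}A_g^{\tau,\alpha})\in\Gamma_k$ give
\[
\sigma_j^{1/j}\!\left(\lambda(g^{-1}A_g^{\tau,\alpha})\right)\;\ge\;\frac{(C_n^j)^{1/j}}{(C_n^k)^{1/k}}\,\sigma_k^{1/k}\!\left(\lambda(g^{-1}A_g^{\tau,\alpha})\right)\;\ge\;\frac{(C_n^j)^{1/j}}{(C_n^k)^{1/k}}\,\delta,
\]
so, $\psi_j$ being a constant, \eqref{key-assum1} holds on all of $M$ with $K_0=\emptyset$ and $\Lambda_0=\frac{(C_n^j)^{1/j}\delta}{(C_n^k)^{1/k}\psi_j}$. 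I would then fix the constants $\psi_j$ so that the equations become the ones in the statement: for $2\le j\le k$ take $\psi_j=(C_n^j)^{1/j}$, turning $\sigma_j^{1/j}(\lambda(\tilde{g}^{-1}A_{\tilde{g}}^{\tau,\alpha}))=\psi_j$ into $\sigma_j(\lambda(\tilde{g}^{-1}A_{\tilde{g}}^{\tau,\alpha}))=C_n^j=\tfrac{n!}{j!(n-j)!}$; and for $j=1$, using $\sigma_1(\lambda(\tilde{g}^{-1}A_{\tilde{g}}^{\tau,\alpha}))=\mathrm{tr}_{\tilde{g}}A_{\tilde{g}}^{\tau,\alpha}=-\tfrac{\alpha(n\tau+2-2n)}{2(n-1)(n-2)}R_{\tilde{g}}$, take $\psi_1=\tfrac{n}{2(n-1)}$, which yields exactly $-R_{\tilde{g}_1}=\tfrac{n(n-2)}{\alpha(n\tau+2-2n)}$. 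In every case $\psi_j>0$ since $\alpha(n\tau+2-2n)>0$ by Lemma~\ref{lemma-add2}.

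With all hypotheses in place, Theorem~\ref{thm1} produces, for each $j$, a unique smooth complete maximal conformal admissible metric $\tilde{g}_j=e^{2u_j}g$ solving the $j$-th equation of the statement with $\lambda(g^{-1}A_{\tilde{g}_j}^{\tau,\alpha})\in\Gamma_j$; the tuple $(\tilde{g}_1,\dots,\tilde{g}_k)$ is the asserted solution, and its uniqueness is inherited from the uniqueness of each $\tilde{g}_j$. The one step that is not entirely routine is the monotonicity used in the second paragraph --- that a single instance of \eqref{tau-alpha} (for $\Gamma_k$) propagates to all the smaller G{\aa}rding cones $\Gamma_j$ with $j\le k$; this rests on controlling how $\kappa_{\Gamma_j}\vartheta_{\Gamma_j}$ depends on $j$ (via $\kappa_{\Gamma_j}=n-j$ from Lemma~\ref{lemma-add1} and the description of $\vartheta_{\Gamma_j}$ in Remark~\ref{remark111}), and is where I expect the modest amount of genuine work to lie. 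Everything else is a direct application of Theorem~\ref{thm1} together with the inclusions $\Gamma_k\subseteq\Gamma_j$ and the Maclaurin inequalities.
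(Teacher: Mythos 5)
Your proposal is exactly the paper's argument: the paper proves this theorem in one line by applying Theorem \ref{thm1} to each pair $(\sigma_j^{1/j},\Gamma_j)$, $1\leqslant j\leqslant k$, with constant right-hand side, using the Maclaurin inequalities together with the hypotheses $\lambda(g^{-1}A_g^{\tau,\alpha})\in\Gamma_k\subseteq\Gamma_j$ and $\sigma_k^{1/k}\geqslant\delta$ so that $g$ itself serves as the complete admissible metric verifying \eqref{key-assum1}, and your normalization of the constants (including the $j=1$ reduction to $-R_{\tilde g_1}$) is correct. One correction to your second paragraph: the monotonicity is stated backwards --- with $\vartheta_\Gamma$ as in Remark \ref{remark111}, the threshold $1+(n-2)(1-\kappa_{\Gamma_j}\vartheta_{\Gamma_j})$ is non-\emph{decreasing} as the cone shrinks (for $n=3$ it equals $4/3$, $23/12$, $2$ on $\Gamma_1,\Gamma_2,\Gamma_3$ respectively), and it is precisely this direction that makes the instance of \eqref{tau-alpha} for the smallest cone $\Gamma_k$ the most restrictive and hence sufficient for every $\Gamma_j$ with $j\leqslant k$ --- a point you rightly flag as needing an argument but which the paper leaves entirely implicit.
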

  
  \vspace{1mm}
  The approximate method also works for conformal scalar curvature equation.  
  \begin{theorem}	\label{thm-scalarcurvature}
  	Let $(M,g)$ be a complete noncompact Riemannian manifold of dimension $n\geqslant3$ with  negative scalar curvature.
  	Let $\psi$ be a smooth positive function which is bounded from above in terms of $-{R}_g$, 
  	i.e. there is a positive constant $\delta$ such that 
  	$0<\psi\leqslant -\delta {R}_g.$  Then there exists a unique  smooth maximal complete conformal metric $\tilde{g}$ with prescribed scalar curvature $-{R}_{\tilde{g}}=\psi.$
  \end{theorem}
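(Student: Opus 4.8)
The plan is to identify Theorem~\ref{thm-scalarcurvature} as the instance $(f,\Gamma)=(\sigma_1,\Gamma_1)$ of Theorem~\ref{thm1-shouten}. First I would record the trace of the Schouten tensor: taking the $\tilde g$-trace of $A_{\tilde g}=\frac{1}{n-2}({Ric}_{\tilde g}-\frac{1}{2(n-1)}{R}_{\tilde g}\,\tilde g)$ gives $\mathrm{tr}_{\tilde g}A_{\tilde g}=\frac{{R}_{\tilde g}}{2(n-1)}$, so that $\sigma_1(\lambda(-\tilde g^{-1}A_{\tilde g}))=\mathrm{tr}_{\tilde g}(-A_{\tilde g})=-\frac{{R}_{\tilde g}}{2(n-1)}$. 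Hence the equation $-{R}_{\tilde g}=\psi$ is exactly $\sigma_1(\lambda(-\tilde g^{-1}A_{\tilde g}))=\frac{\psi}{2(n-1)}$, which is of the form treated by Theorem~\ref{thm1-shouten} with $f=\sigma_1$, $\Gamma=\Gamma_1$, and prescribed function $\psi/(2(n-1))$, positive and in $C^\infty(M)$.

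Next I would verify that $(\sigma_1,\Gamma_1)$ satisfies the structural hypotheses of Theorem~\ref{thm1-shouten}: concavity \eqref{concave} is immediate because $\sigma_1$ is linear; $\sigma_1>0$ in $\Gamma_1$ and $\sigma_1=0$ on $\partial\Gamma_1$, giving \eqref{homogeneous-1-buchong2}; $\sigma_1(t\lambda)=t\sigma_1(\lambda)$ gives \eqref{homogeneous-1-mu} with $\varsigma=1$; and $\frac{\partial\sigma_1}{\partial\lambda_i}\equiv1$ yields $\frac{\partial\sigma_1}{\partial\lambda_i}=\frac1n\sum_{j=1}^n\frac{\partial\sigma_1}{\partial\lambda_j}>0$, i.e.\ fully uniform ellipticity \eqref{fully-uniform2} with $\theta=\tfrac1n$ (equivalently $\kappa_{\Gamma_1}=n-1$ and $\Gamma_1$ is of type~$2$ by Lemma~\ref{lemma-add1}). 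This is the step where the machinery of the paper collapses: $\sigma_1$ on $\Gamma_1$ is already uniformly elliptic, so none of the partial-uniform-ellipticity analysis or the $\varrho$-reduction of Section~\ref{section7} is needed, and the constraint \eqref{tau-alpha} (which would fail at $\tau=1$ for $\Gamma_1$) never enters.

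It then remains only to exhibit the background comparison metric, and I would take $\underline g=g$. Completeness of $\underline g$ is the completeness of $g$. The hypothesis ${R}_g<0$ forces $\mathrm{tr}_g(-A_g)=-\frac{{R}_g}{2(n-1)}>0$, so $\lambda(-g^{-1}A_g)\in\Gamma_1$, which is \eqref{admissible-metric-schouten1}; and $0<\psi\leqslant-\delta {R}_g$ gives $\sigma_1(\lambda(-g^{-1}A_g))=-\frac{{R}_g}{2(n-1)}\geqslant\frac1\delta\cdot\frac{\psi}{2(n-1)}$, which is \eqref{key-assum2-2} with $\Lambda_1=1/\delta$. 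Applying Theorem~\ref{thm1-shouten} produces a unique smooth maximal complete conformal metric $\tilde g=e^{2u}g$ with $\sigma_1(\lambda(-\tilde g^{-1}A_{\tilde g}))=\frac{\psi}{2(n-1)}$; by the trace identity this is $-{R}_{\tilde g}=\psi$, and the proof is complete. (Completeness is also transparent from $\tilde g\geqslant g$, since the maximal solution satisfies $u\geqslant\underline u\equiv0$ by Theorem~\ref{thm2-pde}.)

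In short, the \emph{main obstacle} is that there is no obstacle: this theorem is the linear degeneration $f=\sigma_1$ of the framework, so the genuinely hard input of the paper---upgrading partial to full uniform ellipticity through Theorem~\ref{yuan-k+1} and Proposition~\ref{key-lemma1}---is vacuous, and the assumptions ${R}_g<0$ and $\psi\leqslant-\delta {R}_g$ are tailored exactly so that $g$ itself serves as the admissible subsolution at infinity. A self-contained alternative, not routed through Theorem~\ref{thm1-shouten}, would be to run the approximation scheme of Section~\ref{Sec3} directly on the semilinear equation $-{R}_{\tilde g_k}=\psi$: solve the infinite-boundary-value Dirichlet problems on an exhaustion $\{M_k\}$ (the Loewner--Nirenberg / Aviles--McOwen construction \cite{Aviles1988McOwen}), invoke the comparison principle (Lemma~\ref{lemma-mp}) to obtain a decreasing sequence bounded below by $0$, and pass to the limit via interior gradient and Hessian estimates and Schauder theory; there the only mildly delicate point, completeness of the limit metric, follows at once from $u_k\geqslant0$.
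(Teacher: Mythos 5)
Your proposal is correct and is essentially the paper's own argument: the paper obtains Theorem \ref{thm-scalarcurvature} by the same approximation machinery that yields Theorem \ref{thm1-shouten}, and your reduction — the trace identity $\sigma_1(\lambda(-\tilde g^{-1}A_{\tilde g}))=-\frac{R_{\tilde g}}{2(n-1)}$, the observation that $(\sigma_1,\Gamma_1)$ already satisfies \eqref{concave}, \eqref{homogeneous-1-buchong2}, \eqref{homogeneous-1-mu} and \eqref{fully-uniform2} with $\theta=\tfrac1n$, and the choice $\underline g=g$ with $\Lambda_1=1/\delta$ verifying \eqref{admissible-metric-schouten1} and \eqref{key-assum2-2} — is exactly the intended specialization. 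Your alternative of running the exhaustion/comparison scheme of Section \ref{Sec3} directly on the semilinear equation is precisely what the paper's remark ``the approximate method also works'' refers to, so it is the same route rather than a different one.
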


  \begin{remark}
  	One can apply Theorem \ref{thm-scalarcurvature} to improve \cite[Theorem A]{Aviles1988McOwen2} of Aviles-McOwen. More precisely, 
  	let $(M,g)$ be a complete noncompact Riemannian manifold with nonpositive scalar curvature subject to ${R}_g (x)< -\delta < 0$ 
  	outside a compact subset $K$,
  	then for any 
  	smooth positive function  $\psi$  with $\sup_M\psi<+\infty$,
 there is a unique smooth maximal complete conformal metric $\tilde{g}$ with $-R_{\tilde{g}}=\psi$.
  	
  \end{remark}
  
 
  \medskip
  \section{The Dirichlet problem}
  \label{DP-1}
  
  As described in Section \ref{Sec3},  
  it only requires to solve the equation \eqref{approximate-DP2-2} on  $M_k$
  with infinite boundary value condition \eqref{approximate-DP3-2}.
  Throughout this section, we assume for simplicity that $(M,g)$ is a compact connected Riemannian manifold with smooth boundary $\partial M$, and $\varphi\in C^\infty(\partial M)$.
  
  \subsection{The Dirichlet problem with finite boundary value condition}

  First we solve the Dirichlet problem with finite boundary value condition.  
  \begin{theorem}
  	\label{thm1-finiteBVC}
  	Suppose, in addition to \eqref{concave}, \eqref{sup-infty}, \eqref{homogeneous-1-buchong2},   \eqref{psi-gamma1}, \eqref{R-gamma1} and \eqref{fully-uniform2},  that there is a $C^2$ admissible function.  Then  the equation \eqref{mainequ-1} admits a unique smooth admissible solution $u$  with $u|_{\partial M }=\varphi$.
  \end{theorem}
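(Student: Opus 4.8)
The plan is to solve the Dirichlet problem by the method of continuity, the analytic heart being a chain of a priori estimates up to second order for admissible solutions; what makes these estimates work for a \emph{general} pair $(f,\Gamma)$ is precisely that \eqref{fully-uniform2} holds — fully uniform ellipticity, which by Theorem \ref{yuan-k+1} is available exactly when $\Gamma$ is of type $2$ — together with the concavity \eqref{concave}. First one records that \eqref{concave}, \eqref{sup-infty}, \eqref{homogeneous-1-buchong2} yield \eqref{addistruc} (Lemma \ref{lemma2.3}), hence the weak ellipticity \eqref{elliptic-weak} (Corollary \ref{coro3.2}), which is all that the comparison principle Lemma \ref{lemma-mp} needs; the latter already gives uniqueness and the $C^0$ lower bound below. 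Next, from the given $C^2$ admissible function one builds an admissible subsolution $\underline u$ of \eqref{mainequ-1} with $\underline u|_{\partial M}=\varphi$: translate it down by a large constant — legitimate since $\psi_z>0$, $\psi(x,z)\to0$ as $z\to-\infty$ and $f>0$ in $\Gamma$ — and correct near $\partial M$ using a boundary defining function, the correction not destroying admissibility precisely because $\Gamma$ is of type $2$. Then run continuity on the family whose right-hand side at parameter $t$ is $t\,\psi(x,u)+(1-t)f(\lambda(g^{-1}(\nabla^2\underline u+A(x,\nabla\underline u))))$, so that $u=\underline u$ solves the $t=0$ problem and $t=1$ is \eqref{mainequ-1}; note $\underline u$ stays a subsolution for every $t$. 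Openness of the solvable set follows from the implicit function theorem, since the linearized operator has coefficient matrix with eigenvalues $f_i(\lambda(\cdot))$ (uniformly elliptic by \eqref{fully-uniform2}) and zeroth order coefficient $-\psi_z<0$, hence is invertible; closedness is the content of the estimates, after which Evans--Krylov \cite{Evans82,Krylov83} and Schauder theory promote a $C^2$ bound to $C^\infty$.

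For the estimates: the $C^0$ upper bound is a maximum-principle argument — at an interior maximum $x_0$ one has $\nabla u(x_0)=0$ and $\nabla^2u(x_0)\leqslant0$, so by ellipticity and the inclusion $\Gamma+\bar\Gamma_n\subseteq\bar\Gamma$ (valid as $\Gamma_n\subseteq\Gamma$ and $\Gamma$ is a convex cone, which also forces $\lambda(g^{-1}A(x_0,0))\in\Gamma$) one gets $\psi(x_0,u(x_0))=f(\lambda(g^{-1}(\nabla^2u+A(x_0,0))))\leqslant f(\lambda(g^{-1}A(x_0,0)))$; combined with $\psi(x,z)\geqslant h(x)e^{\alpha(x)z}$ and the positive lower bounds of $h,\alpha$ this gives $\sup_M u\leqslant C$, while $u\geqslant\underline u$ is Lemma \ref{lemma-mp}. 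The boundary gradient estimate uses that $u|_{\partial M}=\varphi$ fixes the tangential-tangential part of $\nabla^2u$ along $\partial M$, the mixed and pure-normal parts being pinned by barriers built from $\underline u$, an extension of $\varphi$, and a defining function together with its square; fully uniform ellipticity enters here essentially, in the guise of Corollaries \ref{coro-type2} and \ref{thm-type2} (type-$2$ structure), which permit barriers whose Hessian is strongly negative in the normal direction yet remain in $\Gamma$. The global gradient estimate is a Bernstein-type argument on $|\nabla u|^2e^{\phi(u)}$ for suitable $\phi$, where the growth hypotheses \eqref{R-gamma1} on $A$, $D_pA$, $\nabla'A$ dominate the inhomogeneous terms and $\psi_z>0$ supplies a favorable sign; it yields $\sup_M|\nabla u|\leqslant C(1+\sup_{\partial M}|\nabla u|)$.

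The main obstacle, as always, is the second-order estimate. I would bound $\sup_M|\nabla^2u|$ by applying the maximum principle to the largest eigenvalue of $g^{-1}(\nabla^2u+A(x,\nabla u))$ multiplied by a test function of $u$ and $|\nabla u|^2$: differentiating \eqref{mainequ-1} twice, the concavity \eqref{concave} furnishes the negative third-order terms needed to dominate the bad ones (the Evans--Krylov mechanism), while the uniform lower bound $f_i\geqslant\theta\sum_jf_j>0$ from \eqref{fully-uniform2} — the very estimate that fails for a general concave $f$ and is supplied here by Theorem \ref{yuan-k+1} — is what lets one absorb the curvature, gradient, $D_pA$ and $\nabla'A$ terms and control the term coming from $\nabla^2(\psi(x,u))$, with $\psi_z>0$ again contributing the right sign. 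This gives $\sup_M|\nabla^2u|\leqslant C(1+\sup_{\partial M}|\nabla^2u|+\sup_M|\nabla u|^2)$; the boundary second-order estimate is once more a barrier argument exploiting the type-$2$ cone, following the Caffarelli--Nirenberg--Spruck scheme, with the double-tangential terms controlled using the already-established $C^1$ bound and the normal-normal term using the equation itself. With $\|u\|_{C^2(\bar M)}\leqslant C$ in hand, \eqref{concave} plus uniform ellipticity give interior and boundary $C^{2,\alpha}$ estimates by Evans--Krylov, Schauder bootstrapping yields $u\in C^\infty$, the continuity method closes, and uniqueness is Lemma \ref{lemma-mp}.
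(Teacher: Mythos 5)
Your proposal follows essentially the same route as the paper's proof: $C^0$ bounds by the maximum/comparison principle, boundary gradient bounds via barriers built from a boundary defining function and made admissible by the type-2 structure of $\Gamma$, interior gradient and Hessian bounds by Bernstein/Pogorelov-type test-function arguments in which the fully uniform ellipticity \eqref{fully-uniform2} absorbs the curvature, gradient and $A$-terms, boundary Hessian bounds in the Caffarelli--Nirenberg--Spruck style (tangential from the boundary data, mixed via barriers, double normal from concavity together with $F^{nn}\geqslant \theta\sum_{i=1}^n f_i$), and then Evans--Krylov, Schauder and the continuity method, with uniqueness from the comparison principle of Lemma \ref{lemma-mp}. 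The only deviations --- your explicit choice of continuity path with a subsolution adjusted to have boundary value $\varphi$, and the interior-maximum argument for the $C^0$ upper bound in place of Lemma \ref{lemma-c0general} --- are cosmetic variants of the same mechanisms.
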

  
  \subsubsection*{$C^0$-estimate}
  
  By the maximum principle, we obtain    $C^0$-estimate. 
  \begin{lemma}
  	\label{lemma-c0general}
  	In addition to \eqref{concave}, 
  	\eqref{elliptic-weak},
  	\eqref{psi-gamma1}, 
  	\eqref{sup-infty} and \eqref{homogeneous-1-buchong2},
  	we assume that there is an admissible function $\underline{w}$. Let  
  	$u\in C^2(\bar M)$ be an admissible solution to the equation \eqref{mainequ-1} with
  	$u=\varphi$  on $\partial  M$,
  	then 
  \begin{equation}
  	\begin{aligned}
  		\min\left\{\inf_{\partial M}(\varphi-\underline{w}), \mbox{ } A_1-\sup_M\underline{w}\right\}\leqslant
  		 u-\underline{w} \leqslant 		\max\left\{\sup_{\partial M}(\varphi-\underline{w}), \mbox{ } A_2-\inf_M\underline{w} \right\},
  		\nonumber
  	\end{aligned}
  \end{equation}
  	where  
  	\begin{equation}
  		\begin{aligned}
  			\,&	\sup_{x\in\bar M}\psi(x,A_1) \leqslant \inf_{x\in \bar M}   f(\lambda(g^{-1}(\nabla^2\underline{w}+A(x,\nabla\underline{w})))),  \\
  			\,&	 \inf_{x\in\bar M}\psi(x,A_2) \geqslant \sup_{x\in \bar M} f(\lambda(g^{-1}(\nabla^2\underline{w}+A(x,\nabla\underline{w})))).
  			\nonumber
  		\end{aligned}
  	\end{equation}
  	
  \end{lemma}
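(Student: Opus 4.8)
The plan is a two-sided maximum principle argument, treating the upper and lower bounds in parallel. Throughout, abbreviate $\Phi[v](x):=f(\lambda(g^{-1}(\nabla^2v+A(x,\nabla v))))$. As a preliminary I would record that under \eqref{homogeneous-1-buchong2} the number $\inf_{x\in\bar M}\Phi[\underline w](x)$ is strictly positive (admissibility of $\underline w$, compactness of $\bar M$, continuity, and $f>0$ in $\Gamma$) while $\sup_{x\in\bar M}\Phi[\underline w](x)<\infty$; together with $\lim_{z\to-\infty}\psi(x,z)=0$ and $\psi(x,z)\geqslant h(x)e^{\alpha(x)z}$ from \eqref{psi-gamma1} (and Dini's theorem for uniformity in $x$), this already shows that constants $A_1,A_2$ as in the statement exist --- though since the statement only posits them this may simply be assumed.

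For the upper bound I would look at $u-\underline w\in C^2(\bar M)$ and let $x_0\in\bar M$ be a point where it attains its maximum. If $x_0\in\partial M$ then $\sup_M(u-\underline w)=\sup_{\partial M}(\varphi-\underline w)$ and there is nothing more to prove. If $x_0$ is an interior point, then $\nabla u(x_0)=\nabla\underline w(x_0)$ and $\nabla^2u(x_0)\leqslant\nabla^2\underline w(x_0)$, so at $x_0$ one has the matrix inequality $\nabla^2u+A(x_0,\nabla u)\leqslant\nabla^2\underline w+A(x_0,\nabla\underline w)$, and both sides have eigenvalue vectors in $\Gamma$ by admissibility of $u$ and of $\underline w$. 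Since $\Gamma$ is an open convex cone containing $\Gamma_n$ we have $\Gamma+\overline{\Gamma}_n\subseteq\Gamma$, so the segment joining the two eigenvalue vectors stays in $\Gamma$ and, because $f_i\geqslant0$ by \eqref{elliptic-weak}, $f$ is nondecreasing along it. Hence, using the equation at $x_0$,
\[
\psi(x_0,u(x_0))=\Phi[u](x_0)\leqslant\Phi[\underline w](x_0)\leqslant\sup_{x\in\bar M}\Phi[\underline w](x)\leqslant\inf_{x\in\bar M}\psi(x,A_2)\leqslant\psi(x_0,A_2),
\]
and $\psi_z>0$ forces $u(x_0)\leqslant A_2$, so $\sup_M(u-\underline w)=u(x_0)-\underline w(x_0)\leqslant A_2-\inf_M\underline w$. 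Combining the two cases gives the asserted upper bound.

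The lower bound is entirely symmetric, applied at a minimum point $x_0$ of $u-\underline w$: at an interior minimum one gets $\nabla^2u+A(x_0,\nabla u)\geqslant\nabla^2\underline w+A(x_0,\nabla\underline w)$, hence $\psi(x_0,u(x_0))=\Phi[u](x_0)\geqslant\Phi[\underline w](x_0)\geqslant\inf_{x\in\bar M}\Phi[\underline w](x)\geqslant\sup_{x\in\bar M}\psi(x,A_1)\geqslant\psi(x_0,A_1)$, so $u(x_0)\geqslant A_1$ by monotonicity of $\psi$ in $z$, and therefore $\inf_M(u-\underline w)\geqslant A_1-\sup_M\underline w$; a boundary minimum instead gives the term $\inf_{\partial M}(\varphi-\underline w)$.

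I do not expect a genuine obstacle: this is a routine comparison argument once degenerate ellipticity of $f$ (monotonicity in the matrix argument) and monotonicity of $\psi$ in $z$ are in hand. The only points deserving a line of care are the legitimacy of the comparison $f(\lambda(N_1))\leqslant f(\lambda(N_2))$ for symmetric $N_1\leqslant N_2$ with $\lambda(N_1)\in\Gamma$, which relies on $\Gamma+\overline{\Gamma}_n\subseteq\Gamma$ together with $f_i\geqslant0$, and the existence of the barrier constants $A_1,A_2$, which is exactly where \eqref{psi-gamma1}, \eqref{sup-infty} and \eqref{homogeneous-1-buchong2} come in.
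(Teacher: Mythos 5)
Your proof is correct and is exactly the argument the paper has in mind: the lemma is justified there only by the phrase ``by the maximum principle,'' and your interior-extremum comparison (same gradient, ordered Hessians, monotonicity of $f$ from \eqref{elliptic-weak} together with $\Gamma+\overline{\Gamma}_n\subseteq\Gamma$, then $\psi_z>0$ to conclude $u(x_0)\leqslant A_2$ resp.\ $u(x_0)\geqslant A_1$) is the same mechanism used in the paper's Lemma \ref{lemma-mp}. Your side remark on the existence of $A_1,A_2$ is fine but not needed, since the lemma only asserts the bound for constants satisfying the stated inequalities.
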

  
  \subsubsection*{Boundary gradient estimate}
 We denote
  the distance from $x$ to $\partial M$ with respect to $g$ by  
  \begin{equation}
  	\label{distance-function}
  	\mathrm{\sigma}(x)=\mathrm{dist}_g(x,\partial M).
  \end{equation}
  Then $\mathrm{\sigma}(x)$ is smooth near the boundary.
  We use  $\mathrm{\sigma}$ to construct local barriers. 
Denote 
  \begin{equation}
  	\label{omega-delta}
  	\Omega_\delta:=\{x\in M: \mathrm{\sigma}(x)<\delta\}.
  \end{equation} 

The assumption \eqref{R-gamma1} on $A(x,p)$ yields that 
 there exists a positive uniform constant  $B$ such that
\begin{equation}
	\label{yuan-418}
	\begin{aligned} 
	\big|	\mathrm{tr}(g^{-1}A(x, p))\big| \leqslant  B(1+|p|^2). 
	\end{aligned}
\end{equation}
  
    \noindent{\bf Local upper barrier}.
  Take $\bar{w}=\epsilon \log(1+\frac{\mathrm{\sigma}}{\delta^2})+\varphi,$ where
  $\epsilon$ is a positive constant to be determined later. 
  By \eqref{yuan-418}, one can see that there is a uniform constant $C_B'$ such that
  \begin{equation}
  	\begin{aligned}
  		\mathrm{tr}(g^{-1}A(x,\bar w)) \leqslant C_B' \left(
  		1+|\nabla\varphi|^2+\frac{\epsilon^2|\nabla\mathrm{\sigma}|^2}{(\delta^2+\mathrm{\sigma})^2}\right).  \nonumber
  	\end{aligned}
  \end{equation}
  
  Set $\epsilon=\frac{1}{2C_B'}$.
  The computation 
  shows that   on $\Omega_\delta$, 
  \begin{equation}
  	\begin{aligned}
  		\mathrm{tr}(g^{-1} A(x,\nabla\bar{w}))
  		\,&	
  		+\Delta \bar{w}
  	  		\leq
  		 \frac{ 2(\delta^2+\mathrm{\sigma})\Delta \mathrm{\sigma} -|\nabla\mathrm{\sigma}|^2}{4C_B'(\delta^2+\mathrm{\sigma})^2}
  		 +C_B'|\nabla \varphi|^2
  		 +C_B'+\Delta\varphi
  	\leqslant 0 \nonumber  
  	\end{aligned}
  \end{equation}
provided $0<\delta\ll1$.
  Notice also that 
  $\underset{\delta\to 0^+}{\lim}\, \bar{w}|_{\mathrm{\sigma}=\delta} =+\infty.$
Combining with the $C^0$-estimate in   Lemma \ref{lemma-c0general},  when   $\delta$ is sufficiently small, we know
  \begin{equation} 	\label{upper-barrier1}
  	\begin{aligned}
  		u\leqslant \bar{w} \mbox{ on } \Omega_{\delta}.  
  	\end{aligned}
  \end{equation}

  \noindent{\bf Local lower barrier}.
  Fix $k\geqslant 1$. Pick  $\varepsilon$ a positive constant to be determined later.
  Inspired by \cite{Guan2008IMRN} we take
  \begin{equation}
  	\label{h-def-k}
  	\begin{aligned}
  		w=\varepsilon\log \frac{\delta^2}{k\mathrm{\sigma}+\delta^2}+\varphi.   
  	\end{aligned}
  \end{equation}
  The straightforward computation gives that for any $k\geqslant1$,
  \begin{equation}
  	\label{compute-h-c1c2}
  	\begin{aligned}
  		\nabla w=-\frac{k\varepsilon \nabla \mathrm{\sigma}}{\delta^2+k\mathrm{\sigma}}+\nabla\varphi,\quad
  		\nabla^2 w= \frac{k^2\varepsilon d\mathrm{\sigma}\otimes d\mathrm{\sigma}  }{(\delta^2+k\mathrm{\sigma})^2}-\frac{k\varepsilon\nabla^2 \mathrm{\sigma}}{\delta^2+k\mathrm{\sigma}}+\nabla^2\varphi.   \nonumber
  	\end{aligned}
  \end{equation}
  Note that $|\nabla\mathrm{\sigma}|=1$ on $\partial M$.
  By 
  \eqref{yuan-418} again,  there exist positive constants $C_B$ and $\delta'$ such that
  \begin{equation}
  	\begin{aligned}
  		\left|\nabla^2\varphi +A\left(x, \nabla\varphi-\frac{k\varepsilon \nabla\mathrm{\sigma}}{\delta^2+k\mathrm{\sigma}} \right)\right|  \leqslant \frac{k^2\varepsilon^2C_B |\nabla \mathrm{\sigma}|^2}{(\delta^2+k\mathrm{\sigma})^2} \nonumber
  		 \mbox{ in } \Omega_\delta, \mbox{ } \forall 0<\delta<\delta'.
  	\end{aligned}
  \end{equation}

    By Lemma \ref{lemma2.3}, $f$ satisfies
  \eqref{addistruc}.  
  According to Corollary \ref{coro-type2} or \ref{thm-type2},
 $$\Gamma^{\infty}_{\mathbb{R}^{n-1}}=\mathbb{R}^{n-1}
 \mbox{ and }
  (0,\cdots,0,1)\in\Gamma.$$
    Notice   $\lambda( g^{-1}\frac{\varepsilon k^2d\mathrm{\sigma}\otimes d\mathrm{\sigma}  }{(\delta^2+k\mathrm{\sigma})^2})=\frac{\varepsilon k^2|\nabla\mathrm{\sigma}|^2}{(\delta^2+k\mathrm{\sigma})^2}(0,\cdots,0,1)$. 
  We  take $\varepsilon$, $\delta$ small enough   
  such that
  \begin{equation}
  	\begin{aligned}
  		\lambda\left(g^{-1}\left(   \frac{\varepsilon k^2 d\mathrm{\sigma}\otimes d\mathrm{\sigma}  }{2(\delta^2+k\mathrm{\sigma})^2}
  		-\frac{ \varepsilon k \nabla^2 \mathrm{\sigma}}{\delta^2+k\mathrm{\sigma}}
  		+\nabla^2\varphi 
  		+A\left(x, \nabla\varphi-\frac{ \varepsilon k \nabla\mathrm{\sigma}}{\delta^2+k\mathrm{\sigma}} \right) \right)\right) \in\Gamma \mbox{ in } \Omega_\delta.
  		\nonumber
  	\end{aligned}
  \end{equation}
Fix  $\varepsilon$, $\delta$ as above. From Lemma \ref{lemma3.4}, 
 for $0<\delta\ll1$,   we have on $\Omega_\delta$
  \begin{equation}
  	\begin{aligned}
  		f\left(\lambda(g^{-1}(\nabla^2w+A(x,\nabla w)))\right) \geqslant
  		f\left( \frac{\varepsilon  k^2 |\nabla\mathrm{\sigma}|^2}{2(\delta^2+k\mathrm{\sigma})^2}
  		\left( 0,\cdots,0,1\right)\right). \nonumber
  	\end{aligned}
  \end{equation}
  Since $\psi_z(x,z)>0$ and $w\leqslant\varphi$,
  we further obtain for $0<\delta\ll1$
  \begin{equation}
  	\begin{aligned}
  		f\left( \frac{k^2\varepsilon  |\nabla\mathrm{\sigma}|^2}{2(\delta^2+k\mathrm{\sigma})^2}
  		\left( 0,\cdots,0,1\right)\right)
  		\geqslant \psi(x,w) \mbox{ on } \Omega_\delta. \nonumber
  	\end{aligned}
  \end{equation}
  It follows from Lemmas \ref{lemma-mp}, \ref{lemma-c0general} and 
  $\underset{\delta\to 0^+}{\lim}\,	w|_{\mathrm{\sigma}=\delta} =-\infty$
  that 
  \begin{equation}
  	\label{low-barrier1}
  	\begin{aligned}
  		u\geqslant w=  \varepsilon\log \frac{\delta^2}{k\mathrm{\sigma}+\delta^2}+\varphi 
  		 \mbox{ on } \Omega_{\delta}   
  	\end{aligned}
  \end{equation}  
for some positive constant $\delta$.

 From \eqref{low-barrier1} (setting $k=1$) and \eqref{upper-barrier1} we derive
  \begin{equation}
  	\begin{aligned}
  		|\nabla u|\leqslant C \mbox{ on } \partial M. \nonumber
  	\end{aligned}
  \end{equation}


  Combining with local estimates in Theorems \ref{interior-2nd-2}-\ref{thm-gradient2}, and boundary estimate in Theorem \ref{thm2-bdy}, we obtain estimates up to second order
  \begin{equation}
  	\begin{aligned}
  		|\nabla^2 u| \leqslant C. \nonumber
  	\end{aligned}
  \end{equation} 
  Together with Evans-Krylov theorem and Schauder theory, one can derive higher estimates. By the standard continuity method, we obtain Theorem \ref{thm1-finiteBVC}.


  
  \subsection{The Dirichlet problem with infinite boundary value condition}
  \label{Sec4}

  \begin{theorem}
  	\label{thm1-pde}
  	Let $(M,g)$ be a compact Riemannian manifold of dimension $n\geq 3$ with smooth boundary. 
  	Suppose in addition that
  	\eqref{concave},  \eqref{psi-gamma1}, \eqref{R-gamma1},
  	\eqref{sup-infty},	\eqref{homogeneous-1-buchong2}  and	\eqref{fully-uniform2} hold. Then the equation \eqref{mainequ-1} admits a smooth admissible solution $u$
  	with  
  	$\underset{x\to \partial M}{\lim}\, u(x)=+\infty,$ provided that there is a $C^2$ admissible function on $\bar M$.
  	
  	
  \end{theorem}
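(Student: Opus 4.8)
The plan is to obtain the solution as the monotone limit of solutions of Dirichlet problems with larger and larger \emph{finite} boundary data, and to trap this monotone family from above by a single global supersolution that already realizes the infinite boundary value.

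First I would fix the $C^2$ admissible function whose existence is assumed, and for each integer $j\geqslant 1$ apply Theorem \ref{thm1-finiteBVC} to get the unique smooth admissible solution $u_j$ of \eqref{mainequ-1} on $M$ with $u_j|_{\partial M}=j$. Applying the comparison principle Lemma \ref{lemma-mp} to the pair $(u_j,u_{j+1})$ gives $u_1\leqslant u_j\leqslant u_{j+1}$ in $M$, so $\{u_j\}$ is nondecreasing and in particular uniformly bounded below on $M$ by the fixed solution $u_1$; a lower bound independent of $j$ can also be read off from the $C^0$-estimate of Lemma \ref{lemma-c0general} applied with the $C^2$ admissible function, the constant datum $j$ dropping out of the relevant side of that estimate.

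The heart of the argument is a uniform upper bound on compact subsets of $M$. For this I would construct a single $\bar u\in C^2(M)$ with $\lim_{x\to\partial M}\bar u(x)=+\infty$ such that
\[
f\big(\lambda(g^{-1}(\nabla^2\bar u+A(x,\nabla\bar u)))\big)\leqslant \psi(x,\bar u)
\]
at every point where the left-hand argument lies in $\Gamma$. To reduce to semilinear information I would use that the concavity \eqref{concave} and the symmetry of $f$ yield the elementary bound $f(\lambda)\leqslant c\,\sigma_1(\lambda)+C_0$ on $\Gamma$, where $c=f_1(\vec{\bf 1})=\cdots=f_n(\vec{\bf 1})>0$ (symmetry together with \eqref{elliptic-weak}, the latter valid here by Lemma \ref{lemma2.3} and Corollary \ref{coro3.2}) and $C_0=f(\vec{\bf 1})-nc$; since $\sigma_1(\lambda(g^{-1}(\nabla^2\bar u+A)))=\Delta\bar u+\mathrm{tr}(g^{-1}A(x,\nabla\bar u))$ and $|\mathrm{tr}(g^{-1}A(x,p))|\leqslant B(1+|p|^2)$ as in \eqref{yuan-418}, it is enough to arrange $c\Delta\bar u+cB(1+|\nabla\bar u|^2)+C_0\leqslant h(x)e^{\alpha(x)\bar u}$, using the lower bound $\psi(x,z)\geqslant h(x)e^{\alpha(x)z}$ from \eqref{psi-gamma1}. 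Near $\partial M$ a function of the form $-\tfrac{2}{a}\log\mathrm{\sigma}+K$ (with $a=\min_{\bar M}\alpha$, $\mathrm{\sigma}$ the distance to $\partial M$ as in \eqref{distance-function}, and $K$ large) works: $\Delta$ and $|\nabla\cdot|^{2}$ of $-\tfrac{2}{a}\log\mathrm{\sigma}$ are both of order $\mathrm{\sigma}^{-2}$, matched exactly by the order $\mathrm{\sigma}^{-2}$ of $h\,e^{aK}\mathrm{\sigma}^{-2}$ on the right, and $K$ is chosen to dominate the coefficient; away from $\partial M$, $\bar u$ is continued as a large constant and the inequality is trivial. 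Equivalently, one may import a singular solution of a semilinear equation of McOwen \cite{McOwen1993} and shift it upward. Given such a $\bar u$, I claim $u_j\leqslant\bar u$ in $M$ for all $j$: since $u_j$ is bounded on $\bar M$ while $\bar u\to+\infty$ at $\partial M$, a positive maximum of $u_j-\bar u$ would occur at an interior point $x_0$, where $\nabla u_j=\nabla\bar u$ and $\nabla^2u_j\leqslant\nabla^2\bar u$; hence the $g$-eigenvalue vector of $\nabla^2\bar u+A(x_0,\nabla\bar u)$ minus that of $\nabla^2u_j+A(x_0,\nabla u_j)$ has nonnegative entries, and since the second vector lies in $\Gamma$ and $\Gamma_n\subseteq\Gamma$, the first lies in $\Gamma$ as well; then \eqref{elliptic-weak}, the inequality defining $\bar u$, and $\psi_z>0$ give $\psi(x_0,u_j(x_0))=f(\ldots u_j)\leqslant f(\ldots\bar u)\leqslant\psi(x_0,\bar u(x_0))<\psi(x_0,u_j(x_0))$, a contradiction.

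It remains to pass to the limit. On every compact $K\subset\subset M$ the bounds $u_1\leqslant u_j\leqslant\bar u$ are uniform in $j$, so the interior gradient and second-derivative estimates of Theorems \ref{interior-2nd-2}--\ref{thm-gradient2} furnish uniform $C^2$ bounds on $K$; since $\psi(x,u_j)$ is then pinched between two positive constants on $K$, the eigenvalue vector stays in a compact subset of $\Gamma$, the equation is uniformly elliptic on $K$, and Evans--Krylov \cite{Evans82,Krylov83} (using \eqref{concave} and \eqref{fully-uniform2}) together with Schauder theory upgrade these to uniform $C^{k}$ bounds on compact subsets for every $k$. A standard compactness and diagonal argument, together with monotonicity, shows $u:=\lim_j u_j\in C^\infty(M)$ is admissible and solves \eqref{mainequ-1}. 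Finally $u\geqslant u_j$ in $M$ and $u_j\in C(\bar M)$ with $u_j|_{\partial M}=j$, so $\liminf_{x\to\partial M}u(x)\geqslant j$ for every $j$, i.e. $\lim_{x\to\partial M}u(x)=+\infty$. I expect the main obstacle to be exactly the third step — producing a bona fide global supersolution carrying the infinite boundary value, where the quadratic blow-up rate $\mathrm{\sigma}^{-2}$ of $\Delta$ and $|\nabla\cdot|^{2}$ must be reconciled with the growth of $\psi$; this is precisely where the hypothesis $\psi(x,z)\geqslant h(x)e^{\alpha(x)z}$ in \eqref{psi-gamma1} enters essentially.
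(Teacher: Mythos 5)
Your proposal is correct and follows essentially the same route as the paper: solve the Dirichlet problems of Theorem \ref{thm1-finiteBVC} with finite boundary data increasing to $+\infty$, get monotonicity from Lemma \ref{lemma-mp}, obtain a uniform interior upper bound by using concavity to dominate $f$ by an affine function of $\sigma_1$ and comparing with a blow-up supersolution of the resulting semilinear inequality (your hand-built $-\tfrac{2}{a}\log\mathrm{\sigma}+K$ barrier, or equivalently McOwen's singular solution, which is exactly what the paper invokes), and then pass to the limit via the local estimates, Evans--Krylov and Schauder. The only cosmetic differences are the choice of boundary data ($j$ versus the paper's $\varepsilon\log k$, which the paper also exploits for the blow-up rate via its lower barrier) and that you conclude $\lim_{x\to\partial M}u=+\infty$ directly from monotonicity of the boundary values, which is fine.
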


  The following   lemma is   useful. It is a consequence of the concavity.
  \begin{lemma}\label{lemma-add5}
  	Let $f$ satisfy \eqref{concave} and $f(\vec{\bf1})<\sup_\Gamma f$, then
  	\begin{equation}
  		\label{key2-main}
  		\begin{aligned}
  			\sum_{i=1}^n \lambda_i\geqslant
  			n+ A_f \left(f(\lambda)-f(\vec{\bf 1})\right), \mbox{ }\forall \lambda\in\Gamma, 
  		\end{aligned}
  	\end{equation}
  	where   $ A_f= {n}\left(\sum_{i=1}^n f_i(\vec{\bf 1})\right)^{-1}.$ 
  	In particular, if $f$ satisfies in addition 
  	  \begin{equation}
  		\label{homogeneous-1}
  		\begin{aligned}
  			f(t\lambda)=tf(\lambda),   
  			\mbox{   } \forall \lambda\in\Gamma, \mbox{  } t>0,
  			\mbox{ with normalization } f(\vec{\bf 1})=1,
  		\end{aligned}
  	\end{equation}
  	then
  	\begin{equation}
  		\label{key1-main}
  		\begin{aligned}
  			\sum_{i=1}^n \lambda_i \geqslant nf(\lambda), \mbox{ }\forall \lambda\in\Gamma.
  		\end{aligned}
  	\end{equation}
  	
  \end{lemma}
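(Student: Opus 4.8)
The plan is to obtain \eqref{key2-main} by plugging the single point $\lambda=\vec{\bf 1}$ into the concavity (supergradient) inequality \eqref{concavity1}, after using the symmetry of $f$ to pin down its gradient there.

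First I would observe that, since $f$ is symmetric, the first partials $f_i(\vec{\bf 1})$ are all equal; write $c$ for their common value, so that $\sum_{j=1}^n f_j(\vec{\bf 1})=nc$ and hence $A_f=1/c$. The relevant structural fact is that $c>0$: by \eqref{elliptic} (or, in the setting of Section \ref{DP-1} where \eqref{fully-uniform2} is in force, directly from $\sum_j f_j(\vec{\bf 1})>0$ and symmetry) one has $c=\tfrac1n\sum_{j=1}^n f_j(\vec{\bf 1})>0$; the hypothesis $f(\vec{\bf 1})<\sup_\Gamma f$ is what rules out the degenerate value $c=0$. Now specialize \eqref{concavity1} by taking its $\lambda$ to be $\vec{\bf 1}$: for every $\mu\in\Gamma$,
$$f(\vec{\bf 1})\geqslant f(\mu)+\sum_{j=1}^n f_j(\vec{\bf 1})(1-\mu_j)=f(\mu)+c\Big(n-\sum_{j=1}^n\mu_j\Big),$$
where the last equality uses $f_j(\vec{\bf 1})=c$ for all $j$. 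Rearranging and dividing by $c>0$ gives
$$\sum_{j=1}^n\mu_j\geqslant n+\frac1c\big(f(\mu)-f(\vec{\bf 1})\big)=n+A_f\big(f(\mu)-f(\vec{\bf 1})\big),$$
which is exactly \eqref{key2-main} after renaming $\mu$ as $\lambda$.

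For the homogeneous case \eqref{homogeneous-1}, differentiating $f(t\lambda)=tf(\lambda)$ in $t$ at $t=1$ yields Euler's identity $\sum_{j=1}^n\lambda_jf_j(\lambda)=f(\lambda)$; evaluating at $\lambda=\vec{\bf 1}$ and using the normalization $f(\vec{\bf 1})=1$ gives $\sum_{j=1}^n f_j(\vec{\bf 1})=1$, hence $A_f=n$. Substituting $A_f=n$ and $f(\vec{\bf 1})=1$ into \eqref{key2-main} produces $\sum_{j=1}^n\lambda_j\geqslant n+n\big(f(\lambda)-1\big)=nf(\lambda)$, which is \eqref{key1-main}. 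The whole argument is essentially a one-line rearrangement of \eqref{concavity1}; the only place deserving a word of care is the positivity of the common value $c=f_i(\vec{\bf 1})$, which makes the division legitimate and $A_f$ positive, and which is automatic under the running ellipticity hypotheses.
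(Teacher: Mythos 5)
Your argument is precisely the paper's intended proof: the paper gives no details beyond calling the lemma ``a consequence of the concavity,'' and your one-line specialization of \eqref{concavity1} at $\vec{\bf 1}$, using symmetry to equate the partials $f_i(\vec{\bf 1})$ and then rearranging, is exactly that argument, with the homogeneous case handled by Euler's identity just as intended. Your caution about the sign of the common value $c=f_i(\vec{\bf 1})$ is also well placed: the lemma's two stated hypotheses by themselves only force $c\neq 0$ (concavity plus $f(\vec{\bf 1})<\sup_\Gamma f$ exclude a vanishing gradient at $\vec{\bf 1}$ but not a negative one), so the strict positivity needed to divide must indeed be imported from the running assumptions of the sections where the lemma is applied --- e.g.\ \eqref{fully-uniform2}, or \eqref{elliptic-weak} as furnished by Corollary \ref{coro3.2} --- which is exactly how you justify it.
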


  \begin{proof}[Proof of Theorem \ref{thm1-pde}]
  	Let $u^{(k)}$ be an admissible solution to  
  	\begin{equation}
  		\label{mainequ-k}
  		\begin{aligned}
  			\,& f\left(\lambda(g^{-1}(\nabla^2 u^{(k)}+A(x,\nabla u^{(k)})))\right) =\psi(x,u^{(k)}), \,&
  			u^{(k)}|_{\partial M}=\varepsilon\log k,
  		\end{aligned}
  	\end{equation}
  	where the constant 
  	$\varepsilon$ 
  	comes from \eqref{h-def-k}.
  	The comparison principle (Lemma \ref{lemma-mp}) simply yields
  	\begin{equation}
  		\label{c0-lower}
  		\begin{aligned}
  			u^{(k+1)} \geqslant u^{(k)}, \quad \forall k\geqslant 1.
  		\end{aligned}
  	\end{equation}
Namely, $\{u^{(k)}\}$ is an increasing sequence.
  	From Lemma \ref{lemma-add5} we get
  	\begin{equation}
  		\begin{aligned}
  			A_f \psi(x,u^{(k)})\leqslant \Delta  u^{(k)}+  \mathrm{tr}(g^{-1}A(x,\nabla u^{(k)}))+ A_f f(\vec{\bf 1})-n. \nonumber
  		\end{aligned}
  	\end{equation}
  	On the other hand, from \eqref{yuan-418}  
  	\begin{equation}
  		\label{yuan-419}
  		\begin{aligned}
  			\mathrm{tr}(g^{-1}A(x,\nabla u^{(k)}))\leqslant  B(1+|\nabla u^{(k)}|^2). 
  		\end{aligned}
  	\end{equation}
  	
  	By the assumption \eqref{psi-gamma1} on $\psi(x,t)$ and the compactness of $\bar M$, 
  	there are   constants $\Lambda_0$ and $\Theta$ such that
  	\begin{equation}
  		\label{yuan-417}
  		\begin{aligned}
  			\psi(x,u^{(k)})\geqslant \Lambda_0 e^{\Theta u^{(k)}}. \nonumber
  		\end{aligned}
  	\end{equation}
  	Fix $\Theta$. 	
  	Take $q$ a constant bigger than 1 but  sufficiently close to $1$, such that
  	\begin{equation}
  		\label{yuan-420}
  		\begin{aligned}
  			\frac{\Theta}{q-1}\geqslant B, \nonumber
  		\end{aligned}
  	\end{equation} 
  where $B$ is as in \eqref{yuan-419}.
  	Fix such $q$.
  	As a corollary of \cite[Theorem 2.2]{McOwen1993}, 
  	there is $\tilde{u}\in C^\infty(M)$ 
  	such that
  	\begin{equation}
  		\begin{aligned}
  			\Delta  \tilde{u}+\,& \frac{\Theta}{q-1}  |\nabla \tilde{u}|^2
  			 =\frac{q-1}{\Theta}  e^{\Theta \tilde{u}} 
  			\mbox{ }
  			\mbox{ in } M,  \,&	
  			\lim_{x\to\partial M} \tilde{u}(x)\to+\infty.
  		\end{aligned}
  	\end{equation}

  	We know that $u^{(k)}-\tilde{u}$ achieves its maximum at an interior point $x_0\in M$. At $x_0$,
  	\begin{equation}
  		\begin{aligned}
  			\nabla u^{(k)}=\nabla\tilde{u}, \quad \nabla^2 u^{(k)} \leqslant \nabla^2\tilde{u}. \nonumber
  		\end{aligned}
  	\end{equation}
  	Thus
  	\begin{equation}
  		\begin{aligned}
  			A_f\Lambda_0 e^{\Theta (u^{(k)}-\tilde{u})} \leqslant \frac{q-1}{\Theta} 
  			 + \left( A_f f(\vec{\bf 1})+B \right)e^{-\Theta \tilde{u}}.  \nonumber
  		\end{aligned}
  	\end{equation}
  	Notice also that $ \underset{M}{\inf}\, \tilde{u}>-\infty.$
  	We conclude
  	\begin{equation}
  		\label{c0-upper}
  		\begin{aligned}
  			u^{(k)}\leqslant \tilde{u}+C
  		\end{aligned}
  	\end{equation}
  	for a uniform positive constant $C$, independent of $k$. 
  	
  	We have derived the (local) $C^0$-estimate in \eqref{c0-lower} and  	\eqref{c0-upper}. We then have 
  	\begin{equation}
  		\label{uinfty}
  		\begin{aligned}
  			u(x)=\lim_{k\to+\infty} u^{(k)}(x), \mbox{  } \forall x\in M. 
  		\end{aligned}
  	\end{equation}
  	
  Combining with Evans-Krylov theorem, 
 Schauder theory as well as
  the local estimates up to second  derivatives established in 
  Theorems
  	\ref{interior-2nd-2}-\ref{thm-gradient2} below, we can conclude that $u$ is  a smooth admissible solution to \eqref{mainequ-1}  
  	with 
  	$\underset{x\to \partial M}{\lim}\, u(x)=+\infty.$
  	
  	Moreover, the inequality \eqref{low-barrier1} gives (note $\varphi=\varepsilon\log k$)
  	\begin{equation}
  		\label{low-barrier-k}
  		\begin{aligned}
  			u^{(k)}\geqslant   \varepsilon\log \frac{k\delta^2}{k\mathrm{\sigma}+\delta^2}  \mbox{ on } \Omega_{\delta}, 
  		\end{aligned}
  	\end{equation} 
  	which implies  $u  \geqslant -\varepsilon\log\mathrm{\sigma}-C_0$  for some $C_0$ near the boundary.

  \end{proof}

  \begin{proposition}
  	\label{prop-7.5}
  	Suppose that $A(x,du)$ in the equation \eqref{mainequ-1} is of the
  	form
  	\begin{equation}
  		\label{A-form1}
  		\begin{aligned}
  			A(x,\nabla u) =\mathrm{U}+\alpha(x)|\nabla u|^2 g+\beta(x) du\otimes du+ R(x,du)
  		\end{aligned}
  	\end{equation} 
  	where $\mathrm{U}$ is a smooth symmetric $(0,2)$ tensor, 
  $R(x,p)$ is a smooth symmetric $(0,2)$ 
  	tensor depending linearly on $p$. Assume in addition that 
  	$\alpha(x)$ and $\beta(x)$  are smooth functions with
  	\begin{equation}
  		\label{alpha-beta-form1}
  		\begin{aligned}
  			(\alpha(x),\cdots,\alpha(x),\alpha(x)+\beta(x)+1) \in \Gamma \mbox{ in } \bar M.
  		\end{aligned}
  	\end{equation} 
  	Then in  \eqref{h-def-k} we can choose $\varepsilon=1$, hence the solution $u$ (defined by \eqref{uinfty})
  	in Theorem \ref{thm1-pde}  
  	satisfies		
  	\begin{equation}
  		\label{u-asymptotic-2}    
  		\begin{aligned}
  			u \geqslant - \log\mathrm{\sigma}-C_0		
  		\end{aligned}
  	\end{equation}
  	for some $C_0$ near the boundary. Moreover, the metric $\tilde{g}=e^{2u}g$ is complete.
  	
  \end{proposition}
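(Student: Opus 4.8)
The plan is to rerun the proof of Theorem \ref{thm1-pde} almost verbatim, modifying only the local lower barrier \eqref{h-def-k} and the verification of its admissibility and subsolution property. For any fixed value of the constant $\varepsilon>0$ in \eqref{h-def-k}, with matching boundary data $u^{(k)}|_{\partial M}=\varepsilon\log k$ in \eqref{mainequ-k}, the construction of the increasing sequence $\{u^{(k)}\}$, the uniform $C^0$ upper bound $u^{(k)}\leqslant\tilde u+C$ from McOwen's equation, the limit $u$ in \eqref{uinfty}, and the conclusion that $u$ is a smooth admissible solution of \eqref{mainequ-1} with $\lim_{x\to\partial M}u=+\infty$, are all unaffected. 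So it is enough to show that, under \eqref{A-form1}--\eqref{alpha-beta-form1}, the barrier $w=\varepsilon\log\frac{\delta^2}{k\mathrm{\sigma}+\delta^2}+\varepsilon\log k$ is admissible and a subsolution on $\Omega_\delta$ for $\delta$ small, uniformly in $k\geqslant1$, when $\varepsilon=1$; then \eqref{low-barrier1}, \eqref{low-barrier-k} and $u\geqslant u^{(k)}$ deliver \eqref{u-asymptotic-2} with $\varepsilon=1$ exactly as in the proof of Theorem \ref{thm1-pde}.

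First I would compute $\nabla^2 w+A(x,\nabla w)$ via \eqref{A-form1}; since $\varphi=\varepsilon\log k$ is constant, $\nabla w=-\frac{k\varepsilon\nabla\mathrm{\sigma}}{\delta^2+k\mathrm{\sigma}}$ and $\nabla^2\varphi=0$. The Hessian contributes the term $\frac{k^2\varepsilon\,d\mathrm{\sigma}\otimes d\mathrm{\sigma}}{(\delta^2+k\mathrm{\sigma})^2}$, while the quadratic part $\alpha(x)|\nabla w|^2g+\beta(x)\,dw\otimes dw$ of $A(x,\nabla w)$ contributes $\frac{k^2\varepsilon^2}{(\delta^2+k\mathrm{\sigma})^2}\big(\alpha(x)|\nabla\mathrm{\sigma}|^2g+\beta(x)\,d\mathrm{\sigma}\otimes d\mathrm{\sigma}\big)$. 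These two are of the \emph{same} order $\frac{k^2}{(\delta^2+k\mathrm{\sigma})^2}$, so, unlike in the generic situation of Theorem \ref{thm1-pde} where the gradient term of $A$ was absorbed as a small perturbation, they must be combined; their sum equals
\[
\frac{k^2\varepsilon^2|\nabla\mathrm{\sigma}|^2}{(\delta^2+k\mathrm{\sigma})^2}\left(\alpha(x)\,g+\Big(\tfrac1\varepsilon+\beta(x)\Big)\frac{d\mathrm{\sigma}\otimes d\mathrm{\sigma}}{|\nabla\mathrm{\sigma}|^2}\right),
\]
whose eigenvalues with respect to $g$ are $\frac{k^2\varepsilon^2|\nabla\mathrm{\sigma}|^2}{(\delta^2+k\mathrm{\sigma})^2}\big(\alpha(x),\cdots,\alpha(x),\alpha(x)+\beta(x)+\tfrac1\varepsilon\big)$. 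All the remaining contributions, namely $\mathrm{U}$, $-\frac{k\varepsilon\nabla^2\mathrm{\sigma}}{\delta^2+k\mathrm{\sigma}}$, $R(x,dw)=-\frac{k\varepsilon}{\delta^2+k\mathrm{\sigma}}R(x,d\mathrm{\sigma})$, and the cross terms from $|\nabla w|^2$ and $dw\otimes dw$, are of order at most $\frac{k\varepsilon}{\delta^2+k\mathrm{\sigma}}+1$, which on $\Omega_\delta$ is a vanishing fraction of the principal term as $\delta\to0^+$, uniformly in $k\geqslant1$.

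The crux is that \eqref{alpha-beta-form1} says precisely that $(\alpha(x),\cdots,\alpha(x),\alpha(x)+\beta(x)+1)\in\Gamma$, and, since $\Gamma_n\subseteq\Gamma$ and $\Gamma$ is an open convex cone, $(\alpha(x),\cdots,\alpha(x),\alpha(x)+\beta(x)+\tfrac1\varepsilon)\in\Gamma$ for every $0<\varepsilon\leqslant1$, this vector staying in a fixed compact subset of $\Gamma$ as $x$ ranges over the compact $\bar M$. Fixing $\varepsilon=1$, the principal part of $\lambda(g^{-1}(\nabla^2 w+A(x,\nabla w)))$ therefore lies, after division by $\frac{k^2|\nabla\mathrm{\sigma}|^2}{(\delta^2+k\mathrm{\sigma})^2}$, in a fixed compact subset of $\Gamma$ uniformly in $x$ and $k$; since this rescaling factor tends to $+\infty$ as $\delta\to0^+$ uniformly in $k\geqslant1$ and dominates the lower-order terms, $w$ is admissible on $\Omega_\delta$ for $\delta$ small. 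Combining Lemma \ref{lemma2.3} with the consequence of \eqref{addistruc} that $\lim_{t\to+\infty}f(t\mu)=+\infty$ for every $\mu\in\Gamma$, and using $w\leqslant\varepsilon\log k$ and $\psi_z>0$, the same estimate as in the proof of Theorem \ref{thm1-pde} yields $f(\lambda(g^{-1}(\nabla^2 w+A(x,\nabla w))))\geqslant\psi(x,w)$ on $\Omega_\delta$ for $\delta$ small. Comparison against the boundary values (Lemmas \ref{lemma-mp} and \ref{lemma-c0general}, together with $\lim_{\delta\to0^+}w|_{\mathrm{\sigma}=\delta}=-\infty$) then gives \eqref{low-barrier1} with $\varepsilon=1$, hence \eqref{low-barrier-k} with $\varepsilon=1$ and, via $u\geqslant u^{(k)}$, the bound \eqref{u-asymptotic-2}: $u\geqslant-\log\mathrm{\sigma}-C_0$ near $\partial M$.

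For completeness of $\tilde g=e^{2u}g$: by \eqref{u-asymptotic-2}, $e^{2u}\geqslant e^{-2C_0}\mathrm{\sigma}^{-2}$ on a collar $\Omega_{\delta_0}$, on which $|\nabla\mathrm{\sigma}|$ is bounded away from $0$, so any rectifiable curve in $M$ approaching $\partial M$ eventually lies in $\Omega_{\delta_0}$ and, parametrised so that $\mathrm{\sigma}$ is monotone, has $\tilde g$-length bounded below by a positive multiple of $\int\mathrm{\sigma}^{-1}\,|d\mathrm{\sigma}|=+\infty$; thus $\partial M$ is at infinite $\tilde g$-distance from every interior point, and since $\bar M$ is compact this makes $(M,\tilde g)$ complete. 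The step I expect to require the most care is the bookkeeping of the second paragraph: one must verify that, once the quadratic part of $A$ has been folded into the principal symbol, every remaining term is genuinely lower order uniformly in $k$, and that \eqref{alpha-beta-form1} is exactly the condition that keeps the combined principal symbol inside $\Gamma$ at the borderline value $\varepsilon=1$ --- for $\varepsilon>1$ the last eigenvalue $\alpha+\beta+\tfrac1\varepsilon$ would drop below $\alpha+\beta+1$ and admissibility could fail --- which is simultaneously why $\varepsilon=1$ is attainable and why it is the optimal value needed for completeness.
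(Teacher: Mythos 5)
Your proposal is correct and follows essentially the same route as the paper: with $\varphi=\varepsilon\log k$ constant, the rank-one Hessian term and the quadratic part of $A$ combine into a principal term with eigenvalues proportional to $(\alpha,\cdots,\alpha,\alpha+\beta+\tfrac1\varepsilon)$, which at $\varepsilon=1$ lies in $\Gamma$ by \eqref{alpha-beta-form1}, after which the subsolution inequality, the comparison with $u^{(k)}$, and the limit $k\to+\infty$ give \eqref{u-asymptotic-2}; this is exactly the paper's computation and use of Lemma \ref{lemma3.4}. The only additions are your explicit completeness argument and the remark on $0<\varepsilon\leqslant1$, both consistent with (and merely elaborating) the paper's terser proof.
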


  \begin{proof}
  	We take  $\varepsilon=1$ in \eqref{h-def-k}, i.e. 	$w= \log \frac{k\delta^2}{k\mathrm{\sigma}+\delta^2}$. 
  	By \eqref{compute-h-c1c2}, 
  	\begin{equation}
  		\begin{aligned}
  			\nabla^2 w+A(x,\nabla w)=  U+
  			\frac{k^2\alpha|\nabla\mathrm{\sigma}|^2g+k^2(1+\beta)  d\mathrm{\sigma}\otimes d\mathrm{\sigma}}{(\delta^2+k\mathrm{\sigma})^2}	
  			 -\frac{k \nabla^2 \mathrm{\sigma}}{\delta^2+k\mathrm{\sigma}}  
  			+R\left(x, \frac{-kd \mathrm{\sigma}}{\delta^2+k\mathrm{\sigma}}\right)  \nonumber
  		\end{aligned}
  	\end{equation}  
  	Note
  	that \begin{equation}
  		\begin{aligned}
  			\lambda\left(g^{-1}\left( 	\frac{k^2\alpha|\nabla\mathrm{\sigma}|^2g+k^2(1+\beta)  d\mathrm{\sigma}\otimes d\mathrm{\sigma}}{(\delta^2+k\mathrm{\sigma})^2}\right)\right) 
  			=\frac{k^2 |\nabla\mathrm{\sigma}|^2}{(\delta^2+k\mathrm{\sigma})^2}(\alpha,\cdots,\alpha,\alpha+\beta+1). \nonumber
  		\end{aligned}
  	\end{equation}  
  	So we can prove
  	by  Lemma \ref{lemma3.4}, 
  	that for $0<\delta\ll1$, 
  	\begin{equation}
  		\begin{aligned}
  			f\left(\lambda(g^{-1}(\nabla^2w+A(x,\nabla w)))\right)  
  			\geqslant \psi(x,w) \mbox{ on } \Omega_\delta. \nonumber
  		\end{aligned}
  	\end{equation}
  	
  	Similarly, we obtain an inequality analogous to \eqref{low-barrier1}  (or \eqref{low-barrier-k}) near  boundary 
  	\begin{equation}
  		\begin{aligned}
  			u^{(k)}\geqslant   \log \frac{k\delta^2}{k\mathrm{\sigma}+\delta^2} 
  			\nonumber
  		\end{aligned}
  	\end{equation} 
  	as required.
  \end{proof}
  
  \begin{remark}
  	Since $\Gamma$ is of type 2 (by Corollary \ref{thm-type2} or \ref{coro-type2}), the assumption \eqref{alpha-beta-form1} is automatically satisfied, provided 
  	\begin{equation}
  		\begin{aligned}
  			\alpha(x)\geqslant0, \mbox{ } \alpha(x)+\beta(x)+1\geqslant 0, \mbox{ } 
  			n\alpha(x)+\beta(x)+1>0.  \nonumber
  		\end{aligned}
  	\end{equation} 
  	
  \end{remark}

  As a corollary, we get
  the following result and Theorem \ref{existence1-compact}.
  \begin{theorem}
  	\label{existence1-compact-2}
  	Suppose $(f,\Gamma)$  satisfies \eqref{concave}, \eqref{homogeneous-1-buchong2}, \eqref{homogeneous-1-mu}  and 
  	\eqref{fully-uniform2}.
  	Let $(M,g)$ be a compact connected Riemannian manifold of dimension $n\geqslant 3$ with smooth boundary and support a $C^2$  
  	conformal metric satisfying \eqref{admissible-metric-schouten1}.
  	Then for any $0<\psi\in C^\infty(\bar M)$, there exists at least one smooth  
  	complete  
  	metric $\tilde{g}=e^{2u}g$ 
  	satisfying 
  	${f}(\lambda(-\tilde{g}^{-1}A_{\tilde{g}}))= \psi$
  	and  $\lambda(-{g}^{-1}A_{\tilde{g}})\in \Gamma$
  	in $M$.
  	
  \end{theorem}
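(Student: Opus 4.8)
The plan is to realize this theorem as a corollary of the solvability of the Dirichlet problem with infinite boundary value (Theorem~\ref{thm1-pde}) together with the completeness criterion of Proposition~\ref{prop-7.5}, after rewriting the geometric equation in the normal form \eqref{mainequ-1}.

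First I would perform the conformal reduction. Writing $\tilde g=e^{2u}g$ and applying the transformation law \eqref{conformal-formula2} for the Schouten tensor gives $-A_{\tilde g}=\nabla^2u+A(x,\nabla u)$ with $A(x,p):=\tfrac12|p|^2g-p\otimes p-A_g$, while eigenvalues with respect to $\tilde g$ are $e^{-2u}$ times those with respect to $g$, so that $\lambda(-\tilde g^{-1}A_{\tilde g})=e^{-2u}\lambda\big(g^{-1}(\nabla^2u+A(x,\nabla u))\big)$. By the homogeneity \eqref{homogeneous-1-mu} of degree $\varsigma$, the equation $f(\lambda(-\tilde g^{-1}A_{\tilde g}))=\psi$ is equivalent to
\begin{equation*}
	f\big(\lambda(g^{-1}(\nabla^2u+A(x,\nabla u)))\big)=\psi(x)\,e^{2\varsigma u},
\end{equation*}
which is of the form \eqref{mainequ-1} with $\psi(x,u)=\psi(x)e^{2\varsigma u}$; moreover $u$ is admissible for this equation if and only if $\lambda(-g^{-1}A_{\tilde g})\in\Gamma$.

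Next I would verify the hypotheses of Theorem~\ref{thm1-pde}. Conditions \eqref{concave}, \eqref{homogeneous-1-buchong2} and \eqref{fully-uniform2} are assumed, and \eqref{sup-infty} follows from \eqref{homogeneous-1-mu} together with $f>0$ in $\Gamma$, since $\varsigma>0$ forces $f(t\lambda)=t^\varsigma f(\lambda)\to+\infty$. For \eqref{psi-gamma1} one checks that $\psi_z=2\varsigma\psi e^{2\varsigma z}>0$, that $\lim_{z\to-\infty}\psi(x,z)=0$, and that $\psi(x,z)\ge h(x)e^{\alpha(x)z}$ with $h=\psi$ and $\alpha\equiv 2\varsigma$. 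Since $A(x,p)$ is a polynomial of degree two in $p$ with coefficients smooth on $\bar M$, the growth bounds in \eqref{R-gamma1} are immediate. Finally, the hypothesized $C^2$ conformal metric $\underline g=e^{2\underline u}g$ provides a $C^2$ admissible function: by \eqref{conformal-formula2}, $\lambda\big(g^{-1}(\nabla^2\underline u+A(x,\nabla\underline u))\big)=\lambda(-g^{-1}A_{\underline g})\in\Gamma$ by \eqref{admissible-metric-schouten1}. Thus Theorem~\ref{thm1-pde} yields a smooth admissible $u$ with $\lim_{x\to\partial M}u(x)=+\infty$, and undoing the reduction gives $f(\lambda(-\tilde g^{-1}A_{\tilde g}))=\psi$ and $\lambda(-g^{-1}A_{\tilde g})\in\Gamma$ in $M$.

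It remains to see that $\tilde g=e^{2u}g$ is complete. Here $A(x,\nabla u)$ has the form \eqref{A-form1} with $\mathrm U=-A_g$, $\alpha(x)\equiv\tfrac12$, $\beta(x)\equiv-1$ and $R\equiv 0$, and the compatibility condition \eqref{alpha-beta-form1} reduces to $\tfrac12\vec{\bf 1}\in\Gamma$, which holds because $\Gamma_n\subseteq\Gamma$ (equivalently, $\alpha\ge 0$, $\alpha+\beta+1\ge 0$ and $n\alpha+\beta+1>0$). So Proposition~\ref{prop-7.5} applies, giving $u\ge-\log\mathrm{\sigma}-C_0$ near $\partial M$ and, with it, the completeness of $\tilde g$. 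All the substantive analytic content — the local a priori estimates up to second order and the continuity/approximation argument — is already packaged in Theorem~\ref{thm1-pde} and Proposition~\ref{prop-7.5}, so the main obstacle in this proof is purely bookkeeping: confirming that $\underline u$ is admissible for the reduced equation and that the exponent $\varsigma\in(0,1]$ is compatible both with \eqref{psi-gamma1} and with the local estimates invoked by Theorem~\ref{thm1-pde}. The general prescribed-curvature statement, Theorem~\ref{existence1-compact}, then follows in the same manner once equation \eqref{main-equ1} for $A_{\tilde g}^{\tau,\alpha}$ has been reduced to the Schouten case via Section~\ref{section7} and Proposition~\ref{key-lemma1}.
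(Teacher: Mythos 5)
Your proposal is correct and follows essentially the same route as the paper: the theorem is stated there precisely as a corollary of Theorem \ref{thm1-pde} (Dirichlet problem with infinite boundary data, after rewriting the equation via \eqref{conformal-formula2} in the form \eqref{mainequ-1} with $\psi(x,u)=\psi(x)e^{2\varsigma u}$) combined with Proposition \ref{prop-7.5} applied with $\mathrm U=-A_g$, $\alpha(x)\equiv\tfrac12$, $\beta(x)\equiv-1$, which gives $u\geqslant-\log\mathrm{\sigma}-C_0$ and hence completeness. Your verification of \eqref{sup-infty}, \eqref{psi-gamma1}, \eqref{R-gamma1}, the admissibility of $\underline u$, and the condition \eqref{alpha-beta-form1} supplies exactly the bookkeeping the paper leaves implicit.
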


    \medskip

  \section{Construction of admissible metrics}
  \label{sec16}

  Now we focus on the equation \eqref{main-equ1}. Throughout this section and Section \ref{section6}, 
  the parameters  in $A_g^{\tau,\alpha}$ obey
 \eqref{tau-alpha-3}, i.e.
\begin{equation}
\begin{cases}	  		\tau\leqslant 0  \,& \mbox{ if } \alpha=-1,\\ 		\tau\geqslant 2  \,& \mbox{ if } \alpha=1.  \nonumber 	\end{cases}  \end{equation}
And we denote
  \begin{equation}
	\label{beta-gamma-A2}
	\begin{aligned}
		V[u]=\Delta u g -\varrho\nabla^2 u+\gamma |\nabla u|^2 g +\varrho du\otimes du+A,   \nonumber
	\end{aligned}
\end{equation} 
\begin{equation}
	\label{beta-gamma-A-3}
	\begin{aligned}
		\varrho=\frac{n-2}{\tau-1}, \mbox{ }
		\gamma=\frac{(\tau-2)(n-2)}{2(\tau-1)}, \mbox{ } 
		A=\frac{n-2}{\alpha(\tau-1)} A_{g}^{\tau,\alpha}.  \nonumber 
	\end{aligned}
\end{equation}
  Here one can check $V[u]=\frac{n-2}{\alpha(\tau-1)} A^{\tau,\alpha}_{\tilde{g}}, \mbox{ } \tilde{g}=e^{2u}g$.


 \vspace{1mm}
 Under the assumption  \eqref{tau-alpha-3}, we have the following lemma.
 
 \begin{lemma}
 	Under the assumption  \eqref{tau-alpha-3}, 
 	we have
 	\begin{equation}
 		\label{gammarho0}
 		\gamma=\frac{(n-2)(\tau-2)}{2(\tau-1)}\geqslant0, \quad \gamma+\varrho=\frac{\tau(n-2)}{2(\tau-1)} \geqslant0.
 	\end{equation}
 	
 \end{lemma}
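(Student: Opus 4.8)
The plan is to obtain both inequalities in \eqref{gammarho0} by a direct sign analysis, organized according to the two cases $\alpha=-1$ and $\alpha=1$ imposed by \eqref{tau-alpha-3}. The first step is to record the algebraic identity
\[
\gamma+\varrho=\frac{(\tau-2)(n-2)}{2(\tau-1)}+\frac{n-2}{\tau-1}
=\frac{(n-2)\bigl((\tau-2)+2\bigr)}{2(\tau-1)}=\frac{\tau(n-2)}{2(\tau-1)},
\]
which is exactly the second expression displayed in \eqref{gammarho0}. Since $n\geqslant3$, the factor $n-2$ is strictly positive and contributes nothing to the sign, so the claim reduces to checking that $\dfrac{\tau-2}{\tau-1}\geqslant0$ and $\dfrac{\tau}{\tau-1}\geqslant0$ under \eqref{tau-alpha-3}.

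For $\alpha=1$ we have $\tau\geqslant2$, hence $\tau-1\geqslant1>0$, $\tau-2\geqslant0$ and $\tau\geqslant2>0$; both quotients are therefore nonnegative (the second is strictly positive), which gives $\gamma\geqslant0$ and $\gamma+\varrho\geqslant0$. For $\alpha=-1$ we have $\tau\leqslant0$, hence $\tau-1\leqslant-1<0$ and $\tau-2\leqslant-2<0$, so $\tau-2$ and $\tau-1$ share the same (negative) sign and $\dfrac{\tau-2}{\tau-1}>0$, whence $\gamma>0$; moreover $\tau\leqslant0$ while $\tau-1<0$, so $\dfrac{\tau}{\tau-1}\geqslant0$ (with equality precisely when $\tau=0$), whence $\gamma+\varrho\geqslant0$. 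Combining the two cases proves \eqref{gammarho0}.

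There is no genuine obstacle in this lemma; the only point needing a little care is that the denominator $\tau-1$ reverses sign between the two regimes of \eqref{tau-alpha-3}, which is why the argument is phrased as a case distinction rather than a single monotonicity statement. I would also remark in passing that \eqref{tau-alpha-3} is strictly stronger than the structural restriction \eqref{tau-alpha} used elsewhere in the paper, so the nonnegativity obtained here is consistent with, though not a consequence of, that weaker hypothesis.
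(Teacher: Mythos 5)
Your verification is correct and is exactly the elementary sign check the paper leaves implicit (the lemma is stated without proof): the identity $\gamma+\varrho=\frac{(n-2)\tau}{2(\tau-1)}$ together with the two-case sign analysis under \eqref{tau-alpha-3} yields \eqref{gammarho0}. One caveat about your closing aside: \eqref{tau-alpha-3} is not in general stronger than \eqref{tau-alpha}, since for $\alpha=1$ the threshold $1+(n-2)(1-\kappa_\Gamma\vartheta_\Gamma)$ can exceed $2$ (for instance $\Gamma=\Gamma_n$ gives $\kappa_\Gamma=0$ and threshold $n-1$, so $\tau\geqslant2$ does not imply \eqref{tau-alpha} when $n\geqslant4$); this is tangential and does not affect the lemma itself.
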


The following lemma asserts that a compact manifold with boundary carries a Morse function without critical points. The construction of such a Morse function is standard
in differential topology.  

  \begin{lemma}
  	\label{lemma-diff-topologuy}
  	Let $(M,g)$ be a compact connected Riemannian manifold of dimension $n\geq 2$ with smooth boundary. Then there is a smooth Morse function $v$ without critical points, that is $d v\neq 0$ in $\bar M$.
  \end{lemma}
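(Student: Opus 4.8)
This is a classical fact in differential topology, and the plan is to obtain $v$ by starting from an arbitrary Morse function and cancelling all of its critical points against the boundary. By the usual transversality/genericity arguments $\bar M$ carries a Morse function $f_0$ whose critical points $p_1,\dots,p_m$ all lie in the interior $M\setminus\partial M$: one takes any Morse function and perturbs it slightly so that no critical value is attained on the codimension-one subset $\partial M$, equivalently so that $f_0|_{\partial M}$ is Morse and no critical point of $f_0$ sits on $\partial M$.

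The main step is to remove $p_1,\dots,p_m$ one at a time. Since $\bar M$ is connected and $\partial M\neq\emptyset$, each $p_i$ can be joined to a point of $\partial M$ by a smoothly embedded arc $\gamma_i$, and these arcs may be taken pairwise disjoint, disjoint from the remaining critical points, and transverse to $\partial M$ at their single boundary endpoint. Choosing a thin tubular neighbourhood $N_i\cong D^{\,n-1}\times[0,1]$ of $\gamma_i$ with $D^{\,n-1}\times\{0\}\subset\partial M$ and containing no critical point other than $p_i$, I would modify $f_0$ only inside $N_i$ --- leaving it unchanged near the faces of $\partial N_i$ lying in $\mathrm{int}\,M$ --- so that the new function has no critical point in $N_i$; intuitively the critical point is slid along $\gamma_i$ and pushed out of $M$ through the free face $D^{\,n-1}\times\{0\}\subset\partial M$, which is the elementary cancellation of a critical point against the boundary. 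Doing this successively for $i=1,\dots,m$ yields a smooth $v$ with $dv\neq 0$ on all of $\bar M$; having no critical points, $v$ is automatically a Morse function, which is what is required.

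The one point I expect to need care is this local modification: one must check that a Morse function on $D^{\,n-1}\times[0,1]$ with a single interior critical point, prescribed and critical-point-free on $(\partial D^{\,n-1})\times[0,1]$ and on $D^{\,n-1}\times\{1\}$ but unconstrained on $D^{\,n-1}\times\{0\}$, can be deformed rel those faces into a submersion --- precisely the situation in which the free face lets the critical point escape. Since all of this is standard (see the classical treatments of Morse theory and the $h$-cobordism theorem, or of Morse functions on manifolds with boundary; alternatively one may invoke the Phillips--Gromov $h$-principle for submersions, noting that $\bar M$ carries a nowhere-vanishing $1$-form because any compact manifold with nonempty boundary admits a nowhere-vanishing vector field), I would only sketch these arguments and refer to the literature.
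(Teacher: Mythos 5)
Your proposal is correct in outline, but it follows a genuinely different route from the paper. The paper's proof is a one-step global trick: pass to the double $X$ of $M$, take any Morse function $w$ on $X$, and invoke Milnor's homogeneity lemma to get a diffeomorphism $h$ of $X$, smoothly isotopic to the identity, carrying the finitely many critical points of $w$ that lie in $\bar M$ to prechosen regular points of $X\setminus\bar M$ while fixing the others; then $v=w\circ h^{-1}\big|_{\bar M}$ has no critical points, since precomposing with a diffeomorphism only relocates critical points and can never create new ones. Your scheme---joining each interior critical point to $\partial M$ by disjoint embedded arcs and cancelling it against the boundary inside a tube around its arc---is the classical alternative, and your fallback via the Phillips--Gromov $h$-principle for submersions is also legitimate (a connected compact manifold with nonempty boundary is ``open'' in Gromov's sense, and it carries a nowhere-vanishing $1$-form because $H^n(\bar M;\mathbb{Z})=0$). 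What the paper's route buys is that the only nontrivial input is the homogeneity lemma: there is no local Morse-theoretic surgery whose effect on the differential must be controlled. The delicate point of your route is exactly the local step you flag: deforming the function on $D^{n-1}\times[0,1]$ rel the side and top faces into a submersion without creating new critical points is where the real work hides, and the cleanest justification is the paper's idea in miniature---attach an external collar to the free face $D^{n-1}\times\{0\}$, extend the function across it, and push the critical point into the collar by an ambient isotopy supported near the arc, so the modification is again a precomposition with a diffeomorphism. With that local argument (or the relative form of Phillips' theorem) spelled out, your proof is complete.
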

\begin{proof} 
	Let $X$ be the double of $M$. Let $w$ be a smooth Morse function on $X$ with the critical set $\{p_i\}_{i=1}^{m+k}$, among which $p_1,\cdots, p_m$ are all the critical points  being in $\bar M$. 
Pick $q_1, \cdots, q_m\in X\setminus \bar M$ but not the critical point of $w$. By homogeneity lemma (see e.g. \cite{Milnor-1997}), one can find a diffeomorphism
$h: X\to X$,	which is smoothly isotopic to the identity, such that 
\begin{itemize}
	\item $h(p_i)=q_i$, $1\leqslant i\leqslant m$.
	\item $h(p_i)=p_i$, $m+1\leqslant i\leqslant m+k$.
\end{itemize}
Then $v=w\circ h^{-1}\big|_{\bar M}$ is the desired Morse function.
	
	\end{proof}

  \begin{proposition}
  	\label{lemma5-main}
  	For  $(\alpha,\tau)$ satisfying \eqref{tau-alpha-3}, 
  	there exists a smooth conformal  admissible metric  
  	on $\bar M$.
  \end{proposition}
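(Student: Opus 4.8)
The plan is to use the Morse function $v$ provided by Lemma~\ref{lemma-diff-topologuy}, which is nowhere critical on $\bar M$, and to build the desired admissible metric as a conformal factor $e^{2u}g$ with $u=\Phi(v)$ for a suitably chosen strictly convex, rapidly growing function $\Phi$ of one real variable. The point is that for such a $u$ one has $\nabla u=\Phi'(v)\nabla v$ and $\nabla^2 u=\Phi''(v)\,dv\otimes dv+\Phi'(v)\nabla^2 v$, so plugging into the expression $V[u]=\Delta u\,g-\varrho\nabla^2u+\gamma|\nabla u|^2g+\varrho\,du\otimes du+A$ and isolating the highest-order terms in $\Phi'$ and $\Phi''$, the dominant contribution comes from the terms carrying $\Phi''(v)$ and $\Phi'(v)^2$. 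Concretely, the leading part of $V[u]$ is
\[
\Phi''(v)\big(|\nabla v|^2 g-\varrho\,dv\otimes dv\big)+\Phi'(v)^2\big(\gamma|\nabla v|^2 g+\varrho\,dv\otimes dv\big)
\]
plus lower-order terms bounded by $C(1+|\Phi'(v)|)$. Since $\nabla v\neq0$ everywhere on the compact set $\bar M$, $|\nabla v|$ is bounded below by a positive constant, so one expects that choosing $\Phi''\gg(\Phi')^2\gg1$ (for instance $\Phi(t)=Ae^{Bt}$ with $A,B$ large) will force $\lambda(g^{-1}V[u])$ into $\Gamma$.

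The key computation is to diagonalize the symmetric $2$-tensor $|\nabla v|^2 g-\varrho\,dv\otimes dv$ at each point: in an orthonormal frame with $\nabla v$ along the last axis, its eigenvalues are $|\nabla v|^2$ with multiplicity $n-1$ and $(1-\varrho)|\nabla v|^2$ with multiplicity one. By Corollary~\ref{coro3-ingamma} (which holds because $\varrho=\frac{n-2}{\tau-1}$ satisfies \eqref{assumption-4} under \eqref{tau-alpha}, hence under the stronger \eqref{tau-alpha-3}, via Lemma~\ref{lemma-add3}), the vector $(1,\dots,1,1-\varrho)$ lies in $\Gamma$; since $\Gamma$ is an open cone containing $\Gamma_n$, the vector $(|\nabla v|^2,\dots,|\nabla v|^2,(1-\varrho)|\nabla v|^2)$ lies in $\Gamma$, and in fact stays in a fixed compact cross-section of $\Gamma$ uniformly over $\bar M$. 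Meanwhile the $\Phi'(v)^2$-term has eigenvalues $\gamma|\nabla v|^2$ (multiplicity $n-1$) and $(\gamma+\varrho)|\nabla v|^2$ (multiplicity one), both $\geqslant0$ by \eqref{gammarho0}; so this term contributes a vector in $\overline{\Gamma_n}\subset\overline\Gamma$. Adding a large multiple of a vector in the interior of $\Gamma$ to a vector in $\overline\Gamma$ keeps us in $\Gamma$, and the perturbation of size $O(1+|\Phi'(v)|)$ is negligible once $\Phi''$ is taken much larger than both $(\Phi')^2$ and $\Phi'$ on the (bounded) range of $v$. Making these inequalities quantitative — using openness of $\Gamma$, the uniform lower bound on $|\nabla v|$, and the uniform bounds on $A$, $\nabla^2 v$, $\Delta v$ over the compact manifold — yields a choice of $\Phi$ for which $\lambda(g^{-1}V[u])\in\Gamma$ pointwise, i.e. $e^{2u}g$ is a conformal admissible metric.

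The main obstacle I anticipate is not any single estimate but organizing the perturbation argument cleanly: one must verify that the error terms — namely $\Phi'(v)(\,\Delta v\,g-\varrho\nabla^2 v+2\gamma\langle\nabla v,\cdot\rangle\text{-type corrections})$ and the fixed tensor $A$ — can genuinely be absorbed, which requires the dominant tensor $\Phi''(v)(|\nabla v|^2 g-\varrho\,dv\otimes dv)$ to sit in the interior of $\Gamma$ with a gap that is uniform in $x$. This is where the precise role of \eqref{tau-alpha-3} versus the weaker \eqref{tau-alpha} enters: under \eqref{tau-alpha-3} the signs in \eqref{gammarho0} are favorable, so the $\Phi'(v)^2$ block does not push us toward $\partial\Gamma$ and need not be fought against. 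I would carry the steps out in this order: (1) invoke Lemma~\ref{lemma-diff-topologuy} to fix $v$ and record $c_0:=\min_{\bar M}|\nabla v|>0$ and uniform $C^2$ bounds for $v$ and a bound for $A$; (2) set $u=\Phi(v)$, compute $V[u]$ and separate it into the dominant tensor, the $(\Phi')^2$-tensor, and a remainder of order $1+|\Phi'(v)|$; (3) use Corollary~\ref{coro3-ingamma} and openness of $\Gamma$ to show the dominant tensor's eigenvalue vector lies in a compact subset of $\Gamma$ uniformly, while the $(\Phi')^2$-tensor's vector lies in $\overline\Gamma$ by \eqref{gammarho0}; (4) choose $\Phi(t)=Ae^{Bt}$ and pick $B$ then $A$ large so that $\Phi''\gg(\Phi')^2+\Phi'+1$ on the range of $v$, concluding $\lambda(g^{-1}V[u])\in\Gamma$; (5) conclude that $\tilde g=e^{2u}g$ is the required smooth conformal admissible metric, which by $V[u]=\frac{n-2}{\alpha(\tau-1)}A^{\tau,\alpha}_{\tilde g}$ is exactly the admissibility condition.
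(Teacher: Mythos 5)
Your overall skeleton (take the nowhere-critical Morse function $v$ from Lemma \ref{lemma-diff-topologuy} and set $u=\Phi(v)$ with $\Phi$ an exponential) is exactly the paper's, which uses $\underline u=e^{Nv}$ with $v\geqslant0$. But two of your key steps do not go through under the stated hypothesis. First, your argument hinges on Corollary \ref{coro3-ingamma}, i.e.\ on $(1,\dots,1,1-\varrho)\in\Gamma$, and you justify this by asserting that \eqref{tau-alpha-3} is stronger than \eqref{tau-alpha}. That is false when $\alpha=1$: \eqref{tau-alpha-3} only asks $\tau\geqslant2$, while \eqref{tau-alpha} asks $\tau>1+(n-2)(1-\kappa_\Gamma\vartheta_\Gamma)$, which for $\Gamma=\Gamma_n$ means $\tau>n-1$. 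So e.g.\ for $\Gamma=\Gamma_n$, $n\geqslant4$, $\tau=2$ one has $\varrho=n-2$ and $(1,\dots,1,1-\varrho)=(1,\dots,1,3-n)\notin\Gamma_n$: your ``dominant tensor'' $\Phi''(v)\bigl(|\nabla v|^2g-\varrho\,dv\otimes dv\bigr)$ simply is not in $\Gamma$, and the proposition (which assumes only \eqref{tau-alpha-3}) still has to be proved in that case. Second, the quantitative requirement you impose, $\Phi''\gg(\Phi')^2+\Phi'+1$ on the range of $v$, is unattainable: for your own choice $\Phi(t)=Ae^{Bt}$ one has $(\Phi')^2/\Phi''=Ae^{Bt}\geqslant A$, so in fact $(\Phi')^2\gg\Phi''$; and no smooth $\Phi$ can satisfy $\Phi''\geqslant K\bigl((\Phi')^2+1\bigr)$ on a fixed interval for large $K$, since $\Phi'$ would blow up inside the interval. (This second point is only needed because of the first: as you yourself observe, the $(\Phi')^2$-block lies in $\overline{\Gamma_n}$ by \eqref{gammarho0} and never needs to be dominated, so the honest requirement is merely $\Phi''\gg1+\Phi'$.)

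The paper's proof closes precisely the case your splitting loses, by never separating the two $dv\otimes dv$ contributions. With $\underline u=e^{Nv}$ and $v\geqslant0$ the total coefficient of $dv\otimes dv$ in $V[\underline u]$ is $\varrho N^2e^{Nv}(e^{Nv}-1)$, which is nonnegative when $\varrho>0$ (the problematic $\alpha=1$ regime), so it can simply be discarded; when $\varrho<0$ one bounds $dv\otimes dv\leqslant|\nabla v|^2g$ and uses $\gamma\geqslant0$, $\gamma+\varrho\geqslant0$ from \eqref{gammarho0}. In both cases
\begin{equation}
V[\underline u]\ \geqslant\ N^2e^{Nv}\Bigl(|\nabla v|^2\,g+\tfrac{1}{N}\bigl(\Delta v\,g-\varrho\nabla^2 v\bigr)\Bigr)+A, \nonumber
\end{equation}
which is positive definite for $N\gg1$ because $|\nabla v|^2\geqslant a_0>0$ on the compact $\bar M$; the eigenvalues land in $\Gamma_n\subseteq\Gamma$, so no cone-membership fact beyond $\Gamma_n\subseteq\Gamma$ is needed. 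To repair your write-up you should either add \eqref{tau-alpha} (equivalently \eqref{assumption-4}) as a hypothesis — which is how the proposition is used in Theorem \ref{thm1-construction}, but is not what the statement says — or replace steps (3)--(4) by the above regrouping, i.e.\ work in the regime $(\Phi')^2\geqslant\Phi''\gg1+\Phi'$ rather than $\Phi''\gg(\Phi')^2$.
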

  
  \begin{proof}
 According to Lemma \ref{lemma-diff-topologuy}, there exists a smooth   function $v$ with 
  \[ |\nabla v|^2\geqslant a_0  \mbox{ in } \bar M \]
  for some positive constant $a_0$. 
   Without loss of generality,  $v\geqslant0$.
   
Set $ \underline{u}=e^{Nv},$ $\underline{g}=e^{2\underline{u}}g,$
  	then \begin{equation}
  		\begin{aligned}
  			V[\underline{u}]= 
  			N^2e^{Nv}\left((\Delta v g-\varrho\nabla^2v)/N+(1+\gamma e^{Nv})|\nabla v|^2 g+\varrho (e^{Nv}-1)dv\otimes dv\right)+A. \nonumber
  		\end{aligned}
  	\end{equation}	
 
  	The discussion consists of two cases:
  	
  	Case 1: $\varrho> 0$. In this case $$(1+\gamma e^{Nv})|\nabla v|^2 g+\varrho (e^{Nv}-1)dv\otimes dv\geqslant  |\nabla v|^2 g.$$

  	Case 2: $\varrho<0$. In this case $$(1+\gamma e^{Nv})|\nabla v|^2 g+\varrho (e^{Nv}-1)dv\otimes dv\geqslant \left((1-\varrho)+(\gamma+\varrho)e^{Nv}\right)|\nabla v|^2 g\geqslant |\nabla v|^2 g.$$
  	Here we use \eqref{gammarho0}.
  	Therefore,   $\lambda(g^{-1}V[\underline{u}])\in \Gamma$ in $\bar M$ for $N\gg1$. That is, $\underline{g}=e^{2\underline{u}}g$ is an admissible metric.
  	
  \end{proof}

\begin{remark}
	By Corollary \ref{coro3-ingamma}, the construction also works if $(\gamma, \cdots,\gamma,\gamma+\varrho)\in\overline{\Gamma}$. 
\end{remark}

  \medskip
  \section{Asymptotic behavior and uniqueness of solutions}
  \label{section6}
  
  
  In this section we   
 further  assume   $f$ satisfies  
 \eqref{homogeneous-1}, i.e.,
  \begin{equation}
  	\begin{aligned}
  		f(t\lambda)=tf(\lambda),    
  		\mbox{   } \forall \lambda\in\Gamma, \mbox{  } t>0, 
  		\mbox{ with normalization } f(\vec{\bf 1})=1.  \nonumber
  	\end{aligned}
  \end{equation} 
  Then the equation \eqref{main-equ1}  
  reads as follows
  \begin{equation}
  	\label{mainequ-02-2-1}
  	\begin{aligned}
  		f(\lambda(g^{-1}V[u]))	=\frac{(n-2)\psi}{\alpha(\tau-1)} e^{2  u}. 
  	\end{aligned}
  \end{equation}

  For the complete conformal metrics satisfying \eqref{main-equ1}, we prove the following asymptotic behavior and uniqueness of metrics,
  when the prescribed function is a constant when restricted to boundary. Let $\sigma$ denote as in \eqref{distance-function} the distance function.
  \begin{theorem}
  	\label{thm1-unique}
  	Let $(M,g)$ be a compact connected Riemannian manifold of dimension $n\geqslant3$ with smooth boundary.
  	Suppose in addition that \eqref{concave}, \eqref{homogeneous-1-buchong2}, \eqref{homogeneous-1},    \eqref{tau-alpha} and \eqref{tau-alpha-3} hold.
  	Then for any positive smooth function with $\psi|_{\partial M}\equiv1$, 
  	there is a unique smooth complete admissible metric  $\widetilde{g}=e^{2\widetilde{u}_\infty}g$ satisfying \eqref{main-equ1}. Moreover,
  	\begin{equation}
  		\label{asymptotic-rate3}
  		\begin{aligned}
  			\lim_{x\rightarrow\partial M} (\widetilde{u}_\infty(x)+\log \mathrm{\sigma}(x))=\frac{1}{2}\log\frac{\alpha (n\tau+2-2n)}{2(n-2)}. \nonumber
  		\end{aligned}
  	\end{equation} 
  	
  \end{theorem}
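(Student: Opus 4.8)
\textbf{Proof proposal for Theorem \ref{thm1-unique}.}

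The plan is to combine the existence results already established for the Dirichlet problem with infinite boundary value (Theorems \ref{thm1-pde}, \ref{existence1-compact-2} and Proposition \ref{prop-7.5}) with an approximation and barrier argument to pin down the precise boundary asymptotics, and then to invoke the comparison principle (Lemma \ref{lemma-mp}) for uniqueness. First I would observe that, under \eqref{tau-alpha} and \eqref{tau-alpha-3}, Proposition \ref{lemma5-main} supplies a $C^2$ conformal admissible metric on $\bar M$, so Theorem \ref{existence1-compact-2} already yields a smooth complete admissible metric $\widetilde{g}=e^{2\widetilde{u}_\infty}g$ solving \eqref{main-equ1}, where $\widetilde{u}_\infty$ is obtained as the increasing limit $\lim_{k\to\infty}u^{(k)}$ of the solutions $u^{(k)}$ to the finite boundary value problems \eqref{mainequ-k}. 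Since the structure of $A(x,\nabla u)$ coming from $V[u]$ has the form \eqref{A-form1} with $\alpha(x)=\gamma/(n-2)$-type and $\beta(x)=\varrho/(n-2)$-type coefficients, and \eqref{gammarho0} guarantees the sign conditions in the remark after Proposition \ref{prop-7.5}, we may take $\varepsilon=1$ in the lower barrier \eqref{h-def-k}; hence \eqref{u-asymptotic-2} holds, i.e. $\widetilde{u}_\infty \geqslant -\log\mathrm{\sigma}-C_0$ near $\partial M$, so $\widetilde{g}$ is complete.

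Next I would establish the two-sided bound $\widetilde{u}_\infty+\log\mathrm{\sigma}\to \frac12\log\frac{\alpha(n\tau+2-2n)}{2(n-2)}$. The idea is to construct sharp sub- and super-solutions of the form $w^{\pm}=-\log\mathrm{\sigma}+\phi^{\pm}$ where $\phi^{\pm}$ is a function on a collar $\Omega_\delta$ with prescribed boundary value $\phi^{\pm}|_{\partial M}=\frac12\log\frac{\alpha(n\tau+2-2n)}{2(n-2)}\pm\epsilon$. The point is that for $u=-\log\mathrm{\sigma}+\phi$ one computes, using $|\nabla\mathrm{\sigma}|=1$ on $\partial M$,
\begin{equation}
V[u]=\frac{(1+\gamma+\varrho)\,|\nabla\mathrm{\sigma}|^2}{\mathrm{\sigma}^2}\,d\mathrm{\sigma}\otimes d\mathrm{\sigma}+\frac{\gamma+1-\varrho\,(\text{stuff})}{\mathrm{\sigma}^2}\,g+O(\mathrm{\sigma}^{-1}),\nonumber
\end{equation}
so that the leading term of $\lambda(g^{-1}V[u])$ is $\mathrm{\sigma}^{-2}$ times a fixed vector, and by homogeneity \eqref{homogeneous-1} the left side of \eqref{mainequ-02-2-1} behaves like $c_0\,\mathrm{\sigma}^{-2}$ for an explicit constant $c_0$ determined by $\alpha,\tau,n$, while the right side behaves like $\frac{(n-2)\psi}{\alpha(\tau-1)}e^{2\phi}\mathrm{\sigma}^{-2}$ and $\psi|_{\partial M}\equiv1$. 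Matching these leading coefficients forces $e^{2\phi}=\frac{\alpha(n\tau+2-2n)}{2(n-2)}$ on the boundary, which is exactly the claimed limit; choosing $\phi=\phi^{\pm}$ slightly above/below this value makes $w^{+}$ a supersolution and $w^{-}$ a subsolution on a sufficiently thin collar. Comparing $\widetilde{u}_\infty$ with $w^{\pm}$ via Lemma \ref{lemma-mp} on $\Omega_\delta$ (after arranging the ordering on $\mathrm{\sigma}=\delta$ using the already-known rough bound $\widetilde{u}_\infty=-\log\mathrm{\sigma}+O(1)$) and then letting $\epsilon\to0$ yields the asymptotic formula.

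For uniqueness, suppose $\widehat{g}=e^{2\widehat{u}}g$ is another smooth complete admissible metric solving \eqref{main-equ1}. I would first show any such solution also satisfies the same boundary asymptotics: completeness of $\widehat{g}$ forces $\widehat{u}\to+\infty$ at $\partial M$ at least at the rate $-\log\mathrm{\sigma}$ (this is where completeness is used — a slower blow-up gives finite length paths to the boundary), and then the same barrier argument as above, now applied to $\widehat{u}$, pins its asymptotics to the identical limit. Consequently $\widehat{u}-\widetilde{u}_\infty\to0$ as $x\to\partial M$, so for any $\epsilon>0$ the inequality $\widehat{u}\leqslant\widetilde{u}_\infty+\epsilon$ holds near $\partial M$; applying the comparison principle Lemma \ref{lemma-mp} on the complement of a small collar (where $\widetilde{u}_\infty+\epsilon$ is a supersolution since $\psi_z>0$ by \eqref{psi-gamma1}) gives $\widehat{u}\leqslant\widetilde{u}_\infty+\epsilon$ on all of $M$, and letting $\epsilon\to0$ yields $\widehat{u}\leqslant\widetilde{u}_\infty$; the reverse inequality follows by symmetry, so $\widehat{u}=\widetilde{u}_\infty$.

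The main obstacle I anticipate is the careful bookkeeping in the barrier construction near $\partial M$: one must control the lower-order terms (the $O(\mathrm{\sigma}^{-1})$ contributions from $\nabla^2\mathrm{\sigma}$, the cross terms between $\nabla\mathrm{\sigma}$ and $\nabla\phi$, and the background tensor $A$) uniformly enough that they do not interfere with matching the leading $\mathrm{\sigma}^{-2}$ coefficients, and simultaneously verify that the perturbed vectors stay in $\Gamma$ so that $f$ and its concavity \eqref{concave} can be applied — this is precisely where one needs $\Gamma$ to be of type $2$ (Corollary \ref{thm-type2} or \ref{coro-type2}) together with the sign conditions \eqref{gammarho0}. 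The uniqueness step is comparatively soft once the asymptotic matching is in hand, but extracting the rough a priori blow-up rate $\widehat{u}\geqslant-\log\mathrm{\sigma}-C$ for an \emph{arbitrary} complete solution (not just the constructed one) requires a separate comparison argument using the local lower barrier of Proposition \ref{prop-7.5} adapted to $\widehat{u}$.
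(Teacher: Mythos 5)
Your overall architecture (existence from Theorem \ref{existence1-compact-2} plus Proposition \ref{lemma5-main}, two-sided boundary asymptotics by barriers, uniqueness by comparison once the rates match) is the same as the paper's, and your lower bound and uniqueness steps essentially parallel Propositions \ref{proposition-asymptotic} and \ref{unique-prop} (the paper's subsolution is $\log\frac{k}{k\sigma+1}+\mathrm{const}+\frac{1}{\sigma+\delta}-\frac{1}{\delta}$ compared against the finite-boundary-value solutions $u_{k,\epsilon}$ of \eqref{001}; uniqueness follows because the sandwich $u_\infty\leqslant\widetilde u_\infty\leqslant u_\infty+\frac12\operatorname{osc}_{\partial M}\log\psi$ collapses when $\psi|_{\partial M}\equiv1$). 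Where you genuinely diverge is the upper bound: you want an explicit admissible supersolution $-\log\sigma+\phi^{+}$ of the fully nonlinear equation, whereas the paper (Proposition \ref{super123}) never builds one; it uses the trace inequality \eqref{key1-main} to see that any solution is a subsolution of the semilinear scalar-curvature equation and compares it with the Aviles--McOwen/Loewner--Nirenberg metrics of scalar curvature $-1$ on the annuli $\Omega_{\delta,\delta'}$, whose boundary rate \eqref{asymptotic-rate1-002} is already known; since those comparison functions blow up on \emph{both} components of $\partial\Omega_{\delta,\delta'}$, the maximum principle applies with no delicate ordering at $\partial M$.

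That difference is exactly where your sketch has real gaps. First, your displayed expansion of $V[-\log\sigma+\phi]$ is wrong: the $\sigma^{-2}$ parts of $-\varrho\nabla^2u$ and $\varrho\,du\otimes du$ cancel, so $V[u]=\frac{1+\gamma}{\sigma^{2}}\,g+O(\sigma^{-1})$ with $1+\gamma=\frac{n\tau+2-2n}{2(\tau-1)}$; this cancellation is what makes the leading eigenvalue vector proportional to $\vec{\bf 1}$, so that via \eqref{homogeneous-1} (normalization $f(\vec{\bf 1})=1$) the matching constant is independent of $f$. With a surviving $d\sigma\otimes d\sigma$ term of the size you wrote, the constant $c_0$ would depend on $f$ and \eqref{asymptotic-rate3} would not come out. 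Second, you cannot invoke Lemma \ref{lemma-mp} on the collar $\Omega_\delta$ as stated: its hypothesis requires the ordering on all of $\partial\Omega_\delta$, and on the component $\partial M$ both $\widetilde u_\infty$ and $w^{\pm}$ are $+\infty$, so the comparison must instead be run against finite-boundary-value approximants (as in \eqref{b4b}) or against comparison functions that blow up faster on an inner boundary (the paper's $\Omega_{\delta,\delta'}$ trick); moreover, the correction you need at $\{\sigma=\delta\}$ to force the ordering must be shown to preserve the supersolution inequality and admissibility, which is nontrivial (for the subsolution the paper checks this carefully for $\frac{1}{\sigma+\delta}-\frac1\delta$ using \eqref{gammarho0}). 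Third, in the uniqueness step, completeness alone does not give $\widehat u\geqslant-\log\sigma-C$: it only forces $\int e^{\widehat u}\,ds=\infty$ along curves reaching $\partial M$, not a pointwise rate; the rate for an arbitrary complete solution has to come from the same comparison argument as in Proposition \ref{proposition-asymptotic}, so this cannot be treated as an automatic consequence of completeness.
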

  

  We now summarize a theorem due to Aviles-McOwen, extending a famous result of   Loewner-Nirenberg to general Riemannian manifolds.
  \begin{theorem}
  	[\cite{Aviles1988McOwen}]
  	\label{thm-AM} 
  	Let $(M,g)$ be a compact connected Riemannian manifold of dimension $n\geqslant 3$ with smooth boundary. 
  	There is a smooth complete conformal metric with scalar curvature $-1$.
  \end{theorem}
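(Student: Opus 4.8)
The plan is to deduce the theorem as the special case $(f,\Gamma)=(\sigma_1,\Gamma_1)$ of the infinite‑boundary‑value Dirichlet problem solved in Theorem~\ref{thm1-pde}, and then to read off completeness from Proposition~\ref{prop-7.5}. First I would make the conformal change $\tilde g=e^{2u}g$; tracing the conformal transformation law of the Ricci tensor recorded in Section~\ref{section7} gives
\begin{equation}
R_{\tilde g}=e^{-2u}\big(R_g-2(n-1)\Delta u-(n-1)(n-2)|\nabla u|^2\big),
\end{equation}
so that $R_{\tilde g}=-1$ is equivalent to
\begin{equation}
\Delta u+\frac{n-2}{2}|\nabla u|^2-\frac{R_g}{2(n-1)}=\frac{e^{2u}}{2(n-1)}.
\end{equation}
Since $\mathrm{tr}\big(g^{-1}A(x,\nabla u)\big)=\frac{n-2}{2}|\nabla u|^2-\frac{R_g}{2(n-1)}$ for $A(x,p)=\frac{n-2}{2n}|p|^2g-\frac{R_g(x)}{2n(n-1)}g$, this is exactly equation~\eqref{mainequ-1} with $f=\sigma_1$, $\Gamma=\Gamma_1$, $\psi(x,z)=\frac{e^{2z}}{2(n-1)}$ and the above $A$.

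Next I would verify the hypotheses of Theorem~\ref{thm1-pde} for this data. Conditions~\eqref{concave}, \eqref{sup-infty} and \eqref{homogeneous-1-buchong2} are immediate because $\sigma_1$ is linear and is positive precisely on $\Gamma_1$; the full uniform ellipticity~\eqref{fully-uniform2} holds with $\theta=1/n$ since $\partial\sigma_1/\partial\lambda_i\equiv1$ (consistently with $\kappa_{\Gamma_1}=n-1$, $\Gamma_1$ being of type~$2$, and Theorem~\ref{yuan-k+1}); condition~\eqref{psi-gamma1} holds with $h\equiv\frac{1}{2(n-1)}$ and $\alpha\equiv2$; and \eqref{R-gamma1} holds since $A(x,p)$ equals $\frac{n-2}{2n}|p|^2g$ plus a smooth tensor independent of $p$. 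A $C^2$ admissible function is obtained as in Proposition~\ref{lemma5-main}: pick via Lemma~\ref{lemma-diff-topologuy} a Morse function $v\geqslant0$ with $|\nabla v|\geqslant a_0>0$ and set $w=Ne^{v}$; then for $N$ large, $\Delta w+\frac{n-2}{2}|\nabla w|^2-\frac{R_g}{2(n-1)}\geqslant\frac{n-2}{2}N^2a_0^2-CN-C>0$ on $\bar M$. Thus Theorem~\ref{thm1-pde} produces a smooth admissible $u$ with $\lim_{x\to\partial M}u(x)=+\infty$, that is, a smooth conformal metric $\tilde g=e^{2u}g$ with $R_{\tilde g}=-1$.

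To see that $\tilde g$ is complete I would invoke Proposition~\ref{prop-7.5}: here $A(x,\nabla u)$ has the form~\eqref{A-form1} with $\mathrm{U}=-\frac{R_g}{2n(n-1)}g$, $\alpha(x)\equiv\frac{n-2}{2n}$, $\beta(x)\equiv0$ and $R\equiv0$, and \eqref{alpha-beta-form1} holds since every component of $(\frac{n-2}{2n},\cdots,\frac{n-2}{2n},\frac{n-2}{2n}+1)$ is positive, hence the vector lies in $\Gamma_1$. Proposition~\ref{prop-7.5} then gives $u\geqslant-\log\sigma-C_0$ near $\partial M$, where $\sigma$ is the boundary distance~\eqref{distance-function}; consequently $e^{2u}\geqslant\frac{1}{C}\sigma^{-2}$ there, so the $\tilde g$‑distance from any interior point to $\partial M$ dominates a multiple of $\int_0^\delta\sigma^{-1}\,d\sigma=+\infty$, while $M$ with a boundary collar removed is compact. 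Hence $(M,\tilde g)$ is complete, which proves the theorem.

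Because the analytic core — local gradient and second‑derivative estimates, the local sub‑ and super‑barriers, and the limiting procedure — is already carried out in Section~\ref{DP-1} and packaged into Theorem~\ref{thm1-pde} and Proposition~\ref{prop-7.5}, I do not expect a genuine obstacle here: the work is the bookkeeping above, together with the observation that $\Gamma_1$ is automatically of type~$2$, so that the scalar‑curvature case is the simplest instance of the theory. If one instead wanted a self‑contained argument in the spirit of Loewner--Nirenberg and Aviles--McOwen, the crux would be the construction of a sharp boundary barrier with asymptotics $u=-\log\sigma+\frac{1}{2}\log\big(n(n-1)\big)+O(\sigma)$, which forces one to use the expansion of $\Delta_g\sigma$ near $\partial M$ (with the mean curvature of $\partial M$ entering the subleading term); that is precisely the computation underlying the local lower barrier of Section~\ref{DP-1}.
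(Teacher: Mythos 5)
The paper offers no proof of Theorem \ref{thm-AM}: it is quoted from Aviles--McOwen \cite{Aviles1988McOwen} and used as an input in Section \ref{section6}, so your argument is necessarily a different route, namely deriving the theorem from the paper's own machinery. Your specialization is correct: with $(f,\Gamma)=(\sigma_1,\Gamma_1)$, $\psi(x,z)=\frac{e^{2z}}{2(n-1)}$ and $A(x,p)=\frac{n-2}{2n}|p|^2g-\frac{R_g}{2n(n-1)}g$, the equation \eqref{mainequ-1} is exactly \eqref{scalar-equ1}, i.e.\ $R_{\tilde g}=-1$ for $\tilde g=e^{2u}g$; the hypotheses \eqref{concave}, \eqref{sup-infty}, \eqref{homogeneous-1-buchong2}, \eqref{psi-gamma1}, \eqref{R-gamma1}, \eqref{fully-uniform2} are all immediate, your Morse-function admissible function mimics Proposition \ref{lemma5-main}, and \eqref{alpha-beta-form1} holds, so Theorem \ref{thm1-pde} and Proposition \ref{prop-7.5} give a smooth solution blowing up at $\partial M$ with $\tilde g$ complete; and there is no circularity inside the paper, since neither Theorem \ref{thm1-pde} nor Proposition \ref{prop-7.5} invokes Theorem \ref{thm-AM}. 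Two caveats about what each approach buys. First, your route is not an independent proof of the Aviles--McOwen theorem: the uniform upper bound in the proof of Theorem \ref{thm1-pde} rests on \cite[Theorem 2.2]{McOwen1993}, itself a boundary blow-up existence result for a semilinear equation of precisely this conformal type, so the hard existence content is still imported from the same circle of results — your derivation recovers the theorem from the general framework rather than replacing the citation by genuinely new analysis. Second, the paper needs more from \cite{Aviles1988McOwen}/\cite{McOwen1993} than bare existence: Proposition \ref{super123} uses the solution of \eqref{scalar-equ1} together with the sharp asymptotics \eqref{asymptotic-rate1-002}, whereas your specialization only produces the one-sided bound $u\geqslant-\log\sigma-C_0$ of Proposition \ref{prop-7.5} and not the precise constant $\frac{1}{2}\log[n(n-1)]$; so for the purposes the paper puts this theorem to, the classical reference (or the sharp-barrier refinement you sketch at the end) is still needed.
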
 
  The theorem  yields that there is a $\tilde{u}\in C^\infty(M)$ such that
  \begin{equation}
  	\label{scalar-equ1}
  	\begin{aligned}
  		\,& 2(n-1)\Delta \tilde{u}+ (n-1)(n-2)|\nabla \tilde{u}|^2-R_g=e^{2\tilde{u}} \mbox{ in } M, 
  		\,& \lim_{x\rightarrow \partial M} \tilde{u}(x)=+\infty. 
  	\end{aligned}
  \end{equation}
  Geometrically,    $e^{2\tilde{u}}g$  
  is a smooth complete  metric with 
  constant scalar curvature $-1$.


  For conformal scalar curvature equation \eqref{scalar-equ1} 
  on $M=\Omega$ a smooth bounded domain of Euclidean spaces,
  Loewner-Nirenberg \cite{Loewner1974Nirenberg} 
  proved 
  the asymptotic ratio
    \begin{equation}
  	\label{asymptotic-rate1-002}
  	\begin{aligned}
  		\lim_{x\to\partial M}\left(\tilde{u}(x)+\log\mathrm{\sigma}(x) \right)=\frac{1}{2} \log{n(n-1)}.
  	\end{aligned}
  \end{equation}
  This asymptotic property 
can be extended to general Riemannian manifolds, as pointed out by McOwen 
 in \cite{McOwen1993}

  \vspace{1mm}
  Theorem \ref{thm1-unique} follows from the  propositions below.
  
  \begin{proposition}
  	\label{super123}
  	Let $\widetilde{g}_\infty=e^{2\widetilde{u}_\infty}g$
  	be a complete  metric  
  	obeying the equation  \eqref{main-equ1} 
  	with \eqref{concave}, \eqref{homogeneous-1-buchong2}, \eqref{homogeneous-1}, \eqref{tau-alpha} and \eqref{tau-alpha-3}, then
  	\begin{equation} 
  		\label{asymptotic-rate1}
  		\begin{aligned}
  			\lim_{x\rightarrow\partial M} (\widetilde{u}_\infty(x)+\log \mathrm{\sigma}(x))\leqslant \frac{1}{2}\log\frac{\alpha(n\tau+2-2n)}{2(n-2) \inf_{\partial M}\psi}. 
  		\end{aligned}
  	\end{equation}
  \end{proposition}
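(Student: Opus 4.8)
The plan is to construct, for each small $\delta>0$, a local supersolution near $\partial M$ that forces the stated inequality in the limit $x\to\partial M$. Write $\epsilon_0 = \inf_{\partial M}\psi > 0$ and fix any $\epsilon < \epsilon_0$; by continuity $\psi(x) \geqslant \epsilon$ on $\Omega_\delta$ for $\delta$ small. The target bound suggests the comparison function $\bar u = \log\frac{1}{\mathrm{\sigma}+c\,\mathrm{\sigma}^{1+\mu}} + \frac{1}{2}\log\frac{\alpha(n\tau+2-2n)}{2(n-2)\epsilon}$ for suitable small $\mu>0$ and a free constant $c$ to be chosen large — or, more in the spirit of the barriers already used in Section \ref{DP-1}, the two-parameter family $\bar w = \log\frac{\delta^2}{k\mathrm{\sigma}+\delta^2} + \text{const}$. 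The idea is that the leading singular behavior of $V[\bar u]$ as $\mathrm{\sigma}\to 0$ is governed by the $\varrho\, du\otimes du$ and $\gamma|\nabla u|^2 g$ terms together with $-\varrho\nabla^2 u$, and since $|\nabla\mathrm{\sigma}|=1$ on $\partial M$, a direct computation of $\lambda(g^{-1}V[\bar u])$ shows its eigenvalues behave like $\mathrm{\sigma}^{-2}$ times a fixed vector in $\Gamma$ proportional to $(\gamma,\dots,\gamma,\gamma+\varrho)$ (which lies in $\overline\Gamma$ by \eqref{gammarho0} and Corollary \ref{coro3-ingamma}), up to lower-order corrections.

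First I would compute $\nabla\bar u$, $\nabla^2\bar u$ and hence $V[\bar u]$ explicitly, isolating the $\mathrm{\sigma}^{-2}$-order term; using homogeneity \eqref{homogeneous-1} of $f$ and Lemma \ref{lemma3.4} (monotonicity under adding cone vectors), I would show that for $\delta$ small,
$$
f(\lambda(g^{-1}V[\bar u])) \geqslant (1-\text{o}(1))\,\frac{\alpha(n\tau+2-2n)}{2(n-2)}\cdot\frac{1}{\mathrm{\sigma}^2}\cdot\frac{1}{\text{(normalization)}},
$$
while the right-hand side of \eqref{mainequ-02-2-1} evaluated at $\bar u$ equals $\frac{(n-2)\psi}{\alpha(\tau-1)}e^{2\bar u} \leqslant \frac{(n-2)\psi}{\alpha(\tau-1)}\cdot\frac{1}{\mathrm{\sigma}^2}\cdot\frac{\alpha(n\tau+2-2n)}{2(n-2)\epsilon}(1+\text{o}(1))$. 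Choosing the additive constant in $\bar u$ exactly as in the statement and using $\psi\leqslant\epsilon_0+\text{o}(1)$ near $\partial M$ (wait — here one needs $\psi$ close to its boundary infimum, so I would instead localize to a neighborhood of the point where the limsup is attained, replacing $\inf_{\partial M}\psi$ by the value of $\psi$ there; the global bound then follows since the point is arbitrary). The upshot is that $\bar u$ is a supersolution of \eqref{mainequ-02-2-1} on $\Omega_\delta$ for all sufficiently small $\delta$.

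Next, since $\widetilde u_\infty$ is an admissible solution and $\bar u \to +\infty$ on $\{\mathrm{\sigma}=0\}$ but is finite (in fact $\to$ a controlled value) — more precisely $\bar u - \widetilde u_\infty \to +\infty$ is false; rather, on $\{\mathrm{\sigma}=\delta\}$ we have $\bar u$ bounded while $\widetilde u_\infty$ is bounded, and near $\{\mathrm{\sigma}=0\}$ both blow up — I would apply the comparison principle Lemma \ref{lemma-mp} on $\Omega_\delta$: boundary values of $\widetilde u_\infty - \bar u$ on the inner boundary $\{\mathrm{\sigma}=\delta\}$ are bounded by a constant $C(\delta)$, and one modifies $\bar u$ by adding this constant; on the outer portion $\{\mathrm{\sigma}=0\}$ the inequality degenerates favorably. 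Hence $\widetilde u_\infty \leqslant \bar u + C(\delta)$ on $\Omega_\delta$, which gives
$$
\widetilde u_\infty(x) + \log\mathrm{\sigma}(x) \leqslant \log\frac{\mathrm{\sigma}(x)}{\mathrm{\sigma}(x)+c\,\mathrm{\sigma}(x)^{1+\mu}} + \frac{1}{2}\log\frac{\alpha(n\tau+2-2n)}{2(n-2)\epsilon} + C(\delta),
$$
and letting $x\to\partial M$ (so the first log $\to 0$), then $\epsilon\to\epsilon_0$ and tracking the localization, we recover \eqref{asymptotic-rate1}.

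The main obstacle I anticipate is \textbf{controlling the lower-order error terms in $f(\lambda(g^{-1}V[\bar u]))$ uniformly} so that the ``$1-\text{o}(1)$'' factor is genuinely $\to 1$ as $\delta\to 0$: one must handle the contributions of $A_g^{\tau,\alpha}$ (bounded, hence $\text{O}(1)$ vs. the $\mathrm{\sigma}^{-2}$ main term, so harmless), the $\nabla^2\mathrm{\sigma}$ term (order $\mathrm{\sigma}^{-1}$, again lower order), and the cross terms, and then invoke continuity of $f$ near the ray through $(\gamma,\dots,\gamma,\gamma+\varrho)$ — but that ray may lie on $\partial\Gamma$, where $f=0$ by \eqref{homogeneous-1-buchong2}, so one cannot simply plug in. This is precisely why the auxiliary small parameter ($\mu$, or an extra $+\eta\,\vec{\mathbf 1}$ perturbation pushing the vector into the interior of $\Gamma$) is needed, and the delicate point is to show that this perturbation contributes only $\text{o}(1)$ to the final constant. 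I would resolve it by first perturbing to $(\gamma+\eta,\dots,\gamma+\eta,\gamma+\varrho+\eta)\in\Gamma$, running the argument, and sending $\eta\to 0$ at the end using homogeneity and concavity to get continuity of the relevant value of $f$.
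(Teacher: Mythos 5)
There is a genuine gap, and it lies at the center of the argument rather than in the error terms you flag at the end. To conclude $\widetilde u_\infty \leqslant \bar u$ from the comparison principle (Lemma \ref{lemma-mp}) you need $\bar u$ to be an admissible \emph{supersolution}, i.e. $f(\lambda(g^{-1}V[\bar u]))\leqslant \frac{(n-2)\psi}{\alpha(\tau-1)}e^{2\bar u}$, which requires an \emph{upper} bound on $f$ at $\bar u$ and a \emph{lower} bound on the right-hand side. The estimates you propose to verify are exactly reversed: a lower bound $f(\lambda(g^{-1}V[\bar u]))\geqslant(1-\mathrm{o}(1))\cdots$ together with an upper bound on the right-hand side, plus the apparatus of Lemma \ref{lemma3.4} and of perturbing the eigenvalue vector into the interior of $\Gamma$ so that $f$ stays positive, is the machinery for a \emph{subsolution}; that yields a lower bound on $\widetilde u_\infty$ (this is what Proposition \ref{proposition-asymptotic} does with the barrier $h_{k,\epsilon,\delta}$), not the upper bound \eqref{asymptotic-rate1}. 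Incidentally, for $\bar u=-\log\mathrm{\sigma}+c$ the $\mathrm{\sigma}^{-2}$ parts of $-\varrho\nabla^2\bar u$ and of $\varrho\, d\bar u\otimes d\bar u$ cancel, so the leading eigenvalue vector is $(1+\gamma)\mathrm{\sigma}^{-2}|\nabla\mathrm{\sigma}|^2\,\vec{\bf 1}$, which lies well inside $\Gamma$; the ray through $(\gamma,\dots,\gamma,\gamma+\varrho)$ never appears, so the ``main obstacle'' you identify is not the real one.

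Second, even granting a genuine supersolution, your handling of the inner boundary $\{\mathrm{\sigma}=\delta\}$ is circular: the constant $C(\delta)=\sup_{\{\mathrm{\sigma}=\delta\}}(\widetilde u_\infty-\bar u)$ is precisely an upper bound for $\widetilde u_\infty$ there, which is what the proposition is trying to establish; nothing makes $C(\delta)$ bounded, let alone $\mathrm{o}(1)$, as $\delta\to0$, and since it survives the limit $x\to\partial M$ the sharp constant in \eqref{asymptotic-rate1} is lost. A repair would need a supersolution blowing up on $\{\mathrm{\sigma}=\delta\}$ while staying admissible, which you do not construct. The paper sidesteps both difficulties by a different route: by \eqref{key1-main} (concavity plus the homogeneity \eqref{homogeneous-1}), any admissible solution satisfies the \emph{semilinear} inequality $\mathrm{tr}(g^{-1}V[\widetilde u_\infty])\geqslant\frac{n(n-2)\psi}{\alpha(\tau-1)}e^{2\widetilde u_\infty}$, so it can be compared, via the scalar maximum principle, with the Aviles--McOwen complete metrics of scalar curvature $-1$ on the collar domains $\Omega_{\delta,\delta'}$ (Theorem \ref{thm-AM}); letting $\delta'\to0$ and invoking the known Loewner--Nirenberg asymptotics \eqref{asymptotic-rate1-002} of that semilinear solution on $\Omega_\delta$ gives \eqref{asymptotic-rate1} with the correct constant, and the unknown size of $\widetilde u_\infty$ on interior hypersurfaces never enters.
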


  \begin{proof}
  	We utilize an approximation in analogy with that used in proof of Theorem \ref{thm2-pde}. 
  	Let $\Omega_\delta$ be as in \eqref{omega-delta}, and let $$\Omega_{\delta,\delta'}=\{x\in M: 0<\delta'<\mathrm{\sigma}(x)<\delta-\delta'\}.$$
  	We choose  $0<\delta'<\frac{\delta}{2}\ll1$ such that $\Omega_\delta$ and $\Omega_{\delta,\delta'}$ are both smooth.  
  	
  	We use Theorem \ref{thm-AM} repeatedly.
  	By Theorem \ref{thm-AM},  $\Omega_{\delta}$ admits a smooth complete conformal metric $g_\infty^{\delta}=e^{2w_\infty^{\delta}}g$ with constant scalar curvature $-1$.
  	Again, 
  	there is a smooth complete metric $g_\infty^{\delta,\delta'}=e^{2u_\infty^{\delta,\delta'}}g$  on $\Omega_{\delta,\delta'}$ with constant scalar curvature $-1$, that is,
  	$$\mathrm{tr} (g^{-1} V[{u_\infty^{\delta,\delta'}}])=\frac{ n\tau+2-2n }{2(n-1)(\tau-1)}e^{2u_\infty^{\delta,\delta'}}
  	\mbox{ in } \Omega_{\delta,\delta'}, 
  	\quad {u_\infty^{\delta,\delta'}}|_{\partial \Omega_{\delta,\delta'}}=+\infty.$$
  	By Lemma \ref{lemma-add2}, $\frac{ n\tau+2-2n }{2(n-1)(\tau-1)}>0$.
  	By maximum principle 
  	\begin{equation} \label{buchong78} \begin{aligned}
  			u_\infty^{\delta,\delta'}\geqslant w_\infty^{\delta}
  			\mbox{ in } \Omega_{\delta,\delta'}.
  	\end{aligned}\end{equation}
  	Furthermore, for any $0<\delta'<\delta_0'<\frac{\delta}{2}$, we have $\Omega_{\delta,\delta_0'}\subset \Omega_{\delta,\delta'}$ and
  	\begin{equation} \label{buchong79} \begin{aligned}
  			u_\infty^{\delta,\delta'}\leqslant u_\infty^{\delta,\delta_0'} \mbox{ in } \Omega_{\delta,\delta_0'}.\end{aligned}\end{equation}
    	According to  \eqref{key1-main}, the solution  $\widetilde{u}_\infty$ 
    	satisfies $$\mathrm{tr} (g^{-1} V[\widetilde{u}_\infty]) \geqslant \frac{n(n-2)\psi}{\alpha(\tau-1)} 
  	e^{2\widetilde{u}_\infty}.$$
  	The maximum principle implies 
  	\begin{equation}  \begin{aligned}
  			u_\infty^{\delta,\delta'} \geqslant \widetilde{u}_\infty+\frac{1}{2}\log \left(\frac{2n(n-1)(n-2)}{\alpha(n\tau+2-2n)}\inf_{\Omega_{\delta,\delta'}} \psi \right) 
  			\mbox{ in } \Omega_{\delta,\delta'}. \nonumber
  	\end{aligned}\end{equation}
  	
  Take $u_\infty^{\delta}(x)= \underset{\delta'\rightarrow0}{\lim}\, u_\infty^{\delta,\delta'}(x)$ (such a limit exists by \eqref{buchong78}-\eqref{buchong79}), 
  	then 	\begin{equation} 
  		\label{f-equ1}
  		\begin{aligned}
  			\,& u_\infty^{\delta} \geqslant w_\infty^{\delta}, \,&  u_\infty^{\delta} \geqslant  \widetilde{u}_\infty+  
  			\frac{1}{2}\log \left(\frac{2n(n-1)(n-2)}{\alpha(n\tau+2-2n)}\inf_{\Omega_{\delta}} {\psi}\right)  \mbox{ in } \Omega_{\delta} 
  		\end{aligned}
  	\end{equation}
  	and $e^{2u_\infty^\delta}g$ is a smooth complete conformal metric on $\Omega_{\delta}$ of scalar curvature $-1$. 
  	(We have local estimates since
  	it is of  uniform ellipticity).
  	Furthermore, \eqref{asymptotic-rate1-002} gives
  	$$\lim_{x\rightarrow \partial M} (u_\infty^{\delta}(x)+\log\mathrm{\sigma}(x))= \frac{1}{2}\log [n(n-1)].$$
  	Putting them together,
  	we get \eqref{asymptotic-rate1}. 
  \end{proof}
  
  

  \begin{proposition}
  	\label{proposition-asymptotic}
 Assume \eqref{concave}, \eqref{homogeneous-1-buchong2},
\eqref{homogeneous-1}, \eqref{tau-alpha} and \eqref{tau-alpha-3} hold.
  	Then any admissible complete metric $\widetilde{g}_\infty=e^{2\widetilde{u}_\infty}g$  satisfying \eqref{main-equ1}  
  	obeys
  	\begin{equation}
  		\label{asymptotic-rate2}
  		\begin{aligned}
  			\lim_{x\rightarrow\partial M} (\widetilde{u}_\infty(x)+\log \mathrm{\sigma}(x))\geqslant \frac{1}{2}\log\frac{\alpha(n\tau+2-2n)}{2 (n-2)\sup_{\partial M}\psi}. 
  		\end{aligned}
  	\end{equation}
  	
  \end{proposition}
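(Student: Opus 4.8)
The plan is to mirror the strategy of Proposition~\ref{super123}, but now building a \emph{subsolution} by exhausting $M$ from the inside with thin collar domains $\Omega_{\delta,\delta'}$, so that the approximating metrics of constant scalar curvature $-1$ lie \emph{below} $\widetilde{u}_\infty$ near $\partial M$. Concretely, fix a small $\delta>0$ so that $\mathrm{\sigma}$ is smooth on $\Omega_\delta$, and for $0<\delta'\ll\delta$ apply Theorem~\ref{thm-AM} to the smooth collar $\Omega_{\delta,\delta'}=\{0<\delta'<\mathrm{\sigma}<\delta-\delta'\}$ to get a complete conformal metric $e^{2u_\infty^{\delta,\delta'}}g$ of scalar curvature $-1$, i.e.
\begin{equation}
	\mathrm{tr}(g^{-1}V[u_\infty^{\delta,\delta'}])=\frac{n\tau+2-2n}{2(n-1)(\tau-1)}e^{2u_\infty^{\delta,\delta'}}\ \mbox{ in }\Omega_{\delta,\delta'},\qquad u_\infty^{\delta,\delta'}|_{\partial\Omega_{\delta,\delta'}}=+\infty.\nonumber
\end{equation}
By Lemma~\ref{lemma-add2} the constant on the right is positive, so the equation is genuinely of Loewner--Nirenberg type.

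The key comparison is in the opposite direction from Proposition~\ref{super123}. Using \eqref{key1-main} from Lemma~\ref{lemma-add5} together with homogeneity \eqref{homogeneous-1}, the admissible solution $\widetilde{u}_\infty$ of \eqref{main-equ1} satisfies the \emph{differential inequality}
\begin{equation}
	\mathrm{tr}(g^{-1}V[\widetilde{u}_\infty])\geqslant nf(\lambda(g^{-1}V[\widetilde{u}_\infty]))=\frac{n(n-2)\psi}{\alpha(\tau-1)}e^{2\widetilde{u}_\infty}\ \mbox{ in }M.\nonumber
\end{equation}
Hence on $\Omega_{\delta,\delta'}$, after rescaling by $\sup_{\Omega_{\delta,\delta'}}\psi$, the function $\widetilde{u}_\infty+\tfrac12\log\!\big(\tfrac{2n(n-1)(n-2)}{\alpha(n\tau+2-2n)}\sup_{\Omega_{\delta,\delta'}}\psi\big)$ is a \emph{supersolution} to the scalar curvature $-1$ equation; since $u_\infty^{\delta,\delta'}\to+\infty$ on $\partial\Omega_{\delta,\delta'}$ while $\widetilde{u}_\infty$ stays bounded on the interior boundary component $\{\mathrm{\sigma}=\delta-\delta'\}$ (and only the component $\{\mathrm{\sigma}=\delta'\}$ matters in the limit $\delta'\to0$), the maximum principle gives
\begin{equation}
	u_\infty^{\delta,\delta'}\leqslant\widetilde{u}_\infty+\frac12\log\!\left(\frac{2n(n-1)(n-2)}{\alpha(n\tau+2-2n)}\sup_{\Omega_{\delta,\delta'}}\psi\right)\ \mbox{ in }\Omega_{\delta,\delta'}.\nonumber
\end{equation}
Letting $\delta'\to0$, monotonicity (as in \eqref{buchong78}--\eqref{buchong79}, with the inclusions reversed appropriately) produces a limit $u_\infty^{\delta}$ which solves the scalar curvature $-1$ equation on $\Omega_\delta$, is complete there, and still lies below $\widetilde{u}_\infty+\tfrac12\log(\cdots\sup_{\Omega_\delta}\psi)$.

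Finally I invoke the Loewner--Nirenberg asymptotic rate \eqref{asymptotic-rate1-002} (valid on general Riemannian collars, as noted after \eqref{scalar-equ1}) for $u_\infty^\delta$, namely $\lim_{x\to\partial M}(u_\infty^\delta(x)+\log\mathrm{\sigma}(x))=\tfrac12\log[n(n-1)]$, and combine with the inequality above to get
\begin{equation}
	\liminf_{x\to\partial M}(\widetilde{u}_\infty(x)+\log\mathrm{\sigma}(x))\geqslant\frac12\log[n(n-1)]-\frac12\log\!\left(\frac{2n(n-1)(n-2)}{\alpha(n\tau+2-2n)}\sup_{\Omega_\delta}\psi\right)=\frac12\log\frac{\alpha(n\tau+2-2n)}{2(n-2)\sup_{\Omega_\delta}\psi}.\nonumber
\end{equation}
Letting $\delta\to0$ replaces $\sup_{\Omega_\delta}\psi$ by $\sup_{\partial M}\psi$ (continuity of $\psi$), yielding \eqref{asymptotic-rate2}. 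The main obstacle is the bookkeeping in the exhaustion-by-collars step: one must make sure the domains $\Omega_{\delta,\delta'}$ are smooth, that the sequence $u_\infty^{\delta,\delta'}$ is monotone in $\delta'$ so the limit exists, and that the contribution of the \emph{inner} boundary $\{\mathrm{\sigma}=\delta-\delta'\}$ genuinely washes out as $\delta'\to0$ (so only the true boundary behavior survives) — this is exactly the mirror image of the argument already carried out in Proposition~\ref{super123}, so the estimates themselves are routine once the geometric setup is in place.
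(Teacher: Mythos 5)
Your plan fails at the central comparison step, and the failure is directional, not technical. From \eqref{key1-main} you correctly get $\mathrm{tr}(g^{-1}V[\widetilde{u}_\infty])\geqslant nf(\lambda(g^{-1}V[\widetilde{u}_\infty]))=\frac{n(n-2)\psi}{\alpha(\tau-1)}e^{2\widetilde{u}_\infty}$, but this is the \emph{subsolution} inequality for the semilinear scalar-curvature-$-1$ equation: after the constant shift it says the trace is at least what that equation requires, which is exactly what Proposition \ref{super123} exploits to bound $\widetilde{u}_\infty$ from \emph{above}. To run your mirror argument you would need $\widetilde{u}_\infty$ (shifted) to be a \emph{supersolution}, i.e.\ an upper bound $\mathrm{tr}(g^{-1}V[\widetilde{u}_\infty])\leqslant C e^{2\widetilde{u}_\infty}$, and no such bound follows from the fully nonlinear equation: for a general concave $f$ with $f(\vec{\bf 1})=1$ (say $f=\sigma_k^{1/k}$, $k\geqslant 2$) the trace of $\lambda$ can be arbitrarily large while $f(\lambda)$ stays bounded, since the Maclaurin-type inequality \eqref{key1-main} has no reverse. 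Consequently the claimed maximum-principle conclusion $u_\infty^{\delta,\delta'}\leqslant\widetilde{u}_\infty+c$ is unjustified; it is in fact manifestly false near the inner boundary component $\{\mathrm{\sigma}=\delta-\delta'\}$, where the left-hand side tends to $+\infty$ while the right-hand side is bounded, and the same problem persists for the limit $u^\delta_\infty$ on $\{\mathrm{\sigma}=\delta\}$, so no bookkeeping with the exhaustion can rescue the inequality. In short, the reduction to the semilinear equation can only transfer information in one direction, and the lower bound \eqref{asymptotic-rate2} cannot be obtained this way.

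The paper's proof stays inside the fully nonlinear equation. It solves the Dirichlet problems \eqref{001} with finite boundary data $\log k+\frac{1}{2}\log\frac{(1-\epsilon)^2\alpha(n\tau+2-2n)}{2(n-2)(\sup_{\partial M}\psi+\epsilon)}$ via Theorem \ref{thm1-finiteBVC}, gets $\widetilde{u}_\infty\geqslant u_{k,\epsilon}$ from the comparison principle, and then constructs the explicit radial barrier $h_{k,\epsilon,\delta}(\mathrm{\sigma})$ on the collar $\Omega_\delta$, verifying through the Claim (using \eqref{gammarho0}, $|\nabla\mathrm{\sigma}|=1$ on $\partial M$, homogeneity and the normalization $f(\vec{\bf 1})=1$) that $h_{k,\epsilon,\delta}$ is a subsolution of the fully nonlinear equation \eqref{buchong1}; the maximum principle on $\Omega_\delta$ then yields $u_{k,\epsilon}\geqslant h_{k,\epsilon,\delta}$, and letting $k\to+\infty$ and $\epsilon\to 0$ gives \eqref{asymptotic-rate2}. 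If you want to salvage your write-up, you should abandon the semilinear comparison for this direction and instead build a subsolution for the fully nonlinear operator along these lines.
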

  
  \begin{proof}
  	The proof is different from that presented in \cite{Loewner1974Nirenberg,Gursky-Streets-Warren2011}.
  	The key ingredients in the proof are \eqref{key1-main} and 
  	$|\nabla \mathrm{\sigma}|=1$ on $\partial M$. 
  	Fix a constant $0<\epsilon<1$. 
  	We consider 
  	\begin{equation}
  		\label{001}
  		\left\{
  		\begin{aligned}
  			\,& f(\lambda(g^{-1} V[u_{k,\epsilon}]))=\frac{(n-2)\psi}{\alpha(\tau-1)} e^{2u_{k,\epsilon}} \mbox{ in } M, \\
  			\,& u_{k,\epsilon}=\log {k} +\frac{1}{2}\log\frac{(1-\epsilon)^2 \alpha(n\tau+2-2n)}{2(n-2)(\sup_{\partial M} {\psi}+\epsilon)} \mbox{ on } \partial M.
  		\end{aligned}
  		\right.
  	\end{equation}
  	The solvability is obtained by Theorem 
  	\ref{thm1-finiteBVC}.
  	The maximum principle implies
  	\begin{equation}
  		\label{b4b}
  		\begin{aligned}
  			\widetilde{u}_\infty\geqslant u_{k,\epsilon} \mbox{ in } M,
  		\end{aligned}
  	\end{equation}
  	\begin{equation}
  		\label{1114-3}
  		\begin{aligned}
  		\inf_M u_{k,\epsilon} \geqslant \frac{1}{2}\min\left\{ \inf_M \log\frac{f(\lambda(g^{-1}A_g^{\tau,\alpha}))}{{\psi}},  \log\frac{k^2(1-\epsilon)^2 \alpha(n\tau+2-2n)}{2(n-2)(\sup_{\partial M} {\psi}+\epsilon)} \right\}. 
  		\end{aligned}
  	\end{equation}

  	The local subsolution is given by
  	\begin{equation}
  		\label{1115-0}
  		\begin{aligned}
  			h_{k,\epsilon,\delta}(\mathrm{\sigma})=\log\frac{k}{k\mathrm{\sigma}+1}
  			+\frac{1}{2}\log\frac{(1-\epsilon)^2 \alpha(n\tau+2-2n)}{2(n-2)(\sup_{\partial M}  {\psi}+\epsilon)}+
  			\frac{1}{\mathrm{\sigma}+\delta}- \frac{1}{\delta}. \nonumber
  		\end{aligned}
  	\end{equation} 
  	The part $``\frac{1}{\mathrm{\sigma}+\delta}-\frac{1}{\delta}"$  is inspired by \cite{Gursky-Streets-Warren2011}.
  	Straightforward computation tells
  	\begin{equation}
  		\label{1114-1}
  		\begin{aligned}
  			\,& h_{k,\epsilon,\delta}'=-\frac{k}{k\mathrm{\sigma}+1}-\frac{1}{(\mathrm{\sigma}+\delta)^2},
  			\,&h_{k,\epsilon,\delta}''=\frac{k^2}{(k\mathrm{\sigma}+1)^2}+\frac{2}{(\mathrm{\sigma}+\delta)^3}, \nonumber
  		\end{aligned}
  	\end{equation}
  	\begin{equation}
  		\begin{aligned}
  			h_{k,\epsilon,\delta}'^2
  			=\frac{k^2}{(k\mathrm{\sigma}+1)^2}+\frac{1}{(\mathrm{\sigma}+\delta)^4}+\frac{2k}{(k\mathrm{\sigma}+1)(\mathrm{\sigma}+\delta)^2}, \nonumber
  		\end{aligned}
  	\end{equation}
  	$$ h_{k,\epsilon,\delta}''+\gamma h_{k,\epsilon,\delta}'^2
  	=\frac{(1+\gamma)   k^2}{(k\mathrm{\sigma}+1)^2}+\frac{2}{(\mathrm{\sigma}+\delta)^3}+\frac{\gamma}{(\mathrm{\sigma}+\delta)^4}+\frac{2k\gamma}{(k\mathrm{\sigma}+1)(\mathrm{\sigma}+\delta)^2}.$$
  	\begin{equation}
  		\label{yuan-39}
  		\begin{aligned}
  			V[h_{k,\epsilon,\delta}] =\,& (h_{k,\epsilon,\delta}''+\gamma h_{k,\epsilon,\delta}'^2)|\nabla \mathrm{\sigma}|^2 g 
  			+ \varrho(h_{k,\epsilon,\delta}'^2-h_{k,\epsilon,\delta}'') d\mathrm{\sigma}\otimes d\mathrm{\sigma}
  			\\\,& +h_{k,\epsilon,\delta}' (\Delta \mathrm{\sigma}\cdot g-\varrho\nabla^2 \mathrm{\sigma})+A.  \nonumber
  		\end{aligned}
  	\end{equation}
  	By
  	\eqref{gammarho0} we have $\varrho+\gamma\geqslant 0$, then 
  	\begin{equation}
  		\label{yuan-310}
  		\begin{aligned}
  			h_{k,\epsilon,\delta}''+\gamma h_{k,\epsilon,\delta}'^2+ \varrho(h_{k,\epsilon,\delta}'^2-h_{k,\epsilon,\delta}'')\geqslant \frac{(1+\gamma)   k^2}{(k\mathrm{\sigma}+1)^2}+\frac{2(1-\varrho)}{(\mathrm{\sigma}+\delta)^3}.
  		\end{aligned}
  	\end{equation}
  	
  	Next we are going to verify that 
  	\begin{claim}
  		If $ \delta\leqslant\frac{1}{4}$  or $ k\geqslant\frac{1}{\delta} $
  		then
  		\begin{equation}
  			\label{yuan-key312}
  			\begin{aligned}
  				V[h_{k,\epsilon,\delta}]
  				\geqslant 
  				\left( \frac{(1+\gamma)k^2}{(k\mathrm{\sigma}+1)^2}+\frac{2}{(\mathrm{\sigma}+\delta)^3} \right)|\nabla \mathrm{\sigma}|^2 g
  				-\left(\frac{k}{k\mathrm{\sigma}+1}+\frac{1}{(\mathrm{\sigma}+\delta)^2}\right)(\Delta \mathrm{\sigma}  g -\varrho\nabla^2\mathrm{\sigma})+A.  \nonumber 
  			\end{aligned}
  		\end{equation} 
  	\end{claim}
  	First, it is easy to see $ h_{k,\epsilon,\delta}'^2- h_{k,\epsilon,\delta}''\geqslant0$ provided $ \delta\leqslant\frac{1}{4}$  or $ k\geqslant\frac{1}{\delta}$.
  So the case  $\varrho>0$ is trivial. When $\varrho<0$, together with \eqref{yuan-310}, we get
  	\begin{equation}
  		\begin{aligned}
  			(h_{k,\epsilon,\delta}''+\gamma h_{k,\epsilon,\delta}'^2)|\nabla \mathrm{\sigma}|^2 g 
  			+ \varrho(h_{k,\epsilon,\delta}'^2-h_{k,\epsilon,\delta}'') d\mathrm{\sigma}\otimes d\mathrm{\sigma}  
  			\geqslant 
  			\left( \frac{(1+\gamma)k^2}{(k\mathrm{\sigma}+1)^2}+\frac{2}{(\mathrm{\sigma}+\delta)^3} \right)|\nabla \mathrm{\sigma}|^2 g.  \nonumber
  		\end{aligned}
  	\end{equation}
  	This completes the proof of the claim. 
  	
  	Note that $|\nabla \mathrm{\sigma}|^2=1$ on ${\partial M}$, 
  	$\Delta\mathrm{\sigma} \cdot g-\varrho\nabla^2\mathrm{\sigma}$ is bounded in $\bar M$,
  	$\frac{k}{k\mathrm{\sigma}+1} \geqslant \frac{1}{\mathrm{\sigma}+\delta} \mbox{ for } k\geqslant\frac{1}{\delta},$
  	and $\frac{1}{\mathrm{\sigma}+\delta}\gg1$ in $\Omega_\delta$ 
  	if $0<\delta\ll1$.
  	For $\epsilon$ fixed, there is a $\delta_\epsilon$ depending on $\epsilon$ and other known data  
  	such that, in $\Omega_\delta$ for 
  	$0<\delta<\delta_{\epsilon}$,
  	\begin{enumerate}
  		\item $\mathrm{\sigma}(x)$ is smooth, and $|\nabla \mathrm{\sigma}|^2\geqslant 1-\epsilon$.
  		\item $\frac{2}{(\mathrm{\sigma}+\delta)^3} |\nabla \mathrm{\sigma}|^2 g
  		-  \frac{1}{(\mathrm{\sigma}+\delta)^2} (\Delta \mathrm{\sigma} \cdot g -\varrho\nabla^2\mathrm{\sigma})+A\geqslant0$.
  		\item $\frac{k^2\epsilon(1+\gamma)}{(k\mathrm{\sigma}+1)^2} |\nabla \mathrm{\sigma}|^2 g-\frac{k}{k\mathrm{\sigma}+1} (\Delta \mathrm{\sigma} \cdot g -\varrho\nabla^2\mathrm{\sigma})\geqslant0$.
  		\item $\sup_{\partial M} {\psi}+\epsilon\geqslant \sup_{\Omega_{\delta}} {\psi}$.
  	\end{enumerate}
  	From now on we fix $0<\delta<\delta_\epsilon$ (for example $\delta=\frac{\delta_\epsilon}{2}$) and $k\geqslant\frac{1}{\delta}$.
  	Putting the discussion above together, in $\Omega_\delta$,  we have
  	\begin{equation}
  		\label{buchong1}
  		\begin{aligned}
  			f(\lambda(g^{-1}V[h_{k,\epsilon,\delta}]))
  			\geqslant 
  			f\left(\frac{k^2{(1+\gamma)(1-\epsilon)} }{(k\mathrm{\sigma}+1)^2}\cdot|\nabla \mathrm{\sigma}|^2 I_n\right) 
  			\geqslant  \frac{(n-2)\psi}{\alpha(\tau-1)}   e^{2h_{k,\epsilon,\delta}}.   
  		\end{aligned}
  	\end{equation}
  	Note that   $u_{k,\epsilon}=h_{k,\epsilon,\delta}$ on $\partial M$, and
  	$u_{k,\epsilon}|_{\mathrm{\sigma}=\delta} \geqslant h_{k,\epsilon,\delta}(\delta)$ (according to \eqref{1114-3}).
  	Here we also use $\frac{\log\frac{k}{k\delta+1}}{\frac{1}{2\delta}}\leqslant\frac{\log\frac{1}{\delta}}{\frac{1}{2\delta}}\rightarrow 0$ as $\delta\rightarrow0^+$.
  	Then the maximum principle yields 
  	\begin{equation}
  		\begin{aligned}
  			u_{k,\epsilon}\geqslant h_{k,\epsilon,\delta}=\log\frac{k}{k\mathrm{\sigma}+\delta}+\frac{1}{2}\log\frac{(1-\epsilon)^2\alpha(n\tau+2-2n)}{2(n-2)(\sup_{\partial M} {\psi}+\epsilon)}+\frac{1}{\mathrm{\sigma}+\delta}-\frac{1}{\delta}  \nonumber
  		\end{aligned}
  	\end{equation}
  	on $\Omega_{\delta}$.
  	From \eqref{b4b}, let $k\rightarrow +\infty$,  
  	we know that on $\Omega_\delta$,
  	\begin{equation}
  		\begin{aligned}
  			\widetilde{u}_\infty(x)+\log \mathrm{\sigma}(x)\geqslant \frac{1}{2}\log\frac{(1-\epsilon)^2
  				\alpha(n\tau+2-2n)}{2(n-2)(\sup_{\partial M} {\psi}+\epsilon)}
  			+\frac{1}{\mathrm{\sigma}+\delta}-\frac{1}{\delta}.  \nonumber
  		\end{aligned}
  	\end{equation}
  	Thus \eqref{asymptotic-rate2} follows.

  \end{proof}

  \begin{proposition}
  	\label{unique-prop}
  Suppose	\eqref{concave}, \eqref{homogeneous-1-buchong2}, \eqref{tau-alpha}, \eqref{homogeneous-1} and \eqref{tau-alpha-3} hold.
  	Let $u_\infty(x)=\underset{k\to+\infty}{\lim}\, u^{(k)}(x),$
  	where 
  	\begin{equation}	\begin{aligned}		\,& f(\lambda(g^{-1}V[u^{(k)}]))=\frac{(n-2)\psi}{\alpha(\tau-1)} e^{2u^{(k)}} 
  			\mbox{ in } M,
  			\,& u^{(k)}=\log k \mbox{ on } \partial M. \nonumber
  	\end{aligned}	\end{equation}
 For  any admissible solution $\widetilde{u}_\infty$  to 
   	the Dirichlet problem for equation \eqref{mainequ-02-2-1} with infinite boundary value condition, then 
  	$$u_\infty\leqslant \widetilde{u}_\infty \leqslant u_\infty +\frac{1}{2}(\sup_{\partial M}\log\psi- \inf_{\partial M}\log\psi) \mbox{ in } M.$$
  	
  \end{proposition}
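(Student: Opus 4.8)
The plan is to run the comparison principle twice: once from below against the finite-boundary Dirichlet solutions $u^{(k)}$, and once from above against $u_\infty$ itself, the constant $\tfrac12(\sup_{\partial M}\log\psi-\inf_{\partial M}\log\psi)$ emerging from the sharp boundary asymptotics of Propositions \ref{super123} and \ref{proposition-asymptotic}.

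First I would fix the setup. After the reduction of Section \ref{section7} the operator $\lambda(g^{-1}V[\,\cdot\,])$ is of the fully uniformly elliptic form covered by Lemma \ref{lemma-mp}, and the right-hand side of \eqref{mainequ-02-2-1} is $\tfrac{(n-2)\psi}{\alpha(\tau-1)}e^{2u}$ with $\tfrac{n-2}{\alpha(\tau-1)}>0$ by Lemma \ref{lemma-add2}, hence strictly increasing in $u$; so the comparison principle applies to \eqref{mainequ-02-2-1} on every compact subdomain. As in the proof of Theorem \ref{thm1-pde}, $u_\infty$ is a smooth admissible solution of \eqref{mainequ-02-2-1} with $\lim_{x\to\partial M}u_\infty=+\infty$, and the local lower barrier near $\partial M$ shows that $u_\infty$ — like the given $\widetilde{u}_\infty$ — induces a complete conformal metric; hence Propositions \ref{super123} and \ref{proposition-asymptotic} apply to both $u_\infty$ and $\widetilde{u}_\infty$.

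For the lower bound $u_\infty\leqslant\widetilde{u}_\infty$, I would fix $k$ and, for small $\epsilon>0$, compare $u^{(k)}$ and $\widetilde{u}_\infty$ on the compact manifold with smooth boundary $\{\sigma\geqslant\epsilon\}$ (here $\sigma$ is the distance function of \eqref{distance-function}). Since $\widetilde{u}_\infty\to+\infty$ as $x\to\partial M$, one can take $\epsilon$ so small that $\widetilde{u}_\infty>\sup_{\bar M}u^{(k)}\geqslant u^{(k)}$ on $\{\sigma=\epsilon\}$; as both solve \eqref{mainequ-02-2-1}, the comparison principle gives $u^{(k)}\leqslant\widetilde{u}_\infty$ on $\{\sigma\geqslant\epsilon\}$. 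Letting $\epsilon\to0^+$ gives $u^{(k)}\leqslant\widetilde{u}_\infty$ in $M$, and then $k\to+\infty$ gives $u_\infty\leqslant\widetilde{u}_\infty$.

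For the upper bound, set $C=\tfrac12(\sup_{\partial M}\log\psi-\inf_{\partial M}\log\psi)\geqslant0$. Proposition \ref{super123} applied to $\widetilde{u}_\infty$ and Proposition \ref{proposition-asymptotic} applied to $u_\infty$ give, for each $\epsilon>0$, some $\delta>0$ with $\widetilde{u}_\infty+\log\sigma\leqslant\tfrac12\log\tfrac{\alpha(n\tau+2-2n)}{2(n-2)\inf_{\partial M}\psi}+\tfrac{\epsilon}{2}$ and $u_\infty+\log\sigma\geqslant\tfrac12\log\tfrac{\alpha(n\tau+2-2n)}{2(n-2)\sup_{\partial M}\psi}-\tfrac{\epsilon}{2}$ on $\Omega_\delta$; subtracting, the two background terms differ by exactly $\tfrac12\log\tfrac{\sup_{\partial M}\psi}{\inf_{\partial M}\psi}=C$, so $\widetilde{u}_\infty\leqslant u_\infty+C+\epsilon$ on $\Omega_\delta$. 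Then I would observe that $v:=u_\infty+C+\epsilon$ is a supersolution: $V[v]=V[u_\infty]$ since the operator sees only derivatives of $u$, so $f(\lambda(g^{-1}V[v]))=\tfrac{(n-2)\psi}{\alpha(\tau-1)}e^{2u_\infty}=e^{-2(C+\epsilon)}\tfrac{(n-2)\psi}{\alpha(\tau-1)}e^{2v}\leqslant\tfrac{(n-2)\psi}{\alpha(\tau-1)}e^{2v}$ because $C+\epsilon>0$. Choosing $\delta'\in(0,\delta)$, on $\{\sigma=\delta'\}\subset\Omega_\delta$ one has $\widetilde{u}_\infty\leqslant v$, so the comparison principle on the compact set $\{\sigma\geqslant\delta'\}$ gives $\widetilde{u}_\infty\leqslant v$ there; together with the bound already established on $\Omega_{\delta'}\subset\Omega_\delta$ this yields $\widetilde{u}_\infty\leqslant u_\infty+C+\epsilon$ on all of $M$, and $\epsilon\to0^+$ finishes.

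The hard part is conceptual rather than computational: the upper bound works precisely because the gap between the two sharp boundary blow-up rates of Propositions \ref{super123}--\ref{proposition-asymptotic} is \emph{exactly} $C$, and because adding a nonnegative constant to $u_\infty$ turns it into a supersolution of \eqref{mainequ-02-2-1}. The remaining care — exhausting $M$ by the sets $\{\sigma\geqslant\epsilon\}$ and reducing the comparison to $\{\sigma\geqslant\delta'\}$ — is bookkeeping forced by the blow-up of $\widetilde{u}_\infty$ on $\partial M$; one must not forget Lemma \ref{lemma-add2} (monotonicity of the right-hand side in $u$) and the reduction of Section \ref{section7} (so that a fully uniformly elliptic equation, to which Lemma \ref{lemma-mp} applies, is in hand).
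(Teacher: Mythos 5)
Your proposal is correct and follows essentially the paper's own argument: the lower bound comes from comparing $\widetilde{u}_\infty$ with each finite-boundary-value solution $u^{(k)}$, and the upper bound from the asymptotic estimates \eqref{asymptotic-rate1}--\eqref{asymptotic-rate2} of Propositions \ref{super123} and \ref{proposition-asymptotic}, whose gap is exactly $\tfrac{1}{2}(\sup_{\partial M}\log\psi-\inf_{\partial M}\log\psi)$, combined with the maximum principle and the monotonicity of the right-hand side in $u$. The only cosmetic difference is that where the paper derives a contradiction at an interior maximum of $\widetilde{u}_\infty-u_\infty$ (which must be interior once its supremum exceeds the boundary limit), you compare $\widetilde{u}_\infty$ with the supersolution $u_\infty+C+\epsilon$ on exhausting compact subdomains --- the same mechanism in slightly different packaging.
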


  \begin{proof}
  	The comparison principle yields $\widetilde{u}_\infty\geqslant u^{(k)}$ $(\forall k\geqslant1)$. Then $\widetilde{u}_\infty\geqslant u_\infty$.
  	
  	Next, we prove the second inequality. 
  	Note that
  	\eqref{asymptotic-rate1} and  \eqref{asymptotic-rate2} imply $$\lim_{x\rightarrow\partial M}(\widetilde{u}_\infty(x)-u_\infty(x))\leqslant\frac{1}{2}(\sup_{\partial M}\log\psi-\inf_{\partial M}\log\psi).$$
  	Assume 
  	there is an  interior point $x_0\in M$ such that
  	$$(\widetilde{u}_\infty-u_\infty)(x_0)=\sup_M(\widetilde{u}_\infty-u_\infty)>\frac{1}{2}(\sup_{\partial M}\log\psi-\inf_{\partial M}\log\psi).$$ 
  	This is a contradiction since the maximum principle yields $\widetilde{u}_\infty(x_0)\leqslant u_\infty(x_0)$.  
  \end{proof}

    \medskip
  
  \section{Local and boundary estimates for   nonlinear uniformly elliptic equations}
  \label{section4}
  


  This section is devote to deriving the local and boundary estimates for a more general equation
  \begin{equation}
  	\label{hessianequ2-riemann}
  	\begin{aligned}
  		F(\nabla^2 u+A(x,u,\nabla u)):=f\left(\lambda(g^{-1}(\nabla^2 u+A(x,u,\nabla u)))\right) =\psi(x,u,\nabla u)
  	\end{aligned}
  \end{equation} 
  which is of fully uniform ellipticity,  
   i.e., 
 \begin{equation} 	\begin{aligned}	f_{i}(\lambda)\geqslant \theta\sum_{j=1}^n f_j(\lambda)>0 	\mbox{ in } \Gamma,		\quad \forall 1\leqslant i\leqslant n,  \nonumber  	\end{aligned}  \end{equation}
  where 
  $\psi(x,z,p)$ and $A(x,z,p)$ 
  denote respectively smooth function and  symmetric $(0,2)$-type tensor 
  of variables $(x,z,p)$,  
  $(x,p)\in T M$,
  $z\in \mathbb{R}$. 
  
  Again, we say $u $ is an \textit{admissible} function for \eqref{hessianequ2-riemann} if $$\lambda(g^{-1}(\nabla^2 u+A(x,u,\nabla u)))\in \Gamma.$$
  Throughout this section, for $u$ we denote
  \[\mathfrak{g}[u]=\nabla^2 u+A(x,u,\nabla u), 
  \quad \psi[u]=\psi(x,u,\nabla u).\]
  Moreover,  for any admissible solution $u$ to \eqref{hessianequ2-riemann},
   $\mathfrak{g}=\mathfrak{g}[u]$, $\lambda=\lambda(g^{-1}\mathfrak{g})$, $F^{ij}=\frac{\partial F}{\partial a_{ij}}(\mathfrak{g})$.
  Then the matrix $\{F^{ij}\}$ (with respect to $\{g^{ij}\}$) has eigenvalues $f_1,\cdots, f_n$. 
   Moreover,   $\sum_{i=1}^n f_i=F^{ij}g_{ij},   \sum_{i=1}^n f_i\lambda_i =F^{ij}\mathfrak{g}_{ij}.$
  

   \vspace{1mm}
  The local estimates for second derivatives can be stated as follows.
  \begin{theorem}
  	\label{interior-2nd-2} 
  	Let $B_r\subset M$ be a geodesic ball of radius $r$.
  	Let $u\in C^4(B_{r})$ be an admissible solution to equation \eqref{hessianequ2-riemann} in $B_r$.
  	Assume 
  	\eqref{concave}, \eqref{addistruc} 
  	and \eqref{fully-uniform2} hold.
  	Then  
  	\begin{equation}
  		\begin{aligned}
  			\sup_{B_{{r}/{2}}}|\nabla^2 u| \leqslant C, \nonumber
  		\end{aligned}
  	\end{equation}
  	where $C$ depends on  $|u|_{C^1(B_r)}$, $r^{-1}$, 
  	$\theta^{-1}$ and other known data.
  \end{theorem}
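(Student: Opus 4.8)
The plan is to prove the estimate by the maximum principle applied to a localized auxiliary function built from the largest eigenvalue of the Hessian, in the now-classical spirit of interior second-order estimates for concave uniformly elliptic fully nonlinear equations. Since \eqref{concave}, \eqref{addistruc} and \eqref{fully-uniform2} are in force, once the bound $\sup_{B_{r/2}}|\nabla^2 u|\le C$ is established the Evans--Krylov theorem \cite{Evans82,Krylov83} together with Schauder theory yields all higher interior estimates automatically, so the whole content of the theorem is this $C^2$ bound. Throughout I abbreviate $\cF=\sum_{i=1}^n f_i=F^{ij}g_{ij}$, which is positive by \eqref{fully-uniform2}.

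First I would fix a cutoff $\eta\in C_0^\infty(B_r)$ with $\eta\equiv1$ on $B_{r/2}$, $0\le\eta\le1$, $|\nabla\eta|^2\le C\eta/r^2$ and $|\nabla^2\eta|\le C/r^2$, and a one-variable function $\phi$ to be chosen with $\phi'>0$ and $\phi''\ge 2(\phi')^2$ on $[0,\sup_{B_r}|\nabla u|^2]$ (so $e^{\phi}$ is controlled by $|u|_{C^1(B_r)}$), and consider
\[
W=\max_{|\xi|_g=1}\ \eta(x)\,(\nabla^2 u)(\xi,\xi)\,e^{\phi(|\nabla u|^2)}.
\]
If $W$ attains its maximum on $\partial B_r$ or is not large there is nothing to prove; otherwise it is attained at an interior point $x_0$ and in a direction $\xi=e_1$, with $W(x_0)$ large. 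Choosing geodesic normal coordinates centered at $x_0$ and rotating the frame so that $\{\nabla_{ij}u(x_0)\}$ is diagonal with largest entry $\lambda_1=\nabla_{11}u(x_0)$, the function $G=\log\eta+\log(\nabla_{11}u)+\phi(|\nabla u|^2)$ still has an interior local maximum at $x_0$, because $\nabla_{11}u(x)\le\lambda_{\max}(\nabla^2u)(x)$ with equality at $x_0$; hence $\nabla G(x_0)=0$ and $F^{ij}\nabla_{ij}G(x_0)\le 0$.

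Next I would differentiate $F(\mathfrak{g}[u])=\psi[u]$ once and twice along $e_1$. The first differentiation, combined with the first-order condition $\nabla G(x_0)=0$, lets me solve for the dangerous third-order quantities ($\nabla_k\nabla_{11}u$ and $F^{ij}\nabla_{ij1}u$) in terms of $\lambda_1$, the bounded data $\nabla u,\nabla\eta,\phi'$ and $\cF$. The second differentiation, together with the concavity \eqref{concave} --- which forces $F^{ij,kl}\nabla_1\mathfrak{g}_{ij}\,\nabla_1\mathfrak{g}_{kl}\le 0$, hence $F^{ij}\nabla_{11}\mathfrak{g}_{ij}\ge\nabla_{11}\psi[u]$ --- and with the commutation of covariant derivatives (whose curvature contributions are bounded by $C\lambda_1\cF$, the curvature of $g$ being among the known data), produces a lower bound for $F^{ij}\nabla_{ij}\log(\nabla_{11}u)$. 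Substituting the first-order condition into the term $-F^{ij}\nabla_i(\nabla_{11}u)\nabla_j(\nabla_{11}u)/(\nabla_{11}u)^2$ turns it into $C(\phi')^2F^{ij}\nabla_i|\nabla u|^2\nabla_j|\nabla u|^2$ plus a controlled remainder, which is absorbed by the $\phi''$-term coming from $\phi$ precisely because $\phi''\ge2(\phi')^2$; meanwhile $\phi'F^{ij}\nabla_{ij}|\nabla u|^2\ge 2\phi'\sum_iF^{ii}\lambda_i^2-C\lambda_1\cF-C$, and \eqref{fully-uniform2} gives $F^{ii}\ge\theta\cF$ for every $i$, so the leading good term is $\gtrsim\theta\lambda_1^2\cF$. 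Assembling $F^{ij}\nabla_{ij}G(x_0)\le0$ from these ingredients and dividing by $\cF>0$ leads to an inequality of the schematic form
\[
c(\theta)\,\lambda_1^2\le C\big(1+r^{-2}+|u|_{C^1(B_r)}^2\big)\big(1+\lambda_1\big),
\]
which forces $\eta(x_0)\lambda_1(x_0)\le C$, hence $\lambda_{\max}(\nabla^2u)\le C$ on $B_{r/2}$; since $\mathfrak{g}[u]\in\Gamma\subseteq\Gamma_1$ we also have $\Delta u\ge-\tr(g^{-1}A)-(n-1)\lambda_{\max}(\nabla^2u)\ge-C$, so $\lambda_{\min}(\nabla^2u)\ge-C$ and the full estimate $\sup_{B_{r/2}}|\nabla^2u|\le C$ follows, with $C$ of the asserted form.

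The step I expect to be the main obstacle is exactly the absorption that produces the displayed inequality, in particular the handling of the lower-order dependence of $A$ and $\psi$ on $\nabla u$: differentiating $A(x,u,\nabla u)$ and $\psi(x,u,\nabla u)$ twice produces terms quadratic in $\nabla^2u$ --- such as $F^{ij}(\partial^2_{p_kp_\ell}A_{ij})\nabla_1\nabla_ku\,\nabla_1\nabla_\ell u$ and $(\partial^2_{p_kp_\ell}\psi)\nabla_{1k}u\,\nabla_{1\ell}u$ --- together with genuine third-order terms $F^{ij}(\partial_{p_k}A_{ij})\nabla_1\nabla_{1k}u$, none of which is manifestly dominated by the single good term $\phi'F^{ij}\nabla_{ij}|\nabla u|^2$. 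It is here that the \emph{full} uniform ellipticity \eqref{fully-uniform2} (reflected in the dependence on $\theta^{-1}$ in the final constant), rather than the mere positivity of the $f_i$, is decisive: because all the $F^{ii}$ are mutually comparable one may repeatedly apply Cauchy--Schwarz and calibrate $\phi$ (through $\phi'$ and $\phi''$, in terms of $|u|_{C^1(B_r)}$ and the known bounds on $A,\psi$) so as to swallow every such term into $\theta\lambda_1^2\cF$; the localization via $\eta$ is then routine given $|\nabla\eta|^2\le C\eta/r^2$.
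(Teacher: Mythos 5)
Your overall scheme is the same as the paper's: the paper maximizes $P=\eta\,\mathfrak{g}(\xi,\xi)e^{\varphi(w)}$ with $w=|\nabla u|^2$ and $\varphi(w)=(1-\tfrac{w}{2N})^{-1/2}$ (which satisfies exactly your calibration requirement, with $1\leqslant\varphi\leqslant\sqrt{2}$ and $\varphi''$ comparable to $(\varphi')^2$), differentiates the equation once and twice, discards the $F^{ij,kl}$-term by concavity, uses the critical-point identity to trade third derivatives, and generates the good term via \eqref{fully-uniform2} through $\mathcal{L}w\geqslant F^{ij}\mathfrak{g}_{ik}\mathfrak{g}_{jk}-\cdots\geqslant(\tfrac{\theta}{2}\mathfrak{g}_{11}^2-C)\sum_{i=1}^n f_i$; working with $\nabla_{11}u$ instead of $\mathfrak{g}_{11}$ is only cosmetic, and your closing use of $\Gamma\subseteq\Gamma_1$ to bound $\lambda_{\min}(\nabla^2u)$ from below is also how the paper recovers the full Hessian bound.

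There is, however, one concrete gap, and it sits precisely in the absorption step you single out as the main obstacle. Several bad terms carry no factor of $\sum_{i=1}^n f_i$ at all: for instance $\psi_{p_lp_m}\nabla_{1l}u\,\nabla_{1m}u\sim\lambda_1^2$ and $\psi_z\nabla_{11}u$ from the twice-differentiated equation, and $\psi_{p_l}\nabla_{kl}u\,\nabla_ku\sim\lambda_1$ when the once-differentiated equation is substituted into $F^{ij}\nabla_{ij}|\nabla u|^2$ (so your bound ``$\phi'F^{ij}\nabla_{ij}|\nabla u|^2\geqslant 2\phi'\sum_iF^{ii}\lambda_i^2-C\lambda_1\sum_iF^{ii}-C$'' should read $-C\lambda_1(1+\sum_iF^{ii})$), as well as the $\psi_{p_l}\nabla_l\eta/\eta$ contribution of the cutoff. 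Mutual comparability of the $F^{ii}$ is of no help for such terms, and ``dividing by $\cF>0$'' only yields a constant containing $(\sum_i f_i)^{-1}$, which is not a priori controlled: \eqref{fully-uniform2} gives positivity, not a uniform lower bound. The paper closes exactly this point with \eqref{sumfi}: by the concavity inequality \eqref{concavity1} together with \eqref{addistruc} (via Lemma \ref{lemma3.4}), one has $\sum_{i=1}^n f_i\geqslant\kappa>0$ with $\kappa$ controlled by $\psi[u]$, hence by $|u|_{C^1(B_r)}$; this is in fact the only place \eqref{addistruc} enters the second-derivative estimate, and your proposal never invokes it. Once this lower bound is inserted, the good term $\tfrac{c\,\theta}{N}\lambda_1^2\sum_{i=1}^n f_i$ dominates all remaining terms and your argument goes through exactly as in the paper's Steps 1--3.
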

  
  We will prove local gradient estimate under asymptotic assumptions:
  \begin{equation}
  	\label{key-assimption1}
  	\begin{aligned}
  		\,& |D_pA|\leqslant \gamma(x,z) |p|, 
  		\,&
  		D_zA+\frac{1}{|p|^2}g(\nabla' A,p)\leqslant \beta(x,z,|p|) |p|^2g, \\
  		\,& |D_p \psi|\leqslant \gamma(x,z) |p|, 
  		\,&
  		-D_z\psi-\frac{1}{|p|^2}g(\nabla' \psi,p)\leqslant \beta(x,z,|p|) |p|^2,
  	\end{aligned}
  \end{equation}
  where
  $\gamma=\gamma(x,z)$ and  $\beta=\beta(x,z,r)$  are positive continuous functions with
  \begin{equation}\begin{aligned}
  		\,& \lim_{r\rightarrow+\infty} \beta(x,z,r)=0, \,& (x,z,r)\in \bar M\times \mathbb{R}\times [0,+\infty).\nonumber
  	\end{aligned}
  \end{equation} 
  Here $\nabla' \psi$, $\nabla' A$ and $D_pA$ are as in Introduction, 
  and  the meanings of $D_zA$, $D_z\psi$  and $D_p\psi$ are similar.
  
  \begin{theorem}
  	\label{thm-gradient2}
  	Let $B_r\subset M$ as above and 
  	$u\in C^3( B_{r})$ be an admissible solution to  \eqref{hessianequ2-riemann} in $B_r$.
  	In addition to 
  	\eqref{concave}, 
  	we assume  \eqref{fully-uniform2} and \eqref{key-assimption1} hold.
  	Suppose 
  	there is a positive constant $\kappa_0$ depending not on $\nabla u$ such that
  	\begin{equation}
  		\label{sumfi-assumption1}
  		\begin{aligned}
  			F^{ij}g_{ij}\geqslant \kappa_0.
  		\end{aligned}
  	\end{equation}
  	Then there is a positive constant $C$ depending on $|u|_{C^0(B_r)}$,   $\theta^{-1}$ and other known data such that
  	$$\sup_{B_{{r}/{2}}}|\nabla u|\leqslant  {C}/{r}.$$
  	
  \end{theorem}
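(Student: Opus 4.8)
The plan is to prove this by the maximum principle applied to a weighted version of $|\nabla u|^2$, in the spirit of the Caffarelli--Nirenberg--Spruck gradient estimate, using fully uniform ellipticity together with \eqref{sumfi-assumption1} to produce a good term that does not degenerate and the asymptotic structure \eqref{key-assimption1} to absorb the errors coming from the gradient-dependence of $A$ and $\psi$. Fix $B_r=B_r(p_0)$ and a cutoff $\zeta$ with $\zeta\equiv1$ on $B_{r/2}$, $\mathrm{supp}\,\zeta\subset B_r$, $0\le\zeta\le1$, $|\nabla\zeta|^2\le C\zeta/r^2$ and $|\nabla^2\zeta|\le C/r^2$, and set
$$G=\zeta\,|\nabla u|^2\,e^{\phi(u)},\qquad \phi(u)=\exp\!\big(\Lambda(u-\sup_{B_r}u-1)\big),$$
where $\Lambda\ge1$ is to be chosen large in terms of $\theta^{-1}$, $\kappa_0^{-1}$, $\sup\gamma$ and known data. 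The point of this $\phi$ is that $0<\phi'=\Lambda\phi<\Lambda e^{-\Lambda}<\tfrac12$ and $\phi''=\Lambda\phi'$, hence $\phi''-(\phi')^2\ge\tfrac12\phi''$, so the good coefficient $\phi''-(\phi')^2$ and the error coefficient $\phi'$ scale together with ratio $\Lambda$, uniformly in $u$, while $e^\phi$ stays bounded. If $\sup_{B_{r/2}}|\nabla u|$ is not already under control, $G$ attains an interior maximum at some $p\in B_r$ with $v:=|\nabla u|^2(p)$ as large as desired; I would work at $p$ in a normal frame diagonalizing $\mathfrak g[u]=\nabla^2u+A(x,u,\nabla u)$, so that $\{F^{ij}\}$ becomes diagonal with eigenvalues $f_1,\dots,f_n$.

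The first-order condition $\nabla G(p)=0$ gives $\nabla_iv/v=-\phi'\nabla_iu-\nabla_i\zeta/\zeta$; in particular the gradient-direction component of the Hessian, $\nabla_{ki}u\,\nabla_ku=\tfrac12\nabla_iv$, is $O\!\big(v(\phi'|\nabla u|+|\nabla\zeta|/\zeta)\big)$. This is the key observation, since every second-order error below enters only contracted with $\nabla u$. From $F^{ij}\nabla_{ij}\log G\le0$, together with the Bochner-type expansion of $F^{ij}\nabla_{ij}\log|\nabla u|^2$ — obtained by differentiating $F(\mathfrak g[u])=\psi[u]$ once, contracting with $\nabla u$, and commuting derivatives, the commutators producing only curvature terms of order $\sum_i f_i$ (no factor $v$) — one arrives at an inequality of the form
$$0\ \ge\ \big(\phi''-(\phi')^2\big)F^{ij}\nabla_iu\,\nabla_ju+\phi'\,F^{ij}\nabla_{ij}u-\mathcal E-Cr^{-2}v\textstyle\sum_i f_i,$$
after discarding the nonnegative term $2v^{-1}F^{ij}\nabla_{ki}u\,\nabla_{kj}u$ and absorbing the cutoff cross-terms by Young's inequality. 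Here $\mathcal E$ gathers the errors from $\nabla_k A(x,u,\nabla u)$ and $\nabla_k\psi(x,u,\nabla u)$. Using the one-sided bounds in \eqref{key-assimption1}, the combinations $-2F^{ij}(D_zA)_{ij}-2v^{-1}F^{ij}(\nabla'_kA)_{ij}\nabla_ku$ and $2D_z\psi+2v^{-1}(\nabla'_k\psi)\nabla_ku$ are $\ge-2\beta v\sum f_i$ and $\ge-2\beta v$; since $\beta(x,u,|\nabla u|)\to0$ as $|\nabla u|\to\infty$ and $|u|$ is bounded, these are negligible for $v$ large, while the residual $D_pA$- and $D_p\psi$-terms are bounded by $C\gamma\phi'\,v\sum f_i$ and $C\gamma\phi'\,v$ by $|D_pA|,|D_p\psi|\le\gamma|\nabla u|$ and the control of $\nabla_{ki}u\,\nabla_ku$.

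Then I would conclude as follows. Fully uniform ellipticity \eqref{fully-uniform2} gives $f_i\ge\theta\sum_jf_j$ for every $i$, so $F^{ij}\nabla_iu\,\nabla_ju=\sum_if_i(\nabla_iu)^2\ge\theta v\sum_jf_j$ — the one quantity not collapsing in the gradient direction — whence $(\phi''-(\phi')^2)F^{ij}\nabla_iu\nabla_ju\ge\tfrac12\Lambda\phi'\theta\,v\sum f_j$. The term $\phi'F^{ij}\nabla_{ij}u=\phi'(\sum f_i\lambda_i-F^{ij}A_{ij})$ is controlled using the equation and concavity (which bound $\sum f_i\lambda_i$ from below, cf. \eqref{addistruc-1}) together with $|A(x,u,\nabla u)|\le C(1+|\nabla u|^2)$, giving a contribution of size $\le C\phi'v\sum f_i$; and \eqref{sumfi-assumption1} provides $\sum f_i\ge\kappa_0$. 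Choosing $\Lambda$ large enough, the good term $\tfrac12\Lambda\phi'\theta\,v\sum f_i$ dominates every $\phi'v\sum f_i$- and $\phi'v$-error (the latter via $\sum f_i\ge\kappa_0$), while the curvature and $\beta$-terms are of strictly lower order in $v$ and the $r^{-2}$-term is absorbed once $v(p)$ exceeds a threshold depending only on $r^{-1},\theta^{-1},\kappa_0^{-1},|u|_{C^0(B_r)}$ and known data. This forces $v(p)\le C$, hence $\sup_{B_{r/2}}|\nabla u|^2\le C\,G(p)\le C\,v(p)\le C$, and tracking the $r$-dependence through the cutoff yields $\sup_{B_{r/2}}|\nabla u|\le C/r$.

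The step I expect to be the main obstacle is the second one: organizing the differentiated equation so that all second-order error terms appear only through the controlled contraction $\nabla_{ki}u\,\nabla_ku$, and then beating the $D_pA$- and $D_p\psi$-errors, which are genuinely of the same order $\phi'v\sum f_i$ as the leading good term. This is precisely where one needs both fully uniform ellipticity (so that $F^{ij}\nabla_iu\nabla_ju$ cannot degenerate in the gradient direction) and the scaling $\phi''=\Lambda\phi'$ built into the choice of $\phi$; by contrast the curvature bookkeeping, the cutoff absorption, and the passage from the maximum point back to $B_{r/2}$ are routine.
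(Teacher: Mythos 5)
Your overall strategy -- maximizing $\zeta|\nabla u|^2e^{\phi(u)}$ at an interior point, using full uniform ellipticity to get the nondegenerate good term $F^{ij}\nabla_iu\nabla_ju\geqslant\theta|\nabla u|^2\sum_i f_i$, absorbing the $D_zA,\nabla'A,D_z\psi,\nabla'\psi$ errors via \eqref{key-assimption1} and the bare (no $\sum f_i$) errors via \eqref{sumfi-assumption1} -- is the same as the paper's, which merely takes $\phi=(u-\inf_{B_r}u+2)^{-N}$ instead of your exponential. The one genuine gap is your treatment of $\phi' F^{ij}\nabla_{ij}u$ after you discard the Hessian-squared term $2w^{-1}F^{ij}\nabla_{ik}u\nabla_{jk}u$ (here $w=|\nabla u|^2$). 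You need a \emph{lower} bound on $F^{ij}\nabla_{ij}u=\sum_i f_i\lambda_i-F^{ij}A_{ij}$, and you claim $\sum_i f_i\lambda_i$ is bounded below ``by concavity, cf.\ \eqref{addistruc-1}''. Concavity by itself gives only the \emph{upper} bound $\sum_i f_i\lambda_i\leqslant f(\lambda)-f(\vec{\bf 1})+\sum_i f_i$ (this is \eqref{concavity1} with $\mu=\vec{\bf 1}$), and the inequality \eqref{addistruc-1} is established in Lemma \ref{lemma2.3} under $\sup_\Gamma f=+\infty$ and \eqref{addistruc-0}, i.e.\ essentially under \eqref{addistruc} -- none of which are hypotheses of Theorem \ref{thm-gradient2}. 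Under \eqref{concave} and \eqref{fully-uniform2} alone, $\sum_i f_i\lambda_i$ need not be bounded below: for instance $f=\sigma_1-\epsilon\sqrt{1+|\lambda|^2}$ on $\Gamma_1$ is concave and fully uniformly elliptic, yet $\sum_i f_i\lambda_i\approx\sigma_1-\epsilon|\lambda|$ can be as negative as a constant times $|\nabla^2u|\sum_i f_i$, i.e.\ of Hessian order -- exactly the kind of term that the discarded Hessian-squared term is needed to absorb.

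The gap is local and repairable. Either keep a small multiple of $w^{-1}F^{ij}\nabla_{ik}u\nabla_{jk}u\geqslant\theta w^{-1}|\nabla^2u|^2\sum_i f_i$, as the paper does, and absorb $\phi'|F^{ij}\nabla_{ij}u|\leqslant\phi'|\nabla^2u|\sum_i f_i$ by Cauchy--Schwarz against it together with the gradient good term (the paper's inequality $\frac18N^2v^{-N-2}\theta w+\frac{\theta}{2w}|\nabla_{ik}u|^2\geqslant\frac12 N\theta|\nabla_{ik}u|v^{-\frac N2-1}$ plays precisely this role); or else add \eqref{addistruc} (or homogeneity of degree one, which gives $\sum_i f_i\lambda_i=f(\lambda)=\psi>0$) to your hypotheses, which is harmless in the paper's applications but proves a weaker statement than the theorem as written. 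Everything else in your outline -- the first-order condition controlling $\nabla_{ik}u\,\nabla_ku$, the $\beta\to0$ absorption for large gradient, the use of $\kappa_0$ to upgrade bare errors to $\sum_i f_i$-errors, and the cutoff bookkeeping yielding the $C/r$ scaling -- matches the paper's proof.
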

  
  \begin{remark}
  	By Lemma \ref{lemma3.4} and \eqref{concavity1}, if 
  	\eqref{concave} and \eqref{addistruc} hold then
  	\begin{equation}
  		\label{sumfi}
  		\begin{aligned}
  			\sum_{i=1}^n f_i(\lambda)> \kappa:={(f(R_0\vec{\bf 1})-\psi[u])}/{R_0}>0 \mbox{ for some } R_0>0.
  		\end{aligned}
  	\end{equation}
  	Moreover, if $f$ is homogeneous of degree one, then   
  	$\sum_{i=1}^n f_i(\lambda)\geqslant f(\vec{\bf 1})>0.$ 
  	
  	As a result,
  	the assumption \eqref{sumfi-assumption1}  holds  when $f$ is a homogeneous function  of degree one, or  
  	$f$ satisfies \eqref{addistruc} and $\psi[u]=\psi(x,u)$. 
  \end{remark}

  The boundary estimates for second derivatives are also obtained. 
  \begin{theorem}
  	\label{thm2-bdy}
  	Let $(M,g)$ be a compact Riemannian manifold with smooth boundary.
  	Suppose  
  	\eqref{concave}, \eqref{addistruc} 
  	and \eqref{fully-uniform2} hold.
  	Let $u\in C^3(M)\cap C^2(\bar M)$ be an admissible solution to  
  	\eqref{hessianequ2-riemann} with 
  	$u=\varphi$ on $\partial M$ for $\varphi\in C^3(\bar M)$, then $$\sup_{\partial M}|\nabla^2 u|\leqslant C$$ holds for 
  	a uniform  constant depending on $\theta^{-1}$, $|u|_{C^1(\bar M)}$, $|\varphi|_{C^3(\bar M)}$ and other known data. 
  	
  \end{theorem}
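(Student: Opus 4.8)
The plan is to run the classical three–step boundary estimate of Caffarelli--Nirenberg--Spruck, exploiting the fully uniform ellipticity \eqref{fully-uniform2} to make every step routine. All constants below are allowed to depend on $\theta^{-1}$, $|u|_{C^1(\bar M)}$, $|\varphi|_{C^3(\bar M)}$ and the geometry near $\partial M$; since $u\in C^1(\bar M)$, the tensor $A$, the function $\psi$ and all their $(x,z,p)$–derivatives are uniformly bounded along $u$ on $\bar M$. Two structural facts will be used repeatedly: by \eqref{concave}, \eqref{addistruc} and \eqref{sumfi} one has $\Sigma:=\sum_i f_i(\lambda)\geqslant\kappa_0>0$ on $\bar M$, and by \eqref{fully-uniform2}, in any orthonormal frame, $F^{nn}\geqslant\theta\Sigma$, $F^{\alpha\alpha}\geqslant\theta\Sigma$ and $|F^{ij}|\leqslant\Sigma$ for all indices. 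Fix $x_0\in\partial M$ and work in Fermi coordinates $(x',x_n)$ centred at $x_0$ with $\partial M=\{x_n=0\}$ locally and $e_n$ the inner unit normal at $x_0$, so that $\partial_{x_\alpha}$ is tangent to $\partial M$ along the coordinate slice for $\alpha<n$. \emph{Step 1 (tangential--tangential).} Differentiating $u=\varphi$ twice along $\partial M$ expresses $\nabla_{\alpha\beta}u(x_0)$ ($\alpha,\beta<n$) through $\nabla^2\varphi$, $\nabla(u-\varphi)$ and the second fundamental form of $\partial M$, so $|\nabla_{\alpha\beta}u(x_0)|\leqslant C$.

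\emph{Step 2 (tangential--normal), the heart of the proof.} Apply $\partial_{x_\alpha}$ ($\alpha<n$) to $F(\nabla^2u+A(x,u,\nabla u))=\psi(x,u,\nabla u)$. Moving the single second-order term $F^{ij}A_{ij,p_k}\,\partial_{x_\alpha}\partial_k u$ to the left, the function $W:=\partial_{x_\alpha}(u-\varphi)$ satisfies $|\mathcal{L}W|\leqslant C\Sigma$, where $\mathcal{L}w:=F^{ij}\nabla_{ij}w+\bigl(F^{ij}A_{ij,p_k}-\psi_{p_k}\bigr)\nabla_k w$ is, up to a bounded zeroth-order perturbation, the linearized operator; the crucial point is that after this rearrangement the right-hand side carries no derivative of $u$ beyond first order, only curvature terms contracted against $F^{ij}$ and bounded data. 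Since $W\equiv 0$ on $\partial M\cap B_\delta$, run a barrier argument on the half-ball $B_\delta^+$ with
\[
\Psi \;=\; A\bigl(1-e^{-a x_n}\bigr)+B\,|x'|^2\;\mp\;W .
\]
Because $F^{ij}\nabla_i x_n\nabla_j x_n\geqslant\tfrac12\theta\Sigma$ and $F^{\alpha\alpha}\geqslant\tfrac12\theta\Sigma$ near $x_0$, choosing first $\delta$ small, then $a$ large, then $B$ large, then $A$ large (in this order) gives $\mathcal{L}\Psi<0$ in $B_\delta^+$ and $\Psi\geqslant 0$ on $\partial B_\delta^+$. The maximum principle for $\mathcal{L}$ yields $\Psi\geqslant 0$ in $B_\delta^+$, and since $\Psi(x_0)=0$ we get $\partial_{x_n}\Psi(x_0)\geqslant 0$, i.e.\ $|\nabla_{\alpha n}u(x_0)|\leqslant Aa=C$.

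\emph{Step 3 (normal--normal).} Write $\mathfrak g=\nabla^2u+A(x,u,\nabla u)$; we may assume $\mathfrak g_{nn}(x_0)>0$, and by Steps 1--2 all entries $\mathfrak g_{ij}(x_0)$ with $(i,j)\neq(n,n)$ are bounded. Choose $R_0$ with $f(R_0\vec{\bf 1})>\sup_{\bar M}\psi[u]$ (possible by \eqref{addistruc} and compactness, cf.\ \eqref{sumfi}); then concavity \eqref{concavity1} at the reference point $R_0\vec{\bf 1}$ gives $F^{ij}\mathfrak g_{ij}=\sum_i f_i\lambda_i<R_0\Sigma$, while $F^{nn}\geqslant\theta\Sigma$, $|F^{ij}|\leqslant\Sigma$ give $F^{ij}\mathfrak g_{ij}\geqslant\theta\Sigma\,\mathfrak g_{nn}(x_0)-C\Sigma$. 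Dividing by $\Sigma>0$ yields $\mathfrak g_{nn}(x_0)\leqslant C$, hence $|\nabla_{nn}u(x_0)|\leqslant C$. Since $x_0\in\partial M$ was arbitrary, combining Steps 1--3 gives $\sup_{\partial M}|\nabla^2 u|\leqslant C$ with the stated dependence.

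The main obstacle is Step 2: one must verify that tangential differentiation of the equation produces exactly one, benign, second-order term which is absorbed into $\mathcal{L}$, and one must order the barrier constants so that the negative term $-a^2e^{-ax_n}F^{ij}\nabla_ix_n\nabla_jx_n$ dominates everything else. Both succeed only because fully uniform ellipticity keeps $F^{ij}$ comparable to $\Sigma\,g^{ij}$ with $\Sigma$ bounded away from $0$; in a merely degenerate-elliptic setting this step is genuinely delicate, which is precisely why the reduction to the uniformly elliptic form in Section \ref{section7} was essential.
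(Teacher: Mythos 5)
Your Steps 1 and 3 reproduce the paper's Cases 1 and 3 (tangential estimate from the boundary condition; double normal estimate from concavity at a reference point $R_0\vec{\bf 1}$ together with $F^{nn}\geqslant\theta\sum_if_i$), and they are fine. The genuine gap is in Step 2, in the claim that after absorbing $F^{ij}A_{ij,p_k}\nabla_kW$ and $-\psi_{p_k}\nabla_kW$ into $\mathcal{L}$ one has $|\mathcal{L}W|\leqslant C\Sigma$ (writing $\Sigma=\sum_if_i$), i.e.\ that ``the right-hand side carries no derivative of $u$ beyond first order.'' On a Riemannian manifold this is false: when you differentiate \eqref{hessianequ2-riemann} tangentially you must also differentiate the metric through which $\lambda(g^{-1}\mathfrak g)$ is formed (equivalently, in an orthonormal frame, commute $\nabla_k$ with $\nabla_{ij}$, which produces terms $\Gamma_{ik}^l\nabla_{jl}u$ contracted against $F^{ij}$). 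These contributions are of the schematic form $F\ast\mathfrak g\ast\nabla g$ and in an eigenframe are only bounded by $C\sum_if_i|\lambda_i|$, which under \eqref{fully-uniform2} is comparable to $\Sigma\sum_i|\lambda_i|$ --- exactly the second-order quantity you are trying to estimate. The concavity inequality \eqref{concavity1} controls only the signed sum, $0<\sum_if_i\lambda_i\leqslant\psi[u]-f(\vec{\bf 1})+\Sigma$, not $\sum_if_i|\lambda_i|$, since negative eigenvalues can have modulus up to $(n-1)\lambda_{\max}$; and choosing Fermi or normal coordinates does not help because the barrier inequality must hold on all of $B_\delta^+$, where $\partial g\neq0$.

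Consequently your barrier $\Psi=A(1-e^{-ax_n})+B|x'|^2\mp W$ cannot be made to satisfy $\mathcal{L}\Psi<0$: the favourable term it produces is only of size $a^2\Sigma$, which cannot dominate the unbounded term $\sum_if_i|\lambda_i|$ hidden in $\mathcal{L}W$. This is precisely why the paper's mixed-derivative barrier carries the additional term $A_3\sum_{k=1}^{n-1}|\nabla_k(u-\varphi)|^2$: applying $\mathcal{L}$ to it yields the good quadratic term $F^{ij}\mathfrak g_{ik}\mathfrak g_{jk}\geqslant\tfrac12\sum_{i\neq r}f_i\lambda_i^2$ (by \cite[Proposition 2.19]{Guan12a}), after which Cauchy--Schwarz together with the concavity bound on $\sum_if_i\lambda_i$ gives $\sum_if_i|\lambda_i|\leqslant\epsilon\sum_{i\neq r}f_i\lambda_i^2+\frac{C}{\epsilon}\Sigma+C$, so the bad term is absorbed. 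As written, your Step 2 closes only in the flat situation where $\nabla g\equiv0$ in the chosen coordinates; on a general $(M,g)$ you must add the squared tangential-derivative term (or an equivalent device) to the barrier and run the absorption argument above.
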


  \subsection{Useful computation}
  

  Let $e_1,...,e_n$ be the local frame 
  as above. We denote 
  $A_{ij}(x,u,\nabla u)=A(x,u,\nabla u)(e_i,e_j)$,
  and $\mathfrak{g}_{ij}=\nabla_{ij}u+A_{ij}(x,u,\nabla u)$.
  The linearized operator of \eqref{hessianequ2-riemann} is given by
  \begin{equation}
  	\label{linearized-operator}
  	\begin{aligned}
  		\mathcal{L}v=F^{ij}\nabla_{ij}v+(F^{ij}A_{ij,p_l}-\psi_{p_l})\nabla_l v \mbox{ for } v\in C^2(\bar M). 
  	\end{aligned}
  \end{equation}

  Similar computation can be found in \cite{Guan2015Jiao}.
  We have
  \begin{equation}
  	\begin{aligned}
  		\nabla_k\mathfrak{g}_{ij}=
  		\nabla_{kij}u+\nabla'_kA_{ij} +A_{ij,z} \nabla_k u+A_{ij,p_l}\nabla_{kl}u. \nonumber
  	\end{aligned}
  \end{equation}
  By differentiating equation \eqref{hessianequ2-riemann},
  \begin{equation}
  	\label{differ-equa1}
  	\begin{aligned}
  		F^{ij}\nabla_k\mathfrak{g}_{ij}=\nabla'_k\psi+\psi_z\nabla_k u+\psi_{p_l}\nabla_{kl}u,
  	\end{aligned}
  \end{equation}
  \begin{equation}
  	\label{differ-buchong3}
  	\begin{aligned}
  		F^{ii}\nabla_{11}\mathfrak{g}_{ii}=\,&
  		\nabla'_{11}\psi
  		-F^{ij,kl}\nabla_1\mathfrak{g}_{ij} \nabla_1\mathfrak{g}_{kl}
  		+ 2\nabla'_1\psi_z \nabla_1 u
  		+\psi_z\nabla_{11}u
  		+2\nabla'_1\psi_{p_l}\nabla_{1l}u 
  		\\\,&
  		+2\psi_{zp_l}\nabla_{1l}u\nabla_1u
  		+\psi_{zz}|\nabla_1u|^2
  		+\psi_{p_lp_m}\nabla_{1l}u\nabla_{1m}u
  		+\psi_{p_l}\nabla_{11l}u.  
  	\end{aligned}
  \end{equation}
  Using  \eqref{differ-equa1}, $\nabla_{ij}u=\nabla_{ji}u$ and $\nabla_{ijk}u=\nabla_{kij}u-R_{ljik}\nabla_l u$,
  \begin{equation}
  	\label{ineq1-c1}
  	\begin{aligned}
  	\,& F^{ij}\nabla_{ijk}u+(F^{ij}A_{ij,p_l}-\psi_{p_l})\nabla_{lk}u  
 		\\=\,&
  	\nabla'_k \psi+\psi_z \nabla_k u 	-F^{ij}(R_{ljik}\nabla_l u+\nabla'_kA_{ij}+A_{ij,z}\nabla_k u).
  	\end{aligned}
  \end{equation}

  Let $w=|\nabla u|^2$.
  The straightforward computation yields
  \begin{equation}
  	\label{wi1}
  	\begin{aligned}
  		\,& \nabla_i w=2\nabla_k u \nabla_{ik}u,
  		\,& \nabla_{ij}w=2\nabla_{ik}u\nabla_{jk}u+2\nabla_k u\nabla_{ijk}u,
  	\end{aligned}
  \end{equation}
  \begin{equation}
  	\label{Lw1}
  	\begin{aligned}
  		\mathcal{L}w=\,&
  		2F^{ij}\nabla_{ik}u \nabla_{jk}u
  		-2F^{ij}R_{ljik}\nabla_k u \nabla_l u
  		+2\nabla'_k\psi \nabla_k u
  		\\\,&
  		+2\psi_z|\nabla u|^2
  		-2F^{ij}(\nabla'_k A_{ij}\nabla_k u+A_{ij,z}|\nabla u|^2).
  	\end{aligned}
  \end{equation}

  
  \subsection{Local gradient estimate}
  Note that the equation \eqref{hessianequ2-riemann} is of fully uniform ellipticity, the proof of local estimates is standard.
  For convenience, we present the details below. 
  As above we denote $w=|\nabla u|^2$. 
  We consider the quantity $$Q:=\eta w e^\phi$$ where $\phi$ is determined later. 
  Following \cite{Guan2003Wang} 
  let $\eta$ be a smooth function 
  with compact support in $B_r \subset M$ and 
  \begin{equation}
  	\label{eta1}
  	\begin{aligned}
  		0\leqslant \eta\leqslant 1, \mbox{  } \eta|_{B_{\frac{r}{2}}}\equiv1, 
  		\mbox{  } |\nabla\eta|\leqslant   {C\sqrt{\eta}}/{r},
  		\mbox{  } |\nabla^2\eta|\leqslant {C}/{r^2}.
  	\end{aligned}
  \end{equation}
  The quantity $Q$ attains its maximum at an interior point $x_0\in M$. 
  We may assume $|\nabla u|(x_0)\geqslant 1$.
  By maximum principle at $x_0$
  \begin{equation}
  	\label{mp1}
  	\begin{aligned}
  		\,& \frac{\nabla_i\eta}{\eta}+\frac{\nabla_i w}{w}+\nabla_i\phi=0, 
  		\,& \mathcal{L}(\log\eta+\log w+\phi)\leqslant 0.
  	\end{aligned}
  \end{equation}
  Around $x_0$ we choose a local orthonormal frame $e_1,\cdots, e_n$;
  for simplicity,  we further assume $e_1,\cdots,e_n$ have been chosen such that at $x_0$,  
  $\mathfrak{g}_{ij}$ is diagonal (so is $F^{ij}$).
  
  \vspace{1mm}
 \noindent{\em Step 1. Computation and estimation for $\mathcal{L}(\log w)$.}
  By \eqref{wi1},
  \begin{equation}
  	\label{wiwi1}
  	\begin{aligned}
  		F^{ij}\nabla_iw \nabla_j w \leqslant 4|\nabla u|^2 F^{ij}\nabla_{ik}u\nabla_{jk}u. 
  	\end{aligned}
  \end{equation}
  Using Cauchy-Schwarz inequality, one derives by \eqref{mp1}
  \begin{equation}
  	\label{wiwi2}
  	\begin{aligned}
  		F^{ij} \frac{\nabla_i w\nabla_j w}{w^2}\leqslant (1+\epsilon)
  		\left(\frac{1}{\epsilon}F^{ij}\frac{\nabla_i \eta\nabla_j \eta}{ \eta^2}
  		+F^{ij}\nabla_i \phi \nabla_j \phi \right), \nonumber
  	\end{aligned}
  \end{equation}
  which, together with \eqref{wiwi1}, yields 
  \begin{equation}
  	\label{wiwi3}
  	\begin{aligned}
  		F^{ij} \frac{\nabla_i w\nabla_j w}{w^2}
  		\leqslant (1-\epsilon^2)
  		\left(\frac{1}{\epsilon}F^{ij}\frac{\nabla_i \eta\nabla_j \eta}{ \eta^2}
  		+F^{ij}\nabla_i \phi \nabla_j \phi \right)+\frac{4\epsilon}{w}F^{ij}\nabla_{ik}u \nabla_{jk}u. \nonumber
  	\end{aligned}
  \end{equation}
  Set $0<\epsilon\leqslant \frac{1}{4}$. We now obtain 
  \begin{equation}
  	\label{Llogw}
  	\begin{aligned}
  		\mathcal{L}(\log w) \geqslant \,&
  		\frac{1}{w}F^{ij}\nabla_{ik}u \nabla_{jk}u
  		+2(\psi_z+\frac{1}{w}\nabla'_k \psi \nabla_k u)
  		-2F^{ij}(A_{ij,z}+\frac{1}{w}\nabla'_k A_{ij}\nabla_k u)  
  		\\
  		\,&
  		-\frac{1-\epsilon^2}{\epsilon}F^{ij}\frac{\nabla_i \eta\nabla_j \eta}{ \eta^2}
  		-(1-\epsilon^2)F^{ij}\nabla_i\phi \nabla_j\phi -C_0\sum F^{ii}.
  	\end{aligned}
  \end{equation}
  
\noindent{\em Step 2. Construction and computation of $\phi$.}
  As in \cite{Guan2008IMRN},  
  let $\phi=v^{-N}$ where $v=u-\inf_{B_r} u+2$ ($v\geqslant 2$ in $B_r$)  and $N\geqslant 1$ to be chosen later. By direct computation,
  \begin{equation}
  	\label{Lphi}
  	\begin{aligned}
  		\mathcal{L}\phi
  		= \,& N(N+1)v^{-N-2}F^{ij}\nabla_i u \nabla_j u-Nv^{-N-1}F^{ij}\nabla_{ij}u 
  		\\\,&
  		-Nv^{-N-1}(F^{ij}A_{ij,p_l}-\psi_{p_l})\nabla_l u. 
  	\end{aligned}
  \end{equation} 
  
  \subsubsection*{Step 3. Completion of the proof.}
  By \eqref{fully-uniform2},
  \begin{equation}
  	\label{fully-application1}
  	\begin{aligned}
  		\,& F^{ij}\nabla_{ik}u \nabla_{jk}u \geqslant \theta |\nabla_{ik} u|^2 \sum F^{ii}, 
  		\,& F^{ij}\nabla_i u \nabla_j u \geqslant \theta w\sum F^{ii}.
  	\end{aligned}
  \end{equation}
  By Cauchy-Schwarz inequality again, one derives
  \begin{equation}
  	\label{cauchy-1}
  	\begin{aligned}
  		\frac{1}{8}N^2 v^{-N-2}\theta w+\frac{\theta}{2w}|\nabla_{ik}u|^2 \geqslant \frac{1}{2}N\theta |\nabla_{ik}u| v^{-\frac{N}{2}-1}. \nonumber
  	\end{aligned}
  \end{equation}
  
  We choose $N\gg1$ so that 
  \begin{equation}
  	\begin{aligned}
  		\,& \frac{1}{4}N\theta v^{-\frac{N}{2}-1}\geqslant Nv^{-N-1}, 
  		\,& N(N+1)v^{-N-2}-N^2v^{-2N-2}\geqslant \frac{N^2}{2}v^{-N-2}.  \nonumber
  	\end{aligned}
  \end{equation}
  Suppose furthermore that $\frac{1}{8}Nv^{-N-2}\theta w\geqslant C_0$ where $C_0$ is as in \eqref{Llogw} (otherwise we are done). 
  Together with \eqref{mp1}, \eqref{Llogw}, \eqref{Lphi} and \eqref{fully-application1},
  we derive
  \begin{equation}
  	\label{key1}
  	\begin{aligned}
  		0 \geqslant \,&
  		\frac{1}{4}\theta N^2 w v^{-N-2}\sum F^{ii}
  		+\frac{1}{4}N\theta |\nabla_{ik} u|v^{-\frac{N}{2}-1} \sum F^{ii}
  		+2(\psi_z+\frac{1}{w}\nabla'_k \psi \nabla_k u)
  		\\\,&
  		-2F^{ij}(A_{ij,z}+\frac{1}{w}\nabla'_k A_{ij}\nabla_k u)  
  		-Nv^{-N-1}(F^{ij}A_{ij,p_l}-\psi_{p_l})\nabla_l u
  		\\ \,&
  		+(F^{ij}A_{ij,p_l}-\psi_{p_l})\frac{\nabla_l \eta}{\eta} + \frac{F^{ij}\nabla_{ij}\eta}{\eta}
  		-\frac{1+\epsilon-\epsilon^2}{\epsilon}F^{ij}\frac{\nabla_i \eta\nabla_j \eta}{ \eta^2}. \nonumber
  	\end{aligned}
  \end{equation}
  Using \eqref{eta1} and the asymptotic assumption \eqref{key-assimption1}, we obtain
  \begin{equation}
  	\begin{aligned}
  		0\geqslant  \,&
  		\frac{1}{4}\theta N^2 w v^{-N-2}\sum F^{ii}
  		+\frac{1}{4}N\theta |\nabla_{ik} u|v^{-\frac{N}{2}-1} \sum F^{ii}  -\frac{C}{r^2\eta}\sum F^{ii}
  		\\\,&
  		-\left(\beta(x,u,|\nabla u|) w+CNv^{-N-1}w+\frac{C\sqrt{w}}{r\sqrt{\eta}}\right) \left(1+\sum F^{ii}\right), \nonumber
  	\end{aligned}
  \end{equation}
  which gives $\eta w\leqslant \frac{C}{r^2}$, as required.
  

  \subsection{Local estimate for second derivatives}
  
  Let's consider 
  $$P(x)=\max_{\xi\in T_x\bar M, |\xi|=1} \eta\mathfrak{g}(\xi,\xi)e^{\varphi},
  $$
  where $\eta$ is the cutoff function as given by \eqref{eta1} and $\varphi$ is a function to be chosen later.
  One knows $P$ achieves maximum at an interior point $x_0\in B_r$ and for $\xi\in T_{x_0}\bar M$. 
  Around $x_0$ we choose a smooth orthonormal local frame $e_1,\cdots, e_n$ such that $e_1(x_0)=\xi$, $\Gamma_{ij}^k(x_0)=0$ and
  $\{\mathfrak{g}_{ij}(x_0)\}$ is diagonal (so is $\{F^{ij}(x_0)\}$). We may assume $\mathfrak{g}_{11}(x_0)\geqslant 1$.
  At $x_0$ one has
  \begin{equation}
  	\label{mp2nd-1}
  	\begin{aligned}
  		\,& \nabla_i \varphi+\frac{\nabla_i \mathfrak{g}_{11}}{\mathfrak{g}_{11}}+\frac{\nabla_i \eta}{\eta}, 
  		\,&
  		\mathcal{L}(\varphi+\log\eta+\log\mathfrak{g}_{11}) \leqslant 0.
  	\end{aligned}
  \end{equation}
  
\noindent{\em Step 1. Estimation for $\mathcal{L}(\mathfrak{g}_{11})$.}
  From \eqref{differ-buchong3}, we get
  \begin{equation}
  	\label{key2nd-1}
  	\begin{aligned}
  		F^{ii}\nabla_{11}\mathfrak{g}_{ii}\geqslant \psi_{p_l}\nabla_l \mathfrak{g}_{11}-C_0\mathfrak{g}_{11}^2.
  	\end{aligned}
  \end{equation}
  The straightforward computation shows
  \begin{equation}
  	\label{key2nd-2}
  	\begin{aligned}
  		F^{ii}(\nabla_{ii}\mathfrak{g}_{11}-\nabla_{11}\mathfrak{g}_{ii}) \geqslant 
  		-F^{ii}A_{ii,p_l}\nabla_l\mathfrak{g}_{11}
  		-C_2\mathfrak{g}_{11}^2 \sum_{i=1}^n F^{ii}-C_2\mathfrak{g}_{11}.
  	\end{aligned}
  \end{equation}
  Here we use \eqref{differ-equa1}. 
  Combing \eqref{key2nd-1} and \eqref{key2nd-2}, 
  \begin{equation}
  	\label{key2nd-3}
  	\begin{aligned}
  		\mathcal{L}(\mathfrak{g}_{11})=
  		F^{ii}\nabla_{ii}\mathfrak{g}_{11}+(F^{ii}A_{ii,p_l}-\psi_{p_l})\nabla_l\mathfrak{g}_{11}  \nonumber
  		\geqslant
  		-C_3\mathfrak{g}_{11}^2(1+\sum_{i=1}^n F^{ii}).
  	\end{aligned}
  \end{equation}
  Using \eqref{mp2nd-1} and Cauchy-Schwarz inequality, 
  \begin{equation}
  	\begin{aligned}
  		F^{ii}\frac{|\nabla_i\mathfrak{g}_{11}|^2}{\mathfrak{g}_{11}^2}
  		\leqslant \frac{3}{2}F^{ii}|\nabla_i\varphi|^2+3F^{ii}\frac{|\nabla_i \eta|^2}{\eta^2}.  \nonumber
  	\end{aligned}
  \end{equation}
  Hence
  \begin{equation}
  	\label{key2nd-4}
  	\begin{aligned}
  		\mathcal{L}(\log\mathfrak{g}_{11}) \geqslant -C_3\mathfrak{g}_{11}(1+\sum_{i=1}^n F^{ii}) -\frac{3}{2}F^{ii}|\nabla_i\varphi|^2
  		-3F^{ii}\frac{|\nabla_i \eta|^2}{\eta^2}.  \nonumber
  	\end{aligned}
  \end{equation}
  
\noindent{\em Step 2. Construction and estimate for $\mathcal{L}\varphi$.}
  Following \cite{Guan2008IMRN}  we set $$\varphi=\varphi(w)=(1-\frac{w}{2N})^{-\frac{1}{2}} \mbox{ where $w=|\nabla u|^2$ and $N=\sup_{\{\eta>0\}}|\nabla u|^2$}. $$
  One can check $\varphi'=\frac{\varphi^3}{4N},$ $\varphi''=\frac{3\varphi^5}{16N^2}$ and $1\leqslant \varphi\leqslant \sqrt{2}$.
  By \eqref{Lw1}, \eqref{fully-uniform2} and Cauchy-Schwarz inequality, we obtain
  \begin{equation}
  	\begin{aligned}
  		\mathcal{L}w \geqslant 
  		\frac{1}{2}F^{ii}\mathfrak{g}_{ii}^2-C_4 \sum_{i=1}^n F^{ii}
  		\geqslant (\frac{\theta }{2}\mathfrak{g}_{11}^2 -C_4)\sum_{i=1}^n F^{ii}.  \nonumber
  	\end{aligned}
  \end{equation}
  Consequently, 
  \begin{equation}
  	\label{key2nd-5}
  	\begin{aligned}
  		\mathcal{L}\varphi
  		\geqslant 
  		\frac{3\varphi^5}{16N^2}F^{ii}|\nabla_i w|^2+
  		\frac{\varphi^3}{4N}  (\frac{\theta }{2}\mathfrak{g}_{11}^2 -C_4)\sum_{i=1}^n F^{ii}.  \nonumber
  	\end{aligned}
  \end{equation}
  
  \noindent{\em Step 3. Completion of the proof.}
  From \eqref{eta1},
  \begin{equation}
  	\begin{aligned}
  		\,&
  		F^{ii}\frac{|\nabla_i\eta|^2}{\eta^2} \leqslant \frac{C}{r^2 \eta}\sum_{i=1}^n F^{ii},
  		\,&
  		\mathcal{L}(\log\eta)\geqslant -\frac{C}{r^2 \eta}\sum_{i=1}^n F^{ii} -\frac{C}{r\sqrt{\eta}}.
  		 \nonumber
  	\end{aligned}
  \end{equation}
  Next,
  \begin{equation}
  	\begin{aligned}
  		\frac{3}{2}F^{ii}|\nabla_i\varphi|^2=\frac{3\varphi^6}{32N^2}F^{ii}|\nabla_i w|^2
  		\leqslant \frac{3\varphi^5}{16N^2}F^{ii}|\nabla_i w|^2.  \nonumber
  	\end{aligned}
  \end{equation}
  Finally, at $x_0$  
  \begin{equation}
  	\begin{aligned}
  		0\geqslant
  		  \frac{3}{4N} \left(\frac{\theta}{2} \mathfrak{g}_{11}^2-\frac{4NC_3}{3}\mathfrak{g}_{11}-C_4\right)\sum_{i=1}^n F^{ii} 
  		-\frac{C}{r^2 \eta}\sum_{i=1}^n F^{ii} 
  		-\frac{C}{r\sqrt{\eta}} 
  		-C_3\mathfrak{g}_{11}. \nonumber
  	\end{aligned}
  \end{equation}
  Combining with \eqref{sumfi} we have $\eta \mathfrak{g}_{11}\leqslant \frac{C}{r^2}$ at $x_0$. 
  Therefore the proof is complete.

  \subsection{Boundary estimate}
  
  For a point $x_0\in\partial M$ we choose a smooth orthonormal local frame $e_1, \cdots, e_n$ around $x_0$ such that
  $e_n$ is unit outer normal vector field when restricted to $\partial M$.  
We denote $\rho(x):=\mathrm{dist}_g(x,x_0)$  the distance function from $x$ to $x_0$
  with respect to $g$,
 and $M_\delta:=\{x\in M: \rho(x)<\delta\}.$
  As in \eqref{distance-function}, $\mathrm{\sigma}(x)$ is  the distance from $x$ to $\partial M$.
  We know that
  $\mathrm{\sigma}(x)$ is smooth and $|\nabla \mathrm{\sigma}|\geqslant \frac{1}{2}$ in $M_\delta$ for small $\delta$.

  \subsubsection*{Case 1. Pure tangential derivatives.}   
  From the boundary value condition, 
  \begin{equation}
  	\label{morry-1}
  	\begin{aligned}
  		\,& \nabla_\alpha u=\nabla_\alpha \varphi, 
  		\,& \nabla_{\alpha\beta}u=\nabla_{\alpha\beta}\varphi+\nabla_{n}(u-\varphi)\mathrm{II}(e_\alpha,e_\beta)
  		\mbox{ on } \partial M  \nonumber
  	\end{aligned}
  \end{equation}
  for $1\leqslant \alpha, \beta<n$, where $\mathrm{II}$ is the fundamental form of $\partial M$. 
  This gives the bound of
  second estimates for pure tangential derivatives
  \begin{equation}
  	\label{ineq2-bdy}
  	\begin{aligned}
  		|\mathfrak{g}_{\alpha\beta}(x_0)|\leqslant C.
  	\end{aligned}
  \end{equation}
  
  \subsubsection*{Case 2. Mixed derivatives.}
  For $1\leqslant\alpha<n$, the local barrier function 
 is given by
  \begin{equation}
  	\begin{aligned}
  		\Psi=\pm \nabla_\alpha (u-\varphi) 
  		+A_1 \left(\frac{N \mathrm{\sigma}^2}{2} -t \mathrm{\sigma}\right)
  		-A_2\rho^2+A_3 \sum_{k=1}^{n-1} |\nabla_k (u-\varphi)|^2 
  		\mbox{ in } M_\delta,  \nonumber
  	\end{aligned}
  \end{equation}
  where $A_1$, $A_2$, $A_3$, $N$, $t$ are all positive constants to be determined, and $N\delta-2t\leqslant 0.$
  The construction of  local barrier is standard.
  
  
  To deal with local barrier functions, we need the following formula 
  \begin{equation}
  	\label{ineq1-bdy}
  	\begin{aligned}
  		\nabla_{ij}(\nabla_k u)=\nabla_{ijk}u+\Gamma_{ik}^l\nabla_{jl}u+\Gamma_{jk}^l\nabla_{il}u+\nabla_{\nabla_{ij}e_k}u,\nonumber
  	\end{aligned}
  \end{equation}
 see e.g. \cite{Guan12a}.
  Combining with \eqref{ineq1-c1} one derives
  \begin{equation}
  	\label{bdy-1}
  	\begin{aligned}
  		|\mathcal{L}(\nabla_k (u-\varphi))|\leqslant C\left(1+\sum_{i=1}^n f_i+\sum_{i=1}^n f_i |\lambda_i|\right).  \nonumber
  	\end{aligned}
  \end{equation}
  As in \eqref{Lw1}, we can prove
  \begin{equation}
  	\label{bdy-2}
  	\begin{aligned}
  		\mathcal{L}(|\nabla_k (u-\varphi)|^2) \geqslant F^{ij}\mathfrak{g}_{ik}\mathfrak{g}_{jk} 
  		-C\left(1+\sum_{i=1}^n f_i+\sum_{i=1}^n f_i |\lambda_i|\right).  \nonumber
  	\end{aligned}
  \end{equation}

  By \cite[Proposition 2.19]{Guan12a}, there exists an index $1\leqslant r\leqslant n$ with
  \begin{equation}
  	\begin{aligned}
  		\sum_{k=1}^{n-1} F^{ij}\mathfrak{g}_{ik}\mathfrak{g}_{jk} \geqslant \frac{1}{2} \sum_{i\neq r} f_i\lambda_i^2.  \nonumber
  	\end{aligned}
  \end{equation}

  It follows from    \eqref{concavity1} and Lemma \ref{lemma3.4} that
  $$0<\sum_{i=1}^n f_i \lambda_i \leqslant \psi(x,u,\nabla u)-f(\vec{\bf 1})+\sum_{i=1}^nf_i.$$
  Combining with Cauchy-Schwarz inequality and
  $$\sum_{i=1}^n f_i |\lambda_i| =2\sum_{\lambda_i\geqslant 0} f_i\lambda_i -\sum_{i=1}^n f_{i} \lambda_i = \sum_{i=1}^n f_i\lambda_i -2\sum_{\lambda_i<0} f_{i} \lambda_i,$$ 
   we have
  \begin{equation}
  	\label{flambda}
  	\begin{aligned}
  		\sum_{i=1}^n f_i |\lambda_i|
  		\leqslant   \epsilon \sum_{i\neq r} f_i\lambda_i^2  +\frac{C}{\epsilon}\sum_{i=1}^n f_i+C. \nonumber
  	\end{aligned}
  \end{equation}

  Let $\kappa$ be as in \eqref{sumfi}.
  If $\delta $ and $t$ are chosen small enough such that
  \begin{equation}
  	\begin{aligned}
  		\mathcal{L}\left(\frac{N \mathrm{\sigma}^2}{2}-t \mathrm{\sigma}\right)
  		\geqslant 
  		\frac{N \kappa \theta}{8(1+\kappa)}
  		\left(1+\sum_{i=1}^n f_i\right).  \nonumber
  	\end{aligned}
  \end{equation}

  Putting those inequalities together, if $A_1\gg A_2$, $A_1\gg A_3>1$ then 
  \begin{equation}
  	\begin{aligned}
  		\mathcal{L}(\Psi)
  		>0 \mbox{ in } M_\delta.  \nonumber
  	\end{aligned}
  \end{equation}
  On the other hand $\Psi|_{\partial M}=-A_2\rho^2\leqslant 0$; while on $\partial M_\delta\setminus \partial  M$, 
   $\Psi\leqslant 0$ if  $N\delta-2t\leqslant 0$, $A_2\gg A_1$.
  Note that $\Psi(x_0)=0$. 
  Thus
  \begin{equation}
  	\label{mix-1}
  	\begin{aligned}
  		|\mathfrak{g}_{\alpha n}(x_0)|\leqslant C \mbox{ for } 1\leqslant \alpha\leqslant n-1.
  	\end{aligned}
  \end{equation}
  
  \subsubsection*{Case 3. Double normal derivatives.}
  Fix $x_0\in\partial M$. 
  Since $\mathrm{tr}(g^{-1}\mathfrak{g})>0$,  \eqref{ineq2-bdy} and \eqref{mix-1}, we have $\mathfrak{g}_{nn}\geqslant -C$. 
  We assume 
  $\mathfrak{g}_{nn}(x_0)\geqslant 1$ (otherwise we are done).
  According to \eqref{fully-uniform2}, 
  $F^{nn}\geqslant \theta \sum_{i=1}^n f_i. $
  By the concavity of equation,   \eqref{ineq2-bdy} and \eqref{mix-1}, 
  \begin{equation}
  	\begin{aligned}
  		\psi[u](x_0)-F(g) 
  		\geqslant 
  		F^{ij} (\mathfrak{g}_{ij}-\delta_{ij}) 
  		\geqslant 
  		-C'\sum_{i=1}^n f_i (\lambda)+  (\theta\mathfrak{g}_{nn}-1)  \sum_{i=1}^n f_i(\lambda).  \nonumber
  	\end{aligned}
  \end{equation}
  This gives $\mathfrak{g}_{nn}(x_0) \leqslant C.$ Here we also use \eqref{sumfi}.

  \subsubsection*{Acknowledgements}  The author wishes to express his gratitude to Professor Robert McOwen for sending his beautiful paper \cite{McOwen1993}. The author would  like to thank Professor Yi Liu for 
  answering questions 
  related to
 the proof of Lemma \ref{lemma-diff-topologuy}.
 The author also wishes to thank Dr. Ze Zhou for useful discussions on the homogeneity lemma.
  The author is supported by the National Natural Science  Foundation of China through grant 11801587.


\bigskip


\end{document}